\begin{document}

\theoremstyle{plain}

\newtheorem{thm}{Theorem}[section]
\newtheorem{lem}[thm]{Lemma}
\newtheorem{conj}[thm]{Conjecture}
\newtheorem{pro}[thm]{Proposition}
\newtheorem{cor}[thm]{Corollary}
\newtheorem{que}[thm]{Question}
\newtheorem{rem}[thm]{Remark}
\newtheorem{defi}[thm]{Definition}
\newtheorem{hyp}[thm]{Hypothesis}

\newtheorem*{thmA}{THEOREM A}
\newtheorem*{thmB}{THEOREM B}
\newtheorem*{corB}{COROLLARY B}
\newtheorem*{conjA}{CONJECTURE A}
\newtheorem*{conjB}{CONJECTURE B}
\newtheorem*{conjE}{CONJECTURE E}
\newtheorem*{thmC}{THEOREM C}
\newtheorem*{thmD}{THEOREM D}
\newtheorem*{conjC}{CONJECTURE C}
\newtheorem*{conj0}{CONJECTURE}

\newtheorem*{thmAcl}{Main Theorem$^{*}$}
\newtheorem*{thmBcl}{Theorem B$^{*}$}

\numberwithin{equation}{section}

\newcommand{\Maxn}{\operatorname{Max_{\textbf{N}}}}
\newcommand{\Syl}{\operatorname{Syl}}
\newcommand{\dl}{\operatorname{\mathfrak{d}}}
\newcommand{\Con}{\operatorname{Con}}
\newcommand{\cl}{\operatorname{cl}}
\newcommand{\Stab}{\operatorname{Stab}}
\newcommand{\Aut}{\operatorname{Aut}}
\newcommand{\Ker}{\operatorname{Ker}}
\newcommand{\IBr}{\operatorname{IBr}}
\newcommand{\Irr}{\operatorname{Irr}}
\newcommand{\SL}{\operatorname{SL}}
\newcommand{\FF}{\mathbb{F}}
\newcommand{\NN}{\mathbb{N}}
\newcommand{\N}{\mathbf{N}}
\newcommand{\C}{\mathbf{C}}
\newcommand{\OO}{\mathbf{O}}
\newcommand{\F}{\mathbf{F}}

\renewcommand{\labelenumi}{\upshape (\roman{enumi})}

\newcommand{\GL}{\operatorname{GL}}
\newcommand{\Sp}{\operatorname{Sp}}
\newcommand{\PGL}{\operatorname{PGL}}
\newcommand{\PSL}{\operatorname{PSL}}
\newcommand{\GU}{\operatorname{GU}}
\newcommand{\PGU}{\operatorname{PGU}}
\newcommand{\SU}{\operatorname{SU}}
\newcommand{\PSU}{\operatorname{PSU}}
\newcommand{\PSp}{\operatorname{PSp}}

\providecommand{\V}{\mathrm{V}}
\providecommand{\E}{\mathrm{E}}
\providecommand{\ir}{\mathrm{Irr_{rv}}}
\providecommand{\Irrr}{\mathrm{Irr_{rv}}}
\providecommand{\re}{\mathrm{Re}}

\def\irrp#1{{\rm Irr}_{p'}(#1)}

\def\Z{{\mathbb Z}}
\def\C{{\mathbb C}}
\def\Q{{\mathbb Q}}
\def\irr#1{{\rm Irr}(#1)}
\def\ibr#1{{\rm IBr}(#1)}
\def\irrv#1{{\rm Irr}_{\rm rv}(#1)}
\def \c#1{{\mathcal #1}}
\def\cent#1#2{{\bf C}_{#1}(#2)}
\def\syl#1#2{{\rm Syl}_#1(#2)}
\def\nor{\trianglelefteq\,}
\def\oh#1#2{{\bf O}_{#1}(#2)}
\def\Oh#1#2{{\bf O}^{#1}(#2)}
\def\zent#1{{\bf Z}(#1)}
\def\det#1{{\rm det}(#1)}
\def\aut#1{{\rm Aut}(#1)}
\def\ker#1{{\rm ker}(#1)}
\def\norm#1#2{{\bf N}_{#1}(#2)}
\def\alt#1{{\rm Alt}(#1)}
\def\iitem#1{\goodbreak\par\noindent{\bf #1}}
   \def \mod#1{\, {\rm mod} \, #1 \, }
\def\sbs{\subseteq}

\def\gc{{\bf GC}}
\def\ngc{{non-{\bf GC}}}
\def\ngcs{{non-{\bf GC}$^*$}}
\newcommand{\notd}{{\!\not{|}}}
\newcommand{\Out}{{\mathrm {Out}}}
\newcommand{\Mult}{{\mathrm {Mult}}}
\newcommand{\Inn}{{\mathrm {Inn}}}
\newcommand{\IBR}{{\mathrm {IBr}}}
\newcommand{\IBRL}{{\mathrm {IBr}}_{\ell}}
\newcommand{\IBRP}{{\mathrm {IBr}}_{p}}
\newcommand{\ord}{{\mathrm {ord}}}
\def\id{\mathop{\mathrm{ id}}\nolimits}
\renewcommand{\Im}{{\mathrm {Im}}}
\newcommand{\Ind}{{\mathrm {Ind}}}
\newcommand{\diag}{{\mathrm {diag}}}
\newcommand{\soc}{{\mathrm {soc}}}
\newcommand{\End}{{\mathrm {End}}}
\newcommand{\sol}{{\mathrm {sol}}}
\newcommand{\Hom}{{\mathrm {Hom}}}
\newcommand{\Mor}{{\mathrm {Mor}}}
\newcommand{\St}{{\sf {St}}}
\def\rank{\mathop{\mathrm{ rank}}\nolimits}
\newcommand{\Tr}{{\mathrm {Tr}}}
\newcommand{\tr}{{\mathrm {tr}}}
\newcommand{\Gal}{{\it Gal}}
\newcommand{\Spec}{{\mathrm {Spec}}}
\newcommand{\ad}{{\mathrm {ad}}}
\newcommand{\Sym}{{\mathrm {Sym}}}
\newcommand{\Char}{{\mathrm {char}}}
\newcommand{\pr}{{\mathrm {pr}}}
\newcommand{\rad}{{\mathrm {rad}}}
\newcommand{\abel}{{\mathrm {abel}}}
\newcommand{\codim}{{\mathrm {codim}}}
\newcommand{\ind}{{\mathrm {ind}}}
\newcommand{\Res}{{\mathrm {Res}}}
\newcommand{\Ann}{{\mathrm {Ann}}}
\newcommand{\Ext}{{\mathrm {Ext}}}
\newcommand{\Alt}{{\mathrm {Alt}}}
\newcommand{\AAA}{{\sf A}}
\newcommand{\SSS}{{\sf S}}
\newcommand{\CC}{{\mathbb C}}
\newcommand{\CB}{{\mathbf C}}
\newcommand{\RR}{{\mathbb R}}
\newcommand{\QQ}{{\mathbb Q}}
\newcommand{\ZZ}{{\mathbb Z}}
\newcommand{\KK}{{\mathbb K}}
\newcommand{\NB}{{\mathbf N}}
\newcommand{\ZB}{{\mathbf Z}}
\newcommand{\OB}{{\mathbf O}}
\newcommand{\EE}{{\mathbb E}}
\newcommand{\PP}{{\mathbb P}}
\newcommand{\GC}{{\mathbf G}}
\newcommand{\HC}{{\mathcal H}}
\newcommand{\AC}{{\mathcal A}}
\newcommand{\BC}{{\mathcal B}}
\newcommand{\GA}{{\mathfrak G}}
\newcommand{\SC}{{\mathcal S}}
\newcommand{\TC}{{\mathbf T}}
\newcommand{\DC}{{\mathcal D}}
\newcommand{\LC}{{\mathcal L}}
\newcommand{\RC}{{\mathcal R}}
\newcommand{\CL}{{\mathcal C}}
\newcommand{\EC}{{\mathcal E}}
\newcommand{\GCD}{\GC^{*}}
\newcommand{\TCD}{\TC^{*}}
\newcommand{\FD}{F^{*}}
\newcommand{\GD}{G^{*}}
\newcommand{\HD}{H^{*}}
\newcommand{\hG}{\hat{G}}
\newcommand{\hP}{\hat{P}}
\newcommand{\hQ}{\hat{Q}}
\newcommand{\hR}{\hat{R}}
\newcommand{\GCF}{\GC^{F}}
\newcommand{\TCF}{\TC^{F}}
\newcommand{\PCF}{\PC^{F}}
\newcommand{\GCDF}{(\GC^{*})^{F^{*}}}
\newcommand{\RGTT}{R^{\GC}_{\TC}(\theta)}
\newcommand{\RGTA}{R^{\GC}_{\TC}(1)}
\newcommand{\Om}{\Omega}
\newcommand{\eps}{\epsilon}
\newcommand{\varep}{\varepsilon}
\newcommand{\al}{\alpha}
\newcommand{\chis}{\chi_{s}}
\newcommand{\sigmad}{\sigma^{*}}
\newcommand{\PA}{\boldsymbol{\alpha}}
\newcommand{\gam}{\gamma}
\newcommand{\lam}{\lambda}
\newcommand{\la}{\langle}
\newcommand{\ra}{\rangle}
\newcommand{\hs}{\hat{s}}
\newcommand{\htt}{\hat{t}}
\newcommand{\sgn}{\mathsf{sgn}}
\newcommand{\SR}{^*R}
\newcommand{\tsig}{\boldsymbol{\sigma}}
\newcommand{\trho}{\boldsymbol{\rho}}
\newcommand{\teps}{\boldsymbol{\epsilon}}
\newcommand{\tvarep}{\boldsymbol{\varepsilon}}
\newcommand{\tL}{\tilde{L}}
\newcommand{\tM}{\tilde{M}}
\newcommand{\tb}{\tilde{b}}
\newcommand{\tUb}{\tilde{U}}
\newcommand{\ta}{\tilde{a}}
\newcommand{\tchi}{\tilde\chi}
\newcommand{\tw}[1]{{}^#1\!}
\renewcommand{\mod}{\bmod \,}
\newcommand{\relc}{\,{\boldsymbol{\succeq_{\sf c}}}\,}
\newcommand{\edit}[1]{{\color{red} #1}}

\def\Par{{\mathcal P}}
\def\OP{{\mathcal O}}
\def\DP{{\mathcal D}}
\newcommand{\0}{{\bar 0}}
\newcommand{\1}{{\bar 1}}
\newcommand{\La}{\Lambda}
\newcommand{\sfS}{{\sf S}}
\newcommand{\sfs}{{\sf s}}
\newcommand{\ts}{\tilde{\sf s}}
\newcommand{\tildet}{\tilde{\sf t}}
\newcommand{\tz}{{\sf z}}

\newcommand{\sfC}{{\sf C}}
\newcommand{\sfA}{{\sf A}}
\newcommand{\TS}{\tilde{\sf S}}
\newcommand{\TA}{\tilde{\sf A}}
\newcommand{\TiC}{\tilde{\sf C}}

\marginparsep-0.5cm

\renewcommand{\thefootnote}{\fnsymbol{footnote}}
\footnotesep6.5pt

\title{A reduction theorem for  the Feit conjecture}

\author[R. Boltje]{Robert Boltje}
\address[R. Boltje]{Department of Mathematics, University of California Santa Cruz, CA 95064, USA}
\email{boltje@ucsc.edu}

\author[A. Kleshchev]{Alexander Kleshchev}
\address[A. Kleshchev]{Department of Mathematics, University of Oregon, Eugene, OR~97403, USA}
\email{klesh@uoregon.edu}

\author[G. Navarro]{Gabriel Navarro}
\address[G. Navarro]{Departament de Matem\`atiques, Universitat de Val\`encia, 46100 Burjassot,
Val\`encia, Spain}
\email{gabriel@uv.es}

\author[P. H. Tiep]{Pham Huu Tiep}
\address[P. H. Tiep]{Department of Mathematics, Rutgers University, Piscataway,
  NJ 08854, USA}
\email{tiep@math.rutgers.edu}

\date{}

\thanks{The research of the first author is supported by Grant  PID2022-137612NB-I00 funded by MCIN/AEI/ 10.13039/501100011033 and ERDF ``A way of making Europe" and CDEIGENT grant CIDEIG/2022/29 funded by Generalitat Valenciana.}

\thanks{The second author is supported by the NSF grant DMS-2101791.}

\thanks{The fourth author gratefully acknowledges the support of the NSF (grant
DMS-2200850) and the Joshua Barlaz Chair in Mathematics.}

\thanks{The first author would like to thank the Mathematics Department of the University of Valencia
for their hospitality during a four-month visit.}

\thanks{Part of this work was done while the third author visited the Department of Mathematics, Rutgers University. 
It is a pleasure to thank Rutgers University for generous hospitality and stimulating environment.
We thank Carlos Tapp-Monfort for a careful reading of the manuscript. We also thank the referee for numerous helpful comments which have improved the readability of this paper.}

\keywords{}

\subjclass[2020]{Primary 20C15; Secondary 20C25, 20C30, 20C33, 20D06}

\begin{abstract} 
We prove that if all the simple groups involved in a finite group $G$ satisfy the 
{\sl inductive Feit condition}, then Walter Feit's conjecture from 1980 holds for $G$.
In particular, this would solve Brauer's Problem 41 from 1963 in the affirmative.
This inductive Feit condition implies that some features of all 
the irreducible characters of finite groups can be found locally.
\end{abstract}

\maketitle

\tableofcontents 
 
\section{Introduction} 
Already in 1904 H. Blichfeldt \cite{Bl}, and W. Burnside in 1911 \cite{Bu},   found a relationship between the irrationalities of
an irreducible complex character $\chi \in \irr G$ of a finite group $G$ and the existence of some elements in $G$ of a certain order: 
if the conductor of $\chi(g_i)$ equals $p_i^{a_i}$, where $p_1, \ldots, p_n$ are different primes and $g_i \in G$, then
$G$ possesses elements of order $p_1^{a_1} \cdots p_n^{a_n}$. 
(Recall that if $K$ is an abelian number field, then the conductor of $K$ is the smallest integer $n>0$ such that $K$ is contained in the
$n^{\mathrm{th}}$ cyclotomic field $\Q_n$; the conductor of $\alpha \in K$ is the conductor of $\Q(\alpha)$, and the conductor $c(\chi)$ 
of $\chi$ is the conductor of the smallest field $\Q(\chi)$ containing all of its values.)

The situation is more difficult if conductors of individual character values are not just prime powers. 
In 1962, R. Brauer provided a new method which allowed him to generalize the previous result and to obtain some others, cf.~\cite{Br64}. 
However this was not powerful enough to answer, for instance, his (still open) Problem 41 in the famous list of problems \cite{Br63}.

These results and Problem 41 were unified in a single question in 1980 by W.~Feit in his well-known paper 
that gave a list of the main problems that remained open after the Classification of the Finite Simple Groups \cite{F80}. 
  Later this became known as Feit's conjecture.
\medskip
\begin{conj0}~{\rm (Feit)}
Let $G$ be a finite group and let $\chi \in \irr G$.
Then there exists  $g \in G$ such that the order $o(g)$ is the conductor of $\chi$.
\end{conj0}
 
Feit's conjecture was proven for solvable groups independently in \cite{AC86} and 
\cite{FT86}. Further results for other families of groups were proven in \cite{FT87}, but since the end of the 1980's no significant progress on Feit's conjecture was made.
One of the main problems of the conjecture is that inductive methods do not  seem to work properly.

\smallskip
Our new strategy here is to confront the irreducible character $\chi$ of $G$ not against order of elements
of $G$, but
against other characters of proper subgroups of $G$. This approach, together with an ad hoc use of the recent methods
that have successfully led to the proof of the McKay conjecture and a reduction of its Galois version to simple groups
(see \cite{IMN}, \cite{NS}, \cite{NSV}, \cite{N}), allows us to prove the following.

\begin{thmA}\label{thmA} 
Suppose that $G$ is a finite group, $\chi \in \irr G$ with $\chi(1)>1$. 
Assume that the non-abelian composition factors of $G$ satisfy the inductive Feit condition. 
If $\chi$ is primitive, then there exists $H=\norm GH<G$ and $\psi \in \irr H$ such that $\Q(\chi)=\Q(\psi)$.
\end{thmA}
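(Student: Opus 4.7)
My plan is to argue by induction on $|G|$, fixing a minimal counterexample $(G,\chi)$. Since the non-abelian composition factors of $G/\ker\chi$ are a subset of those of $G$, we may pass to this quotient and assume $\chi$ is faithful. Primitivity of $\chi$ forces $\chi_N$ to be homogeneous for every $N\nor G$; combined with faithfulness, this forces $G$ to have a unique minimal normal subgroup $N$. Writing $\chi_N=e\theta$ with $\theta\in\irr N$ a $G$-invariant character, we split on whether $N$ is abelian.

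The abelian subcase is the easier in shape but the more subtle in substance: minimality of $N$, faithfulness of $\chi$, and $G$-invariance of $\theta$ force $N$ to be cyclic of prime order $p$ and contained in $\zent G$, with $\theta=\lambda$ a faithful linear character. Here the entire inductive Feit content must come from $G/N$ (whose composition factors are a subset of those of $G$), and one constructs $(H,\psi)$ by combining a Fong--Reynolds / projective-character descent relative to $\lambda$ with the inductive hypothesis applied to $G/N$. In the non-abelian subcase, $N\cong S^k$ for a non-abelian simple $S$ and $\theta=\theta_1\boxtimes\cdots\boxtimes\theta_k$ with the $\theta_i$ all $G$-conjugate. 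One applies the inductive Feit condition for $S$ to $\theta_1$, obtaining a proper self-normalizing pair $(M_0,\psi_0)$ inside the relevant automorphism-extension of $S$ with $\Q(\psi_0)=\Q(\theta_1)$, in a manner equivariant for both $\Aut(S)$ and $\Gal(\QQ^{\mathrm{ab}}/\QQ)$. Diagonalizing across the $k$ factors of $N$ and then lifting through $G/N$ by means of character-triple / Butterfly-type isomorphisms---the machinery developed for the reduction of the Galois--McKay conjecture in \cite{NSV}---should yield the required $(H,\psi)$ in $G$.

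The main obstacle is the preservation of the equality $\Q(\chi)=\Q(\psi)$ through every step of this lift. Fields of values interact delicately with restriction, induction, extension, and Clifford correspondences, so the inductive Feit condition must be framed with a strong enough joint $\Aut(S)$- and Galois-equivariance to allow the lift. Closely tied to this is the self-normalizing requirement $H=\norm G H$: the character-triple isomorphisms must be selected so that stabilizer calculations in $G/N$ align precisely with the normalizer structure dictated by $(M_0,\psi_0)$. Finally, the central-cyclic subcase poses its own hurdle, since no non-abelian simple layer is available at the bottom, and the entire argument there has to be sustained by structural analysis of $G/N$ combined with the inductive hypothesis, with the self-normalizing condition being the most delicate point.
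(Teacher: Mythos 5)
Your first structural reduction already fails. Faithfulness plus primitivity of $\chi$ does \emph{not} force $G$ to have a unique minimal normal subgroup: take $G = \AAA_5 \times C_7$ and $\chi = \tau \boxtimes \lambda$ with $\tau \in \irr{\AAA_5}$ of degree $3$ (which is primitive, $\AAA_5$ having no subgroup of index $3$) and $\lambda \in \irr{C_7}$ faithful. Then $\chi$ is faithful and primitive (there is no subgroup of index $3$ in $G$, since every homomorphism $G \to \SSS_3$ is trivial), yet $G$ has two minimal normal subgroups. So the dichotomy ``$N$ abelian vs.\ $N \cong S^k$ for a unique minimal normal $N$'' on which your whole plan rests does not hold, and the abelian and non-abelian layers may coexist.

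There is a second, independent problem: your plan is a minimal-counterexample induction on $|G|$, passing in the abelian subcase to $G/N$ and ``applying the inductive hypothesis.'' But the inductive hypothesis requires primitivity, and primitivity is not preserved under taking quotients or under the Fong--Reynolds/character-triple correspondence you invoke, so there is no clean way to re-enter the induction in $G/N$. The paper avoids both issues by \emph{not} doing an induction on $|G|$ at all. It sets $G^\infty$ equal to the smallest normal subgroup with solvable quotient and splits on whether $G/G^\infty$ is nilpotent. When $G/G^\infty$ is not nilpotent, it finds a chief factor $X/Y$ above $G^\infty$, passes to a Carter subgroup, and produces $(H,\psi)$ directly via either the ``$e=1$'' restriction lemma (Lemma 4.4) or Isaacs' fully-ramified-section theory (Theorem 4.5) --- no recursion needed. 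When $G/G^\infty$ is nilpotent, it takes a chief factor $L/Z$ of $G$ inside $L = G^\infty$ (so $L/Z \cong S^n$ with $S$ non-abelian simple), replaces the character triple $(G,Z,\lambda)$ by an isomorphic one with a better central cyclic subgroup $C$ via Ladisch's theorem, and then applies the inductive Feit condition at the quasi-simple layer $K = L \cap \cent{G}{C}$ via the key Theorem 4.3 (plus the Butterfly theorem and wreath-product descent). Your sketch of the second half --- diagonalizing across $S^k$, lifting through character-triple isomorphisms --- captures the right technical ideas, but the scaffolding around it (unique minimal normal subgroup, induction on $|G|$ through $G/N$) is wrong and would have to be replaced by the Carter/Isaacs machinery plus the Ladisch replacement to preserve both $\Q(\chi)=\Q(\psi)$ and the self-normalizing condition.
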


We remark that the character $\psi$ in Theorem A needs not to be a constituent of the restriction $\chi_H$, as shown, for instance, by $G={\rm SL}_2(3)$. At the time of this writing, we do not know of any example
where $H$ is not a maximal subgroup of $G$.

\medskip
We easily derive the following.

\begin{corB}
Suppose that $G$ is a finite group. If all the simple groups involved in $G$ satisfy the inductive Feit condition, then
Feit's conjecture holds for $G$.
\end{corB}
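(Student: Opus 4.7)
The plan is to deduce Corollary B from Theorem A by a short induction on $|G|$. Given $\chi \in \irr G$, I would handle the linear case directly and reduce each of the two subcases of $\chi(1)>1$ to a proper subgroup.

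If $\chi(1)=1$, then $\chi$ factors through the cyclic quotient $G/\ker\chi$ of order $m := o(\chi)$ and sends a generator to a primitive $m$-th root of unity, so $\Q(\chi) = \Q_m$ and $c(\chi) \mid m$. Any preimage in $G$ of a generator of $G/\ker\chi$ has order divisible by $m$, so a suitable power of it is an element of $G$ of order exactly $c(\chi)$.

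If $\chi(1)>1$ and $\chi$ is imprimitive, write $\chi = \psi^G$ with $\psi \in \irr H$ for some proper subgroup $H<G$. Since the values of $\chi$ are $\Z$-linear combinations of values of $\psi$, we have $\Q(\chi) \subseteq \Q(\psi)$; combined with the intersection rule $\Q_a \cap \Q_b = \Q_{\gcd(a,b)}$ for cyclotomic fields, this forces $c(\chi) \mid c(\psi)$. The composition factors of $H$ are among those of $G$, so the inductive hypothesis yields $h \in H$ with $o(h) = c(\psi)$, and the power $h^{c(\psi)/c(\chi)}$ has order exactly $c(\chi)$. If instead $\chi$ is primitive, Theorem A directly supplies a proper self-normalising subgroup $H = \norm G H < G$ and $\psi \in \irr H$ with $\Q(\chi) = \Q(\psi)$, so $c(\chi) = c(\psi)$; induction applied to $H$ then produces $h \in H \subseteq G$ of order $c(\chi)$.

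The argument is essentially formal once Theorem A is available: Theorem A carries all the genuine mathematical content, and the only bookkeeping subtlety is the order-versus-conductor passage, handled by raising the element furnished by the inductive hypothesis to a suitable power. I do not expect any serious obstacle to the deduction beyond Theorem A itself.
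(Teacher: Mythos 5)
Your proof is correct and follows essentially the same route as the paper: an induction on $|G|$, splitting into the linear case, the imprimitive case (where induction of characters gives $\Q(\chi) \subseteq \Q(\psi)$), and the primitive case (where Theorem A supplies the proper subgroup with matching character field). The only cosmetic difference is that the paper peels off the imprimitive case first regardless of degree, whereas you first isolate the linear case, but the content is identical.
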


The precise definition of the inductive Feit condition is given in Definition \ref{inductive}, and it builds on the
inductive McKay condition and its Galois version. Roughly speaking, if $X$ is a quasi-simple group
and $\chi \in \irr X$, we want to find a proper self-normalizing intravariant subgroup $U$ of $X$ with a character $\mu \in \irr U$ 
such that $\chi$ and $\mu$ have the same cohomology and the same behavior under the
action of $\aut X_U\times \c G$, where $\c G$ is the absolute Galois group
and $\aut X_U$ is the group of automorphisms of $X$ that fix $U$. This may sound innocent, but in reality lies quite deep. It implies that relevant features of the irreducible characters of every finite group can be found in convenient proper self-normalizing 
subgroups of $G$.  (See for instance Corollary \ref{key3}.) Feit's conjecture would merely be   one application of  this theory.

As we are implying, the inductive Feit condition shares some similarities with the inductive  McKay condition \cite{IMN} and the inductive Galois--McKay condition \cite{NSV}, but there are also significant differences. First,
the inductive Feit condition applies to every irreducible character of $G$, not only to characters 
of degree coprime to a given prime $p$. Secondly, the target ``corresponding subgroup'' is  less specific (suitable self-normalizing subgroups instead of   $p$-Sylow normalizers). Thirdly, the fields in consideration are the smallest possible (rationals instead of the $p$-adics).

    
\smallskip
Of course, this leaves the stimulating task of proving that the inductive Feit condition holds for all simple groups. The aforementioned differences of the
inductive Feit condition make this task even more challenging.
In Section 7,  we accomplish this for the alternating groups.

\begin{thmC}
For any $n \geq 5$, the simple group $\AAA_n$ satisfies the inductive Feit condition.
\end{thmC}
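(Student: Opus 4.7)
My plan is to parametrize the irreducible complex characters of the Schur covers of $\AAA_n$ via partitions of $n$ and, for each such character $\chi$, to exhibit a proper self-normalizing intravariant subgroup $U$ of the cover $X$ together with a character $\mu\in\Irr U$ whose field of values, $\Aut(X)_U$-orbit, Galois orbit, and Clifford cohomology class all coincide with those of $\chi$, as required by Definition \ref{inductive}. The characters split naturally into three families: (a) ordinary characters of $\AAA_n$ coming from non-self-conjugate partitions $\lambda\vdash n$, which are $\Q$-valued and arise as $\chi^\lambda|_{\AAA_n}$ with $\lambda\neq\lambda'$; (b) ordinary characters from self-conjugate partitions, which split as $\chi^{\lambda,\pm}$ with values in $\Q(\sqrt{d})$ where $d=(-1)^{(n-m)/2}\prod_i h_{ii}(\lambda)$, the $h_{ii}$ being the diagonal hook lengths and $m$ their number; and (c) spin characters of $2.\AAA_n$ indexed by strict partitions via the Schur correspondence.

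For family (a), I would take $U$ to be the normalizer in $X$ of the Young subgroup associated to the first-row hook of $\lambda$ and select $\mu$ by Murnaghan--Nakayama-type induction on the hook; intravariance follows because this hook combinatorics is preserved by the inner $\SSS_n$-action on $\AAA_n$. For family (b), I would take $U$ to be the $\AAA_n$-normalizer of the diagonal-hook Young subgroup $\prod_i \SSS_{h_{ii}}\le \SSS_n$ wreathed with the symmetric group permuting the $m$ diagonal hooks, and build $\mu$ as a tensor product of sign and trivial characters on the $\SSS_{h_{ii}}$ factors. A direct quadratic-discriminant computation, modeled on the Frobenius--Schur/Gauss-sum formula, should yield $\Q(\mu)=\Q(\sqrt{d})$ and reproduce the splitting pattern of $\chi^{\lambda,\pm}$. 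For family (c), I would lift the same recipe to the spin double cover $\tilde{\SSS}_n$, using Stembridge's projective-character formulas and the strict-partition analogue of the diagonal-hook subgroup.

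The main obstacle is the simultaneous matching of the Clifford cohomology class together with the Galois and outer-automorphism action. Concretely, I must verify that the central extension $\Aut(X)_\chi/X$ acting on $\chi$ is cohomologically equivalent in $H^2$ to $N_{\Aut(X)}(U)_\mu/U$ acting on $\mu$, compatibly with the absolute Galois group $\mathcal{G}$. In the self-conjugate case, a delicate sign governs whether the outer automorphism of $\AAA_n$ interchanges $\chi^{\lambda,\pm}$ or fixes each of them, and the pair $(U,\mu)$ must be calibrated so that exactly the same sign is produced on the $\mu$-side. The small cases $n=6,7$ require separate direct inspection of the character tables of the covers $3.\AAA_6$, $6.\AAA_6$, $3.\AAA_7$, and $6.\AAA_7$, taking into account the enlarged Schur multiplier and, for $n=6$, the exceptional outer automorphism group $(\Z/2)^2$; these I would handle by a finite check using the Atlas.
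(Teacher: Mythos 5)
Your overall architecture matches the paper: split into non-symmetric partitions, symmetric partitions, and spin characters, and treat $n=6,7$ separately by machine computation of the covers. However, in two of the three families your choice of $(U,\mu)$ is not going to work, and these are not cosmetic issues.

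\textbf{Family (a).} The paper simply takes $U=\AAA_{n-1}$ and uses the branching rule to produce a constituent $\mu$ of multiplicity one with $\Q(\mu)=\Q(\theta)=\Q$; the combinatorial content is the elementary observation that, when $\lambda\neq\lambda'$, one of the restrictions $\chi^{\delta_i}$ is not an associate of any other one. Your ``normalizer of the Young subgroup associated to the first-row hook'' is neither clearly a subgroup (a hook is not a row partition, and the first-row hook has $\lambda_1 + \lambda'_1 - 1$ boxes without a natural Young subgroup attached to it) nor self-normalizing in general; and ``Murnaghan--Nakayama-type induction'' is a recursion for character values, not a device for singling out one $\mu\in\Irr U$. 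The simpler choice $\AAA_{n-1}$ is what makes this case go.

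\textbf{Family (b).} Two separate problems. First, the diagonal hook lengths $n_1>n_2>\dots>n_s$ are pairwise distinct, so $\prod_i \SSS_{n_i}$ is already self-normalizing in $\SSS_n$ and cannot be wreathed with a nontrivial permutation group of the factors. Second, and more seriously, a tensor product of $\operatorname{sgn}$ and trivial characters on $\prod_i\SSS_{n_i}$ is a linear character taking values in $\{\pm1\}$, hence rational. It can therefore never satisfy $\Q(\mu)=\Q(\chi)=\Q\bigl(\sqrt{d}\,\bigr)$ when $d$ is a nonsquare, which is the generic situation for symmetric $\lambda$. The discriminant does not enter through $\mu$ by some Gauss-sum argument; it is a property of the split constituents $\theta^{\pm}$ of $\chi^\lambda$ on $\AAA_n$ and of the split constituents of characters of $\AAA_m$ and $\AAA_k$. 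The paper's working choice is $U=(\SSS_m\times\SSS_k)\cap\AAA_n$, where $k=n_s$ is the \emph{last} diagonal hook, and $\mu$ is a non-linear character whose restriction to $\AAA_m\times\AAA_k$ is $\alpha^+\boxtimes\beta^+ + \alpha^-\boxtimes\beta^-$, the $\alpha^{\pm}$ and $\beta^{\pm}$ being themselves split constituents carrying the required irrationality. Without that, there is no way to match $\Q(\chi)$.

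\textbf{Family (c).} ``Lift the same recipe'' is not enough: the spin case requires the $Q$-function/shifted-tableau combinatorics (Stembridge's branching and Clifford-product formulas), and the basic spin characters need a separate treatment (normalizers of Sylow $p$-subgroups for $n$ prime, preimages of $\SSS_a\wr\SSS_b$ for composite $n$), precisely because they cannot be split off by removing a row.

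Finally, the cohomological matching you correctly flag as the hard point is not verified in the paper by a direct $H^2$ computation, but by checking one of the sufficient criteria in Theorem~\ref{easysituations} (multiplicity-one restriction, trivial stabilizer in $\Aut$, or a multiplicity coprime to $p$); you would need to identify and check one of these, since a bare field-of-values and stabilizer match does not yield condition (iii) of Definition~\ref{inductive} on its own.
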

Also, we prove that the sporadic groups and certain families of simple groups of Lie type
satisfy the inductive Feit condition  (see Theorems \ref{spo},   \ref{sl22}, \ref{sl21}, \ref{ree}).
(The inductive Feit condition for further families of simple groups of Lie type will be studied elsewhere.
For instance, see \cite{CT}.)  
Some of the new ideas in the proofs of these results, in particular various parts of Theorem \ref{easysituations},
should be useful for the verification of the inductive conditions for other global-local conjectures as well. 
As a consequence of this, and to show the power of our method, we   offer here the following.

\begin{thmD}
Suppose that $G$ is a finite group with abelian Sylow $2$-subgroups. Then Feit's conjecture holds for $G$.
\end{thmD}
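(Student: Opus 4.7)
\medskip
\noindent\textbf{Proof plan for Theorem D.}
The plan is to combine Corollary~B with Walter's classification of finite non-abelian simple groups having abelian Sylow $2$-subgroups, and to invoke the verifications of the inductive Feit condition already carried out in this paper for every family on Walter's list. In particular, no new character-theoretic work is required; Theorem~D will fall out by a matching exercise between two lists.

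First, by Corollary~B it suffices to check that every non-abelian simple group involved in $G$, i.e.\ every non-abelian simple section $H/N$ with $N\nor H\le G$, satisfies the inductive Feit condition. The hypothesis ``abelian Sylow $2$-subgroups'' is inherited by sections: if $P\in\syl{2}{H}$ and $Q\in\syl{2}{G}$ contains $P$, then $Q$ is abelian, hence so is $P$, and hence so is its image $PN/N\in\syl{2}{H/N}$. Therefore every non-abelian simple section of $G$ has abelian Sylow $2$-subgroups, and Walter's theorem asserts that, up to isomorphism, any such finite non-abelian simple group is one of the following: $\PSL_2(2^f)$ with $f\ge 2$; $\PSL_2(q)$ with $q$ odd and $q\equiv\pm 3\pmod 8$; a Ree group $\tw{2}G_2(3^{2m+1})$ with $m\ge 1$; or the first Janko group $J_1$.

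Each of these families is handled elsewhere in this paper: $\PSL_2(2^f)$ by Theorem~\ref{sl22}; $\PSL_2(q)$ for $q$ odd by Theorem~\ref{sl21}; the Ree groups by Theorem~\ref{ree}; and $J_1$ as part of the sporadic groups by Theorem~\ref{spo}. (The small coincidence $\PSL_2(4)\cong\PSL_2(5)\cong\AAA_5$ can also be dispatched via Theorem~C if needed.) Applying Corollary~B then yields Feit's conjecture for $G$. Since the deduction itself is immediate, the substantive content of Theorem~D rests entirely on the prior verification of the inductive Feit condition for those specific families; the only point that requires any care is to confirm that no small member of Walter's list falls outside the hypotheses of the cited theorems, which one checks case by case against the statements of Theorems~\ref{sl22}, \ref{sl21}, \ref{ree}, \ref{spo}, and~C.
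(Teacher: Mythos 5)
Your proposal is correct and matches the paper's own proof almost verbatim: both reduce via Corollary~B to verifying the inductive Feit condition for the non-abelian simple groups on Walter's list, then cite Theorems~\ref{spo}, \ref{sl22}, \ref{sl21}, and~\ref{ree}. The only elaboration you add (that the abelian Sylow~$2$ property passes to sections, and that Walter's odd-$q$ cases $q\equiv\pm3\pmod 8$ automatically have $q=p^n$ with $n$ odd and $q\ge 5$, so Theorem~\ref{sl21} applies) is a sound sanity check that the paper leaves implicit.
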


In the final section of this paper, we consider a generalization of Feit's conjecture which we consider perhaps more natural.

\begin{conjE}
Suppose that $G$ is a finite group and $\chi \in \irr G$.
Then there is a subgroup $H$ of $G$ and a linear constituent 
$\lambda \in \irr H$ of the restriction $\chi_H$ such that the conductor of $\chi$ is the order of $\lambda$.
\end{conjE}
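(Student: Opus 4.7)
The plan is to prove Conjecture E by induction on $|G|$, paralleling the derivation of Corollary B from Theorem A but carrying additional information to track linear constituents. The base case is immediate: when $\chi$ is linear, $H=G$ and $\lambda=\chi$ work. So assume $\chi(1)>1$ and split according to whether $\chi$ is primitive.

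If $\chi=\mu^G$ is imprimitive with $\mu\in\irr M$ and $M<G$, then $\mu$ is a constituent of $\chi|_M$ by Frobenius reciprocity and $c(\chi)\mid c(\mu)$. If one can arrange $c(\mu)=c(\chi)$, the induction hypothesis applied to $(M,\mu)$ yields $H\leq M$ and a linear constituent $\lambda$ of $\mu|_H$ with $o(\lambda)=c(\mu)=c(\chi)$, and by transitivity of restriction $\lambda$ also appears in $\chi|_H$. The subtle point is ensuring $c(\mu)=c(\chi)$: I would first take $\mu$ primitive, and then exploit that the Galois orbit of $\mu$ in $\irr M$ is contained in its $G$-orbit (since $\chi^\sigma=\chi$ forces $\mu^\sigma$ to be $G$-conjugate to $\mu$) to ``descend'' $\Q(\mu)$ onto $\Q(\chi)$ by a Clifford-type adjustment within the $G$-orbit of $\mu$, replacing $\mu$ by a suitable Galois conjugate along the inertia factorization.

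For the primitive case, the natural move is to apply Theorem A to produce a proper self-normalizing subgroup $H=\norm{G}{H}$ and $\psi\in\irr H$ with $\Q(\psi)=\Q(\chi)$, and then apply the induction hypothesis to $(H,\psi)$ to get $K\leq H$ and a linear constituent $\lambda$ of $\psi|_K$ with $o(\lambda)=c(\psi)=c(\chi)$. The main obstacle, exactly as the authors emphasize after Theorem A, is that $\psi$ need not be a constituent of $\chi|_H$ (as the $\SL_2(3)$ example shows), so $\lambda$ a priori need not appear in $\chi|_K$. To get around this, one must either strengthen the inductive Feit condition at the quasi-simple level so that the correspondence $\chi\leftrightarrow\psi$ also transports linear constituents of restrictions to self-normalizing intravariant subgroups (in the spirit of the block-aware refinements of the inductive McKay condition), and then re-verify this stronger condition for the simple groups handled in Theorems C and D, or else find a global Clifford-theoretic comparison between $\chi$ and $\psi$ that directly produces the required $\lambda$ as a constituent of $\chi|_K$. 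I expect this primitive step to be the hardest, with the genuinely new difficulty lying in faithful characters of large conductor on quasi-simple groups, where the available self-normalizing intravariant subgroups are most constrained.
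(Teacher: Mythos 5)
The statement you are trying to prove is labeled a \emph{conjecture} in the paper, and it remains open: the paper does not prove Conjecture~E in general. It is only established for solvable groups, and by a method quite different from what you outline. Your proposal, by your own account, is not a complete proof — you explicitly leave the primitive case unresolved (``one must either strengthen the inductive Feit condition... or else find a global Clifford-theoretic comparison''), which is exactly the genuinely open part of the problem.

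On the imprimitive step, you overcomplicate matters by insisting on arranging $c(\mu)=c(\chi)$ via a ``Clifford-type adjustment,'' which is not actually worked out. The paper sidesteps this entirely: it first proves (Theorem~\ref{9.4}) the weaker statement that there is a \emph{cyclic} $H$ and a linear constituent $\lambda$ of $\chi_H$ with $c(\chi)\mid o(\lambda)$, for which the imprimitive step is trivial ($c(\chi)\mid c(\mu)$ and induction on $|G|$ suffice). Equality is then recovered a posteriori by the elementary Lemma~9.1, which shows that the divisibility version, the cyclic divisibility version, and the equality version are all equivalent.

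The real gap in your plan is the primitive step, and the conceptual mistake is trying to run it through Theorem~A and the inductive Feit condition. The paper explicitly warns that the character $\psi$ produced by Theorem~A need not be a constituent of $\chi_H$ (the $\SL_2(3)$ example), so this route does not transport linear constituents. What the paper actually does in the solvable case is entirely different: after reducing to a primitive faithful $\chi$, it locates a fully ramified chief section and invokes Isaacs' theory of character fives and the canonical character $\psi^{(K/L)}$ (Theorem~\ref{isaacs}). The key technical inputs are Lemma~\ref{fully} (for odd $p$, Isaacs' canonical character restricted to a cyclic subgroup always contains a linear constituent of order dividing~$2$) and Lemma~\ref{roots} (for $p=2$, an elementary Galois-conjugation trick for odd-order linear characters), which allow the linear constituent supplied by the inductive hypothesis on the complement $U$ to be corrected into a linear constituent of $\chi$ of the right order. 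None of this machinery appears in your sketch. For non-solvable $G$ no analogue is known, and the problem is genuinely open; the paper itself only offers the Brauer-induction reformulation in the forthcoming~\cite{BN}, not a proof.
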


Conjecture E is stronger than Feit's conjecture.
Indeed,  it is not in general true that if $n$ is the order of an element of $G$
and $\chi \in \irr G$, then there exists a linear $\lambda$ constituent of $\chi_H$ 
of order divisible by $n$, even for faithful $\chi$. Examples are ${\sf S}_5$ with $\chi$ of degree 6, 
or for solvable groups, ${\tt SmallGroup}(324,160)$ with one of the characters of degree 6.
We will prove Conjecture E for solvable groups, and explore some other formulations which might have some interest.

In a forthcoming paper, \cite{BN}, it is shown that if, for given $\chi$, pairs $(H,\lambda)$ as in Conjecture~E exist at all then at least one such pair must occur in the canonical Brauer induction formula of $\chi$. Moreover, a natural indicator function on $\irr G$ with values in non-negative integers is constructed via Adams operations, whose value at $\chi\in\irr G$ is positive if and only if Conjecture~E holds for $\chi$.


\section{$\c G$-triples}

In this section, we use the ideas of the reduction of the Galois-McKay conjecture and borrow the notation from \cite{NSV}.  
For the reader's convenience we recall the definition of $\c G$-triples, related notions, properties, and results,
which will be used heavily in Sections~3 and 4. Some new results on projective representations and $\c G$-triples are also obtained, which should be useful for checking other inductive conditions.
\medskip

We use the notation for characters in \cite{Is} and \cite{N}. 
Among other standard results, we frequently use Clifford's theorem, the Clifford correspondence or Gallagher's theorem (Theorems 6.2, 6.11, and Corollary 6.17 of \cite{Is}, respectively) without further reference.

\medskip
Let $\Q^{\mathrm{ab}}$ denote the field generated by all roots of unity in $\C$. Throughout this section, 
we fix an intermediate field $\Q\sbs F \sbs \Q^{\mathrm{ab}}$ and set $\c G:=\mathrm{Gal}(\Q^{\mathrm{ab}}/F)$ which is known to be 
an abelian group. From Section~3 on we will assume that $F=\Q$ and $\c G=\mathrm{Gal}(\Q^{\mathrm{ab}}/\Q)$.

Let $X$ be a finite group. Then $\c G$ acts on $\irr X$ via $(\chi^\sigma)(x):= \chi(x)^\sigma:=\sigma(\chi(x))$,
for $x\in X$, $\sigma\in\c G$ and $\chi\in\Irr(X)$.

Note that we view $\aut X$ as a group with multiplication defined by $\alpha\beta:=
\beta\circ \alpha$. Thus, $x^\alpha:=\alpha(x)$ defines a right action of $\aut X$ on $X$, via group automorphisms.

More generally, if a group $\Gamma$ acts on $X$ from the right via automorphisms $x\mapsto x^\gamma$,
we consider the induced action of $\Gamma$ on $\irr X$ via $\theta^\gamma(x^\gamma):=
\theta(x)$ for $x\in X$, $\gamma\in\Gamma$. Note that this action commutes with the action of $\c G$ on $\irr X$, so that
$\Gamma\times \c G$ acts on $\irr X$, where $\chi^{(\gamma,\sigma)}(x)=\sigma(\chi^\gamma(x))$, for $x\in X$, 
$\chi\in \irr X$ and $\gamma\in\Gamma$. We will often just write $\chi^{\gamma\sigma}$ for $\chi^{(\gamma,\sigma)}$. 
For $\theta\in\irr X$ we may consider the stabilizer $(\Gamma\times \c G)_\theta$ of $\theta$ in $\Gamma\times \c G$.
We denote its first and second projection by $\Gamma_{\theta^{\c G}}$ and ${\c G}_{\theta^\Gamma}$, respectively, since these
are the respective stabilizers of the orbit of $\theta$ under the action of the other groups. 
More explicitly, $\Gamma_{\theta^{\c G}}$ is the set of all elements $\gamma\in \Gamma$ 
such there exists $\sigma\in\c G$ with $\theta^{\gamma\sigma}=\theta$, or equivalently, 
such there exists $\sigma\in \c G$ with $\theta^\gamma =\theta^\sigma$; 
and similarly for $\c G_{\theta^\Gamma}$. It is clear that the stabilizer $\Gamma_\theta$ 
is normal in $\Gamma_{\theta^{\c G}}$, the stabilizer $\c G_\theta$ is normal in $\c G_{\theta^\Gamma}$, 
and the condition $(\gamma,\sigma)\in (\Gamma\times \c G)_\theta$ defines an isomorphism 
$\gamma \Gamma_\theta\mapsto \sigma\c G_\theta$ between the groups $\Gamma_{\theta^{\c G}}/\Gamma_\theta$ and $\c G_{\theta^{\Gamma}}/\c G_\theta$ by using the projections from the
 direct product $\Gamma \times \c G$.

\medskip
Now let $G$ be a finite group and $N$ a normal subgroup of $G$. 
Then $G$ acts by conjugation on $N$ via $n\mapsto n^g=g^{-1}ng$ for $n \in N, g \in G$,
and by the the previous paragraph we obtain 
the usual action of $G$ on $\irr N$. The above considerations hold in particular in this case that $\Gamma=G$
and $G$ acts via conjugation on $N$ and $\irr N$.

Let $\theta\in \irr N$. Recall that if $\theta\in\irr N$ is $G$-fixed, i.e., $G=G_\theta$, then 
$(G,N,\theta)$ is called a {\bf character triple}. More generally, if $G=G_{\theta^{\c G}}$, we call 
$(G,N,\theta)$ a {\bf $\c G$-triple} and indicate this by writing $(G,N,\theta)_{\c G}$. 
This is equivalent to $\{ \theta^g \mid g \in G\}  \sbs \{ \theta^\sigma \mid \sigma \in \c G\}$.
Note that $(G_{\theta^{\c G}},N,\theta)_{\c G}$ is always a $\c G$-triple.
We denote by $\irr{G|\theta}$ the set of  $\chi \in \irr G$
such that $\theta$ is a constituent of the restriction $\chi_N$, and by $\irr{G|\theta^{\c G}}$ the set
of irreducible characters $\chi\in\irr G$ such that $\chi\in\irr{G|\theta^\sigma}$ for some $\sigma\in\c G$.

\medskip
We refer the reader to Chapter 11 of \cite{Is} or  Section 10.4 of \cite{N} for background on 
complex projective representations $\c P$ of a finite group $G$, i.e., functions $\c P\colon G\to \GL_n(\C)$
for some $n$ such that $\c P(x)\c P(y)=\alpha(x,y)\c P(xy)$ for $x,y\in G$ and some $\alpha(x,y)\in \C^\times$.
The function $\alpha\colon G\times G\to \C^\times$ is determined by $\c P$ and called the {\bf factor set}
associated with $\c P$. It satisfies the $2$-cocycle relation $\alpha(x,y)\alpha(xy,z)=\alpha(x,yz)\alpha(y,z)$ for all $x,y,z$ in $G$, 
and it is therefore, by definition, an element of the abelian group $Z^2(G,\C^\times)$ of {\em $2$-cocycles of $G$ with values in $\C^\times$}. 
For any function $\mu\colon G\to \C^\times$ the function $\mu\c P$ which maps $g$ to
$\mu(g)\c P(g)$ is again a projective representation. 
Its factor set is $\delta(\mu)\cdot\alpha$, where $\delta(\mu)(x,y)=\mu(x)\mu(y)\mu(xy)^{-1}$ for $x,y\in G$. 
The functions of the form $\delta(\mu)$ form the subgroup $B^2(G,\C^\times)\le Z^2(G,\C^\times)$ of {\em $2$-coboundaries of $G$ 
with values in $\C^\times$}. In fact,   all these can be defined if instead of $\C^\times$ we use any multiplicative subgroup $U$ of $\C^\times$, and we will need to do so.
\medskip

Recall that two complex (projective) representations $\c P$ and $\c P'$ of $G$ are called {\bf similar} if
there exists an invertible complex matrix $M$ such that $\c P(g) =M^{-1} \c P'(g)M$ for every $g \in G$. 
In this case we write $\c P\sim \c P'$ and observe that $\c P$ and $\c P'$ have the same factor set. 
Moreover, if $\c P$ and $\c P'$ are similar and if their entries and also the values of their factor set 
belong to a subfield $K$ of $\C$, then the matrix $M$ from above can be chosen to have entries in $K$ as well. 
This follows from the 
Noether-Deuring Theorem (see for instance \cite[Lemma~1.14.8]{Linckelmann1})
applied to the corresponding twisted group algebra of $G$ over $K$.

For any character triple $(G,N,\theta)$, there exists a projective representation $\c P$ of $G$
{\bf associated} with $\theta$ in the sense of Definition~5.2 in \cite{N}. That is, $\c P_N$ is a representation of $N$
affording the character $\theta$, $\c P(gn)=\c P(g)\c P(n)$, and $\c P(ng)=\c P(n)\c P(g)$ for all $g\in G$ and $n\in N$.
Note that, since $\c P_N$ is a representation, one has $\alpha(1,1)=1$. Also, $\alpha$ is constant on $N\times N$-cosets
of $G\times G$. Thus, we may interpret $\alpha$ also as element in $Z^2(G/N,\C^\times)$, and we do so frequently by abuse of notation.

\begin{thm}\label{projectivecyclotomic}
If $(G,N,\theta)$ is a character triple,
then there exists
a projective representation
$\c P$ of $G$ associated with $\theta$
which has entries in $\QQ^{\mathrm{ab}}$ and whose
factor set takes values in the group of $|G/N|$-th roots of unity.
\end{thm}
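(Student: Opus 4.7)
Set $H:=G/N$ and $m:=|H|$. The plan is to lift $\theta$ via a Schur cover of $H$ to an ordinary representation of a suitable central extension of $G$, realize that representation over $\Q^{\mathrm{ab}}$ by Brauer's splitting field theorem, and then push it down to $G$ via a set-theoretic section. The factor set will then automatically take values in roots of unity of order dividing $\exp\bigl(\Mult(H)\bigr)$, which in turn divides $m$.

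The first step is to choose a Schur cover $\pi_0\colon\tilde H\twoheadrightarrow H$: a central extension $1\to Z\to \tilde H\to H\to 1$ with $Z\cong \Mult(H)\sbs Z(\tilde H)\cap [\tilde H,\tilde H]$, whose defining property is that the inflation $H^2(H,\C^\times)\to H^2(\tilde H,\C^\times)$ vanishes and for which $\exp(Z)$ divides $m$. Form the fiber product $\tilde G:=\tilde H\times_H G$. The first projection $\tilde G\to G$ is surjective with central kernel $Z\times\{1\}\cong Z$, giving a central extension
\[
1\longrightarrow Z\longrightarrow \tilde G\longrightarrow G\longrightarrow 1,
\]
while the second projection $\tilde G\to\tilde H$ has kernel a normal copy $\iota(N)\sbs\tilde G$ of $N$ with $\tilde G/\iota(N)\cong \tilde H$. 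Pull $\theta$ back to $\tilde\theta:=\theta\circ\iota^{-1}\in\irr{\iota(N)}$; this is $\tilde G$-invariant because $\theta$ is $G$-invariant.

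Next, the obstruction cocycle of the character triple $(\tilde G,\iota(N),\tilde\theta)$, viewed in $H^2(\tilde G/\iota(N),\C^\times)=H^2(\tilde H,\C^\times)$, is the inflation along $\pi_0$ of the obstruction cocycle of $(G,N,\theta)$, hence vanishes. Therefore $\tilde\theta$ extends to an irreducible representation $\tilde{\c X}$ of $\tilde G$. By Brauer's splitting field theorem applied to the finite group $\tilde G$, we may assume $\tilde{\c X}$ has entries in $\Q_{|\tilde G|}\sbs \Q^{\mathrm{ab}}$. Since $Z$ is central in $\tilde G$ and $\tilde{\c X}$ is irreducible, Schur's lemma provides a linear character $\lambda\colon Z\to \C^\times$ with $\tilde{\c X}(z)=\lambda(z)\cdot I$; the image of $\lambda$ consists of roots of unity whose orders divide $\exp(Z)$, and hence divide $m$.

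Finally, fix any transversal $T\ni 1$ of $N$ in $G$, lift each $t\in T$ to an element $s(t)\in\tilde G$ with $s(1)=1$, and extend to a set-theoretic section $s\colon G\to\tilde G$ by $s(tn):=s(t)\iota(n)$. Define $\c P(g):=\tilde{\c X}(s(g))$. All matrix entries of $\c P$ lie in $\Q^{\mathrm{ab}}$; $\c P_N$ affords $\theta$ since $\c P(n)=\tilde{\c X}(\iota(n))$ for $n\in N$; and the identity $s(t)^{-1}\iota(n)s(t)=\iota(t^{-1}nt)$, which is immediate in the fiber product, yields $\c P(gn)=\c P(g)\c P(n)$ and $\c P(ng)=\c P(n)\c P(g)$ after a short verification with coset representatives. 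For the factor set, writing $s(g)s(h)=c(g,h)s(gh)$ with $c(g,h)\in Z$ gives $\c P(g)\c P(h)=\lambda(c(g,h))\c P(gh)$, so the factor set equals $\lambda\circ c$ and takes values in the group of $m$-th roots of unity. A direct computation shows that $c$, and hence this factor set, is constant on $N\times N$-cosets of $G\times G$, as required by the statement.

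The only delicate point is the vanishing of the obstruction cocycle on the Schur cover, which is exactly the defining property of such a cover; everything else is routine bookkeeping with the section $s$ together with the invocation of Brauer's theorem that brings the coefficient field down to $\Q^{\mathrm{ab}}$.
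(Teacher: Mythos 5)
Your proof is correct, but it takes a genuinely different route from the paper's. The paper starts from Corollary~1.2 of \cite{NSV}, which already supplies a projective representation with $\QQ^{\mathrm{ab}}$-entries and a roots-of-unity-valued factor set $\alpha$; it then modifies $\alpha$ by a coboundary via the manipulation from the proof of Isaacs's Theorem~11.15: raising the cocycle relation to the power $|G/N|$ shows $\alpha^{|G/N|}\in B^2$, and divisibility of the coefficient group $U$ lets one split off the $|G/N|$-torsion part of $\langle B^2(G/N,U),\alpha\rangle$. You instead build the projective representation from scratch by killing the cohomological obstruction at the group-theoretic level: form the fiber product $\tilde G=\tilde H\times_{H}G$ over a Schur cover $\tilde H$ of $H=G/N$, observe that the obstruction class of $(G,N,\theta)$ inflates to zero in $H^2(\tilde H,\CC^\times)$, extend $\theta$ to an ordinary irreducible representation of $\tilde G$ realized over $\QQ^{\mathrm{ab}}$ via Brauer's splitting-field theorem, and push down along a section $s\colon G\to\tilde G$; the factor set is then $\lambda\circ c$ for the central character $\lambda$ of $Z\cong\Mult(H)$, so the exponent bound comes for free from $\exp(\Mult(H))\mid |H|$. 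Both approaches work. The paper's is shorter because it leans on \cite[Cor.~1.2]{NSV} and stays entirely inside $2$-cocycle bookkeeping; yours is more structural, establishes the $\QQ^{\mathrm{ab}}$-rationality and the exponent bound in a single step without invoking the [NSV] corollary, and makes transparent that the $|G/N|$-th-root-of-unity bound is really a statement about the exponent of the Schur multiplier of $G/N$. All the intermediate verifications you flag as ``routine bookkeeping'' (the conjugation identity $s(t)^{-1}\iota(n)s(t)=\iota(t^{-1}nt)$ in the fiber product, the constancy of $c$ on $N\times N$-cosets, and the identification of the obstruction class of $(\tilde G,\iota(N),\tilde\theta)$ with the inflation of that of $(G,N,\theta)$) do check out.
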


\begin{proof} 
Let $U$ denote the group of roots of unity in $\Q^{\mathrm{ab}}$. For any abelian group $A$ and positive integer $n$ let $A_n$ denote the $n$-torsion subgroup of $A$. By Corollary~1.2 in \cite{NSV} there exists a projective representation $\c P$ of $G$ associated with $\theta$ 
which has entries in $\Q^{\mathrm{ab}}$ and whose factor set $\alpha$ belongs to $Z^2(G/N,U)$. 
We follow an idea from the proof of \cite[Theorem~11.15]{Is}.  
For $x,y,z\in G/N$ we have $\alpha(x,yz)\alpha(y,z) = \alpha(x,y)\alpha(xy,z)$. 
Fixing $x$ and $y$ and taking the product over all $z\in G/N$ one obtains
$\mu(x)\mu(y)=\alpha(x,y)^{|G/N|}\mu(xy)$, where $\mu\colon G/N\to U$ is defined by $a\mapsto\prod_{b\in G/N} \alpha(a,b)$.

Next set $A:=\langle B^2(G/N,U), \alpha\rangle\le Z^2(G/N,U)$. 
The above equation relating $\mu$ and $\alpha$ shows that $|A:B^2(G/N,U)|$ is finite and divides $|G/N|$. 
Since the group $U$ is divisible, also $B^2(G/N,U)$ is divisible. 
Therefore, Lemma~11.14 in \cite{Is} implies that $B^2(G/N,U)$ has a complement $X$ in $A$. 
Since $A/B^2(G/N,U)$ has order dividing $|G/N|$, we have $X\le A_{|G/N|}\le Z^2(G/N,U)_{|G/N|}=Z^2(G/N,U_{|G/N|})$. 
Thus, $\alpha\in A\le B^2(G/N,U) \cdot Z^2(G/N,U_{|G/N|})$ and there exists a function 
$\nu\colon G/N\to U$ and $\beta\in Z^2(G/N,U_{|G/N|})$ such that $\alpha=\delta(\nu)\cdot \beta$. 
For $x\in G/N$ set $\lambda(x):=\nu(x)^{-1}\beta(1,1)^{-1}\in U_{|G/N|}$.
Then $\lambda\c P$ is a projective representation of $G$ associated with $\theta$ and with values in $\Q^{\mathrm{ab}}$. 
Its factor set equals $\alpha\cdot\delta(\lambda)=\beta(1,1)^{-1}\beta\in Z^2(G/N,U_{|G/N|})$.
\end{proof}

Suppose that $\c P$ is a projective representation of $G$ which has entries in $\Q^{\mathrm{ab}}$.
Then, for any $\sigma\in\c G$, the function $g\mapsto \c P(g)^\sigma$ defines again a projective
representation $\c P^\sigma$ of $G$. If $\alpha$ denotes the factor set of $\c P$ then the factor
set of $\c P^\sigma$ is given by $\alpha^\sigma$ which is defined by $\alpha^\sigma(x,y)=
\alpha(x,y)^\sigma$ for $x,y\in G$.

\begin{lem}\label{mu}
Suppose that $N \nor G$, $\theta \in \irr N$, and assume that
$\theta^{g\sigma }=\theta$ for some $g\in G$ and
$\sigma \in {\rm Gal}(\QQ^{\mathrm{ab}}/\QQ)$.
Let $\c P$ be a projective representation of $G_\theta$
associated with $\theta$ with entries in $\QQ^{\rm ab}$ and factor set $\alpha\in Z^2(G_\theta,\C^\times)$.
Then 
$$\c P^{g\sigma}(x)=\c P(gxg^{-1})^\sigma\, ,$$ 
where $\sigma$ is applied entry-wise, defines again a projective representation of $G_\theta$
associated with $\theta$ with factor set $\alpha^{g\sigma}$ given by 
$\alpha^{g\sigma}(x, y)=\alpha^g(x,y)^\sigma=\alpha(gxg^{-1},gyg^{-1})^\sigma$ for $x,y \in G_\theta$.
In particular, there is a unique function 
$$\mu_{g\sigma} \colon G_\theta \rightarrow \CC^\times$$
with $\mu_{g\sigma}(1)=1$ and constant on cosets of $N$
such that $\c P^{g\sigma} \sim \mu_{g\sigma} \c P$.
\end{lem}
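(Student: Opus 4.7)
The plan is to verify directly that $\c P^{g\sigma}$ is a projective representation of $G_\theta$ with the stated factor set and is associated with $\theta$, then to produce $\mu_{g\sigma}$ by a standard Clifford/Schur-type argument.

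First, the hypothesis $\theta^{g\sigma}=\theta$ places $g$ in $G_{\theta^{\c G}}$, and since $G_\theta \nor G_{\theta^{\c G}}$ by the earlier discussion of $\c G$-triples, conjugation by $g$ preserves $G_\theta$; so the assignment $\c P^{g\sigma}(x):=\c P(gxg^{-1})^\sigma$ is well defined on $G_\theta$. Because $\c P$ has entries in $\Q^{\mathrm{ab}}$, entrywise application of $\sigma$ is a ring homomorphism on matrices, and the direct computation
\[
\c P^{g\sigma}(x)\c P^{g\sigma}(y)=\bigl(\c P(gxg^{-1})\c P(gyg^{-1})\bigr)^\sigma=\alpha(gxg^{-1},gyg^{-1})^\sigma\,\c P^{g\sigma}(xy)
\]
establishes both the projective-representation property and the factor set $\alpha^{g\sigma}$.

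To confirm that $\c P^{g\sigma}$ is associated with $\theta$, I would observe that for $n\in N$ the trace of $\c P^{g\sigma}(n)=\c P(gng^{-1})^\sigma$ equals $\theta(gng^{-1})^\sigma$, which by the convention $\theta^g(n)=\theta(gng^{-1})$ and the hypothesis $\theta^{g\sigma}=\theta$ simplifies to $\theta(n)$. The hybrid identities $\c P^{g\sigma}(hn)=\c P^{g\sigma}(h)\c P^{g\sigma}(n)$ and $\c P^{g\sigma}(nh)=\c P^{g\sigma}(n)\c P^{g\sigma}(h)$ for $h\in G_\theta$, $n\in N$ then follow by applying $\sigma$ entrywise to the corresponding identities for $\c P$ at the arguments $ghg^{-1}\in G_\theta$ and $gng^{-1}\in N$.

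With $\c P$ and $\c P^{g\sigma}$ both projective representations of $G_\theta$ associated with the same character $\theta$, producing $\mu_{g\sigma}$ is a standard Schur-type argument. I would first choose an invertible matrix $M$ conjugating $(\c P^{g\sigma})_N$ onto $\c P_N$, possible since both restrictions afford $\theta$, and pass to the $M$-conjugate of $\c P^{g\sigma}$ so that its restriction to $N$ agrees with $\c P_N$. Then $\c P(x)$ and $M\c P^{g\sigma}(x)M^{-1}$ both conjugate $\c P(n)$ to $\c P(xnx^{-1})$ for every $n\in N$ by the associated-with-$\theta$ property, so their ratio centralizes the irreducible module $\c P_N$ and, by Schur's lemma, equals a scalar $\mu_{g\sigma}(x)I$. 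Constancy of $\mu_{g\sigma}$ on $N$-cosets follows from $\alpha(x,n)=\alpha(n,x)=1$ for $n\in N$ (a further consequence of being associated with $\theta$) by computing $\c P^{g\sigma}(xn)$ two ways, and $\mu_{g\sigma}(1)=1$ is automatic because $\c P(1)=I$. Uniqueness is one more Schur's lemma application: any second $\mu'$ with the same properties yields a similarity matrix which, restricted to $N$, centralizes all $\c P(n)$ and hence is scalar, forcing $\mu'=\mu_{g\sigma}$. I do not anticipate a real obstacle; the only subtlety is maintaining consistent action conventions so that $\theta^{g\sigma}=\theta$ does produce the trace identity $\theta(gng^{-1})^\sigma=\theta(n)$ at the key step.
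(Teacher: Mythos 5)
Your proof is correct, and it fills in a genuine argument where the paper itself gives none: the paper's entire proof is a citation to Lemma~1.4 of [NSV]. Your route is the natural one and presumably mirrors what that reference does, so there is no substantive divergence to report. A few small checks that you handled correctly and that are worth flagging as the real content of the lemma: (1) that $g$ normalizes $G_\theta$, so $x\mapsto gxg^{-1}$ really is a self-map of $G_\theta$ (you derive this from $G_\theta\nor G_{\theta^{\c G}}$, which the paper records earlier); (2) that entrywise $\sigma$ is a ring homomorphism on matrices over $\Q^{\mathrm{ab}}$, so the factor-set computation survives; (3) the trace identity $\theta(gng^{-1})^\sigma=\theta^{g\sigma}(n)=\theta(n)$, which uses the hypothesis exactly once; and (4) the two Schur's-lemma applications (one to define $\mu_{g\sigma}$, one for uniqueness). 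Your remark that $\alpha(x,n)=\alpha(n,x)=1$ for $n\in N$ is the right justification for $N$-coset constancy; it follows either from $\alpha$ being constant on $N\times N$-cosets together with $\alpha(1,1)=1$, or directly from $\c P(xn)=\c P(x)\c P(n)$. The only thing I would add for polish is to note explicitly that $\alpha$ actually takes values in $\Q^{\mathrm{ab}}$ (since it is a ratio of entries of $\Q^{\mathrm{ab}}$-matrices), so that $\alpha^\sigma$ is meaningful; you use this implicitly.
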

   
\begin{proof}  
See Lemma~1.4 in \cite{NSV}.
\end{proof}

We are now ready to define a partial order relation between $\c G$-triples, which is exactly
the same relation given in \cite{NSV} except that the subgroup $\c H$ is now replaced by the more general
group $\c G$.

\begin{defi}\label{Hiso}
Suppose that $(G,N,\theta)_{\c G}$ and $(H,M,\varphi)_{\c G}$
are $\c G$-triples.
We write 
$$(G,N,\theta)_{\c G} \relc (H,M,\varphi)_{\c G}$$ 
if all the following conditions hold:
\begin{enumerate}[(i)]
\item[(i)]
$G=NH$, $N\cap H=M$, and $\cent {G}N \sbs H$. 
  
\smallskip
\item[(ii)]
$(H \times \c G)_\theta=(H \times \c G)_\varphi$. In particular, $H_\theta=H_\varphi$.

\smallskip  
\item[(iii)]
There are projective representations $\c P$ and $\c P'$ associated with 
$(G_\theta,N,\theta)$ and $(H_\varphi,M,\varphi)$
with entries in $\QQ^{\rm ab}$ and factor sets $\alpha$ and $\alpha'$, respectively, such that 
$\alpha$ and $\alpha'$ take roots of unity as values,
$\alpha_{H_\theta \times H_\theta}=\alpha'$, 
and, for each $c \in \cent GN$, the scalar matrices $\c P(c)$ and $\c P'(c)$ are 
associated with the same scalar $\zeta_c$.

\smallskip
\item[(iv)] For every $a \in (H\times \c G)_\theta$, the functions $\mu_a$
and $\mu'_a$  given by Lemma \ref{mu} agree on $H_\theta$.
\end{enumerate}
\end{defi}

In (iii), notice that if $c \in \cent GN$, then $c \in H_\theta$,
and $\c P(c)$ and $\c P'(c)$ are scalar matrices by Schur's Lemma
(applied to the irreducible representations $\c P_N$ and $\c P'_M$).

\begin{lem}\label{key1}
Let $N \nor G$, $H \le G$ such that $G=NH$ and set $M:=N\cap H$.
Suppose that $(G_{\theta^{\c G}}, N, \theta)_{\c G} \relc  (H_{\varphi^{\c G}}, M, \varphi)_{\c G}$.
Then there exists a $\c G$-equivariant bijection 
$^*:\irr{G|\theta^{\c G}} \rightarrow \irr{H|\varphi^{\c G}}$
which preserves ratios of characters in the sense that $\chi(1)/\theta(1)=\chi^*(1)/\varphi(1)$.
In particular, $F(\chi)=F(\chi^*)$ for all $\chi \in \irr{G|\theta^{\c G}}$.
Also, $[\chi_{\zent N}, \chi^*_{\zent N}] \ne 0$.
\end{lem}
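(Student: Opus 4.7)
The plan is to adapt the construction of the character correspondence from \cite[Theorem~2.9]{NSV} to this more general setting, obtaining the bijection orbit-by-orbit under the $\c G$-action on $\Irr(N)$ and then verifying $\c G$-equivariance via condition (iv) of Definition~\ref{Hiso}. By (i) we have $G=NH$ and $N\cap H=M$, so the inclusion $H_\theta\hookrightarrow G_\theta$ descends to an isomorphism $H_\theta/M\xrightarrow{\sim} G_\theta/N$, and the Clifford correspondence furnishes bijections $\Irr(G_\theta\mid\theta)\leftrightarrow\Irr(G\mid\theta)$ and $\Irr(H_\varphi\mid\varphi)\leftrightarrow \Irr(H\mid\varphi)$ via induction. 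By (ii), $H_\theta=H_\varphi$, so the two inertia groups coincide.

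Using the projective representations $\c P$ and $\c P'$ from (iii), with factor sets $\alpha$ and $\alpha'=\alpha|_{H_\theta\times H_\theta}$ taking roots of unity as values, I would invoke the standard projective-character parametrization (Theorem~5.6 of \cite{N} or \cite[Theorem~11.2]{Is}): each $\chi\in\Irr(G_\theta\mid\theta)$ corresponds, uniquely up to similarity, to a projective representation $\c Q$ of $G_\theta/N$ with factor set $\alpha^{-1}$ such that $\chi$ is afforded by $\c Q\otimes\c P$. Transporting $\c Q$ through the isomorphism $H_\theta/M\cong G_\theta/N$ and tensoring with $\c P'$ produces a character $\chi^*\in\Irr(H_\theta\mid\varphi)$ with factor set $\alpha'^{-1}\cdot\alpha'=1$; composing with the two Clifford correspondences yields a bijection $\Irr(G\mid\theta)\to\Irr(H\mid\varphi)$. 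Extend this to $\Irr(G\mid\theta^{\c G})\to\Irr(H\mid\varphi^{\c G})$ by declaring $(\chi^\sigma)^*:=(\chi^*)^\sigma$ for each $\sigma\in\c G$.

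The main obstacle is well-definedness together with $\c G$-equivariance: if $\sigma,\tau\in\c G$ satisfy $\theta^\sigma=\theta^\tau$, equivalently $\sigma\tau^{-1}\in\c G_\theta$, I must verify $(\chi^\sigma)^*=(\chi^\tau)^*$. This reduces to showing $(\chi^\sigma)^*=(\chi^*)^\sigma$ for every $\sigma\in\c G_\theta$ and $\chi\in\Irr(G_\theta\mid\theta)$. Applying $\sigma$ to the identity $\chi=\mathrm{tr}(\c Q\otimes\c P)$ and using Lemma~\ref{mu} in the form $\c P^\sigma\sim\mu_\sigma\c P$, one sees that $\c Q$ is replaced by a twist $\mu_\sigma^{-1}\c Q^\sigma$ when parametrizing $\chi^\sigma$ through $\c P$. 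Running the same computation on the $H$-side with $\c P'$ produces the twist $(\mu'_\sigma)^{-1}(\c Q^\sigma)$ on $H_\theta$, and the equality of these two twists on $H_\theta$ is exactly the content of condition (iv). The same argument handles any $a=(h,\sigma)\in(H\times\c G)_\theta$ when comparing the orbits, giving full $\c G$-equivariance.

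Finally, for the remaining assertions: the ratios $\chi(1)/\theta(1)$ and $\chi^*(1)/\varphi(1)$ both equal $\c Q(1)$, because $\chi(1)=\c Q(1)\theta(1)$ and $\chi^*(1)=\c Q(1)\varphi(1)$. Since $\c G=\mathrm{Gal}(\Q^{\mathrm{ab}}/F)$ and the bijection is $\c G$-equivariant, $\c G_\chi=\c G_{\chi^*}$, so $F(\chi)=F(\chi^*)$ by Galois theory. For the last claim, note that $\zent N\le\cent G N\subseteq H$ by (i), and in fact $\zent N\le M\cap\zent H$; moreover, condition (iii) asserts that $\c P(c)=\zeta_c I$ and $\c P'(c)=\zeta_c I$ for every $c\in\cent G N$, so both $\chi_{\zent N}$ and $\chi^*_{\zent N}$ are multiples of the same linear character $c\mapsto\zeta_c$, forcing $[\chi_{\zent N},\chi^*_{\zent N}]\neq 0$.
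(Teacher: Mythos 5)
Your argument is correct and reproduces the mechanism behind the two results the paper simply cites: Corollary~1.11 of \cite{NSV} for the projective-representation construction of the $\c G$-equivariant bijection, and Lemma~3.3 of \cite{NS} for the last assertion. One small imprecision in the final paragraph: $\chi_{\zent N}$ need not be a multiple of a single linear character of $\zent N$ (and the parenthetical $\zent N\le\zent H$ does not hold in general); rather, the Clifford correspondent $\hat\chi\in\Irr(G_\theta\mid\theta)$, afforded by $\c Q\otimes\c P$, satisfies $\hat\chi_{\zent N}=\hat\chi(1)\zeta$ with $\zeta(c)=\zeta_c$, and likewise $\hat\chi^*_{\zent N}=\hat\chi^*(1)\zeta$, so $\zeta$ is a common constituent of $\chi_{\zent N}$ and $\chi^*_{\zent N}$ because $\hat\chi_{\zent N}$ and $\hat\chi^*_{\zent N}$ are summands of those restrictions --- which still yields $[\chi_{\zent N},\chi^*_{\zent N}]\ne 0$.
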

 
\begin{proof}
From the hypotheses, we easily deduce that $G_{\theta^\c G} \cap H=H_{\varphi^\c G}$.
See Corollary 1.11 of \cite{NSV}.  For the last part use, for instance, Lemma 3.3 of \cite{NS}.
\end{proof}

\begin{lem}\label{canext}
Suppose that $(G,N,\theta)_{\c G}$ and $(H,M,\varphi)_{\c G}$
are $\c G$-triples such that $G=NH$, $N\cap H=M$, $\cent {G}N \sbs H$, 
and $(H \times \c G)_{ \theta}=    (H \times \c G)_{ \varphi}$.
Suppose further that $\theta$ has an extension $\hat\theta \in \irr{G_\theta}$
and $\varphi$ has an extension $\hat\varphi \in \irr{H_\varphi}$
such that $[\hat\theta_{\cent GN}, \hat\varphi_{\cent GN}] \ne 0$.
Finally suppose that, for every $(h,\sigma) \in (H \times \c G)_{ \theta}$, the unique linear characters
$\lambda$ and $\lambda'$ of $G_\theta$ and $H_\varphi$ with $\hat\theta^{h\sigma}=\lambda\hat\theta$ and
$\hat\varphi^{h\sigma}=\lambda'\hat\varphi$, respectively, satisfy $\lambda_{H_\varphi} = \lambda'$.
Then 
$$(G,N,\theta)_{\c G} \relc (H,M,\varphi)_{\c G} \, .$$
 \end{lem}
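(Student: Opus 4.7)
The plan is to verify the four conditions (i)--(iv) of Definition~\ref{Hiso} directly. Conditions (i) and (ii) are given by hypothesis, so only (iii) and (iv) require work. The key simplification is that, since $\theta$ and $\varphi$ extend to ordinary characters $\hat\theta$ and $\hat\varphi$, I may take \emph{ordinary} matrix representations $\c P\colon G_\theta \to \GL_n(\Q^{\mathrm{ab}})$ affording $\hat\theta$ and $\c P'\colon H_\varphi \to \GL_m(\Q^{\mathrm{ab}})$ affording $\hat\varphi$ (possible because every complex irreducible representation of a finite group is realizable over $\Q^{\mathrm{ab}}$). Both factor sets $\alpha$ and $\alpha'$ are then identically $1$, so the factor-set part of (iii)---values in roots of unity and the restriction condition $\alpha_{H_\theta \times H_\theta} = \alpha'$---is automatic.

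For the scalar part of (iii), note that (ii) forces $H_\theta = H_\varphi$ (by considering elements of the form $(h,1)$), and $\cent GN \sbs H_\theta = H_\varphi$ by hypothesis. For $c \in \cent GN$, Schur's lemma applied to the irreducible restrictions $\c P|_N$ and $\c P'|_M$ yields $\c P(c) = \zeta_c I_n$ and $\c P'(c) = \zeta'_c I_m$, so $\hat\theta|_{\cent GN}$ and $\hat\varphi|_{\cent GN}$ are scalar multiples of the linear characters $c\mapsto\zeta_c$ and $c\mapsto\zeta'_c$ of $\cent GN$, respectively. The hypothesis $[\hat\theta_{\cent GN}, \hat\varphi_{\cent GN}] \ne 0$ then forces those two linear characters to coincide, giving $\zeta_c = \zeta'_c$ for every $c \in \cent GN$.

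For (iv), fix $a = (h,\sigma) \in (H \times \c G)_\theta = (H \times \c G)_\varphi$. The representation $\c P^{h\sigma}$ of Lemma~\ref{mu} again has trivial factor set, hence is ordinary, and a trace computation gives its character as $x \mapsto \hat\theta(hxh^{-1})^\sigma = \hat\theta^{h\sigma}(x) = \lambda(x)\hat\theta(x)$. Thus $\c P^{h\sigma}$ and $\lambda \c P$ afford the same character and are similar. Since $\lambda$ is a linear character of $G_\theta/N$ with $\lambda(1)=1$ (in particular constant on $N$-cosets), the uniqueness clause in Lemma~\ref{mu} yields $\mu_a = \lambda$ on $G_\theta$; symmetrically $\mu'_a = \lambda'$ on $H_\varphi$. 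The hypothesis $\lambda_{H_\varphi} = \lambda'$ together with $H_\theta = H_\varphi$ then gives $\mu_a = \mu'_a$ on $H_\theta$, which is (iv).

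No genuine obstacle arises in this plan: the passage to ordinary extensions collapses the definition of $\relc$ onto purely character-theoretic matching, and the three substantive hypotheses of the lemma (the Clifford compatibility $(H\times\c G)_\theta = (H\times\c G)_\varphi$, the inner-product condition on $\cent GN$, and the agreement $\lambda_{H_\varphi} = \lambda'$) correspond precisely to (ii), the scalar part of (iii), and (iv), respectively. The only minor technical point is the simultaneous realizability of $\c P$ and $\c P'$ over $\Q^{\mathrm{ab}}$, which is standard.
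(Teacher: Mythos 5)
Your proof is correct and follows essentially the same route as the paper: take ordinary $\Q^{\mathrm{ab}}$-representations affording the given extensions (making the factor sets trivial), use Schur's lemma and the nonvanishing inner product to match the scalars on $\cent GN$, and identify $\mu_a$ with $\lambda$ and $\mu'_a$ with $\lambda'$ via the extension hypothesis to conclude condition (iv). The only cosmetic difference is that you work explicitly with the matrix representations and their scalar values $\zeta_c, \zeta'_c$, whereas the paper phrases the scalar matching at the character level ($\hat\theta_{\cent GN} = \theta(1)\gamma$, $\hat\varphi_{\cent GN}=\varphi(1)\gamma$); the content is the same.
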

 
 \begin{proof}
 First note that the condition in the scalar product makes sense, since $\cent GN\le H_\varphi=H_\theta\le G_\theta$.
 
 The first and second condition in Definition~\ref{Hiso} follow from the assumptions of the lemma.
 
 Let $\c P$ and $\c P^\prime$ be representations of $G_\theta$ and $H_\varphi$ over $\Q^{\rm ab}$ affording $\hat\theta$ and
 $\hat\varphi$, respectively. Since $\hat\theta_N$ is irreducible,
 we have $\hat\theta_{\cent GN}=\theta(1)\gamma$, for some linear $\gamma \in \irr{\cent GN}$.
 By hypothesis, $\hat\varphi_{\cent GN}=\varphi(1)\gamma$. This shows that the third condition in Definition~\ref{Hiso} is satisfied.
 
To check the fourth condition in Definition~\ref{Hiso},
let $a \in (H\times \c G)_\theta$. Thus, $\theta^a=\theta$ and $\varphi^a=\varphi$, 
since $(H \times \c G)_{ \theta}=    (H \times \c G)_{ \varphi}$.
As $\c P^a$ is a representation of $G_\theta$ whose character is an extension of $\theta$ to $G_\theta$, it
is similar to the representation $\lambda \c P$ for a unique linear character $\lambda \in \irr{G_\theta/N}$. This implies that $\mu_a=\lambda$. Similarly, $\mu'_a=\lambda'$. But by hypothesis, $\lambda_{H_\varphi}=\lambda'$, so that $\mu_a$ and $\mu'_a$ coincide on $H_\varphi=H_\theta$ as desired. 
 \end{proof}

\begin{thm}\label{mult1}
Suppose that $(G,N,\theta)_{\c G}$ and $(H,M,\varphi)_{\c G}$
are $\c G$-triples with $H\le G$ such that $G=NH$, $N\cap H=M$ and  $\cent {G}N \sbs H$.
Suppose further that $(H \times \c G)_\theta=(H \times \c G)_\varphi$.
Assume that  $[\theta_M,\varphi]=1$, or that $G_\theta/N$ is a $p$-group and $[\theta_M,\varphi]$
is coprime to $p$. 
Then   
$$(G,N,\theta)_{\c G} \relc (H,M,\varphi)_{\c G} \, .$$
\end{thm}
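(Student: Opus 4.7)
The plan is to verify the four conditions of Definition~\ref{Hiso}. Conditions (i) and (ii) are immediate from the hypotheses (noting that $H_\theta=H_\varphi$ follows from the stabilizer assumption $(H\times\c G)_\theta=(H\times\c G)_\varphi$). The main work lies in producing projective representations $\c P$ of $G_\theta$ and $\c P'$ of $H_\varphi$ satisfying (iii), and then verifying (iv). First, apply Theorem~\ref{projectivecyclotomic} to obtain $\c P$ associated with $\theta$, with entries in $\Q^{\mathrm{ab}}$ and root-of-unity factor set $\alpha$. Let $W$ be the representation space, and let $V\subseteq W$ be the $\varphi$-isotypic subspace of $\c P_M$; then $\dim V=m\varphi(1)$ with $m:=[\theta_M,\varphi]$, and since $H_\theta=H_\varphi$ stabilizes $\varphi$, $V$ is preserved by all $\c P(h)$ with $h\in H_\theta$.

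When $m=1$, define $\c P'$ as the restriction of $\c P_{H_\theta}$ to $V$. This is a projective representation of $H_\theta$ associated with $\varphi$, with factor set $\alpha|_{H_\theta\times H_\theta}$, and for $c\in\cent{G}{N}$ the scalar $\c P(c)=\zeta_c I$ (Schur's lemma applied to the irreducible $\c P_N$) restricts to $\zeta_c I$ on $V$, so (iii) holds. In the $p$-group case, decompose $V\cong V_0\otimes V^{(1)}$ as $M$-modules, with $M$ acting trivially on $V_0$ (of dimension $m$) and affording $\varphi$ on $V^{(1)}$. The restricted action of $H_\theta$ on $V$ then factors as $\c Q\otimes\c R$, with $\c Q$ a projective representation of $H_\theta/M$ on $V_0$ and $\c R$ one of $H_\theta$ on $V^{(1)}$, unique up to the rescaling $(\c Q,\c R)\mapsto(\lambda\c Q,\lambda^{-1}\c R)$ with $\lambda\colon H_\theta/M\to\C^\times$. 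The determinant identity gives $[\alpha_Q]^m=0$ in $H^2(H_\theta/M,\C^\times)$, which is $p$-torsion since $H_\theta/M$ is a $p$-group; coprimality of $m$ then forces $[\alpha_Q]=0$. Choose $\lambda$ to make $\lambda\c Q$ a genuine representation of the $p$-group $H_\theta/M$ of dimension $m$ coprime to $p$; any such representation splits as a sum of linear characters $\chi_1\oplus\cdots\oplus\chi_m$. Scalarity of $\c P(c)$ on $V$ for $c\in\cent{G}{N}$ forces $\chi_1|_{\cent{G}{N}}=\cdots=\chi_m|_{\cent{G}{N}}$, so a further rescaling by $\chi_1^{-1}$ makes the new $\c Q$ trivial on $\cent{G}{N}$ and the corresponding $\c R(c)$ equal to $\zeta_c I$. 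Set $\c P':=\c R$.

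For condition (iv), work in a basis of $W$ adapted to $V\oplus V^\perp$ (and, in the $p$-group case, to $V=V_0\otimes V^{(1)}$). For $a=(h,\sigma)\in(H\times\c G)_\theta=(H\times\c G)_\varphi$, a direct computation in this basis shows $\c P^a(x)|_V=(\c P')^a(x)$ for $x\in H_\theta$, since entrywise $\sigma$-twist commutes with restriction to coordinate blocks. The intertwiner $T$ in $\c P^a=T^{-1}\mu_a\c P T$ is determined up to scalar by its action on the irreducible $\c P_N$; computing the $\varphi$-isotypic of $(\c P^a)_M$ in this basis, using $\varphi^{h^{-1}}=\varphi^\sigma$ (a consequence of $(h,\sigma)\in(H\times\c G)_\varphi$), shows $T(V)=V$, and in the $p$-group case Schur's lemma describing $M$-module automorphisms of $m\varphi$ further forces $T|_V$ to respect the tensor decomposition. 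Restricting $\c P^a=T^{-1}\mu_a\c P T$ to $V$ (respectively $V^{(1)}$) yields $(\c P')^a\sim\mu_a|_{H_\theta}\c P'$, hence $\mu'_a=\mu_a|_{H_\theta}$ by the uniqueness in Lemma~\ref{mu}. The main obstacle is the $p$-group case: once the cohomological argument trivializes $[\alpha_Q]$, matching the central scalars on $\cent{G}{N}$ depends on the linear-character splitting of the $p$-group representation $\c Q$ of dimension coprime to $p$; by contrast, the $m=1$ case is essentially immediate from the restriction to $V$.
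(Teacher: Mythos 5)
Your $m=1$ argument is a valid alternative to the paper's. You work directly inside $\c P$ with the $\varphi$-isotypic subspace $V$ of $\c P_M$, whereas the paper passes to the twisted central extension $\hat G_\theta = G_\theta\times Z$, writes the honest representation $\hat{\c P}_{\hat H_\theta}$ in block form, and invokes \cite[Theorem~1.25]{N} to find nonvanishing trace elements in each coset of $M$. Your route for (iii) is more direct; for (iv) you should spell out that the adapted basis of $W$ can be taken over $\Q^{\rm ab}$ so that entry-wise $\sigma$-twist respects the $V\oplus V^\perp$ block structure, but this is routine. With those details, the $m=1$ case is fine.

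The $p$-group case has genuine gaps. A minor one first: a representation of a $p$-group of dimension $m$ coprime to $p$ need \emph{not} split as a sum of $m$ linear characters (a degree-$1$ plus a degree-$2$ irreducible of the dihedral group of order $8$ is a $3$-dimensional counterexample). What you actually need for (iii) — a linear character of $H_\theta/M$ extending the scalar by which $\c Q$ acts on $\cent GN M/M$ — is obtained by an $m$-th-root-of-$\det\c Q$ argument, not from any putative splitting into linears.

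The serious gap is in (iv). Writing $\c P|_V = \c Q\otimes\c R$ and $T|_V = A\otimes B$ (Schur's lemma describes intertwiners of the two $M$-module structures on $V\cong m\varphi$), you arrive at
\begin{equation*}
\c Q^a(x)\otimes\c R^a(x) \;=\; \mu_a(x)\bigl(A^{-1}\c Q(x)A\bigr)\otimes\bigl(B^{-1}\c R(x)B\bigr)\,,\qquad x\in H_\theta\,,
\end{equation*}
but a tensor decomposition of an invertible matrix only determines its two factors up to a reciprocal pair of scalars. All one can conclude is that $\c R^a(x) = \gamma(x)^{-1}B^{-1}\c R(x)B$ for some scalar function $\gamma$, whence $\mu'_a = \gamma^{-1}$, which is \emph{not} visibly $\mu_a|_{H_\theta}$; ``restricting to $V^{(1)}$'' does not remove this ambiguity. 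The paper resolves it precisely here: via \cite[Theorem~6.9(a)]{N} it chooses the multiplicity character $\psi$ with $\det\psi=1$, then takes determinants and uses that $\mu_a$ and $\mu'_a$ take $p$-power-order values while $m$ is coprime to $p$. In your framework the analogous normalization is to rescale $\c Q$ so that $\det\c Q = 1$ (possible since $m$ is invertible modulo the exponent of the $p$-group $H_\theta/M$). That single normalization simultaneously (a) forces $\c Q$ to be trivial on $\cent GN$, which is what (iii) needs (the scalar $\eta$ on $\cent GN$ satisfies $\eta^m=1$ and has $p$-power order, hence $\eta=1$), and (b) gives $\det\c Q^a(x)=\det\c Q(hxh^{-1})^\sigma=1$ as well; taking determinants in the displayed identity then yields $\gamma(x)^m\mu_a(x)^m=1$, and since $\gamma=(\mu'_a)^{-1}$ and $\mu_a,\mu'_a$ have $p$-power-order values with $\gcd(m,p)=1$, one gets $\gamma=\mu_a^{-1}$, i.e.\ $\mu'_a=\mu_a|_{H_\theta}$. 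Without explicitly normalizing $\det\c Q=1$ and running this determinant-plus-coprimality finish, the $p$-group case of (iv) is not proved.
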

 
\begin{proof}
(a) Suppose first that $[\theta_M,\varphi]=1$. By Theorem~\ref{projectivecyclotomic} there exists a projective representation $\c P$ of $G_\theta$ associated with $\theta$
with entries in $\Q^{\rm ab}$ and factor set $\alpha$ taking values in the group $Z$ of $|G_\theta/N|$-th roots of unity in $\Q^{\rm ab}$.

Let $\hat G_\theta:=G_\theta\times Z$ be the group with multiplication 
$(g,z)(g',z'):=(gg', \alpha(g,g')zz')$, cf.~Theorem~5.6 in \cite{N}. 
Then $N=N\times 1\nor \hat G_\theta$ and $Z=1\times Z\le Z(\hat G_\theta)$. 
Moreover, $\hat H_\theta:=H_\theta\times Z\le \hat G_\theta$ is a subgroup with $N\hat H_\theta=\hat G_\theta$,
since $(n,1)(h,z)=(nh, \alpha(n,h)z)=(nh,z)$.
Also, identifying $M$ with $M\times 1$, we have $N\cap \hat H_\theta=M \nor \hat H_\theta$.

Set $\hat{\c P}(g,z)=z\c P(g)$ for $(g,z)\in\hat G_\theta$. This defines an ordinary irreducible
representation of $\hat G_\theta$ whose character $\chi$ extends $\theta$. 
Since $[\chi_M,\varphi]=[\theta_M, \varphi]=1$,
there exists an extension $\xi\in\irr{\hat H_\theta}$ of $\varphi$ and a character 
$\delta$ of $\hat H_\theta$ (possibly $0$) with $[\delta_M,\varphi]=0$ 
such that $\chi_{\hat H_\theta}=\xi+\delta$.
Replacing $\hat{\c P}$ with a $\Q^{\rm ab}$-similar representation,
we may assume that 
\begin{equation}\label{eqn block matrix a}
  {\hat{\c P}}_{\hat H_\theta} = \left( \begin{array}{cc} {\hat{\c P}^\prime} & 0  \\ 0 & {\c Q} \\ \end{array} \right) \, ,
\end{equation}
where $\hat{\c P}^\prime$ is a representation of $\hat H_\theta$ that affords $\xi$,
and $\c Q$ is a representation of $\hat H_\theta$ that affords $\delta$. 
For $h\in H_\theta$ set $\c P'(h):=\hat{\c P}'(h,1)$.
Equation~(\ref{eqn block matrix a}) implies  
$\hat{\c P}^\prime(h,z)=z\hat{\c P}^\prime (h,1)=z\c P'(h)$ and ${\c Q}(h,z)=z{\c Q}(h,1)$ for $h \in H_\theta$ and $z \in Z$. 
Therefore, evaluating Equation~(\ref{eqn block matrix a}) at elements of the form $(h,1)$ with $h\in H_\theta$ implies that $\c P'$ is a
a projective representation associated to $\varphi$ with factor set $\alpha_{H_\theta \times H_\theta}$. And evaluating 
Equation~(\ref{eqn block matrix a}) at elements of the form $(c,1)$ with $c\in \cent GN$ implies that $\c P(c)$ and $\c P'(c)$ involve the same scalar.

Suppose that $a \in (H \times \c G)_\theta$. 
Then there exists functions $\mu_a\colon G_\theta/N\to \C^\times$ and $\mu'_a\colon H_\theta/M\to \C^\times$  
and invertible matrices $S$ and $T$ such that 
\begin{equation*}
   S^{-1}\c P^a(g) S=\mu_a(g) \c P(g) \quad\text{and}\quad T^{-1} (\c P')^a(h)T = \mu'_a(h) \c P'(h)
\end{equation*}
for all $g\in G_\theta$ and $h\in H_\theta$. Here we view $\mu_a$ and $\mu'_a$ as functions on $G_\theta$ and $H_\theta$, respectively.
Recall that we need to show that $\mu_a$ and $\mu'_a$ agree on $H_\theta$.
The above equations together with Equation~(\ref{eqn block matrix a}) imply
\begin{equation}\label{eqn block matrix a1}
   \mu_a(h)\begin{pmatrix} \c P'(h) & 0 \\ 0 & \c Q(h) \end{pmatrix} = S^{-1}\c P^a(h) S = 
   S^{-1} \begin{pmatrix} \mu'_a(h)T \c P'(h) T^{-1} & 0 \\ 0 & \c Q^a(h)\end{pmatrix} S
\end{equation}
for all $h\in H_\theta$. Since $\delta^a_M=\chi^a_M-\xi^a_M = \theta^a_M-\varphi^a=\theta_M-\varphi=\chi_M-\xi_M=\delta_M$, 
there also exists an invertible matrix $U$ such that $U^{-1} \c Q^a(m) U = \c Q(m)$ for all $m\in M$. 
Thus, Equation~(\ref{eqn block matrix a1}) for $h\in M$ implies that the invertible matrix 
$S^{-1}\begin{pmatrix}T&0\\0&U\end{pmatrix}$ is an intertwining matrix for the representation 
$\begin{pmatrix} \c P'_M & 0 \\ 0 & \c Q_M \end{pmatrix}$ of $M$. Since $[\varphi,\delta_M]=0$, this matrix must be block diagonal.
This implies that also $S=\begin{pmatrix}A&0\\0&B\end{pmatrix}$ is block diagonal with invertible matrices $A$ and $B$.
Now Equation~(\ref{eqn block matrix a1}) implies $\mu_a(h)\c P'(h) = \mu'_a(h) A^{-1}T \c P'(h) T^{-1}A$ and taking traces implies
\begin{equation}\label{eqn traces a}
   \mu_a(h)\tr(\c P'(h)) = \mu'_a(h) \tr(\c P'(h))
\end{equation}
for all $h\in H_\theta$. Since $\xi\in\irr{\hat H_\theta}$ restricts irreducibly to $\varphi\in\irr M$, \cite[Theorem~1.25]{N} implies that every coset of $M$ in $\hat H_\theta$ contains an element on which $\xi$ does not vanish. Therefore, for every $h\in H_\theta$ there exists $m\in M$ such that $0\neq \xi(hm,1)=\tr(\c P'(hm))$. Since $\mu_a$ and $\mu'_a$ are constant on cosets of $M$ in $H_\theta$, Equation~(\ref{eqn traces a}) implies $\mu_a(h)=\mu'_a(h)$ for all $h\in H_\theta$ as desired.

(b) Suppose now that $G_\theta/N$ is a $p$-group and that $n:=[\theta_M,\varphi]$ is not divisible by $p$. 
By Theorem~\ref{projectivecyclotomic} there exists a projective representation $\c P'$ of $H_\theta=H_\varphi$ associated with $\varphi$
with entries in $\Q^{\rm ab}$ and factor set $\alpha'$ taking values in the group $Z$ of $|G_\theta/N|$-th roots of unity in $\Q^{\rm ab}$.
In particular, $Z$ is a $p$-group. Let $\alpha\in Z^2(G_\theta/N, Z)$ be the image of $\alpha'$ under the isomorphism 
$H_\theta/M\cong G_\theta/N$, $hM\mapsto hN$, and form the group $\hat G_\theta$ from $\alpha$ as in Part~(a). Then $\alpha_{H_\theta\times H_\theta}=\alpha'$. Again we have $N:=N\times 1\nor \hat G_\theta$, $M:=M\times 1\nor \hat H_\theta$ and $\hat G_\theta= \hat H_\theta N$ with $M=\hat H_\theta\cap N$. Moreover, we will consider the normal subgroups $\hat N:=N\times Z\nor \hat G_\theta$ and $\hat M:=M\times Z\nor \hat H_\theta$ which satisfy $\hat G_\theta= \hat H_\theta \hat N$ and $\hat M = \hat H_\theta\cap \hat N$.

Define $\hat{\c P}'(h,z):= z\c P'(h)$ for $(h,z)\in\hat H_\theta$. Then $\hat{\c P}'$ is an irreducible representation of $\hat H_\theta$ 
whose character $\xi$ extends $\varphi$. 
Note that $\xi_{\hat M}= \varphi\times \iota=:\hat \varphi$, where $\iota\in \irr Z$ is the linear character given by $\iota(z)=z$ for all $z\in Z$. 
We also set $\hat \theta:=\theta \times \iota \in \irr{\hat N}$. 
Then $[\hat \theta_{\hat M}, \hat \varphi] = [\theta_M,\varphi]  = n$ is not divisible by $p$.
By \cite[Theorem~6.9(a)]{N}, there exist an extension $\chi\in\irr{\hat G_\theta}$ of $\hat \theta$,
 a character $\psi$ of $\hat H_\theta/\hat M$ with determinant $1$, 
 and a character $\delta$ of $\hat H_\theta$ (possibly $0$) with $[\delta_{\hat M},\hat\varphi]=0$ 
 such that $\chi_{\hat H_\theta} = \psi \xi +\delta$. Note that $\chi$ lies over $\iota$, since $\xi$ does and that $\psi(1)=n$, since $(\chi_{\hat H_{\theta}})_{\hat M} = \chi_{\hat M}=\hat \theta_{\hat M}$.
 
 Choose a representation $\hat{\c P}$ of $\hat G_\theta$ affording $\chi$ such that
 \begin{equation}\label{eqn block matrix 2}
   {\hat{\c P}}_{\hat H_\theta} = \left( \begin{array}{cc} {\c R \otimes \hat{\c P}^\prime} & 0  \\ 0 & {\c Q} \\ \end{array} \right) \, ,
 \end{equation}
 where $\c R$ is a representation of $\hat H_\theta/\hat M$  affording $\psi$ 
 and $\c Q$ is a representation of $\hat H_\theta$ affording $\delta$. 
 Moreover, set $\c P(g):= \hat{\c P}(g,1)$ for $g\in G_\theta$. 
 Then $\c P$ is a projective representation of $G_\theta$ with factor set $\alpha$ and satisfying $\hat{\c P}(g,z)=z\c P(g)$ 
 for all $(g,z)\in\hat G_\theta$, since $\chi_Z=\chi(1)\iota$.
 We will show that $\c P$ and $\c P'$ satisfy the conditions (iii) and (iv) in Definition~\ref{Hiso}.
 
For $c\in \cent GN$ write $\c P(c)= \epsilon(c) I_{\theta(1)}$ and $\c P'(c)=\epsilon'(c) I_{\varphi(1)}$. 
Note that $\cent GN\times Z$ is a subgroup of $\hat H_\theta$ which is normal in $\hat G_\theta$. 
We have $\chi_{\cent GN\times Z}=\chi(1) \hat\epsilon$ and $\xi_{\cent GN\times Z}=\xi(1)\hat\epsilon'$ 
for the linear characters $\hat\epsilon$ and $\hat\epsilon'$ of $\cent GN \times Z$ defined by 
$\hat\epsilon(c,z)=z\epsilon(c)$ and $\hat\epsilon'(c,z)=z\epsilon'(c)$. 
It suffices to show that $\hat\epsilon=\hat\epsilon'$. 
By Equation~(\ref{eqn block matrix 2}) applied to $(c,z)\in\cent GN\times Z$, 
it suffices to show that $\c R(c,z)=I_{\psi(1)}$, or equivalently, that $\psi_{\cent GN\times Z}=\psi(1) 1_{\cent GN\times Z}$. 
Set $V:=M\cent GN\times Z$ which is a normal subgroup of $\hat H_\theta$. 
It suffices to show that $\psi_V=\psi(1)1_V$, which we will prove now.  
Write $\psi_V=\rho_1+\cdots+\rho_t$ with $\rho_i\in \irr{V/\hat M}$. 
We have $\chi_V= (\chi_{\hat H_\theta})_V=\sum_{i=1}^t \rho_i\xi_V +\delta_V$. 
Since $\chi_{\cent GN\times Z}=\chi(1) \hat\epsilon$, the characters $\rho_i\xi_V$ lie over $\hat \epsilon$, for all $i=1,\ldots,t$. 
Moreover, by Gallagher's Theorem, each $\rho_i \xi_V$ is irreducible, since $\xi_V$ extends $\hat \varphi\in \irr{\hat M}$. 
By \cite[Theorem~6.8(d)]{N}, restriction from $V$ to $\hat M$ defines a bijection
\begin{equation*}
   \irr{V|\hat \epsilon}\longrightarrow \irr{\hat M | \hat\epsilon_{\hat M\cap (\cent GN\times Z)}}\,.
\end{equation*}
Thus, $(\rho_i \xi_V)_{\hat M}=\rho_i(1)\xi_{\hat M} = \rho_i(1)\hat\varphi$ is irreducible, so that $\rho_i$ is a linear character for all $i=1,\ldots,t$. Moreover, Gallagher's Theorem, together with the injectivity of the above map implies that $\rho_i=\rho_1$ for all $i$. Thus, $t=n$ and, $\psi_V=n\rho_1$.
Taking determinants and using that $\rho_1$ is a linear character of a $p$-group, we conclude that 
$\rho_1$ is trivial and that $\hat \epsilon=\hat \epsilon'$ as claimed.

Next let $a\in(H\times \c G)_\theta=(H\times \c G)_\varphi$ and let $\mu_a\colon G_\theta/N\to \C^\times$ and $\mu'_a\colon H_\theta/M\to\C^\times$ be the unique functions with $\c P^a\sim \mu_a\c P$ and $(\c P')^a \sim \mu'_a \c P'$, respectively. Further, let $S$ and $T$ be invertible matrices such that $S^{-1} \c P^a(g) S = \mu_a(g) \c P(g)$ for all $g\in G_\theta$ and $T^{-1} \c (P')^a(h) T = \mu'_a(h) \c P'(h)$ for all $h\in H_\theta$. Using the first of these equations, we obtain $\mu_a(g) \mu_a(g') \alpha(g,g') = \mu_a(gg') \alpha^a(g,g')$ for all $g,g'\in G_\theta$.
And fixing $g$ and taking the product of these equations over elements $g'$ from a transversal of $N$ in $G_\theta$, we obtain that $\mu_a(g)^{|G_\theta/N|}\in Z$. Thus $\mu_a(g)$ is a root of unity of $p$-power order. Similarly one shows that $\mu'_a(h)$ has $p$-power order for all $h\in H_\theta$.

Using equation~(\ref{eqn block matrix 2}) we obtain in the same way as in Part~(a) that
\begin{equation}\label{eqn block matrix 3}
   \mu_a(h) \begin{pmatrix} \c R(h)\otimes \c P'(h) & 0 \\ 0 & \c Q(h)\end{pmatrix} =
   S^{-1} \begin{pmatrix} \c R^a(h) \otimes \mu'_a(h) T\c P'(h) T^{-1} & 0 \\ 0 & \c Q^a(h)\end{pmatrix} S\,,
\end{equation}
for all $h\in H_\theta$. With the same argument as in Part~(a), there also exist an invertible matrix $U$ 
such that $U^{-1} \c Q^a(m) U= \c Q(m)$ for all $m\in M$. 
Therefore, Equation~(\ref{eqn block matrix 3}) for $h\in M$ implies that $S^{-1} \begin{pmatrix}I_n\otimes T & 0\\ 0 & U\end{pmatrix}$ 
is an intertwining matrix for the representation $\begin{pmatrix} I_n\otimes \c P'_M & 0\\ 0 & \c Q_M \end{pmatrix}$ of $M$.
Since the character $n\varphi$ of $I_n\otimes \c P'_M$ has no common constituent 
with the character $\delta_M$ of $\c Q_M$,  this intertwining matrix must be block diagonal. 
This implies that also $S=\begin{pmatrix}A& 0\\ 0 & B\end{pmatrix}$ must be block diagonal with invertible matrices $A$ and $B$. Therefore, $A^{-1}(I_n\otimes T)$ is an intertwining matrix for the representation $I_n\otimes \c P'_M$. Since $\c P'_M$ is irreducible, Schur's Lemma implies that $A^{-1}(I_n\otimes T)$ must be of the form $C\otimes I_{\varphi(1)}$ for some invertible matrix $C$. Thus $A=C^{-1}\otimes T$ and Equation~(\ref{eqn block matrix 3}) implies $\c R(h)\otimes \mu_a(h) \c P'(h)) = C^{-1} \c R^a(h) C\otimes \mu'_a(h)\c P'(h)$ for all $h\in H_\theta$. Applying the linear map $\id\otimes \tr$ on both sides, we obtain $\mu_a(h)\tr(\c P'(h)) \cdot \c R(h) = \mu'_a(h) \tr(\c P'(h))\cdot C^{-1}\c R^a(h) C$ and taking determinants yields $\mu_a(h)^n\tr(\c P'(h))^n = \mu'_a(h)^n \tr(\c P'(h))^n$ for all $h\in H_\theta$, since $\det{\psi}=1$. As in Part (a) we can find for every $h\in H_\theta$ an element $m\in M$ such that $\tr(\c P'(hm))\neq 0$. Since $\mu_a$ and $\mu'_a$ are constant on cosets of $M$, we obtain $\mu_a(h)^n=\mu'_a(h)^n$ for all $h\in H_\theta$. Since $\mu_a(h)$ and $\mu'_a(h)$ have $p$-power orders and $n=[\theta_M,\varphi]$ is not divisible by $p$, we obtain $\mu_a(h)=\mu'_a(h)$ for all $h\in H_\theta$ as desired. This completes the proof of the theorem.
\end{proof}

\medskip
The remainder of this section recalls results from \cite{NSV} on how the relation 
$$(G,N,\theta)_{\c G} \relc  (H,M,\varphi)_{\c G}$$ between two $\c G$-triples induces
the same relation between naturally associated $\c G$-triples. These results will be
essential for the proofs in Section~4. We point out that the results in \cite{NSV}
were stated for a particular subgroup $\c H$ of $\mathrm{Gal}(\Q^{\mathrm{ab}}/Q)$, but the 
proofs hold for any subgroup $\c G$.

\smallskip
The following lemma transitions to subgroups. 
It is immediate from the definition. 
 
\begin{lem}\label{goingsubgroup}
Let $(G,N,\theta)_{\c G} \relc  (H,M,\varphi)_{\c G}$ and let $N\le J\le G$. Then
$$(J,N,\theta)_{\c G} \relc  (H\cap J,M,\varphi)_{\c G}\,.$$
\end{lem}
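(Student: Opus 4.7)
The plan is to verify the four conditions of Definition~\ref{Hiso} for the pair of triples $(J,N,\theta)_{\c G}$ and $(H\cap J, M,\varphi)_{\c G}$ by restriction of the data witnessing the original relation $(G,N,\theta)_{\c G} \relc (H,M,\varphi)_{\c G}$. Before this I would first observe that the new triples are indeed $\c G$-triples: since $J\le G=G_{\theta^{\c G}}$, every $j\in J$ satisfies $\theta^j=\theta^\sigma$ for some $\sigma\in\c G$, so $J=J_{\theta^{\c G}}$, and similarly for $H\cap J$.

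First I would check Condition~(i). The equality $J=N(H\cap J)$ follows from $G=NH$ together with $N\le J$: given $j\in J$, write $j=nh$ with $n\in N$ and $h\in H$, so that $h=n^{-1}j\in J$. The identity $N\cap (H\cap J)=M\cap J=M$ is immediate since $M\le N\le J$, and $\cent{J}{N}\le \cent{G}{N}\le H$ gives $\cent{J}{N}\le H\cap J$. Condition~(ii) follows by intersecting both sides of $(H\times\c G)_\theta=(H\times\c G)_\varphi$ with $(H\cap J)\times \c G$, noting that $(H\cap J)_\theta=H_\theta\cap J$ and similarly for $\varphi$.

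For Condition~(iii), I would simply take the restrictions of the given projective representations: set $\tilde{\c P}:=\c P|_{J_\theta}$ and $\tilde{\c P}':=\c P'|_{(H\cap J)_\varphi}$, where $J_\theta=G_\theta\cap J$ and $(H\cap J)_\varphi=H_\varphi\cap J$. The entries still lie in $\Q^{\mathrm{ab}}$, the factor sets (restrictions of $\alpha$ and $\alpha'$) still take values in roots of unity, and the compatibility $\alpha|_{(H\cap J)_\theta\times (H\cap J)_\theta}=\alpha'|_{(H\cap J)_\theta\times (H\cap J)_\theta}$ is inherited from $\alpha_{H_\theta\times H_\theta}=\alpha'$, since $(H\cap J)_\theta\le H_\theta$. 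Also, for $c\in\cent{J}{N}\le\cent{G}{N}$ the scalars $\tilde{\c P}(c)=\c P(c)$ and $\tilde{\c P}'(c)=\c P'(c)$ agree with the value $\zeta_c$ from the original setup.

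Finally, for Condition~(iv), the key point is that for $a\in ((H\cap J)\times \c G)_\theta\subseteq (H\times \c G)_\theta$, the function $\tilde\mu_a$ associated with $\tilde{\c P}$ is just $\mu_a|_{J_\theta}$, because restricting the similarity relation $\c P^a\sim \mu_a\c P$ to $J_\theta$ yields $\tilde{\c P}^a\sim \mu_a|_{J_\theta}\cdot \tilde{\c P}$, and $\mu_a|_{J_\theta}$ is still constant on cosets of $N$ with value $1$ at the identity, so uniqueness in Lemma~\ref{mu} applies. The same holds for $\tilde\mu'_a=\mu'_a|_{(H\cap J)_\varphi}$. Since the original $\mu_a$ and $\mu'_a$ agree on $H_\theta$, their restrictions agree on $(H\cap J)_\theta=H_\theta\cap J$, completing the verification. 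No step poses a real obstacle; the only thing requiring minor care is the bookkeeping of stabilizers and intersections under restriction.
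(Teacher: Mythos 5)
Your verification is correct and is exactly what the paper means when it calls this lemma ``immediate from the definition'': one restricts all the data (the groups, the projective representations $\c P$, $\c P'$, their factor sets, and the functions $\mu_a$, $\mu'_a$) from $G$ to $J$ and from $H$ to $H\cap J$, and each of the four conditions of Definition~\ref{Hiso} is inherited. The paper gives no proof, and yours supplies the routine bookkeeping faithfully, including the mild point that restricting the similarity $\c P^a\sim\mu_a\c P$ to $J_\theta$ preserves the conclusion of Lemma~\ref{mu} so that $\tilde\mu_a=\mu_a|_{J_\theta}$.
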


The next lemma describes the situation when the relation $\relc $ descends to quotient groups.

\begin{lem}\label{goingquotient}
Let $(G,N,\theta)_{\c G} \relc  (H,M,\varphi)_{\c G}$ and suppose that $L\nor G$ is contained in 
$\ker\theta\cap\ker\varphi\cap \cent G N$,
and $\cent{G/L}{N/L}=\cent GN/L$. Then
$$(G/L,N/L,\theta)_{\c G} \relc  (H/L,M/L,\varphi)_{\c G}\, ,$$
where $\theta$ and $\varphi$ are considered in $\irr{N/L}$ and $\irr{M/L}$, respectively.
\end{lem}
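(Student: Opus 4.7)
The plan is to verify the four conditions of Definition~\ref{Hiso} for the quotient triples by descending the data witnessing the original relation $\relc$ through the normal subgroup $L$. First observe that $L \sbs \ker\theta \sbs N$ (by definition of the kernel of a character), so $L \sbs N \cap \cent GN = \zent N$; together with $L \sbs \cent GN \sbs H$ (from Definition~\ref{Hiso}(i) for the original triples) this gives $L \sbs M$, so all of $G/L$, $H/L$, $N/L$, $M/L$ are defined. Condition~(i) of Definition~\ref{Hiso} for the quotient triples is then immediate: $G/L = (N/L)(H/L)$ and $(N/L) \cap (H/L) = M/L$ descend from $G=NH$ and $N\cap H=M$, while $\cent{G/L}{N/L} \sbs H/L$ combines $\cent GN \sbs H$ with the hypothesis $\cent{G/L}{N/L} = \cent GN /L$.

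For condition~(ii), since $L \sbs \ker\theta \cap \ker\varphi$, the action of $H \times \c G$ on the characters $\theta,\varphi$ (viewed in $\irr{N/L}$ and $\irr{M/L}$) factors through $(H/L) \times \c G$; the stabilizer in $(H/L) \times \c G$ of $\theta$ is therefore the image of $(H \times \c G)_\theta$ modulo $L \times 1$, and similarly for $\varphi$. Thus the equality $(H \times \c G)_\theta = (H \times \c G)_\varphi$ descends to the equality in the quotient. The same reasoning identifies $(G/L)_\theta$ with $G_\theta/L$ and $(H/L)_\theta$ with $H_\theta/L$, which will be used below.

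For condition~(iii), let $\c P$ of $G_\theta$ and $\c P'$ of $H_\varphi$, with factor sets $\alpha,\alpha'$, witness the original relation. Since $\c P_N$ is an ordinary representation affording $\theta$ and $L \sbs \ker\theta$, we have $\c P(\ell) = I$ for all $\ell \in L$; combining this with $\c P(g\ell) = \c P(g)\c P(\ell)$ for $\ell \in N$ shows that $\c P$ factors to a projective representation $\bar{\c P}$ of $G_\theta/L$ associated with $\theta \in \irr{N/L}$. The factor set $\alpha$ is constant on $N \times N$-cosets, hence a fortiori on $L \times L$-cosets, so it descends to a factor set $\bar\alpha$ of $\bar{\c P}$ which still takes roots of unity as values; the analogous statements hold for $\c P'$ and $\bar{\c P}'$. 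The restriction equality $\alpha_{H_\theta \times H_\theta} = \alpha'$ and the scalar identities $\c P(c) = \zeta_c I = \c P'(c)$ for $c \in \cent GN$ pass verbatim to $\bar{\c P},\bar{\c P}'$ and $\bar c \in \cent{G/L}{N/L}$.

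Finally, for condition~(iv), any $\bar a = (\bar h,\sigma) \in (H/L \times \c G)_\theta$ lifts to $a = (h,\sigma) \in (H \times \c G)_\theta = (H \times \c G)_\varphi$; by hypothesis the functions $\mu_a$ and $\mu'_a$ from Lemma~\ref{mu} coincide on $H_\theta$ and are constant on $N$- and $M$-cosets respectively, hence on $L$-cosets. They therefore descend to functions $\bar\mu_{\bar a}$ on $G_\theta/L$ and $\bar\mu'_{\bar a}$ on $H_\theta/L$ which coincide on $H_\theta/L = (H/L)_\theta$, yielding the fourth condition. The only substantive step is the descent of $\c P$ and $\c P'$ to genuine projective representations of the quotient groups; this is the main technical point, but it runs smoothly once one observes $L \sbs \zent N$ and uses that $\c P_N$ and $\c P'_M$ are ordinary representations with $L$ in their kernel, so no new arithmetic input beyond the hypotheses is required.
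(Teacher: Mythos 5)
Your proof is correct and gives a self-contained direct verification of Definition~\ref{Hiso} for the quotient triples, which the paper itself handles by simply citing \cite[Lemma~2.4]{NSV}; the descent you carry out --- passing $\c P$, $\c P'$, their factor sets, and the functions $\mu_a$, $\mu'_a$ through $L$, using $L\le\zent N\cap M$ and $L\le\ker\theta\cap\ker\varphi$ --- is the natural argument behind that citation. One small slip: in your verification of condition~(ii), what makes the conjugation action of $H$ on $\irr{N/L}$ factor through $H/L$ is that $L\nor G$ (in fact $L\le\cent GN$) forces $L$ to act trivially on $N/L$, not that $L\le\ker\theta$, which is only what permits viewing $\theta$ as a character of $N/L$; the conclusion you draw is nevertheless correct.
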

 
\begin{proof}
See Lemma~2.4 in \cite{NSV}.
\end{proof}

Recall that the wreath product $G\wr {\sf S}_n$ of $G$ with the symmetric group ${\sf S}_n$ 
is defined as the semidirect product $G^n\rtimes {\sf S}_n$, 
where ${\sf S}_n$ acts on $G^n$ via $(g_1,\ldots,g_n)^\pi:=(g_{\pi(1)},\ldots, g_{\pi(n)})$. 
We always view ${\sf S}_n$ as a group with multiplication $\pi\rho:=\rho\circ \pi$.
 Thus, 
 $$((g_1,\ldots,g_n),\pi)\cdot((g'_1,\ldots, g'_n),\pi') = ((g_1 g'_{\pi(1)},\ldots,g_n g'_{\pi(n)}), \pi\pi')\, .$$
 The next lemma shows that the relation $\relc $ passes to wreath products.

 \begin{lem}\label{goingwreath}
 Let $(G,N,\theta)_{\c G} \relc  (H,M,\varphi)_{\c G}$ and let $n$ be a positive integer. Then
 $$((G\wr {\sf S}_n)_{\tilde\theta^{\c G}},N^n,\tilde\theta)_{\c G} \relc  ((H\wr {\sf S}_n)_{\tilde\varphi^{\c G}},M^n, \tilde\varphi)_{\c G}\, ,$$
 where $\tilde\theta:=\theta^n\in \irr{N^n}$ and $\tilde\varphi:=\varphi^n\in\irr{M^n}$.
 \end{lem}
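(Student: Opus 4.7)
The plan is to verify the four conditions in Definition \ref{Hiso} for the wreath-product triples, building componentwise on the data provided for the original pair. As a preliminary I identify the ambient groups concretely. Since $(G,N,\theta)_{\c G}$ is a $\c G$-triple, $G=G_{\theta^{\c G}}$ and $G_\theta\nor G$ (because $G/G_\theta\cong \c G_{\theta^G}/\c G_\theta$ is abelian). A direct calculation from $\tilde\theta^{(\vec g,\pi)}=\bigotimes_j \theta^{g_{\pi^{-1}(j)}}$ shows that $(\vec g,\pi)\in (G\wr {\sf S}_n)_{\tilde\theta^{\c G}}$ iff all components $g_i$ lie in a common right coset of $G_\theta$ in $G$. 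The same holds with $H,\varphi,M$ in place of $G,\theta,N$; using $H_\theta=H_\varphi$ from Definition \ref{Hiso}(ii), one sees $\tilde H$ sits inside $\tilde G$ as expected. Moreover, the stabilizer of $\tilde\theta$ itself in $\tilde G$ is $\tilde G_{\tilde\theta}=G_\theta\wr {\sf S}_n$, and analogously $\tilde H_{\tilde\varphi}=H_\varphi\wr {\sf S}_n$.

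With these identifications, conditions (i) and (ii) of Definition \ref{Hiso} descend to the wreath products by componentwise bookkeeping: $G=NH$ gives $\tilde G=N^n\tilde H$; $N\cap H=M$ gives $N^n\cap \tilde H=M^n$; and since a nontrivial permutation of coordinates cannot commute with all of $N^n$, a standard centralizer argument in wreath products yields $C_{\tilde G}(N^n)=C_G(N)^n\sbs H^n\sbs \tilde H$. For (ii), note that $((\vec h,\pi),\sigma)\in \tilde H\times \c G$ fixes $\tilde\theta$ iff $(h_i,\sigma)\in (H\times \c G)_\theta$ for each $i$; by hypothesis this set equals $(H\times \c G)_\varphi$, so the same element fixes $\tilde\varphi$.

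For (iii), let $\c P$ and $\c P'$ be the projective representations of $G_\theta$ and $H_\varphi$ furnished by Definition \ref{Hiso}(iii), with factor sets $\alpha$ and $\alpha'$ taking values in roots of unity, let $V$ be the space of $\c P$, and set
$$\tilde{\c P}((\vec g,\pi)) := \bigl(\c P(g_1)\otimes\cdots\otimes\c P(g_n)\bigr) T_\pi$$
on $G_\theta\wr {\sf S}_n$, where $T_\pi$ is the permutation operator on $V^{\otimes n}$ (which has integer entries). Define $\tilde{\c P}'$ analogously on $H_\varphi\wr {\sf S}_n$. A short verification shows $\tilde{\c P}$ is a projective representation of $\tilde G_{\tilde\theta}$ associated with $\tilde\theta$, whose factor set $\tilde\alpha((\vec g,\pi),(\vec g',\pi'))=\prod_i \alpha(g_i,g'_{\pi(i)})$ again takes values in roots of unity and restricts to the corresponding $\tilde\alpha'$ on $\tilde H_{\tilde\varphi}\times \tilde H_{\tilde\varphi}$ because $\alpha$ restricts to $\alpha'$. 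For $\vec c\in C_{\tilde G}(N^n)=C_G(N)^n$, both $\tilde{\c P}(\vec c,1)$ and $\tilde{\c P}'(\vec c,1)$ equal the scalar $\prod_i \zeta_{c_i}\cdot I$, so the central-values condition holds.

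Finally for (iv), given $a=((\vec h,\pi),\sigma)\in (\tilde H\times \c G)_{\tilde\theta}$, I expand $\tilde{\c P}^a$ using the componentwise similarities $\c P^{(h_i,\sigma)}\sim \mu_{(h_i,\sigma)}\,\c P$ together with the facts that $T_\pi$ is $\sigma$-invariant and that conjugation by $(\vec h,\pi)$ only reshuffles tensor factors. The upshot is that $\tilde{\c P}^a\sim \tilde\mu_a\cdot \tilde{\c P}$ with $\tilde\mu_a((\vec g,\rho))=\prod_i \mu_{(h_i,\sigma)}(g_i)$, and similarly $\tilde\mu'_a((\vec g,\rho))=\prod_i \mu'_{(h_i,\sigma)}(g_i)$; Definition \ref{Hiso}(iv) for the original triples then forces $\tilde\mu_a|_{\tilde H_{\tilde\varphi}}=\tilde\mu'_a$. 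The main technical obstacle is precisely this last step: carefully tracking how $T_\pi$ interacts with Galois conjugation and with conjugation by $(\vec h,\pi)$, in order to pin down the clean product formula for $\tilde\mu_a$; the remaining steps are formal componentwise bookkeeping.
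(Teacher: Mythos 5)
The paper disposes of this lemma by citing Lemma~2.7 of \cite{NSV} (the case $k=1$), whereas you give a direct verification of the four conditions in Definition~\ref{Hiso}. Your framework is sound and your handling of (i)--(iii) is essentially correct: the preliminary identification of $(G\wr{\sf S}_n)_{\tilde\theta^{\c G}}$ via common $G_\theta$-cosets, the wreath-product centralizer computation, the stabilizer bookkeeping for (ii), and the construction $\tilde{\c P}(\vec g,\pi)=(\bigotimes_i\c P(g_i))T_\pi$ with factor set $\tilde\alpha((\vec g,\pi),(\vec g',\pi'))=\prod_i\alpha(g_i,g'_{\pi(i)})$ all check out.

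The gap is in (iv), exactly where you flag the ``main technical obstacle,'' but it is more serious than a bookkeeping issue and your conjectured formula is not correct. Writing $a=((\vec h,\pi),\sigma)$ and $\tau=\pi\rho\pi^{-1}$, one has $(\vec h,\pi)(\vec g,\rho)(\vec h,\pi)^{-1}=\bigl((h_ig_{\pi(i)}h_{\tau(i)}^{-1})_i,\tau\bigr)$, so the base components are conjugated by \emph{different} elements on the left and on the right whenever $\rho\neq 1$ or $\pi\neq 1$; the claim that ``conjugation by $(\vec h,\pi)$ only reshuffles tensor factors'' is not true. Already when $\rho=1$ the correct formula is $\tilde\mu_a((\vec g,1))=\prod_j\mu_{(h_{\pi^{-1}(j)},\sigma)}(g_j)$ (the $h$-index is permuted), and when $\rho\neq 1$ one cannot take an arbitrary componentwise tensor $\bar S_1\otimes\cdots\otimes\bar S_n$ of intertwiners: the factor $\bar S_i^{-1}\c P(h_ig_ih_{\rho(i)}^{-1})^\sigma\bar S_{\rho(i)}$ is a scalar multiple of $\c P(g_i)$ only if the $\bar S_i$ are chosen \emph{compatibly}, e.g. $\bar S_i:=\bar S_1\,\c P(h_1^{-1}h_i)$. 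With that choice a computation shows $\tilde\mu_a((\vec 1,\rho))=\prod_i\mu_{(h_1,\sigma)}(h_1^{-1}h_ih_{\rho(i)}^{-1}h_1)\,\alpha(h_1^{-1}h_ih_{\rho(i)}^{-1}h_1,\,h_1^{-1}h_{\rho(i)})$, i.e.\ $\tilde\mu_a$ acquires extra cocycle factors that your product formula omits. The argument can still be completed: every $\mu$- and $\alpha$-term occurring in this expansion has all its arguments in $H_\theta$, so by hypotheses (iii) and (iv) for $(G,N,\theta)\relc(H,M,\varphi)$ these coincide with the corresponding $\mu'$- and $\alpha'$-terms, giving $\tilde\mu_a=\tilde\mu'_a$ on $\tilde H_{\tilde\theta}$. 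Alternatively, one can avoid the explicit formula by observing that $\tilde\mu_a/\tilde\mu'_a$ is a linear character of $\tilde H_{\tilde\theta}/M^n$ trivial on $H_\theta^n/M^n$, hence factors through ${\sf S}_n$, and then ruling out the sign character; but either way the $\rho\neq 1$ contribution requires a genuine argument, not merely ``reshuffling.'' One small additional remark: the centralizer claim $C_{\tilde G}(N^n)=C_G(N)^n$ silently assumes $N\neq 1$, which holds in all nondegenerate cases but is worth noting.
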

 
 \begin{proof}
 This is the special case of Lemma~2.7 in \cite{NSV} with $k=1$.
 \end{proof}

The next lemma indicates how the relation $\relc $ passes to central products in a special case. 
We refer the reader to Chapter 10 (Section 3) of \cite{N} for the definition 
of central products of groups and the description of characters of central products,
which we will use freely.

\smallskip
Suppose that $K$ is the product of two subgroups $N$ and $Z$ with $N\nor K$ and $Z \leq \cent K N$.
Then $K$ is the central product of $N$ and $Z$. 
In this case 
$$\irr K =\dot{ \bigcup_{\nu \in \irr{Z\cap N}}} \irr{K|\nu}\,.$$
Moreover, for $\nu\in\irr{Z\cap N}$, one has a bijection
$$\irr{N|\nu}\times\irr{Z|\nu} \rightarrow \irr{K|\nu}\,,\quad (\theta,\lambda)\mapsto \theta\cdot\lambda\,,$$
with $(\theta\cdot\lambda)(nz)=\theta(n)\lambda(z)$ for $n\in N$ and $z\in Z$.
Note that, whenever a group $A$ acts via automorphisms on $K$,
stabilizing $N$ and $Z$, the following holds: if $a \in A$ and 
$\theta\cdot \lambda \in \irr K$, then 
$(\theta \cdot \lambda)^a=\theta\cdot \lambda$ if, and only if,
$\theta^ a=\theta$ and $\lambda^a=\lambda$. The same happens if $A \le \c G={\rm Gal}(\Q^{\rm ab}/\Q)$.

\begin{lem}\label{dotproduct}  Let $(G, N, \theta)_{\c G}$ and 
$(H, M, \varphi)_{\c G}$ be $\c G$-triples such that 
$$(G, N, \theta)_{\c G}\relc  (H, M, \varphi)_{\c G}\, .$$
Suppose that $Z\nor G$ is abelian and satisfies  $Z\sbs \cent G N$.
Let $\nu \in \irr{Z\cap N}$ be under $\theta$ and $\lambda \in \irr {Z|\nu}$. Then
$$(G_{(\theta\cdot \lambda)^\c G}, NZ, \theta\cdot \lambda)_{\c G}\relc  (H_{(\varphi\cdot \lambda)^\c G}, MZ, \varphi\cdot \lambda)_{\c G}.$$
\end{lem}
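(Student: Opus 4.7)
The plan is to construct projective representations for the new $\c G$-triples by twisting the existing $\c P,\c P'$ with a scalar function determined by $\lambda$, and then to verify the four conditions of Definition~\ref{Hiso}.

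I would begin by noting that $N$ acts trivially on $\irr{NZ}$: indeed, $N$ centralizes $Z$ (since $Z\sbs\cent G N$) and acts on $N$ by inner automorphisms. Combined with the central-product bijection $\irr{N|\nu}\times\irr{Z|\nu}\to\irr{NZ|\nu}$, this yields the equivalence
\[
  (\theta\cdot\lambda)^{g\sigma}=\theta\cdot\lambda \iff \theta^{g\sigma}=\theta \text{ and } \lambda^{g\sigma}=\lambda\,,
\]
and analogously for $\varphi\cdot\lambda$. Applying Lemma~\ref{goingsubgroup} with $J=G_{(\theta\cdot\lambda)^\c G}$ (containing $N$) and using $(H\times\c G)_\theta=(H\times\c G)_\varphi$ gives a first reduction to $(G_{(\theta\cdot\lambda)^\c G},N,\theta)_\c G\relc(H_{(\varphi\cdot\lambda)^\c G},M,\varphi)_\c G$. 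Conditions (i) and (ii) for the new triples then follow from the above equivalence, $G=NH$, the original condition (ii), and $Z\sbs H$; explicitly, $G_{(\theta\cdot\lambda)^\c G}=NZ\cdot H_{(\varphi\cdot\lambda)^\c G}$, $NZ\cap H_{(\varphi\cdot\lambda)^\c G}=MZ$, and $\cent{G_{(\theta\cdot\lambda)^\c G}}{NZ}\sbs\cent G N\sbs H_{(\varphi\cdot\lambda)^\c G}$.

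Next, I construct the projective representations. Condition (iii) of the original relation supplies scalars $\zeta_z\in\QQ^{\mathrm{ab}}$ with $\c P(z)=\c P'(z)=\zeta_z I$ for $z\in Z\sbs\cent G N$, and $\zeta|_{Z\cap N}=\nu$. Define $\mu_0\colon NZ\to\CC^\times$ by $\mu_0(nz):=\lambda(z)\zeta_z^{-1}$; this is well-defined (the ambiguity $Z\cap N$ cancels since $\lambda|_{Z\cap N}=\nu=\zeta|_{Z\cap N}$), is trivial on $N$, and satisfies $\mu_0(k_1k_2)=\alpha(k_1,k_2)\mu_0(k_1)\mu_0(k_2)$ for $k_i\in NZ$, so that $\mu_0\c P|_{NZ}$ is an ordinary representation affording $\theta\cdot\lambda$. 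Fixing a transversal $S$ of $MZ$ in $H_{\varphi\cdot\lambda}$, extend $\mu_0$ to $\mu'\colon H_{\varphi\cdot\lambda}\to\CC^\times$ by $\mu'|_S\equiv 1$ and $\mu'(sk):=\mu_0(k)\alpha'(s,k)$; the cocycle property of $\alpha'$ ensures that $\mu'(hk)=\mu'(h)\mu'(k)\alpha'(h,k)$ for all $h\in H_{\varphi\cdot\lambda},k\in MZ$. Since $G_{\theta\cdot\lambda}=N\cdot H_{\varphi\cdot\lambda}$ with $N\cap H_{\varphi\cdot\lambda}=M$, any transversal $T\sbs N$ of $M$ in $N$ is also a transversal of $H_{\varphi\cdot\lambda}$ in $G_{\theta\cdot\lambda}$; extend $\mu'$ to $\mu\colon G_{\theta\cdot\lambda}\to\CC^\times$ by $\mu(th):=\mu'(h)$. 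Set $\c P^*:=\mu\c P$ and $(\c P')^*:=\mu'\c P'$.

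Condition (iii) for the new relation now follows easily: the factor sets $\alpha^*=\delta(\mu)\alpha$ and $(\alpha')^*=\delta(\mu')\alpha'$ agree on $H_{\varphi\cdot\lambda}\times H_{\varphi\cdot\lambda}$ because $\mu|_{H_{\varphi\cdot\lambda}}=\mu'$ and $\alpha|_{H_\theta\times H_\theta}=\alpha'$, and for $c\in\cent G{NZ}\sbs\cent G N$ one has $\c P^*(c)=\mu(c)\zeta_c I=(\c P')^*(c)$. Condition (iv) follows by applying Lemma~\ref{mu} to the twisted representations: for $a=(h,\sigma)\in(H\times\c G)_{\theta\cdot\lambda}$ (which lies in $(H\times\c G)_\theta$), one computes $\mu_a^{\c P^*}(x)=\mu(hxh^{-1})^\sigma\mu_a^{\c P}(x)/\mu(x)$ and analogously for $(\c P')^*$; since $h$ normalizes $H_\theta\cap H_\lambda=H_{\varphi\cdot\lambda}$ and the original condition (iv) gives $\mu_a^{\c P}=\mu_a^{\c P'}$ on $H_\theta$, equality on $H_{\theta\cdot\lambda}$ follows from $\mu=\mu'$ there. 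The main technical obstacle is proving that $\c P^*$ is genuinely associated with $(NZ,\theta\cdot\lambda)$, i.e., $\c P^*(gk)=\c P^*(g)\c P^*(k)$ for $g\in G_{\theta\cdot\lambda},k\in NZ$ (plus the left analog); writing $g=th$, this reduces via iterated use of the $2$-cocycle identity, together with $\alpha(n,x)=\alpha(x,n)=1$ for $n\in N,x\in G_\theta$ (from $\c P$ being associated with $\theta$), to the identity $\alpha(th,nz)=\alpha(h,z)$, which combines with the associated-property constraint for $\mu'$ in $H_{\varphi\cdot\lambda}$ to yield $\mu(gk)=\mu(g)\mu(k)\alpha(g,k)$.
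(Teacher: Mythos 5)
The paper's own proof of this lemma is a one-line citation to Theorem 2.8 of \cite{NSV}, so there is no in-paper argument to compare against. Your proposal, read as a from-scratch reconstruction, has a plausible overall plan: reduce via Lemma~\ref{goingsubgroup}, check conditions (i), (ii) of Definition~\ref{Hiso} using the stated central-product equivalence, and then twist $\c P, \c P'$ by a scalar function $\mu$ built out of $\lambda$ and the scalars $\zeta_z$ to produce projective representations for the new triples. The preliminary reductions are correct (note though that $\cent G N \not\subseteq H_{(\varphi\cdot\lambda)^{\c G}}$ in general; what holds is $\cent G {NZ}\cap G_{(\theta\cdot\lambda)^{\c G}} \subseteq H\cap G_{(\theta\cdot\lambda)^{\c G}} = H_{(\varphi\cdot\lambda)^{\c G}}$). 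The construction of $\mu_0$ is well-defined (the verification uses both $\lambda|_{Z\cap N}=\nu=\zeta|_{Z\cap N}$ and $\alpha(w,z')=1$ for $w\in N$), and $\mu_0\c P|_{NZ}$ does afford $\theta\cdot\lambda$.

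The genuine gap is in the final paragraph: the ``main technical obstacle'' is not resolved by what you write, and it contains the real content of the lemma. You verify only the right-multiplicativity $\mu'(hk)=\mu'(h)\mu'(k)\alpha'(h,k)$ for $h\in H_{\varphi\cdot\lambda}$, $k\in MZ$, but to make $(\c P')^*$ associated with $(H_{\varphi\cdot\lambda},MZ,\varphi\cdot\lambda)$ you also need $\mu'(kh)=\mu'(k)\mu'(h)\alpha'(k,h)$, and this direction is \emph{not} a formal consequence of the cocycle identity plus $\alpha(n,x)=\alpha(x,n)=1$. Writing $kh=h\cdot(h^{-1}kh)$ and using your right-multiplicativity, the required identity unwinds (with $k=mz$, $m\in M$, $z\in Z$, $z''=h^{-1}zh$) to
\[
  \zeta_z\,\alpha(h,z'')=\zeta_{z''}\,\alpha(z,h)\qquad\text{and}\qquad \lambda(z'')=\lambda(z)\,.
\]
The second equality comes from $h\in H_{\varphi\cdot\lambda}\subseteq H_\lambda$, a fact you never invoke. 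The first is a genuine scalar--cocycle compatibility that follows from $\c P(z)=\zeta_z I$ and $\c P(z'')=\zeta_{z''}I$ (comparing $\c P(h)\c P(z'')$ with $\c P(z)\c P(h)$ via $hz''=zh$); it does not follow from ``$\alpha$ is constant on $N\times N$-cosets'' alone. The same two ingredients are needed again when you try to prove $\mu(gk)=\mu(g)\mu(k)\alpha(g,k)$ for $g\in G_{\theta\cdot\lambda}$, $k\in NZ$. Your claimed reduction to the single identity $\alpha(th,nz)=\alpha(h,z)$ is therefore too coarse: that identity is true but is only a small piece of the computation. Once both multiplicativities of $\mu'$ (and hence of $\mu$) are established via these identities, the rest of your argument for conditions (iii) and (iv) goes through; in particular, one can also check that $\zeta_z$ is always a root of unity (since some power of $z$ lies in $Z\cap N$), so $\mu_0$, $\mu'$, $\mu$ take root-of-unity values and the twisted factor sets still take root-of-unity values as required by Definition~\ref{Hiso}(iii). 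As it stands, the proposal identifies the right construction but leaves the decisive verification unproved.
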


\begin{proof}
See Theorem 2.8 of \cite{NSV}
 \end{proof}

We will also need the {\em butterfly} theorem.

\begin{thm}\label{butterfly}
Let $(G, N, \theta)_{\c G}$ and $(H, M, \varphi)_{\c G}$ be 
$\c G$-triples such that $(G, N, \theta)_{\c G}\relc  (H, M, \varphi)_{\c G}$. 
Let $\epsilon \colon G \rightarrow {\rm Aut}(N)$ be the group homomorphism 
defined by conjugation with elements of $G$. Suppose that $N\nor \hat G$ and 
$\hat \epsilon(\hat G)=\epsilon(G)$, where $\hat \epsilon \colon \hat G \rightarrow {\rm Aut}(N)$ 
is given by conjugation by $\hat G$. Let $M \cent{\hat G} N\sbs \hat H\leq \hat G$ 
be such that $\hat \epsilon(\hat H)=\epsilon(H)$. Then 
$$(\hat G, N, \theta)_{\c G}\relc  (\hat H, M, \varphi)_{\c G}.$$
\end{thm}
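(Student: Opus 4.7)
The plan is to verify the four conditions of Definition~\ref{Hiso} for the new triples $(\hat G,N,\theta)_{\c G}$ and $(\hat H,M,\varphi)_{\c G}$, transporting the projective data along the structural correspondence $\epsilon(G)=\hat\epsilon(\hat G)$; this mirrors the proof of the butterfly lemma in~\cite{NSV}.

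For condition~(i), from $\hat\epsilon(N\hat H)=\epsilon(N)\epsilon(H)=\epsilon(G)=\hat\epsilon(\hat G)$ combined with $\Ker(\hat\epsilon)=\cent{\hat G}{N}\subseteq\hat H$, I get $\hat G=N\hat H$. The identity $N\cap\hat H=M$ follows by a diagram chase: for $x\in N\cap\hat H$ we have $\hat\epsilon(x)=\epsilon(x)\in\epsilon(H)$, so $\epsilon(x)=\epsilon(h)$ for some $h\in H$; then $xh^{-1}\in\cent GN\subseteq H$, giving $x\in N\cap H=M$. For condition~(ii), the conjugation action of any $\hat h\in\hat H$ on $\theta\in\irr N$ and on $\varphi\in\irr M$ depends only on $\hat\epsilon(\hat h)\in\epsilon(H)$, since distinct preimages differ by an element of $\cent{\hat G}N$ which centralizes $N\supseteq M$. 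Hence condition~(ii) for the original triples transfers verbatim.

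For (iii) and (iv) I would construct the projective representations by \emph{butterfly transport}. Choose a set-map $\sigma'\colon\hat H_\varphi\to H_\varphi$ with $\epsilon\circ\sigma'=\hat\epsilon|_{\hat H_\varphi}$ (possible since $\epsilon(H_\varphi)=\hat\epsilon(\hat H_\varphi)$), normalized so that $\sigma'|_M=\id_M$ and $\sigma'(m\hat h)=m\sigma'(\hat h)$ for $m\in M$ (obtained by choosing a section on a transversal of $M$ in $\hat H_\varphi$ and extending equivariantly). Using $\hat G_\theta=N\hat H_\theta=N\hat H_\varphi$, extend $\sigma'$ to $\sigma\colon\hat G_\theta\to G_\theta$ by $\sigma(n\hat h):=n\sigma'(\hat h)$; the $M$-equivariance makes this well-defined. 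Define $\hat{\c P}(\hat g):=\c P(\sigma(\hat g))$ and $\hat{\c P}'(\hat h):=\c P'(\sigma'(\hat h))$; since $\sigma|_N=\id_N$, these are projective representations with entries in $\Q^{\mathrm{ab}}$ associated with $(\hat G_\theta,N,\theta)$ and $(\hat H_\varphi,M,\varphi)$.

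The factor set of $\hat{\c P}$ works out to $\hat\alpha(\hat g_1,\hat g_2)=\alpha(\sigma(\hat g_1),\sigma(\hat g_2))\cdot\zeta_{c(\hat g_1,\hat g_2)}$, where $c(\hat g_1,\hat g_2):=\sigma(\hat g_1)\sigma(\hat g_2)\sigma(\hat g_1\hat g_2)^{-1}\in\cent GN$, and similarly for $\hat\alpha'$. Since $\alpha|_{H_\theta\times H_\theta}=\alpha'$ and $\zeta_c=\zeta'_c$ on $\cent GN$, the equality $\hat\alpha|_{\hat H_\varphi\times\hat H_\varphi}=\hat\alpha'$ and the agreement of scalars on $\cent{\hat G}N$ (using $\sigma(\hat c)=\sigma'(\hat c)\in\cent GN$ for $\hat c\in\cent{\hat G}N$) both follow, verifying~(iii). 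For~(iv), given $a=(\hat h,\tau)\in(\hat H\times\c G)_\theta$, pick $h\in H$ with $\epsilon(h)=\hat\epsilon(\hat h)$, so $a'=(h,\tau)\in(H\times\c G)_\theta$; expanding $\hat{\c P}^a$ against $\hat\mu_a\hat{\c P}$ yields $\hat\mu_a(\hat x)=[\text{correction}]^\tau\cdot\mu_{a'}(\sigma(\hat x))$ on $\hat H_\theta$, where the correction depends on $\alpha$ and the scalars $\zeta_c$. The parallel computation for $\hat\mu'_a$ produces the same correction, and original~(iv) gives $\mu_{a'}(\sigma(\hat x))=\mu'_{a'}(\sigma'(\hat x))$. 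The main obstacle is precisely this scalar bookkeeping: confirming that the corrections arising from the non-homomorphism character of $\sigma$ cancel identically between $\hat{\c P}$ and $\hat{\c P}'$, a cancellation that rests on the compatible choice $\sigma|_{\hat H_\varphi}=\sigma'$.
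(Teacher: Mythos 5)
Your argument is correct and follows the same section-transport strategy that the paper delegates to Theorem~2.9 of \cite{NSV} without giving its own proof: parts (i) and (ii) are fine, and the crux of (iii)/(iv) --- that every correction term produced by the non-multiplicativity of the $M$-equivariant section $\sigma'\colon\hat H_\varphi\to H_\varphi$ (and its extension $\sigma$) is an $\alpha$-value or a $\zeta_c$ at arguments lying in $H_\theta$ or $\cent GN$, hence is matched by the primed data via (iii)--(iv) of the original pair --- does hold.

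One small inaccuracy in the bookkeeping: your displayed factor-set formula omits a factor. Writing $c=\sigma(\hat g_1)\sigma(\hat g_2)\sigma(\hat g_1\hat g_2)^{-1}\in\cent GN$, the correct identity is
$$\hat\alpha(\hat g_1,\hat g_2)=\alpha\bigl(\sigma(\hat g_1),\sigma(\hat g_2)\bigr)\,\alpha\bigl(c,\sigma(\hat g_1\hat g_2)\bigr)^{-1}\zeta_c\,,$$
which comes from $\c P(c g')=\alpha(c,g')^{-1}\zeta_c\,\c P(g')$. The missing factor $\alpha\bigl(c,\sigma(\hat g_1\hat g_2)\bigr)^{-1}$ does no harm to the conclusion, because on $\hat H_\theta\times \hat H_\theta$ both $c$ and $\sigma(\hat g_1\hat g_2)$ lie in $H_\theta$, where $\alpha=\alpha'$; the same remark applies to the extra $\alpha$-terms that similarly appear in the correction factors relating $\hat\mu_a$ and $\hat\mu'_a$ in your verification of (iv), which again live in $H_\theta$ and therefore cancel between the two sides.
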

 
\begin{proof}   
See Theorem 2.9 of \cite{NSV}.
\end{proof}

 To finish this section we prove a lemma which will be useful later.

 \begin{lem}\label{useful}
 Suppose that $G=NH$, $M=N\cap H$, where $N\nor G$ and $\cent GN \sbs H\le G$.
 Let $\theta \in \irr N$ and $\varphi \in \irr M$ lie over the same $\lambda \in \irr{\zent N}$.
 Suppose that $G_\theta=N\cent GN$ and that $(H \times \c G)_\theta=(H \times \c G)_\varphi$. 
Then 
$$(G_{\theta^{\c G}},N,\theta)_{\c G} \relc  (H_{\varphi^\c G},M, \varphi)_{\c G} \, .$$
\end{lem}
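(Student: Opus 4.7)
The plan is to verify the four conditions of Definition~\ref{Hiso} in turn. For conditions~(i) and~(ii), I will first show that $H\cap G_{\theta^\c G}=H_{\varphi^\c G}$: an element $h\in H$ lies in $G_{\theta^\c G}$ iff there is $\sigma\in\c G$ with $\theta^h=\theta^\sigma$, which by the hypothesis $(H\times\c G)_\theta=(H\times\c G)_\varphi$ is equivalent to the same equation with $\varphi$ in place of $\theta$. Since $N\le G_\theta\le G_{\theta^\c G}$ and $G=NH$, this yields $G_{\theta^\c G}=N\cdot H_{\varphi^\c G}$; the remainder of~(i) is then automatic from $\cent GN\subseteq H$ and $N\cap H=M$, while~(ii) follows by restricting the given equality to $H_{\varphi^\c G}\times\c G$.

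The heart of the argument is the construction of matching projective representations. The hypothesis $G_\theta=N\cent GN$, together with $\cent GN\subseteq H$, forces $H_\theta=H_\varphi=H\cap G_\theta=M\cent GN$, and both $G_\theta$ and $H_\varphi$ are central products amalgamated over $\zent N$. By Theorem~\ref{projectivecyclotomic} I fix a projective representation $\c P$ of $G_\theta$ associated with $\theta$, with entries in $\Q^{\mathrm{ab}}$ and factor set $\alpha$ valued in roots of unity. Since $\c P|_N$ is irreducible, Schur's lemma provides a unique function $\xi\colon\cent GN\to\Q^{\mathrm{ab}}$ with $\c P(c)=\xi(c)I_{\theta(1)}$; and $\xi|_{\zent N}=\lambda$ because $\theta$ lies over $\lambda$. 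Picking any representation $\c D'$ of $M$ affording $\varphi$ with entries in $\Q^{\mathrm{ab}}$, I define
\[
\c P'(mc):=\xi(c)\,\c D'(m)\qquad(m\in M,\ c\in\cent GN),
\]
which is well defined on $H_\varphi=M\cent GN$ because $M\cap\cent GN=\zent N$ and the identity $\xi|_{\zent N}=\lambda$ exactly absorbs the resulting ambiguity.

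Condition~(iii) then reduces to direct calculations: using $\xi(c)\xi(c')=\alpha(c,c')\xi(cc')$ together with the fact that $\alpha$ is trivial whenever one argument lies in $N$, one verifies that $\c P'$ is a projective representation of $H_\varphi$ associated with $\varphi$ whose factor set coincides with $\alpha|_{H_\varphi\times H_\varphi}$, while the scalar-matching on $\cent GN$ is immediate from $\c P'(c)=\xi(c)I_{\varphi(1)}$. For condition~(iv), fix $a=(h,\sigma)\in(H\times\c G)_\theta$; since $\cent GN$ is normal in $G$, for each $c\in\cent GN$ both $\c P^{h\sigma}(c)$ and $(\c P')^{h\sigma}(c)$ equal the scalar matrix $\xi(hch^{-1})^\sigma I$, so comparison with $\mu_a(c)\c P(c)$ and $\mu'_a(c)\c P'(c)$ forces $\mu_a(c)=\mu'_a(c)=\xi(hch^{-1})^\sigma\xi(c)^{-1}$. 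Because $N\cap H_\theta=M$, both $\mu_a$ and $\mu'_a$ are constant on $M$-cosets of $H_\theta$, and agreement on $\cent GN$ propagates to all of $H_\theta=M\cent GN$. The one genuinely delicate piece of bookkeeping is ensuring the well-definedness of $\c P'$ on the central product and invoking the correct ``$\alpha$ trivial on $N$'' identity at each step; once these are in place, all four conditions are verified.
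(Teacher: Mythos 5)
Your proof is correct and follows the same strategy as the paper's: exploit the central‐product structure $G_\theta = N\,\cent GN$ and $H_\varphi = M\,\cent GN$ to construct projective representations of $G_\theta$ and $H_\varphi$ that differ from ordinary representations of $N$ and $M$ only by a common scalar function on $\cent GN$, then verify the four conditions of Definition~\ref{Hiso} by direct computation. The one small divergence is in how that scalar function is obtained. The paper builds its projective ``character'' $\hat\lambda$ of $\cent GN$ by hand from a fixed transversal of $\zent N$ in $\cent GN$ and then \emph{defines} $\c P(nl)=\c Y(n)\hat\lambda(l)$; you instead invoke Theorem~\ref{projectivecyclotomic} to get some associated $\c P$, read off its scalar part $\xi$ on $\cent GN$ via Schur's lemma, and manufacture $\c P'$ from $\xi$ and an arbitrary $\Q^{\rm ab}$-representation of $M$. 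Your route is slightly slicker in that $\xi$ automatically satisfies the needed cocycle identity with $\alpha$ (so there is no transversal bookkeeping), at the small cost of relying on Theorem~\ref{projectivecyclotomic} where the paper's proof is self-contained. The verifications of~(iii) (factor sets match since $\alpha$ is constant on $N\times N$-cosets) and~(iv) (evaluate both $\mu_a$ and $\mu'_a$ at $c\in\cent GN$ using the fact that $\cent GN\nor G$, then propagate by $M$-coset constancy over $H_\theta=M\cent GN$) are sound and match the paper's computations in content.
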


\begin{proof}
Write $L=\cent GN$ and $Z=\zent N$, so that $Z=N \cap L$. Note that $Z\sbs \zent{G_\theta}$, since $G_\theta=NL$.
We define a projective representation $\hat\lambda$ of $L$ associated with the character triple $(L,Z,\lambda)$ in the following way:
let $T$ be a fixed transversal of $Z$ in $L$ containing $1$. Define $\hat\lambda(l)=\lambda(z)$ for $l \in L$, where $l=zt$
for unique $z \in Z$ and $t \in T$. 
It is straightforward to verify that $\hat\lambda$ is a projective representation of $L$ associated with $(L,Z,\lambda)$ and with factor set 
$$\alpha(l_1,l_2)=\hat\lambda(l_1l_2)^{-1} \lambda(z_1z_2) \, ,$$
where $l_i=z_it_i$.

Now, let $\c Y$ be a representation of $N$ affording $\theta$ with entries in $\Q^{\rm ab}$.
We define 
$$\c P(nl)=\c Y(n) \hat\lambda(l) \, $$
for $n \in N$ and $l\in L$. We show that that $\c P$ is a well-defined projective representation of $N$ associated to $\theta$.
Suppose that $nl=n_1l_1$. Then $n_1^{-1}n=l_1l^{-1}=z \in N \cap L$, so that
$\c Y(n)\hat\lambda(l)=\c Y(n_1z)\hat\lambda(z^{-1}l_1)=\c Y(n_1) \lambda(z)\lambda(z^{-1}) \hat\lambda(l_1)$
and $\c P$ is well defined. Clearly, $\c P(g)$ has entries in $\Q^{\rm ab}$ for all $g\in G_\theta$. 
Since $N$ and $L$ centralize each other, one obtains
$$\c P(g_1) \c P(g_2)=\alpha(l_1, l_2) \c P(g_1g_2) \, ,$$
where $g_i=n_il_i$ with $n_i\in N$ and $l_i\in L$. 
It is now straightforward to verify that the projective representation $\c P$ is associated with $(G_\theta,N,\theta)$. 
Note that its factor set is given by $(n_1l_1,n_2l_2)\mapsto \alpha(l_1,l_2)$ for $n_i\in N$ and $l_i\in L$.

Similarly, let $\c Z$ be a representation of $M$ affording $\varphi$ with entries in $\Q^{\rm ab}$. Note that
$H_\varphi=H_\theta=G_\theta\cap H = NL\cap H = (N\cap H)L = ML$ and $M\cap L=Z$.
We define 
$$\c P^\prime (ml)=\c Z(m) \hat\lambda(l) \, $$
for $m \in M$ and $l\in L$. 
As for $\c P$, it is straightforward to verify that $\c P'$ is a well-defined projective representation of $H_\varphi$, 
which has entries in $\Q^{\rm ab}$, is associated with $(H_\varphi,M,\varphi)$, 
and whose factor set is given by $(m_1l_1,m_2l_2)\mapsto \alpha(l_1,l_2)$ for $m_i\in M$ and $l_i\in L$.

Parts (i) and (ii) in Definition~\ref{Hiso} of $\relc $ follow by hypotheses. 
It is now also clear that Part~(iii) of this definition is satisfied with the scalar $\zeta_l$ associated to $l\in L$ equal to $\hat\lambda(l)$. 
Finally, we verify Part~(iv) of Definition~\ref{Hiso}. Let $a=(g,\sigma)\in (G\times \c G)_\theta$. 
Since $\c Y^a(n)=\c Y(gng^{-1})^\sigma$ also affords the character $\theta$, there exists $S\in\GL_{\theta(1)}(\Q^{\rm ab})$ with
$S\c Y^a(n) S^{-1}=\c Y(n)$ for all $n\in N$. With this, for $n\in N$ and $l\in L$, we obtain
$$ \c P^a(nl)=\c Y(gng^{-1})^\sigma \hat\lambda(glg^{-1})^\sigma = S\c Y(n)S^{-1}\hat\lambda^a(l) = 
S \c P(nl) \hat\lambda(l)^{-1}\hat\lambda^a(l)S^{-1}\,,$$
so that $\mu_a(nl)= \lambda(l)^{-1}\hat\lambda^a(l)$. 
Similarly, one shows that for $a\in (H\times \c G)_\varphi$, $m\in M$ and $l\in L$, one has $\mu'_a(ml)=\lambda(l)^{-1}\hat\lambda^a(l)$. 
Now it is immediate that Part~(iv) in Definition~\ref{Hiso} is satisfied, and the proof is complete.
\end{proof}


\section{The inductive Feit condition}\label{thedefinition}

From now on we assume $\c G=\mathrm{Gal}(\Q^{\mathrm{ab}}/\Q)$.

\begin{defi}\label{inductive}Let $S$ be a finite non-abelian simple group
and let $X$ be a universal covering group 
of $S$. We say that $S$ satisfies the {\bf inductive Feit condition}
if for every $\chi \in \irr X$, there is a pair $(U,\mu)$, where
$U<X$ and $\mu \in \irr U$,   satisfying the following conditions:
\begin{enumerate}

\item
$U=\norm XU$ and $U$ is {\bf intravariant} in $X$, i.e., 
if $w \in {\rm Aut}(X)$, then $U^w=U^x$ for some $x \in X$.

\item
$(\Gamma \times \c G)_\chi=(\Gamma \times \c G)_\mu$,
where $\Gamma={\rm Aut}(X)_U$.

\item
$(X \rtimes \Gamma_{\chi^{\c G}}, X, \chi)_{\c G} \relc 
 (U \rtimes \Gamma_{\chi^{\c G}}, U, \mu )_{\c G}$.
\end{enumerate}

In this case, we also say that $(\chi,\mu)$, or simply $\chi$, satisfies the {\bf inductive Feit condition}.
\end{defi}

Note that condition (i) implies $\zent X\le U$, since $\norm XU=U$, and that
condition (ii) implies that $\Gamma_\chi = \Gamma_\mu$ and $\Gamma_{\chi^{\c G}}=\Gamma_{\mu^{\c G}}$. 
Note also that in condition (iii) one has
$(X \rtimes \Gamma)_{\chi^{\c G}}= X\rtimes (\Gamma_{\chi^{\c G}})$ and 
$(U \rtimes \Gamma)_{\chi^{\c G}}= U\rtimes (\Gamma_{\chi^{\c G}}) = 
U\rtimes (\Gamma_{\mu^{\c G}}) = (U\rtimes \Gamma)_{\mu^{\c G}}$, so that 
$(X \rtimes \Gamma_{\chi^{\c G}}, X, \chi)_{\c G}$ and $(U \rtimes \Gamma_{\chi^{\c G}}, U, \mu )_{\c G}$ are
$\c G$-triples.

\medskip
We give a few situations in which the inductive Feit condition is automatically satisfied.

\begin{thm}\label{easysituations}
Let $S$ be a non-abelian finite simple group, let $X$ be a universal covering group of $S$, and
let $\chi \in \irr X$. Further, let $U<X$ be self-normalizing and intravariant in $X$. Let $\Gamma=\aut X_U$ and assume that $\mu \in \irr U$ is
such that  $(\Gamma \times \c G)_\chi=(\Gamma \times \c G)_\mu$. 
\begin{enumerate}[\rm(a)]
\item
If $[\chi_U, \mu]=1$, then $(\chi,\mu)$ satisfies the inductive Feit condition.
\item
If $\aut X_\chi={\rm Inn}(X)$ and if $\chi$ and $\mu$ lie over the same irreducible character of $\zent X$, 
then $(\chi, \mu)$ satisfies the inductive Feit condition.
\item
Suppose $\Delta$ is a finite group acting on $X$ via automorphisms, fixing $U$ and stabilizing the orbit $\chi^{\c G}$ in $\irr X$.
Set $\tilde X:=X\rtimes \Delta$ and $\tilde U:=U\rtimes \Delta$, and
suppose that $\chi$ has an extension $\tilde\chi \in \irr{\tilde X_\chi}$
and $\mu$ has an extension $\tilde\mu \in \irr{\tilde U_\mu}$ such that 
$[\tilde\chi_{\cent {{\tilde U}_\mu}{X}}, \tilde\mu_{\cent {{\tilde U}_\mu}{X}}]\ne 0$.
Assume further that, for each $(\delta,\sigma) \in (\Delta \times \c G)_\chi$, the unique linear character 
$\lambda \in \irr{\Delta_\chi}$  with $\tilde\chi^{\delta\sigma}=\lambda\tilde\chi$ 
also satisfies $\tilde\mu^{\delta\sigma}=\lambda\tilde\mu$, after identifying $\tilde X_\chi/X=\Delta_\chi=\Delta_\mu=\tilde U_\mu/U$. 
Finally, assume that $\tilde \epsilon(\tilde X)=\hat\epsilon(X \rtimes \Gamma_{\chi^\c G})$,   where
$\tilde\epsilon\colon \tilde X \rightarrow \aut X$ and $\hat\epsilon: X \rtimes \Gamma_{\chi^\c G} \rightarrow \aut X$ 
are defined through the conjugation actions of $\tilde X$ and $X \rtimes \Gamma_{\chi^\c G}$ on their common normal subgroup $X$. 
Then $(\chi,\mu)$ satisfies the inductive Feit condition.
\item
Suppose that $\Sigma$ is a finite group containing $X$ as a normal subgroup such that the conjugation action of 
$\Delta:= \NB_\Sigma(U)$ on $X$ induces the subgroup $\Gamma_{\chi^{\c G}}$. 
Suppose that $\chi$ has an extension $\chi^\sharp \in \irr{\Sigma_\chi}$
and $\mu$ has an extension $\mu^\sharp \in \irr{\Delta_\mu}$ such that  
$[\chi^\sharp_{\cent {\Delta_\mu}{X}}, \mu^\sharp_{\cent {\Delta_\mu}{X}}]\ne 0$.
Assume further that, for each $(\delta,\sigma) \in (\Delta \times \c G)_\chi$, the unique linear character 
$\gamma \in \irr{\Sigma_\chi}$  with $(\chi^\sharp)^{\delta\sigma}=\gamma\chi^\sharp$ 
also satisfies $(\mu^\sharp)^{\delta\sigma}=\gamma\mu^\sharp$, after identifying $\Sigma_\chi/X=\Delta_\mu/U$. 
Then $(\chi,\mu)$ satisfies the inductive Feit condition.
\item Suppose that $\Sigma$ is a finite group containing $X$ as a normal subgroup such that the conjugation action of 
$\Delta:= \NB_\Sigma(U)$ on $X$ induces the subgroup $\Gamma_{\chi^{\c G}}$. 
Suppose that $\chi$ has a $\Sigma_\chi$-invariant real-valued extension $\chi^\ast \in \irr{\OB^{2'}(\Sigma_\chi)}$
and $\mu$ has a $\Delta_\mu$-invariant real-valued extension $\mu^\ast$ to the full inverse image of 
$\OB^{2'}(\Delta_\mu/U)$ in $\Delta_\mu$ 
such that  
$$[\chi^\ast_{\cent {\OB^{2'}(\Delta_\mu)}{X}}, \mu^\ast_{\cent {\OB^{2'}(\Delta_\mu)}{X}}]\ne 0.$$
Assume further that, for each $(\delta,\sigma) \in (\Delta \times \c G)_\chi$, the unique linear character 
$\gamma \in \irr{\OB^{2'}(\Sigma_\chi)}$  with $(\chi^*)^{\delta\sigma}=\gamma\chi^\ast$ 
also satisfies $(\mu^\ast)^{\delta\sigma}=\gamma\mu^\ast$, after identifying $\Sigma_\chi/X=\Delta_\mu/U$. 
Then $(\chi,\mu)$ satisfies the inductive Feit condition.
\item
Suppose that $\Sigma$ is a finite group containing $X$ as a normal subgroup such that the conjugation action of 
$\Delta:= \NB_\Sigma(U)$ on $X$ induces the subgroup $\Gamma_{\chi^{\c G}}$. 
Assume in addition that both $\chi$ and $\mu$ are real-valued, 
$[\chi_{\ZB(X)}, \mu_{\ZB(X)}]\ne 0$, and $|\Delta_\mu/U|$ is odd.
Then $(\chi,\mu)$ satisfies the inductive Feit condition.
\item
Suppose that $\Aut(X)_{\chi^{\c G}}/\Inn(X)$ is a $p$-group and that $[\chi_U,\mu]$ is not divisible by $p$. 
Then $(\chi,\mu)$ satisfies the inductive Feit condition.
\end{enumerate}
\end{thm}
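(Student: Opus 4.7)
The plan is to verify condition (iii) of Definition~\ref{inductive} in each of the seven cases; conditions (i) and (ii) are already among the hypotheses. Throughout, set $G := X \rtimes \Gamma_{\chi^{\c G}}$, $N := X$, $H := U \rtimes \Gamma_{\chi^{\c G}}$, $M := U$, $\theta := \chi$ and $\varphi := \mu$. One checks directly that $G = NH$, $N \cap H = M$, and $\cent GN \sbs H$ (using $U = \norm XU$, so that any inner automorphism lying in $\Gamma$ is induced by an element of $U$), and that the assumption $(\Gamma \times \c G)_\chi = (\Gamma \times \c G)_\mu$ yields $(H \times \c G)_\chi = (H \times \c G)_\mu$. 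This places us in the setup of the main results of Section~2.

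Parts (a) and (g) follow from Theorem~\ref{mult1}: (a) is the multiplicity-one case directly. For (g), I would first invoke the butterfly theorem (Theorem~\ref{butterfly}) to replace $G$ by a group $\hat G$ whose conjugation action on $X$ realises the full subgroup $\aut X_{\chi^{\c G}}$, so that $\hat G_\chi / X \cong \aut X_{\chi^{\c G}}/\Inn(X)$ is a $p$-group by hypothesis; the coprime case of Theorem~\ref{mult1} then applies since $[\chi_U,\mu]$ is not divisible by $p$. Part (b) proceeds analogously via Lemma~\ref{useful}: after the same butterfly reduction, $\aut X_\chi = \Inn(X)$ forces $\hat G_\chi = N \cent{\hat G}N$, and $\chi, \mu$ lying over a common character of $\zent X$ is exactly the remaining hypothesis of that lemma.

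Parts (c)--(f) share a common strategy: first use the given extensions to apply Lemma~\ref{canext} inside the ambient group supplied by the hypothesis --- namely $\tilde X = X \rtimes \Delta$ in (c), and $\Sigma$ in (d)--(f) --- and then transfer the resulting $\c G$-triple relation to $G$ and $H$ via the butterfly theorem. In (c), the scalar-product condition on $\cent{\tilde U_\mu}X$ and the linear-character compatibility under the $(\Delta \times \c G)_\chi$-action are precisely the hypotheses of Lemma~\ref{canext}, while the assumption $\tilde\epsilon(\tilde X) = \hat\epsilon(X \rtimes \Gamma_{\chi^{\c G}})$ is what is needed to invoke Theorem~\ref{butterfly}. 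In (d) and (e), $\Delta = \NB_\Sigma(U)$ already induces $\Gamma_{\chi^{\c G}}$ on $X$, so the butterfly step is immediate once Lemma~\ref{canext} is applied inside $\Sigma_\chi$. For (f), I would reduce to (e) as follows: since $|\Delta_\mu/U|$ is odd and $\chi, \mu$ are real-valued, uniqueness of real-valued extensions through odd-index embeddings (a Frobenius--Schur/Gallagher argument) produces canonical real extensions $\chi^\ast$, $\mu^\ast$ on the relevant $\OB^{2'}$-subgroups, and the hypothesis $[\chi_{\zent X}, \mu_{\zent X}] \ne 0$ transfers to non-vanishing on the larger centralizers together with matching linear characters under the Galois action.

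The main obstacle I expect is the butterfly bookkeeping in (b), (g), and the transfer steps in (c)--(e): one must carefully exhibit intermediate groups $\hat G$, $\hat H$ with $\hat\epsilon(\hat G) = \epsilon(G)$, $\hat\epsilon(\hat H) = \epsilon(H)$, and the containment $M \cent{\hat G}N \sbs \hat H$. In (f) there is the additional task of producing the canonical real extensions and verifying their compatibility with the $(\Delta \times \c G)_\chi$-action; once these verifications are carried out, the machinery of Section~2 delivers the required $\c G$-triple relation in each case.
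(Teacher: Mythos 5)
Your overall decomposition — establish the basic containments $\hat X = X\hat U$, $X\cap\hat U = U$, $\cent{\hat X}{X}\sbs\hat U$ and the equality $(\hat U\times\c G)_\chi=(\hat U\times\c G)_\mu$, then apply the Section~2 machinery — matches the paper, and your handling of (a) and the broad outline of (c)--(f) (Lemma~\ref{canext} inside an ambient overgroup, then butterfly; (f) as a corollary of (e) via unique real extensions) is essentially correct. The problem lies in (g), and the same misconception contaminates your description of (b).

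The butterfly theorem (Theorem~\ref{butterfly}) \emph{transfers} an already-established relation $(G,N,\theta)_{\c G}\relc(H,M,\varphi)_{\c G}$ to a companion pair $(\hat G, N,\theta)_{\c G}\relc(\hat H, M,\varphi)_{\c G}$ whose conjugation images on $N$ agree; it cannot be invoked to ``replace'' the target group before a relation exists. Your plan for (g) --- ``first invoke the butterfly theorem \ldots the coprime case of Theorem~\ref{mult1} then applies'' --- is therefore backward. Moreover, the claimed isomorphism $\hat G_\chi/X\cong\Aut(X)_{\chi^{\c G}}/\Inn(X)$ is wrong: with $\hat G=X\rtimes\Gamma_{\chi^{\c G}}$ one has $\hat G_\chi/X\cong\Gamma_\chi$, and $\Gamma_\chi\supseteq c(U)=\Gamma\cap\Inn(X)$, which is in general not a $p$-group. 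The paper's actual argument is to pick a Sylow $p$-subgroup $P$ of $\Gamma_{\chi^{\c G}}$, observe that $Pc(U)=\Gamma_{\chi^{\c G}}$ since $\Gamma_{\chi^{\c G}}/c(U)\cong\Aut(X)_{\chi^{\c G}}/\Inn(X)$ is a $p$-group, apply Theorem~\ref{mult1} to the pair $(X\rtimes P,X,\chi)$, $(U\rtimes P,U,\mu)$ where $(X\rtimes P)_\chi/X\cong P_\chi$ genuinely is a $p$-group, and \emph{then} use the butterfly theorem to pass to $(\hat X,X,\chi)\relc(\hat U,U,\mu)$. This Sylow $p$-subgroup intermediary and the correct order of operations are the missing ingredients in (g).

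Relatedly, in (b) your phrase ``after the same butterfly reduction'' is spurious: no butterfly step is needed or possible before a relation is in hand. Lemma~\ref{useful} applies \emph{directly} to the $\c G$-triples $(\hat X,X,\chi)_{\c G}$ and $(\hat U,U,\mu)_{\c G}$; the needed hypothesis $\hat X_\chi=X\cdot\cent{\hat X}{X}$ follows because $\Aut(X)_\chi=\Inn(X)$ and $\norm X U=U$ force $\Gamma_\chi=c(U)$, which coincides with the conjugation image of $\cent{\hat X}{X}$. Once (g) is repaired with the Sylow $p$-subgroup argument and (b) is rephrased to drop the phantom butterfly step, your outline is sound.
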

 
\begin{proof}
Write $\hat X=X \rtimes \Gamma_{\chi^\c G}$
and $\hat U=U \rtimes \Gamma_{\chi^\c G}$. 

Let $c\colon X\to \Aut(X)$ denote the homomorphism sending $x$ to $-^x$, i.e., conjugation with $x$. 
We first establish that in all Parts~(a)--(g) the following hold:

\begin{enumerate}[\rm(i)]
\item $\hat X= X\hat U$ and $X\cap \hat U = U$.

\item $\cent{\hat X}{X}=\{(u,c(u)^{-1})\mid u \in U\}$. In particular, $\cent{\hat X}{X} \sbs \hat U$.

\item $(\hat U\times \c G)_\chi = (\hat U \times \c G)_\mu$.
\end{enumerate}

\smallskip
(i) is obvious from the definitions of $\hat X$ and $\hat U$. 
To see (ii), let $(x,\alpha)\in \cent{\hat X}{X}$. Then $y^{x\alpha}=y$ for all $y\in X$. 
Thus, $\alpha=c(x)^{-1}$ and, since $\alpha\in\Gamma$, we have $U=U^{\alpha^{-1}}=U^x$, so that $x\in \norm X U=U$. 
This shows one inclusion.
Conversely, if $u\in U$, then clearly $(u,c(u)^{-1})\in\hat X$ centralizes $X$. This shows (ii).
To see that (iii) holds, note that 
$(\hat U\times \c G)_\chi = (U\Gamma_{\chi^{\c G}}\times \c G)_\chi = (U\times 1)(\Gamma\times \c G)_\chi$, 
since $U$ fixes $\chi$ and since $\Gamma_{\chi^{\c G}}$ is the image of the first projection of $(\Gamma\times \c G)_\chi$. 
Similarly, using $\Gamma_{\chi^{\c G}}=\Gamma_{\mu^{\c G}}$, we have 
$(\hat U\times \c G)_\mu = (U\Gamma_{\chi^{\c G}}\times \c G)_\mu= (U\Gamma_{\mu^{\c G}} \times \c G)_\mu = 
(U\times 1)(\Gamma\times \c G)_\mu$. 
Since $(\Gamma \times \c G)_\chi=(\Gamma \times \c G)_\mu$ by hypothesis, we obtain
$(\hat U\times \c G)_\chi=(\hat U\times \c G)_\mu$, so that (iii) holds.

\smallskip
Note that in all Parts~(a)--(g) we only need to show that condition~(iii) in Definition~\ref{inductive}, namely
\begin{equation}\label{eqn hat c}
(\hat X, X,\chi)_{\c G} \relc  (\hat U, U, \mu)_{\c G}
\end{equation}
is satisfied, since $\chi$ and $(U,\mu)$ satisfy conditions (i) and (ii) in Definition~\ref{inductive} by hypothesis. 

\smallskip
(a) We apply Theorem~\ref{mult1} with $(\hat X, X, \chi)$ and $(\hat U, U, \mu)$ 
in place of $(G,N,\theta)$ and $(H,M,\varphi)$, respectively. Clearly, these are $\c G$-triples.
Note that all the hypotheses of Theorem~\ref{mult1} are either hypotheses in this lemma or are covered by (i), (ii), or (iii) above.
Thus, Theorem~\ref{mult1} implies (\ref{eqn hat c}).

\smallskip
(b) We apply Lemma~\ref{useful} to the $\c G$-triples $(\hat X, X,\chi)_{\c G}$ and $(\hat U, U,\mu)_{\c G}$.
At the beginning of the proof we already established that $\hat X= X\hat U$, $U=X\cap \hat U$, $\cent{\hat X}{X}\sbs \hat U$, and
$(\hat U\times \c G)_\chi = (\hat U \times \c G)_\mu$. 
Since, by hypothesis, $\chi$ and $\mu$ lie over the same irreducible character of $\zent X$,
the only remaining hypotheses of Lemma~\ref{useful} that needs to be checked is that $\hat X_\chi= X\cdot\cent{\hat X}{X}$.

But clearly $X\cent{\hat X}{X}\sbs \hat X_\chi$. 
Conversely, note that $\hat X_\chi=(X\rtimes\Gamma_{\chi^{\c G}})_\chi =X\cdot\Gamma_\chi$. 
But $\Aut(X)_\chi=\Inn(X)$ implies that $\Gamma_\chi= c(U)$. 
In fact, if $\alpha\in\Gamma_\chi$ then $\alpha = c(x)$ for some $x\in X$ and we obtain $U=U^\alpha=U^x$ so that $x\in \norm X U = U$.
Altogether we obtain $\hat X_\chi= X\cdot c(U) \le X \cdot \cent{\hat X}{X}$ by property (ii) at the beginning of the proof.
Now Lemma~\ref{useful} implies (\ref{eqn hat c}) and the proof of Part~(c) is complete.

\smallskip
(c) We first apply Lemma~\ref{canext} in order to show that $(\tilde X, X,\chi)_{\c G}\relc (\tilde U, U, \mu)_{\c G}$.
Let $\pi\colon \Delta\to\Aut(X)$ be the permutation representation of the action of $\Delta$ on $X$. 
Then $\pi(\Delta)\sbs \Gamma_{\chi^{\c G}}$. It follows that $(\tilde X, X,\chi)_{\c G}$ is a $\c G$-triple.
Moreover, since  $(\Gamma \times \c G)_\chi=(\Gamma \times \c G)_\mu$, we have $\Gamma_{\chi^{\c G}}=\Gamma_{\mu^{\c G}}$, 
so that also $(\tilde U, U, \mu)_{\c G}$ is a $\c G$-triple, since $\pi(\Delta)\sbs \Gamma_{\mu^{\c G}}$.
Some of the hypotheses of Lemma~\ref{canext} are already satisfied by the hypotheses in this Lemma.
The remaining ones that need to be verified are 
\begin{enumerate}
\item [\rm (i')] $X\tilde U = \tilde X$, 
\item [\rm (ii')] $X\cap \tilde U = U$, 
\item [\rm (iii')] $\cent{\tilde X}{X}\sbs \tilde U$, and 
\item [\rm (iv')] $(\tilde U \times \c G)_\chi = (\tilde U\times \c G)_\mu$.
\end{enumerate}

Again, (i') and (ii') follow from the definitions of $\tilde X$ and $\tilde U$. 
To verify (iii'), let $(x,\delta)\in\cent{\tilde X}{X}$. Then $y^{x\pi(\delta)}=y$ for all $y\in X$. 
Since $\pi(\delta)\in\Gamma$ fixes $U$, we obtain $U=U^{\pi(\delta)^{-1}}=U^x$, so that $x\in N_X(U) = U$ and $(x,\delta)\in \tilde U$.
This shows (iii').
Finally, since $U$ fixes $\chi$ and $\mu$, we have 
$(\tilde U\times \c G)_\chi = (U\times 1) (\Delta\times \c G)_\chi = 
(U\times 1)(\pi\times \id)^{-1}((\Gamma\times \c G)_\chi)$ and 
$(\tilde U\times \c G)_\mu = (U\times 1)(\Delta\times \c G)_\mu =
(U\times 1) (\pi\times \id)^{-1}((\Gamma\times \c G)_\mu)$, 
which shows (iv').
Note also that $(\Delta\times \c G)_\chi = (\Delta\times \c G)_\mu$ implies $\Delta_\chi=\Delta_\mu$ as claimed in the lemma.
Lemma~\ref{canext} now implies $(\tilde X, X,\chi)_{\c G}\relc (\tilde U, U, \mu)_{\c G}$.

Next we use the butterfly Theorem~\ref{butterfly} to show that 
$$(\hat X, X, \chi)_{\c G} \relc  (\hat U, U,\mu)_{\c G}$$
also holds.
The only remaining hypotheses of Theorem~\ref{butterfly} we need to verify are 
\begin{enumerate}
\item [($\alpha$)] $U\cent{\hat X}{X}\le \hat U$, and 
\item [($\beta$)] $\hat \epsilon (\hat U) = \tilde \epsilon (\tilde U)$. 
\end{enumerate}
But ($\alpha$) was already established in (ii') at the beginning of the proof.
And to see ($\beta$), note that $c(X) \cdot \Gamma_{\chi^{\c G}}  = \hat \epsilon(\hat X) = \tilde \epsilon (\tilde X) = c(X) \cdot \pi(\Delta)$, so that
$\Gamma_{\chi^{\c G}} \le c(X) \cdot \pi(\Delta)$. This implies $\Gamma_{\chi^{\c G}} \le c(U)\cdot \pi(\Delta)$. 
In fact, if $\alpha\in\Gamma_{\chi^{\c G}}$ then $\alpha=c(x)\pi(\beta)$ for some $x\in X$ and $\beta\in\Delta$. 
Since $\alpha,\pi(\beta)\in\Gamma=\Aut(X)_U$, we have $U= U^{\alpha\pi(\beta^{-1})} = U^x$, 
so that $x\in \norm X U = U$ and $\alpha\in c(U)\cdot \pi(\Delta)$ as claimed. 
But $\Gamma_{\chi^{\c G}} \le c(U)\cdot \pi(\Delta)$ implies
$\hat\epsilon(\hat U) = c(U)\cdot\Gamma_{\chi^{\c G}} = c(U)\cdot\pi(\Delta) = \tilde\epsilon(\tilde U)$ and ($\beta$) is proved.
This establishes (c).

\smallskip
(d) The assumptions on $U$, and the characters $\chi$ and $\mu$ imply that $\Sigma = X\Delta$ and $\Delta_\chi=\Delta_\mu$. 
Let 
$$\Phi: \Sigma_\chi \to \GL_n(\C), ~\Theta: \Delta_\mu \to \GL_m(\C)$$ 
be representations (over $\Q^{\mathrm {ab}}$) affording $\chi$, respectively $\mu$, and let $\Phi^\sharp$, $\Theta^\sharp$ be extensions of
them that afford $\chi^\sharp$, respectively $\mu^\sharp$. Now we form the semidirect product 
$$\tilde X = X \rtimes \Delta = \{(x,v) \mid x \in X, v \in \Delta\}$$ 
and define
$$\tilde\Phi: \tilde X_\chi \to \GL_n(\C)$$ 
via $\tilde\Phi((x,v)) = \Phi(x)\Phi^\sharp(v)$ for $x \in X$ and $v \in \Delta_\chi$. Since $\Phi^\sharp$ extends $\Phi$, $\tilde \Phi$ is a
representation of $\tilde X_\chi$ with character $\tilde \chi$ which extends $\chi$. Similarly, we form the semidirect product 
$$\tilde U = U \rtimes \Delta = \{(u,v) \mid u \in U, v \in \Delta\}$$  
and define
$$\tilde\Theta: \tilde U_\mu \to \GL_m(\C)$$ 
via $\tilde\Theta((u,v)) = \Theta(u)\Theta^\sharp(v)$ for $u \in U$ and $v \in \Delta_\mu$. Since $\Theta^\sharp$ extends $\Theta$, $\tilde \Theta$ is a
representation of $\tilde U_\mu$ with character $\tilde \mu$ which extends $\mu$. 

Note that $\CB_{{\tilde U}_\mu}(X) = \{(u,v) \mid u \in U,~v \in \Delta_\mu,~uv \in \CB_{\Sigma}(X)\}$. For any $(u,v) \in \CB_{{\tilde U}_\mu}(X)$,
$z:=uv$  fixes $\mu$ and $U$,  hence belongs to $\Delta_\mu=\Delta_\chi$, and thus $z \in \CB_{\Delta_\mu}(X)$. By Schur's lemma, there is a 
linear character $\nu$ of $\CB_{\Delta_\mu}(X)$ such that $\Phi^\sharp(z) = \nu(z)I_n$. By assumption and by Schur's lemma, 
$\Theta^\sharp(z) = \nu(z)I_m$. Now we have 
$$\tilde\Phi((u,v)) = \Phi(u)\Phi^\sharp(v) = \Phi^\sharp(z) = \nu(z)I_n,$$
and 
$$\tilde\Theta((u,v)) = \Theta(u)\Theta^\sharp(v) = \Theta^\sharp(z) = \nu(z)I_m.$$
It follows that 
$$[\tilde\chi_{\cent {{\tilde U}_\mu}{X}}, \tilde\mu_{\cent {{\tilde U}_\mu}{X}}]\ne 0.$$

Next, for each $(\delta,\sigma) \in (\Delta \times \c G)_\chi$, let $\lambda$ be the unique linear character 
of $\Delta_\chi$ such that $\tilde\chi^{\delta\sigma}=\lambda\tilde\chi$, and let $\gamma$ be the unique linear character 
of $\Delta_\chi$ such that $(\chi^\sharp)^{\delta\sigma}=\gamma\chi^\sharp$. Viewing $\lambda$ as a representation (of degree $1$)
of $\tilde X_\chi$, we see that the representation $\lambda\tilde\Phi$ is equivalent to $\tilde\Phi^{\delta\sigma}$; let the conjugation by 
$M \in \GL_n(\C)$ sends the former to the latter. In particular, this conjugation sends $\lambda(x)\tilde\Phi(x) = \Phi(x)=\lambda(x)\Phi^\sharp(x)$ to 
$\tilde\Phi^{\delta\sigma}(x) = (\Phi^{\sharp})^{\delta\sigma}(x)$ for all $x \in X$, and 
$\lambda(v)\tilde\Phi(v) = \lambda(v)\Phi^\sharp(v)$ to $(\Phi^\sharp)^{\delta\sigma}(v)$ for all $v \in \Delta_\chi$. 
Since $\Sigma_\chi=X\Delta_\chi$, it follows that 
$\lambda\Phi^\sharp$ is equivalent to $(\Phi^\sharp)^{\delta\sigma}$. Taking traces and using uniqueness, we have $\gamma=\lambda$.
By the assumption on $\gamma$, $(\mu^\sharp)^{\delta\sigma}=\lambda\mu^\sharp$. Repeating the preceding argument for $\Theta^\sharp$ and 
$\tilde\Theta$, we obtain that $\tilde\mu^{\delta\sigma}=\lambda\tilde\mu$.

We have verified all the hypotheses in (c), and hence (d) follows.

\smallskip
(e) Since $\chi^*$ is real-valued and $\Sigma_\chi$-invariant, it has a unique real extension $\chi^\sharp$ to $\Sigma_\chi$ by \cite[Lemma 2.1]{NT}. Similarly,
$\mu^*$ has a unique real extension $\mu^\sharp$ to $\Delta_\mu$. Let $Y$ denote $\CB_{\Delta_\mu}(X)$. By assumption, the restrictions of $\chi^\sharp$ and 
$\mu^\sharp$ to $Y \cap \OB^{2'}(\Delta_\mu)$ are both multiples of a single linear character. Of course, every Sylow $2$-subgroups of $Y$ 
are contained in $\OB^{2'}(\Delta_\mu)$, and hence $\OB^{2'}(Y) \leq Y \cap \OB^{2'}(\Delta_\mu)$.  Working with the abelian images of $Y$ in corresponding representations, we see that $\chi^\sharp$ and $\mu^\sharp$ agree on the $2$-part of it, and are trivial on the $2'$-part because they are real. It follows 
that the restrictions of $\chi^\sharp$ and $\mu^\sharp$ to $Y$ are both multiples of a single linear character.

Since $\Delta$ induces $\Gamma_{\chi^{\c G}}$, $\Delta_\chi=\Delta_\mu$ is normal in $\Delta$, and hence $\Delta$ normalizes $\OB^{2'}(\Delta_\mu)$
and $\OB^{2'}(\Sigma_\chi)$. Consider any $(\delta,\sigma) \in (\Delta \times \c G)_\chi$, and let $\lambda$ be the unique linear character of 
$\Sigma_\chi/X$ such that $(\chi^\sharp)^{\delta\sigma} = \lambda\chi^\sharp$. Note that the action of $\sigma$ on characters of $\Sigma_\chi$ is induced by 
the action of a Galois automorphism of some cyclotomic field and hence commutes with the complex conjugation, which obviously also commutes with 
the conjugation action by $\delta$. It follows that both $\chi^\sharp$ and $(\chi^\sharp)^{\delta\sigma}$ are real extensions
of $\chi$, whence $\lambda=\bar\lambda$. Similarly, the unique linear character $\nu$ of $\Delta_\mu/U$ such that $(\mu^\sharp)^{\delta\sigma} = \nu\mu^\sharp$
is also real-valued. By assumption, $\lambda$ and $\mu$ agree on $\OB^{2'}(\Delta_\mu/U)$ (after identifying $\Sigma_\chi/X=\Delta_\mu/U$); denote this 
restriction by $\lambda_0$. Now $\lambda$ and $\nu$ are real extensions of $\lambda_0$ to $\Delta_\mu/U$ which has odd index over 
$\OB^{2'}(\Delta_\mu/U)$. Again using \cite[Lemma 2.1]{NT}, we obtain $\lambda=\nu$. Thus $(\mu^\sharp)^{\delta\sigma} = \lambda\mu^\sharp$,
and we are done by (d).

\smallskip
(f) This is a special case of (e), with $\OB^{2'}(\Sigma_\chi)=X$, $\chi^\ast=\chi$, $\OB^{2'}(\Delta_\mu)=U$, and $\mu^\ast=\mu$.

\smallskip
(g) Since $U$ is self-normalizing in $X$, one has $\Inn(X)\cap \Gamma_{\chi^{\c G}} = c(U)$, 
and since $U$ is intravariant in $X$, one has $\Inn(X)\cdot \Gamma_{\chi^{\c G}} = \aut{X}_{\chi^{\c G}}$.
Thus, the inclusion $\Gamma_{\chi^{\c G}}\le \aut{X}_{\chi^{\c G}}$ induces an isomorphism 
$\Gamma_{\chi^{\c G}}/c(U) \cong \aut{X}_{\chi^{\c G}}/\Inn(X)$.
Let $P$ be a Sylow $p$-subgroup of $\Gamma_{\chi^{\c G}}$. Then $P c(U) = \Gamma_{\chi^{\c G}}$, since $\aut{X}_{\chi^{\c G}}/\Inn(X)$ is a $p$-group. 

We claim that $(X\rtimes P, X,\chi)_{\c G}\relc (U\rtimes P, U,\mu)_{\c G}$. 
It suffices to show that the hypotheses of Theorem~\ref{mult1} are satisfied. 
First, both triples are $\c G$-triples, since both $X$ and $P$ stabilize $\chi^{\c G}$ and since both $U$ and 
$P\le \Gamma_{\chi^{\c G}}= \Gamma_{\mu^{\c G}}$
stabilize $\mu^{\c G}$. 
Clearly, $X\rtimes P= X\cdot(U\rtimes P)$ and $X\cap (U\rtimes P)=U$. 
Moreover, $C_{X\rtimes P}(X)\le U\rtimes P$, since if $(x,\alpha)\in X\rtimes P$ centralizes $X$ 
then $c(x)=\alpha^{-1}\in \Gamma$ which implies $x\in \norm XU=U$. 
Using (iii) from the beginning of the proof and intersecting with $(U\rtimes P)\times {\c G}$, 
we also obtain $((U\rtimes P)\times \c G)_\chi=((U\rtimes P)\times \c G)_\mu$. 
Finally $(X\rtimes P)_\chi=X\rtimes(P_\chi)$, so that $(X\rtimes P)_\chi/X\cong P_\chi$ is a $p$-group. 
Together with $[\chi_U,\mu]\not\equiv 0\mod p$, this establishes the hypotheses of Theorem~\ref{mult1} and proves the claim.

Next we apply the butterfly Theorem~\ref{butterfly}, to $(X\rtimes P, X,\chi)_{\c G}\relc (U\rtimes P, U,\mu)_{\c G}$ 
and the $\c G$-triples $(\hat X, X,\chi)$ and $(\hat U, U,\mu)$ and check all the hypotheses. 
Considering the homomorphisms $\hat \epsilon \colon \hat X\to \aut{X}$ and $\epsilon\colon X\rtimes P\to \aut{X}$, 
we have $\hat\epsilon(\hat X) = \Inn(X)\cdot\Gamma_{\chi^{\c G}} = \Inn(X)\cdot c(U)\cdot P = \Inn(X)\cdot P= \epsilon(X\rtimes P)$, 
since $c(U)\le \Inn(X)$. 
Moreover, we have $\hat\epsilon(\hat U) = c(U)\cdot \Gamma_{\chi^{\c G}} = \Gamma_{\chi^{\c G}} = c(U)\cdot P = \epsilon(U\rtimes P)$ 
and $U\cdot C_{\hat{X}}(X)\le \hat U$ by (ii) from the beginning of the proof. 
Therefore, all the hypotheses are of Theorem~\ref{butterfly} are satisfied and we conclude that 
$(\hat X, X, \chi)_{\c G} \relc  (\hat U, U, \mu)_{\c G}$ as desired.
\end{proof}

To help check some of the conditions listed in Theorem \ref{easysituations}, we record the following.

\begin{lem}\label{easy2}
Let $X$ be a finite group, and let $U<X$ be self-normalizing and intravariant in $X$. Let $\chi \in \Irr(X)$ and $\mu \in \irr U$ be
such that  $\Q(\chi)=\Q(\mu)$. Let $\Gamma=\aut X_U$, and assume furthermore that $\Gamma_\chi=\Gamma_\mu$. Then 
$(\Gamma \times \c G)_\chi=(\Gamma \times \c G)_\mu$, if at least one of the following conditions holds.
\begin{enumerate}[\rm(i)]
\item $\chi$ or $\mu$ is rational-valued.
\item $\chi$ or $\mu$ is $\Gamma$-stable.
\item $[\Q(\chi):\Q]=2$, $\chi^{\c G}=\chi^\Gamma$, and $\mu^{\c G}=\mu^\Gamma$. 
\item $[\Q(\chi):\Q]=2$, and $\Gamma$ induces an odd-order subgroup of $\Out(X)$.
\end{enumerate}
\end{lem}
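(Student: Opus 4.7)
The plan is to reduce everything to two basic observations and then handle the four cases uniformly. First, since $\Q(\chi)=\Q(\mu)$, we have $\c G_\chi = \Gal(\Q^{\rm ab}/\Q(\chi)) = \Gal(\Q^{\rm ab}/\Q(\mu)) = \c G_\mu$, and by hypothesis $\Gamma_\chi=\Gamma_\mu$. Second, by the general isomorphism recalled in Section~2, projection onto the two factors induces an isomorphism $\Gamma_{\chi^{\c G}}/\Gamma_\chi \cong \c G_{\chi^\Gamma}/\c G_\chi$ (and similarly for $\mu$), so describing $(\Gamma\times\c G)_\chi$ amounts to identifying these two quotients together with the pairing between their cosets. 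Thus it suffices to show that in each case $(\Gamma\times \c G)_\chi$ and $(\Gamma\times \c G)_\mu$ admit the same explicit description in terms of $\Gamma_\chi=\Gamma_\mu$ and $\c G_\chi=\c G_\mu$.

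For cases (i) and (ii) this is immediate. If, say, $\chi$ is rational-valued, then so is $\mu$, since $\Q(\mu)=\Q(\chi)=\Q$; hence $\c G_\chi=\c G_\mu=\c G$, and one checks directly that $(\Gamma\times \c G)_\chi = \Gamma_\chi \times \c G = \Gamma_\mu \times \c G = (\Gamma\times \c G)_\mu$. The case when $\chi$ (equivalently $\mu$) is $\Gamma$-stable is symmetric: $\Gamma_\chi=\Gamma_\mu=\Gamma$, and both stabilizers equal $\Gamma\times \c G_\chi$.

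For case (iii), $[\Q(\chi):\Q]=2$ means $|\chi^{\c G}|=2$; the hypothesis $\chi^{\c G}=\chi^\Gamma$ then forces $|\Gamma:\Gamma_\chi|=2$ and $\Gamma_{\chi^{\c G}}=\Gamma$. Pick $\sigma_0\in \c G\setminus \c G_\chi$; for any $\gamma\in\Gamma\setminus \Gamma_\chi$ the character $\chi^\gamma$ is the unique non-trivial element of $\chi^{\c G}$, namely $\chi^{\sigma_0}$. Hence $(\gamma,\sigma)$ stabilizes $\chi$ exactly when $\sigma\in\sigma_0^{-1}\c G_\chi=\c G\setminus\c G_\chi$, giving
\[
  (\Gamma\times\c G)_\chi \;=\; (\Gamma_\chi\times\c G_\chi)\;\cup\; \bigl((\Gamma\setminus\Gamma_\chi)\times(\c G\setminus\c G_\chi)\bigr).
\]
The identical description holds for $\mu$ by the same argument using $\mu^{\c G}=\mu^\Gamma$, and equality of the two stabilizers follows from $\Gamma_\chi=\Gamma_\mu$ and $\c G_\chi=\c G_\mu$.

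Case (iv) is the one requiring the extra trick. Since inner automorphisms of $X$ act trivially on $\irr X$, we have $\Gamma\cap\Inn(X)\le \Gamma_\chi$, so $\Gamma/\Gamma_\chi$ is a quotient of $\Gamma/(\Gamma\cap\Inn(X))$ and therefore has odd order by hypothesis. Consequently $\Gamma_{\chi^{\c G}}/\Gamma_\chi$ has odd order, while the isomorphism with $\c G_{\chi^\Gamma}/\c G_\chi$ forces its order to divide $|\c G:\c G_\chi|=[\Q(\chi):\Q]=2$. Hence $\Gamma_{\chi^{\c G}}=\Gamma_\chi$, and for any $(\gamma,\sigma)\in (\Gamma\times\c G)_\chi$ we must have $\gamma\in\Gamma_\chi$ and then $\sigma\in\c G_\chi$. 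Thus $(\Gamma\times \c G)_\chi = \Gamma_\chi\times \c G_\chi$, and the analogous statement holds for $\mu$ by the same argument. The main (mild) obstacle throughout is simply to keep track of the compatibility between the $\Gamma$- and $\c G$-actions; once the coset isomorphism is in hand, each case is a short check.
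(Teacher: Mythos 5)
Your proof is correct in all four cases, and (i)--(iii) follow the same elementary checks as the paper. Case (iv) is where you deviate: the paper picks an odd integer $m$ with $\delta^m\in c(U)$, observes that $\sigma^m$ and $\sigma$ agree on the two-element orbit $\chi^{\c G}$, and chases this through to show $(\delta,\sigma)$ fixing $\chi$ forces $\delta\in\Gamma_\chi$ and $\sigma\in\c G_\chi$. You instead argue structurally: $\Gamma\cap\Inn(X)=c(U)\le\Gamma_\chi$ shows $\Gamma/\Gamma_\chi$ has odd order, while the coset isomorphism $\Gamma_{\chi^{\c G}}/\Gamma_\chi\cong\c G_{\chi^\Gamma}/\c G_\chi$ (recalled in Section 2) bounds its order by $2$, so the orbit stabilizer $\Gamma_{\chi^{\c G}}$ collapses to $\Gamma_\chi$ and the stabilizer in the product becomes $\Gamma_\chi\times\c G_\chi$. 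Both arguments exploit the same odd/even clash, but yours avoids element-chasing by invoking the isomorphism between the two outer quotients directly, which is slightly cleaner and makes the collapse $\Gamma_{\chi^{\c G}}=\Gamma_\chi$ explicit; the paper's version is more self-contained and doesn't require having that isomorphism in hand. The computation in (iii) giving $(\Gamma\times\c G)_\chi=(\Gamma_\chi\times\c G_\chi)\cup((\Gamma\setminus\Gamma_\chi)\times(\c G\setminus\c G_\chi))$ is also correct and matches the paper's conclusion, though the paper phrases it via the generators $g$ and $\gamma$.
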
 

\begin{proof}
Consider any $(\delta,\sigma) \in \Gamma \times \c G$. 
In the case of (i), $\sigma$ fixes both $\chi$ and $\mu$, hence 
$(\delta,\sigma)$ fixes $\chi$ if and only if $\delta \in \Gamma_\chi = \Gamma_\mu$ if and only if $(\delta,\sigma)$ fixes $\mu$. 
In the case of (ii), $\delta$ fixes both $\chi$ and $\mu$, and hence 
$(\delta,\sigma)$ fixes $\chi$ if and only if $\sigma \in \c G_\chi = \c G_\mu$ if and only if $(\delta,\sigma)$ fixes $\mu$. 
Assume we are in the case of (iv). Then there is an odd integer $m$ such that $\delta^m$ acts as conjugation by some element 
$x \in X$, and $x \in \NB_X(U)=U$. Thus $\delta^m$ fixes both $\chi$ and $\mu$. On the other hand, $\chi^{\c G}$ has size $2$, 
so $\sigma^m$ fixes $\chi$ if and only if $\sigma$ fixes $\chi$. Now if $(\delta,\sigma)$ fixes $\chi$,
then $(\delta^m,\sigma^m)$ fixes $\chi$, so $\sigma^m$ fixes $\chi$, whence $\sigma$ fixes $\chi$ and so $\delta$ fixes $\chi$. By assumption,
this ensures that $\mu$ is fixed by both $\delta$ and $\sigma$. Similarly, if $(\delta,\sigma)$ fixes $\mu$ then it also fixes $\chi$.

In the case of (iii), by the assumption, we can find $\gamma \in \c G$ such that the restriction of 
$\gamma$ to $\Q(\chi)$ generates ${\rm Gal}(\Q(\chi)/\Q)$, and $\chi^\Gamma=\chi^{\c G} = \{\chi,\chi^\gamma\}$, whence
$\Gamma = \langle \Gamma_\chi,g \rangle$ with $\chi^g = \chi^\gamma$ and $g^2 \in \Gamma_\chi$. Similarly, $\mu^\Gamma=\mu^{\c G} = \{\mu,\mu^\gamma\}$ and 
$\mu^g = \mu^\gamma$.
It follows that $(\delta,\sigma)$ fixes $\chi$ if and only if $(\delta,\sigma) \in \Gamma_\chi \times \c G_\chi$ or 
$(\delta,\sigma) \in g\Gamma_\chi \times \gamma \c G_\chi$. Similarly,  $(\delta,\sigma)$ fixes $\mu$ if and only if $(\delta,\sigma) \in \Gamma_\chi \times \c G_\chi$ or 
$(\delta,\sigma) \in g\Gamma_\chi \times \gamma \c G_\chi$, and so we are done.
\end{proof}


\section{Proof of Theorem A}
 
The goal of this section is to prove Theorem~A, see Theorem~\ref{main}. As an easy consequence we will obtain
Corollary~B, see Corollary~\ref{cormain}. Theorem~\ref{main} will be proved by reducing it to situations 
dealt with in Theorem~\ref{key2}, Lemma~\ref{res}, and Theorem~\ref{isaacs}.

\medskip
If a group $\Gamma$ acts on a group $X$ then $\Gamma\wr {\sf S}_n= \Gamma^n\rtimes{\sf S}_n$ 
acts on $X^n$ via
$$(x_1,\ldots,x_n)^{((\gamma_1,\ldots,\gamma_n),\pi)}= 
(x_{\pi^{-1}(1)}^{\gamma_{\pi^{-1}(1)}},\ldots, x_{\pi^{-1}(n)}^{\gamma_{\pi^{-1}(n)}})\, .$$

\begin{lem}\label{wreath}
Let $\Gamma$ be a group acting via automorphisms on a group $X$. 

\smallskip
{\rm (a)} The mapping $\bigl(((x_1,\gamma_1),\ldots,(x_n,\gamma_n)), \pi\bigr)\mapsto
\bigl((x_1,\ldots,x_n),((\gamma_1,\ldots,\gamma_n),\pi)\bigr)$
defines a group isomorphism
$$ (X\rtimes \Gamma) \wr {\sf S}_n \cong X^n\rtimes (\Gamma \wr {\sf S}_n)$$
which preserves the conjugation action on the normal subgroup $X^n$ of both sides. 

\smallskip
{\rm (b)} If $U$ is a subgroup of $X$ and $\Gamma=\aut X_U$ then the map
 \begin{align*}
\Gamma\wr {\sf S}_n & \to \aut{X^n}_{U^n}\,, \\
((f_1,\ldots,f_n),\pi) & \mapsto \Bigl((x_1,\ldots,x_n)\mapsto 
\bigl(f_{\pi^{-1}(1)}(x_{\pi^{-1}(1)}),\ldots,f_{\pi^{-1}(n)}(x_{\pi^{-1}(n)})\bigr) \Bigr)\, ,
\end{align*}
is an injective group homomorphism, preserving the action of both sides on $X^n$.
Moreover, if $X$ is quasisimple then the map is an isomorphism.
\end{lem}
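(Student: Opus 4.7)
The proof is essentially a patient unpacking of semidirect and wreath product conventions, with the only genuinely substantive point being the surjectivity claim in part~(b).

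For part~(a), call the stated map $\Phi$. On underlying sets $\Phi$ is obviously a bijection, since both sides are in natural bijection with $X^n\times\Gamma^n\times {\sf S}_n$. To see that it is a group homomorphism I would simply multiply two general elements on the left-hand side, using the semidirect product formula in $X\rtimes\Gamma$ together with the wreath product formula recalled in Section~2, and verify that the result matches the product of the images on the right-hand side, using the induced action of $\Gamma\wr {\sf S}_n$ on $X^n$ displayed at the start of Section~4. Since $X^n$ embeds into both sides in the obvious way and $\Phi$ restricts to the identity on $X^n$, the claim about preservation of the conjugation action on $X^n$ is automatic.

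For part~(b), write $\Psi$ for the stated map. Well-definedness is clear: a permutation of coordinates composed with a coordinatewise action by elements of $\Gamma\le \aut X$ is an automorphism of $X^n$ that preserves $U^n$. The homomorphism property is checked by a direct computation, matching $\Psi(a)\Psi(b)$ with $\Psi(ab)$ using the conventions $\pi\rho=\rho\circ\pi$ in ${\sf S}_n$ and $\alpha\beta=\beta\circ\alpha$ in $\aut X$, together with the wreath product multiplication $((f_i),\pi)\cdot((g_i),\rho)=((f_ig_{\pi(i)}),\pi\rho)$. Preservation of the action of $\Gamma\wr {\sf S}_n$ on $X^n$ is then immediate, since that action (also displayed at the start of Section~4) is given by exactly the formula defining $\Psi$. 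Finally, injectivity follows by evaluating $\Psi((\mathbf f,\pi))$ first on tuples of the form $(1,\ldots,1,x,1,\ldots,1)$ with $x\neq 1$, which forces $\pi=1$, and then on arbitrary tuples, which forces each $f_i$ to be the identity.

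The remaining step, and the main (though standard) obstacle, is surjectivity of $\Psi$ when $X$ is quasisimple. In that case each factor $X_i:=1\times\cdots\times X\times\cdots\times 1$ of $X^n$ is a component, and by the standard fact that the components of a direct product of quasisimple groups are the obvious factors, these are precisely the components of $X^n$. Hence any $\varphi\in\aut{X^n}$ permutes the $X_i$, say $\varphi(X_i)=X_{\pi(i)}$ for some $\pi\in {\sf S}_n$, and the restriction of $\varphi$ to $X_i$, transported via the natural identifications $X_i\cong X\cong X_{\pi(i)}$, yields an element $f_i\in\aut X$ such that
\[
 \varphi\bigl((x_1,\ldots,x_n)\bigr)=\bigl(f_{\pi^{-1}(1)}(x_{\pi^{-1}(1)}),\ldots,f_{\pi^{-1}(n)}(x_{\pi^{-1}(n)})\bigr).
\]
If in addition $\varphi$ stabilizes $U^n$, then evaluation on tuples supported in a single coordinate forces $f_i(U)=U$ for every $i$, so $f_i\in\Gamma$, and therefore $\varphi=\Psi\bigl(((f_1,\ldots,f_n),\pi)\bigr)$, as required.
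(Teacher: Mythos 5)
Your proof is correct and, in spirit, matches the paper's: the paper calls all parts of the lemma straightforward verifications except the surjectivity in part~(b), which it delegates to Lemma~10.24 of \cite{N}. Your direct argument for that surjectivity --- identify the factors $X_i$ as the components of $X^n$ (using quasisimplicity and the fact that distinct components centralize one another, which forces any further component into $\zent{X^n}$, a contradiction), conclude that any automorphism permutes the $X_i$, read off $\pi$ and the $f_i$, and then use stabilization of $U^n$ to place each $f_i$ in $\Gamma$ --- is the standard one and supplies exactly the content of that citation.
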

 
\begin{proof}
For a proof of the last statement in Part~(b) see Lemma~10.24 in \cite{N}. All other parts are straightforward verifications.
\end{proof}

\begin{thm}\label{direct}
Suppose that $S$ is simple, that $X$ is a universal covering group of
$S$, and that $\chi\in\irr X$. Suppose further that $U<X$ and $\mu\in\irr U$ are such that
$(\chi,\mu)$ satisfies the inductive Feit condition.
Set $\Gamma={\rm Aut}(X)_U$,
$\tilde \chi=\chi^n \in \irr{X^n}$, $\tilde\mu=\mu^n \in \irr{U^n}$, 
and $\tilde\Gamma=\Gamma \wr {\sf S}_n$. 
Then $U^n=\norm {X^n}{U^n}$,
$(\tilde\Gamma \times \c G)_{\tilde \chi}=(\tilde\Gamma \times \c G)_{\tilde\mu}$,
and
$$(X^n \rtimes \tilde\Gamma_{\tilde\chi^{\c G}}, X^n, \tilde\chi)_{\c G} \relc 
(U^n \rtimes \tilde\Gamma_{\tilde\mu^{\c G}}, U^n, \tilde\mu )_{\c G} \, .$$
\end{thm}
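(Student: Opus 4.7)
The plan is to derive everything from Lemma~\ref{goingwreath} applied to the $\c G$-triple relation $(X \rtimes \Gamma_{\chi^{\c G}}, X, \chi)_{\c G} \relc  (U \rtimes \Gamma_{\chi^{\c G}}, U, \mu)_{\c G}$ provided by condition~(iii) of Definition~\ref{inductive}, then translate the output into the claimed form using the structural isomorphisms of Lemma~\ref{wreath}.

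The two preliminary claims come essentially for free. The equality $U^n = \norm{X^n}{U^n}$ follows coordinate-wise from $U = \norm X U$. For the stabilizer identity $(\tilde\Gamma \times \c G)_{\tilde\chi} = (\tilde\Gamma \times \c G)_{\tilde\mu}$, observe that since every factor of $\tilde\chi = \chi \times \cdots \times \chi$ coincides (and likewise for $\tilde\mu$), a triple $((f_1,\ldots,f_n),\pi,\sigma)$ fixes $\tilde\chi$ if and only if $\chi^{f_i \sigma} = \chi$ for every $i$, with the permutation $\pi$ playing no role at all. Condition~(ii) of Definition~\ref{inductive} then converts this condition verbatim into the analogous one for $\mu$, and in particular $\tilde\Gamma_{\tilde\chi^{\c G}} = \tilde\Gamma_{\tilde\mu^{\c G}}$.

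Applying Lemma~\ref{goingwreath} to the relation from condition~(iii), with $n$ fixed, produces
\begin{equation*}
\bigl(((X \rtimes \Gamma_{\chi^{\c G}}) \wr {\sf S}_n)_{\tilde\chi^{\c G}}, X^n, \tilde\chi\bigr)_{\c G} \relc  \bigl(((U \rtimes \Gamma_{\chi^{\c G}}) \wr {\sf S}_n)_{\tilde\mu^{\c G}}, U^n, \tilde\mu\bigr)_{\c G}.
\end{equation*}
Lemma~\ref{wreath}(a) identifies $(X \rtimes \Gamma_{\chi^{\c G}}) \wr {\sf S}_n \cong X^n \rtimes (\Gamma_{\chi^{\c G}} \wr {\sf S}_n)$ preserving conjugation on $X^n$, and similarly for $U$. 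Since inner automorphisms act trivially on characters, $X^n$ stabilizes $\tilde\chi$ pointwise, so on the first side the stabilizer splits as $X^n \rtimes (\Gamma_{\chi^{\c G}} \wr {\sf S}_n)_{\tilde\chi^{\c G}}$, and analogously for $\tilde\mu$ on the second side.

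The final and most delicate step --- the only one that I would describe as a genuine obstacle, though a mild one --- is the identification $(\Gamma_{\chi^{\c G}} \wr {\sf S}_n)_{\tilde\chi^{\c G}} = \tilde\Gamma_{\tilde\chi^{\c G}}$, which is meaningful because Lemma~\ref{wreath}(b), an isomorphism since $X$ is quasisimple, realizes $\tilde\Gamma = \Gamma \wr {\sf S}_n$ as a subgroup of $\aut{X^n}_{U^n}$. An element $((f_i),\pi) \in \tilde\Gamma$ lies in $\tilde\Gamma_{\tilde\chi^{\c G}}$ if and only if there exists a single $\sigma \in \c G$ with $\chi^{f_i} = \chi^\sigma$ for every $i$; this forces each $f_i \in \Gamma_{\chi^{\c G}}$, and so both stabilizers are cut out by exactly the same condition. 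The subtlety worth flagging is that the single-$\sigma$ requirement is genuinely restrictive --- distinct indices may not be paired with different Galois elements --- so $\tilde\Gamma_{\tilde\chi^{\c G}}$ is in general strictly smaller than $\Gamma_{\chi^{\c G}} \wr {\sf S}_n$, and one must take care not to conflate the two. With this identification and its counterpart for $\tilde\mu$, the $\c G$-triple relation produced by Lemma~\ref{goingwreath} becomes exactly the one asserted by the theorem.
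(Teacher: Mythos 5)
Your proof is correct and takes essentially the same route as the paper: both reduce the $\c G$-triple relation to Lemma~\ref{goingwreath} applied to condition~(iii) of Definition~\ref{inductive}, then use Lemma~\ref{wreath}(a) to recast the wreath of semidirect products as a semidirect product with a wreath. The only cosmetic difference is that the paper first observes $((X\rtimes \Gamma)_{\chi^{\c G}}\wr {\sf S}_n)_{\tilde\chi^{\c G}} = ((X\rtimes \Gamma)\wr {\sf S}_n)_{\tilde\chi^{\c G}}$ and then applies Lemma~\ref{wreath}(a) to the full group $(X\rtimes \Gamma)\wr {\sf S}_n$, whereas you apply Lemma~\ref{wreath}(a) directly to the smaller group $(X\rtimes\Gamma_{\chi^{\c G}})\wr{\sf S}_n$ and identify stabilizers afterward via the inclusion $\Gamma_{\chi^{\c G}}\wr{\sf S}_n \leq \tilde\Gamma$ — your remark that the single-$\sigma$ requirement makes $\tilde\Gamma_{\tilde\chi^{\c G}}$ potentially strictly smaller than $\Gamma_{\chi^{\c G}}\wr{\sf S}_n$, yet equal to $(\Gamma_{\chi^{\c G}}\wr{\sf S}_n)_{\tilde\chi^{\c G}}$, is exactly the right point. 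Your appeal to Lemma~\ref{wreath}(b) is harmless but not actually needed for this identification; the paper does not invoke it here.
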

 
\begin{proof}
Since $\norm X U = U$, we also have $\norm{X^n}{U^n}= U^n$ and the first statement holds.
  
An element $((f_1,\ldots,f_n),\pi,\sigma)\in \tilde{\Gamma}\times \c G$
fixes $\tilde\chi$ if and only if $\chi^{f_i\sigma}=\chi$ for all $i=1,\ldots,n$, i.e., if and only if
$(f_i,\sigma)\in (\Gamma\times \c G)_\chi$ for all $i=1,\ldots,n$. 
Similarly, $((f_1,\ldots,f_n),\pi,\sigma)$ fixes $\mu$ if and only if
$(f_i,\sigma)\in (\Gamma\times \c G)_\mu$ for all $i=1,\ldots,n$. 
Since $(\Gamma\times \c G)_\chi = (\Gamma\times \c G)_\mu$ by the inductive Feit condition,
we obtain $(\tilde\Gamma \times \c G)_{\tilde \chi}=(\tilde\Gamma \times \c G)_{\tilde\mu}$ 
and the second statement holds. 
 
In order to prove the last statement set
$$(G,N,\theta)=((X\rtimes\Gamma)_{\chi^{\c G}}, X, \chi)\quad \text{and}\quad 
(H,M,\varphi)=((U\rtimes\Gamma)_{\mu^{\c G}}, U, \mu)\,.$$
By the inductive Feit condition we have $(G,N,\theta)_{\c G} \relc  (H,N,\varphi)_{\c G}$.
Applying Lemma~\ref{goingwreath} we obtain
$$\bigl(((X\rtimes \Gamma)_{\chi^{\c G}}\wr {\sf S}_n)_{\tilde\chi^{\c G}}, X^n,\tilde\chi \bigr)_{\c G} \relc 
\bigl(((U\rtimes \Gamma)_{\mu^{\c G}}\wr {\sf S}_n)_{\tilde\mu^{\c G}}, U^n, \tilde\mu \bigr)_{\c G}\, .$$
Moreover, one has $((X\rtimes \Gamma)_{\chi^{\c G}}\wr {\sf S}_n)_{\tilde\chi^{\c G}} = 
((X\rtimes \Gamma)\wr {\sf S}_n)_{\tilde\chi^{\c G}}$ and 
$((U\rtimes \Gamma)_{\mu^{\c G}}\wr {\sf S}_n)_{\tilde\mu^{\c G}} =
((U\rtimes \Gamma)\wr {\sf S}_n)_{\tilde\mu^{\c G}}$. Using the isomorphism in Lemma~\ref{wreath}(a) we obtain
$$\bigl((X^n\rtimes \tilde\Gamma)_{\tilde\chi^{\c G}}, X^n,\tilde\chi \bigr)_{\c G} \relc 
\bigl((U^n \rtimes \tilde\Gamma)_{\tilde\mu^{\c G}}, U^n, \tilde\mu \bigr)_{\c G}$$
and the proof is complete.
\end{proof}

 The following key theorem deals with a special situation to which the proof of the main theorem
 of this section, Theorem~\ref{main}, will be reduced.

\begin{thm}\label{key2}
Suppose that $K \nor G$ and $K/\zent K\cong S^n$, 
where $S$ is a non-abelian finite simple group which satisfies the inductive Feit condition.   
Moreover, suppose that $\cent G{\zent K}$ acts transitively by conjugation on the $n$ minimal
normal subgroups of $K/\zent K$.
Let $\tau \in \irr{K}$ be $\cent G{\zent K}$-invariant.
 Then there exists a pair $(U,\mu)$, where $\zent K \le U<K$ with $\norm KU=U$,
 $G=K\norm GU$, $\norm G{\norm GU}=\norm GU$, and $\mu \in \irr{U}$, such that
$$(G_{\tau^{\c G}},K, \tau)_{\c G} \relc  (\norm GU_{\mu^{\c G}},U, \mu)_{\c G} \, .$$ 
In particular (see  Lemma~\ref{key1})
there is a $\c G$-equivariant bijection
$$^*:\irr{G|\tau^{\c G} }\rightarrow  \irr{\norm G U | \mu^{\c G}}\,$$
preserving ratios of characters.
Therefore, if $\chi \in \irr{G|\tau^{\c G}}$ and $\xi=\chi^*$, then $\chi(1)/\tau(1)=\xi(1)/\mu(1)$, $\Q(\chi)=\Q(\xi)$,
and $[\chi_{\zent K}, \xi_{\zent K}] \ne 0$.
\end{thm}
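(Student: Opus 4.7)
The plan is to reduce the $\c G$-triple $(G, K, \tau)$ to the universal cover $X^n$ of $S^n$, apply the inductive Feit condition for $S$ componentwise and promote it via Theorem~\ref{direct}, and then descend to $K$ inside $G$ in three stages using Lemma~\ref{goingquotient}, Lemma~\ref{dotproduct}, and the butterfly Theorem~\ref{butterfly}.

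First, I set up the covering. Let $X$ be a universal covering group of $S$, set $K_0 := [K,K]$, and observe that $K = K_0 \zent K$ since $K/\zent K$ is perfect. Then $K_0$ is perfect with $K_0/\zent{K_0} \cong S^n$, so $K_0 \cong X^n/Z_0$ for some central $Z_0 \le \zent{X^n}$, and the conjugation action of $G$ on $K_0$ lifts to an action on $X^n$ permuting the $n$ copies of $X$ in a way compatible with the given action on the factors of $K/\zent K$. Let $\hat\tau \in \irr{X^n}$ be the inflation of an irreducible constituent of $\tau|_{K_0}$; the $\cent G{\zent K}$-invariance of $\tau$ combined with the transitive permutation of factors (and a harmless $\c G$-twist, since $\relc$ is defined at the level of $\c G$-orbits) forces $\hat\tau = \chi^n$ for a single $\chi \in \irr X$. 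Next I invoke the inductive Feit condition for $\chi$ to obtain $V < X$ self-normalizing and intravariant, together with $\nu \in \irr V$ satisfying $(\Gamma \times \c G)_\chi = (\Gamma \times \c G)_\nu$ for $\Gamma := \aut X_V$, and
\[
(X \rtimes \Gamma_{\chi^{\c G}}, X, \chi)_{\c G} \relc (V \rtimes \Gamma_{\chi^{\c G}}, V, \nu)_{\c G}.
\]
Writing $\tilde\chi := \chi^n$, $\tilde\nu := \nu^n$, and $\tilde\Gamma := \Gamma \wr {\sf S}_n$, Theorem~\ref{direct} promotes this to
\[
(X^n \rtimes \tilde\Gamma_{\tilde\chi^{\c G}}, X^n, \tilde\chi)_{\c G} \relc (V^n \rtimes \tilde\Gamma_{\tilde\chi^{\c G}}, V^n, \tilde\nu)_{\c G},
\]
with $V^n$ self-normalizing in $X^n$.

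Then I descend in three stages. Since $\hat\tau = \chi^n$ is the inflation of a character of $K_0 \cong X^n/Z_0$, the subgroup $Z_0$ lies in $\ker\tilde\chi$ and (by construction) in $\ker\tilde\nu$, so Lemma~\ref{goingquotient} pushes the relation down to the quotient, yielding self-normalizing $U_0 := V^n/Z_0$ in $K_0$. Next, Lemma~\ref{dotproduct} applied to the central product $K = K_0 \zent K$ incorporates $\zent K$, producing $U := U_0 \zent K$ with $\zent K \le U < K$, $\norm K U = U$, and $\mu \in \irr U$ of the form $\mu_0 \cdot \lambda$ for an appropriate linear character $\lambda$ of $\zent K$. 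Finally, the butterfly Theorem~\ref{butterfly} transfers the relation from the lifted semidirect product to the ambient group $G_{\tau^{\c G}}$, giving
\[
(G_{\tau^{\c G}}, K, \tau)_{\c G} \relc (\norm G U_{\mu^{\c G}}, U, \mu)_{\c G};
\]
the properties $G = K \norm G U$ and $\norm G{\norm G U} = \norm G U$ then follow from the intravariance and self-normalization of $V$ in $X$, transported through the covering.

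The main obstacle will be verifying the hypothesis $\hat\epsilon(\hat G) = \epsilon(G)$ of the butterfly theorem: one must check that every automorphism of $K$ induced by $G_{\tau^{\c G}}$ lifts through $X^n \to K_0$ compatibly with the wreath structure $\aut X \wr {\sf S}_n$. This is precisely where the transitivity of $\cent G{\zent K}$ on the $n$ factors of $K/\zent K$ is indispensable, as it guarantees that the outer action of $G_{\tau^{\c G}}$ on $K_0$ factors through $\tilde\Gamma_{\tilde\chi^{\c G}}$. A secondary technical point is the reduction in the first step to a single $\chi \in \irr X$ rather than an arbitrary tensor product of $\c G$-conjugate characters; this will be handled by a Galois twist, using that $\relc$ is defined at the level of $\c G$-orbits so that the representative of $\hat\tau$'s orbit can be chosen freely.
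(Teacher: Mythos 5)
Your overall architecture mirrors the paper's (lift the perfect core of $K$ to a universal cover $X^n$ of $S^n$, amplify the inductive Feit condition via Theorem~\ref{direct}, then descend to $G$ using Lemma~\ref{goingquotient}, Lemma~\ref{dotproduct}, and Theorem~\ref{butterfly}), and you correctly identify that verifying the butterfly hypotheses is where intravariance and the transitivity of $\cent G{\zent K}$ are indispensable. The genuine gap is that your descent is ordered incorrectly: you apply Lemma~\ref{dotproduct} immediately after Lemma~\ref{goingquotient}, with $\zent K$ in the role of the central subgroup $Z$, while the ambient group of your $\c G$-triple is still a semidirect product of the form $K_0 \rtimes \aut{K_0}_{\ldots}$ (the image of $X^n \rtimes \tilde\Gamma_{\tilde\chi^{\c G}}$ after killing $Z_0$). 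But Lemma~\ref{dotproduct} requires $Z$ to be an abelian normal subgroup of that ambient group centralizing $N = K_0$; $\zent K$ is simply not a subgroup of $K_0 \rtimes \aut{K_0}_{\ldots}$, and only $\zent{K_0} = K_0 \cap \zent K$ is available there, which is too small. The paper's proof applies the butterfly theorem first, transporting the relation to a triple whose ambient group is $G_{\tau_1^{\c G}}$ (with middle term and character still $K_1 = K'$ and $\tau_1 = \tau_{K'}$), and only then invokes Lemma~\ref{dotproduct} with $C = \zent K \nor G$, which does centralize $K_1$ inside $G$. The butterfly step must precede the dotproduct step, because the central product step needs $\zent K$ to already live in the ambient group of the triple, and that is only true after the ambient group has been transferred into $G$.

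Two secondary omissions. Between Theorem~\ref{direct} and Lemma~\ref{goingquotient} you need a restriction step (an application of Lemma~\ref{goingsubgroup}, the paper's Step~3) to cut $\tilde\Gamma_{\tilde\chi^{\c G}}$ down to those automorphisms that stabilize $Z_0 = \ker{X^n \to K_0}$; without this, $Z_0 \times 1$ need not be normal in $X^n \rtimes \tilde\Gamma_{\tilde\chi^{\c G}}$ and Lemma~\ref{goingquotient} does not apply. Also, the ``harmless $\c G$-twist'' you invoke to force $\hat\tau = \chi^n$ with a single $\chi$ is unnecessary and would not mesh with Lemma~\ref{goingwreath} as stated: the genuine $\cent G{\zent K}$-invariance of $\tau$ (not merely invariance up to $\c G$-conjugacy), together with the transitive conjugation action of $\cent G{\zent K}$, already forces the $n$ tensor factors to be literally equal as characters of $X$, which is exactly what Theorem~\ref{direct} requires.
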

 
\begin{proof}
{\sc Step 1:} We first have to establish some results concerning the group theoretic situation.
For ease of notation set $C:=\zent K \nor G$ and $V:=\cent G C \nor G$.
By hypothesis, there exists a minimal normal subgroup $T/C\cong S$ of $K/C$ and 
elements $v_1=1,v_2,\ldots,v_n\in V$ such that $K/C=T_1/C\times\cdots\times T_n/C$,
where $T_i:=T^{v_i}$ for $i=1,\ldots,n$. Then $T_i/C\cong S$ for all $i$.

Next, as it is well-known,  we show that for $i\neq j$ in $\{1,\ldots,n\}$ one has $[T_i,T_j]=1$. In fact, 
$[T_i,T_j]\le C=\zent K$ implies $[T_i,T_j,T_i]=[T_j,T_i,T_i]=1$. The Three-Subgroup
Lemma then implies that $[T'_i,T_j]=1$. But since $T_i/C\cong S$ is non-abelian simple, 
we have $T'_iC=T_i$ so that also $[T_i,T_j]=1$.

Since $K=T_1\cdots T_n$ and $[T_i,T_j]=1$ if $i\neq j$, we have $K'=T'_1\cdots T'_n$. This implies
$K'C=T'_1\cdots T'_nC=T_1\cdots T_n =K$. Now
let $i\in\{1,\ldots,n\}$. Then $C=\zent{T_i}$, since $C$ centralizes $T_i$ and $T_i/C\cong S$
has no nontrivial central subgroup. By \cite[31.1]{A00}, $T_i=T'_i C$ and $T'_i\nor K$ is a 
quasisimple (in particular perfect) group with $T'_i/\zent{T'_i}\cong S$, where 
$\zent{T'_i}=C\cap T'_i= C\cap T' =: D$ is independent of $i$, since $T_i=T^{v_i}$ and $v_i$ 
centralizes $C$.

Next we show that $\zent{K'}=D$. Clearly $D\le \zent{K'}$, since $D\le K'$ and $D\le C=Z(K)$
centralizes $K'$. Conversely, suppose that $z\in\zent{K'}$ and write $z=t_1\cdots t_n$ with
$t_i\in T'_i$ for $i=1,\ldots,n$. Then, for every $i\in\{1,\ldots,n\}$, we have
$[t_i,T'_i] = [z,T'_i] \subseteq [z,K'] = \{1\}$, so that $t_i\in\zent{T'_i}=D$. Thus, $x\in D$.

We now have $\zent{K'}=D\le C\cap T' \le C\cap K' \le \zent{K'}$ which implies equality everywhere.
Next we show that $T' = T\cap K'$. Clearly, $T'\le T\cap K'$. Moreover, 
$T'C\le (T\cap K')C\nor T$ with $T/(T\cap K')C$ is abelian. But $T/C\cong S$ is perfect, so that
$(T\cap K')C=T$. This implies $(T\cap K')/D=(T\cap K')/(C\cap K') \cong (T\cap K')C/C
\cong T/C\cong S$. But from above we have $T'/D\cong S$. Thus $T'\le T\cap K'$ have the same
order so that $T\cap K' = T'$ as desired.
Conjugating with $v_i$ we also obtain $T_i\cap K' =T'_i$ for all $i=1,\ldots, n$.

\smallskip
We now reset the notation and define $K_1:=K'$,  $C_1:=K_1\cap C=D$, and $X:=T'=T\cap K'$, 
a quasisimple group with $\zent{K_1}=\zent X=C_1$. While $K$ might not have been perfect, note that
$K'$ is perfect, since $K'=T'_1\cdots T'_n$ with pairwise commuting and 
perfect normal subgroups $T'_i$. As will become apparent, this is the reason we need to work with $K_1$ first.
Note that $K_1$ is the central product of $X_i:=X^{v_i}$, $i=1,\ldots,n$.
Write $\tau_C= e \nu$, where $\nu\in\irr C$ is linear and 
$e$ is a positive integer, and set $\tau_1:=\tau_{K_1}$. Note that $\tau_1\in\irr{K_1}$, 
since $K=K_1 \zent K$, see Corollary 10.8 in \cite{N}. Finally set $\nu_1:=\nu_{C_1}$, a linear
character of $C_1$. The following diagram depicts the situation we have established.

\begin{center}
\unitlength 5mm
{\scriptsize
\begin{picture}(12,10)
\put(0,0){$\bullet$}
\put(0,2){$\bullet$}
\put(0,5){$\bullet$}
\put(3,1){$\bullet$}
\put(3,3){$\bullet$}
\put(3,6){$\bullet$}
\put(6,7){$\bullet$}
\put(9,8){$\bullet$}

\put(0.1,0.1){\line(0,1){5}}
\put(3.1,1.1){\line(0,1){5}}
\put(0.1,0.1){\line(3,1){3}}
\put(0.1,2.1){\line(3,1){3}}
\put(0.1,5.1){\line(3,1){9}}

\put(-1.8,0){$C_1,\nu_1$}
\put(-1.3,2){$X$}
\put(-1.9,5){$K_1,\tau_1$}
\put(3.6,1){$C,\nu$}
\put(3.6,3){$T$}
\put(2.4,6.6){$K,\tau$}
\put(5.8,7.6){$V$}
\put(8.8, 8.6){$G$}
\end{picture}
}
\end{center}

{\sc Step 2:} Consider the homomorphism
$$\pi\colon X^n\to K_1\, , \quad  (x_1,\ldots, x_n)\mapsto x_1^{v_1}\cdots x_n^{v_n}\, ,$$
on the exterior direct product group $X^n$. It is surjective, since $\pi(X^n)=X_1\cdots X_n=K_1$.
Since $K_1$ is the central product of $X_1,\ldots,X_n$, we can write
$$\tau_1=\gamma_1\cdot \ldots \cdot\gamma_n$$
for some $\gamma_i\in\irr{X_i|\nu_1}$ for $i=1,\ldots,n$, see Lemma~10.7 in \cite{N} and 
observe that $\gamma_i|_{C_1}$ is a summand of the character $\tau_1|_{C_1}=e\nu_1$.
Since $X_i=X^{v_i}$, we have $\gamma_i=\rho_i^{v_i}$ for some $\rho_i\in\irr{X|\nu_1}$. Here, $\rho_i$
lies above $\nu_1$, since $V$ centralizes $C_1$. For each $x\in X$ and each $i=1,\ldots,n$, we have
$\bigl(\prod_{j\neq i}\gamma_j(1)\bigr)\rho_i(x)=\bigl(\prod_{j\neq i}\gamma_j(1)\bigr)\cdot \gamma_i(x^{v_i}) = \tau_1(x^{v_i}) = \tau_1(x)$,
since  $\tau$ and also $\tau_1=\tau_{K_1}$ are $V$-stable. Thus, for any two $i, i'$ in $\{1,\ldots,n\}$,
$\rho_i$ is a rational multiple of $\rho_{i'}$. Since they are all irreducible we obtain $\rho_1=\cdots = \rho_n$
which we denote by $\rho$. Thus, the inflation of $\tau_1$ under the epimorphism 
$\pi\colon X^n\to K_1$ is equal to $\tilde\rho:=\rho^n\in\irr{X^n}$.

Let $p\colon Y\to S$ be a universal central extension of $S$. We fix an isomorphism $\phi\colon X/C_1\to S$. 
Since its composition $t\colon X\to X/C_1\to S$ with the natural epimorphism is a central extension and since
$X$ is perfect, there exists a unique homomorphism 
$q\colon Y\to X$ such that $p=t\circ q$. Moreover, $q$ is then a universal central extension of $X$, 
see \cite[33.8]{A00}. We denote the inflation of $\rho\in\irr X$ via $q\colon Y\to X$ again by $\rho\in \irr Y$.
Since $S$ satisfies the inductive Feit condition, there exists $R<Y$ and $\delta\in\irr R$ such that 
$(\rho,\delta)$ satisfies the inductive Feit conditions (i)--(iii) in Definition~\ref{inductive}. 
In particular, $\zent Y\le R$.
Theorem~\ref{direct} then implies that 
\begin{equation}\label{eqn 0}
(Y^n\rtimes \tilde\Gamma_{\tilde\rho^{\c G}}, Y^n, \tilde\rho)_{\c G} \relc 
(R^n\rtimes \tilde\Gamma_{\tilde\delta^{\c G}}, R^n, \tilde\delta)_{\c G}\,. 
\end{equation}
Here, $\Gamma=\aut Y_R$, $\tilde\rho=\rho^n\in\irr{Y^n}$ and $\tilde\delta=\delta^n\in\irr{R^n}$.
By the inductive Feit condition, we have $(\Gamma\times \c G)_\rho=(\Gamma\times \c G)_\delta$,
which in turn implies $(\tilde\Gamma \times \c G)_{\tilde\rho}= (\tilde\Gamma \times \c G)_{\tilde\delta}$.
Applying Lemma~\ref{wreath}(b), (\ref{eqn 0}) becomes
\begin{equation}\label{eqn 1}
(Y^n\rtimes\aut{Y^n}_{R^n,\tilde\rho^{\c G}}, Y^n, \tilde\rho)_{\c G} \relc 
(R^n \rtimes\aut{Y^n}_{R^n,\tilde\delta^{\c G}}, R^n, \tilde\delta)_{\c G}\,.
\end{equation}
Note that $(\tilde\Gamma \times \c G)_{\tilde\rho}= (\tilde\Gamma \times \c G)_{\tilde\delta}$ immediately implies
\begin{equation}\label{eqn stab 1}
(\aut{Y^n}_{R^n}\times \c G)_{\tilde\rho}= (\aut{Y^n}_{R^n}\times \c G)_{\tilde\delta}
\end{equation}
and intersection with $\aut{Y^n}_Z\times \c G$ yields
\begin{equation}\label{eqn stab 2}
(\aut{Y^n}_{R^n,Z}\times \c G)_{\tilde\rho}= (\aut{Y^n}_{R^n,Z}\times \c G)_{\tilde\delta}\,.
\end{equation}

\medskip
{\sc Step 3:} Denote by $Z$ the kernel of the surjective homomorphism
$$\hat\pi:= \pi \circ q^n \colon Y^n\to X^n\to K_1\,,\quad 
(y_1,\ldots,y_n)\mapsto q(y_1)^{v_1}\cdots q(y_n)^{v_n}\,,$$
and denote by $\aut{Y^n}_{R^n,Z,\tilde\rho^{\c G}}$ the set of all automorphisms of $Y^n$ 
which stabilize $R^n$, $Z$, and $\tilde\rho^{\c G}$. Then, by Lemma~\ref{goingsubgroup}, the relation in
(\ref{eqn 1}) implies 
\begin{equation}\label{eqn 2}
(Y^n\rtimes\aut{Y^n}_{R^n,Z,\tilde\rho^{\c G}}, Y^n, \tilde\rho)_{\c G} \relc 
(R^n \rtimes\aut{Y^n}_{R^n,Z,\tilde\delta^{\c G}}, R^n, \tilde\delta)_{\c G}\,.
\end{equation}

\medskip
{\sc Step 4:} The first goal in this step is to show that $\hat\pi\colon Y^n\to K_1$ is a 
universal central extension of $K_1$. To this end consider the surjective homomorphism
$$K_1\to K_1/C_1\cong (X/C_1)^n\cong S^n\,,$$
where the first map is the canonical epimorphism, the second map sends
$(x_1C_1,\ldots, x_nC_1)$ with $x_i\in X_i=X^{v_i}$ to $(x_1^{v_1^{-1}}C_1,\ldots,x_n^{v_n^{-1}}C_1)$,
and the third map is $\phi^n$, where $\phi\colon X/C_1\to S$ is the fixed isomorphism from above.
It is straightforward to check that the composition of $\hat\pi$ and the above map $K_1\to S^n$ is equal
to $p^n$. Therefore, $Z=\ker{\hat\pi}$ is contained in $\ker{p^n}$ which is central in $Y^n$, and $\hat\pi$ is a
central extension of $K_1$.
Since the above map $K_1\to S^n$ has kernel $C_1$, it is a central extension. 
Moreover, since $K_1$ is perfect, by \cite[33.8]{A00}(b), $Y^n$ is a universal covering group of $K_1$.
Since $\hat\pi\colon Y^n\to K_1$ is a central extension of $K_1$ and $Y^n$ is a universal covering group of $K_1$, 
the map $\hat\pi\colon Y^n\to K_1$ is a universal central extension of $K_1$.

Next set $U_1:=\hat\pi(R^n)\le K_1$ and
consider the component-wise homomorphism
$$\tilde\pi\colon Y^n\rtimes\aut{Y^n}_{R^n,Z,\tilde\rho^{\c G}} \to K_1\rtimes \aut{K_1}_{U_1,\tau_1^{\c G}}\,,
\quad(\underline{y},\alpha)\mapsto (\hat\pi(\underline{y}),\bar\alpha)\,,$$
where, for any $\alpha\in\aut{Y^n}_Z$ we denote by $\bar\alpha\in\aut{K_1}$ the unique automorphism 
such that $\hat\pi\circ \alpha= \bar\alpha\circ\hat\pi$. Note that if $\alpha$ additionally stabilizes $R^n$ 
and $\tilde\rho^{\c G}$ then $\bar\alpha$ stabilizes $\hat\pi(R^n)=U_1$ and $\tau_1^{\c G}$, since 
$\tilde\rho$ is the inflation of $\tau_1$ with respect to $\hat\pi\colon Y^n\to K_1$.

For later use we record that $\norm{K_1}{U_1}=U_1<K_1$. In fact this follows immediately from the 
surjective map $\hat\pi\colon Y^n\to K_1$, noting that $Z=\ker{\hat\pi}\le R^n$, $\hat\pi(R^n)=U_1$,
and $\norm{Y^n}{R^n}=R^n<Y^n$.

The next goal is to show that $\tilde\pi$ is surjective. It suffices to show that 
$\aut{Y^n}_{R^n,Z,\tilde\rho^{\c G}} \to \aut{K_1}_{U_1,\tau_1^{\c G}}$, $\alpha\mapsto \bar\alpha$,
is surjective. So let $\beta\in \aut{K_1}_{U_1,\tau_1^{\c G}}$. By Corollary B.8(b) in \cite{N}, there exists
$\alpha\in\aut{Y^n}_Z$ with $\bar\alpha=\beta$. Moreover, since $\hat\pi(R^n)=U_1$ and 
$\ker{\hat\pi}=Z\le \zent{Y^n}\le R^n$,
one has $\alpha(R^n)=\alpha(\hat\pi^{-1}(U_1))=\hat\pi^{-1}(U_1) = R^n$. Also, 
$\tilde\rho^\alpha=\mathrm{inf}_{\hat\pi}(\tau_1)^\alpha = \mathrm{inf}_{\hat\pi}(\tau_1^\beta) =
\mathrm{inf}_{\hat\pi}(\tau_1^\sigma) = \mathrm{inf}_{\hat\pi}(\tau_1)^\sigma= \tilde\rho^\sigma$
for some $\sigma\in \c G$.

Let $L$ denote the kernel of $\tilde\pi$. Our final objective in this step is to show that $L$ satisfies the hypotheses 
in Lemma~\ref{goingquotient}, i.e., (i) $L\le\ker{\tilde\rho}$, (ii) $L\le\ker{\tilde\delta}$,
(iii) $L\le \cent{Y^n\rtimes\aut{Y^n}_{R^n,Z,\tilde\rho^{\c G}}}{Y^n}$, 
and (iv) $\cent{Y^n\rtimes\aut{Y^n}_{R^n,Z,\tilde\rho^{\c G}}/L}{Y^n/L} = 
\cent{Y^n\rtimes\aut{Y^n}_{R^n,Z,\tilde\rho^{\c G}}}{Y^n}/L$. 
First note that, since $\tilde\pi$ is a component-wise map, $\ker{\tilde\pi}$ consists of all pairs
$(\underline{y},\alpha)$ with $\underline{y}\in\ker{\hat\pi}=Z$ and $\bar\alpha=\id_{K_1}$. But by 
Corollary B.8(b) in \cite{N}, this implies $\alpha=\id_{K_1}$. Therefore, 
$L=\{(z,\id_{Y^n})\mid z\in Z\}$. But $Z=\ker{\hat\pi}$ clearly acts trivially on the inflation $\tilde\rho$ of 
$\tau_1$ via $\hat\pi\colon Y^n\to K_1$ which implies (i).
By the relation in (\ref{eqn 2}) and Part~(iii) in Definition~\ref{Hiso} of $\relc $, $\tilde\rho$ and $\tilde\delta$ 
lie above the same linear character of $\zent {Y^n}$. Since $Z=\ker{\hat\pi}\le \zent{Y^n}$, we obtain
$Z\le \ker{\tilde\rho}\cap \zent{Y^n} = \ker{\tilde\delta}\cap \zent{Y^n}\le \ker{\tilde\delta}$. This now implies (ii).
In order to see that (iii) holds, just note that $Z\le\zent{Y^n}$, so that $L$ centralizes $Y^n$.
Next we show (iv). The right hand side is always contained in the left hand side. So let $(\underline y,\alpha)\in
Y^n\rtimes\aut{Y^n}_{R^n,Z,\tilde\rho^{\c G}}$ such that $(\underline y, \alpha)L$ centralizes $Y^n/L$. Then, for all
$\underline y'\in Y^n$ we have $(\underline y')^{(\underline y,\alpha) } = \underline y' l_{\underline y'}$, for some element
$l_{\underline y'}\in L$. One verifies easily that the function $\underline y'\mapsto l_{\underline y'}$ defines a group 
homomorphism $Y^n\to L$. But $L$ is abelian and $Y^n$ is perfect, so that this homomorphism is trivial. 
Thus, $(\underline y, \alpha)$ centralizes $Y^n$ and (iv) is proved.

Let $\mu_1\in\irr{U_1$} be the unique character whose inflation via $\hat\pi\colon Y^n\to K_1$ is equal to $\tilde\delta$.
Since $L$ satisfies the hypotheses of Lemma~\ref{goingquotient}, we obtain, after applying the isomorphism 
$Y^n\rtimes\aut{Y^n}_{R^n,Z,\tilde\rho^{\c G}}/L \cong K_1\rtimes \aut{K_1}_{U_1,\tau_1^{\c G}}$
induced by $\tilde\pi$, that
\begin{equation}\label{eqn 3}
(K_1\rtimes \aut{K_1}_{U_1,\tau_1^{\c G}}, K_1,\tau_1)_{\c G} \relc 
(U_1\rtimes \aut{K_1}_{U_1,\mu_1^{\c G}}, U_1,\mu_1)_{\c G}\,.
\end{equation}

\medskip
{\sc Step 5:} Define 
$\epsilon\colon G_{\tau_1^{\c G}}\to\aut{K_1}_{\tau_1^{\c G}}$ and 
$\hat\epsilon\colon K_1\rtimes\aut{K_1}_{\tau_1^{\c G}} \to \aut{K_1}_{\tau_1^{\c G}}$ 
by $\epsilon(g)(y):=y^g$ and $\hat\epsilon(x,f)(y):=f(x^{-1}yx)$ for $g\in G_{\tau_1^{\c G}}$, $x, y\in K_1$ and 
$f\in \aut{K_1}_{\tau_1^{\c G}}$.
Set $A:=\epsilon(G_{\tau_1^{\c G}})$ and $\hat G:=\hat\epsilon^{-1}(A)$.
Note that $K_1\le \hat G$, since $\hat\epsilon(K_1)=\Inn(K_1) = \epsilon(K_1)\le \epsilon(G_{\tau_1^{\c G}})$.
Therefore we can apply Lemma~\ref{goingsubgroup} to $K_1\le \hat G\le K_1\rtimes\aut{K_1}_{\tau_1^{\c G}}$ 
and (\ref{eqn 3}) to obtain
\begin{equation}\label{eqn 4}
(\hat G,K_1,\tau_1)_{\c G} \relc  ((U_1\rtimes\aut{K_1}_{U_1,\mu_1^{\c G}}) \cap \hat G, U_1,\mu_1)_{\c G}\,.
\end{equation}

\medskip
{\sc Step 6:} We will next replace (\ref{eqn 4}) using the butterfly Theorem~\ref{butterfly} for the comparison
chain of subgroups $K_1\le \norm G{U_1}_{\tau_1^{\c G}}\le G_{\tau_1^{\c G}}$. We have to check that the hypotheses of
Theorem~\ref{butterfly} are satisfied, i.e., that (i) $\hat\epsilon(\hat G)=\epsilon(G_{\tau_1^{\c G}})$, 
(ii) $U_1\cent{G_{\tau_1^{\c G}}}{K_1} \le \norm{G_{\tau_1^{\c G}}}{U_1} \le G_{\tau_1^{\c G}}$, and
(iii) $\hat\epsilon((U_1\rtimes\aut{K_1}_{U_1,\tau_1^{\c G}})\cap \hat G) = \epsilon(\norm{G_{\tau_1^{\c G}}}{U_1})$.
Note first that (ii) is immediate, since 
$\cent{G_{\tau_1^{\c G}}}{K_1}\le \cent{G_{\tau_1^{\c G}}}{U_1}\le \norm{G_{\tau_1^{\c G}}}{U_1}$.

In order to prove (i) and (iii) we will need the equality $G=K_1 \cdot\norm G{U_1}$ which we show next.
Let $g\in G$. Then $U_1^g\le K_1=K_1$ and $c_g\in\aut{K_1}$.
Since $\hat\pi\colon Y^n\to K_1$ is a universal central extension (Step 4), 
\cite[Corollary B.8]{N} implies that there exists $\alpha\in\aut{Y^n}_Z$ such that
$\bar\alpha=c_g\in \aut{K_1}$, i.e., such that $c_g\circ \hat\pi = \hat\pi \circ \alpha$.
By Lemma~\ref{wreath}(b) and (c), $\alpha$ is of the form 
$(f_1,\ldots,f_n) \pi$ for some $f_1,\ldots,f_n\in\aut Y$ and $\pi\in{\sf S}_n$. Since $R$
satisfies the inductive Feit condition (i), for each $i=1,\ldots,n$, there exists
$y_i\in Y$ such that $R^{f_i}=R^{y_i}$. We claim that
$g':=\hat\pi(y_1^{-1},\ldots,y_n^{-1})g=q(y_1^{-1})^{v_1}\cdots q(y_n^{-1})^{v_n} g\in \norm G{U_1}$.
Since $U_1=q(R)^{v_1}\ldots q(R)^{v_n}$, it suffices to show that $q(R)^{v_ig'}\le U_1$ for all $i\in\{1,\ldots,n\}$.
Since $q(R)^{v_i}\le X_i$ commutes with all $q(R)^{v_j}\le X_j$ for $j\neq i$, we have 
$q(R)^{v_ig'}=q(R)^{v_i q(y_i^{-1})^{v_i}g}=q(R)^{q(y_i^{-1})v_ig}=
q(R^{y_i^{-1}})^{v_ig}=q(R^{f_i^{-1}})^{v_ig}=q(R)^{v_{\pi^{-1}(i)}}\le U_1$,
where the last equation follows from, $c_g\circ\hat\pi=\hat\pi\circ(f_1,\ldots,f_n)\pi\colon Y^n\to K_1$ applied to 
$1\times\cdots \times R^{f_i^{-1}}\times \cdots\times 1$ with $R^{f_i^{-1}}$ in the $i$-th component. 
Altogether, we have $g=q(y_1)^{v_1}\cdots q(y_n)^{v_n} g' \in K_1\cdot \norm G {U_1}$. Therefore,
$G=K_1 \cdot\norm G{U_1}$ .

Now we obtain $G_{\tau_1^{\c G}}= (K_1\cdot \norm G{U_1})_{\tau_1^{\c G}} = K_1\cdot \norm G{U_1}_{\tau_1^{\c G}}
= K_1\cdot \norm{G_{\tau_1^{\c G}}}{U_1}$, where the second equation holds, since $K_1$ stabilizes $\tau_1$ 
and in particular $\tau_1^{\c G}$. This implies $\epsilon(G_{\tau_1^{\c G}}) = \
\epsilon(K_1)\cdot\epsilon(\norm{G_{\tau_1^{\c G}}}{U_1}) \le
\hat\epsilon(K_1\rtimes \aut{K_1}_{U_1,\tau_1^{\c G}})$,
since $\epsilon(K_1)=\Inn(K_1)=\hat\epsilon(K_1)$ and 
$\epsilon(\norm{G_{\tau_1^{\c G}}}{U_1}) \le \aut{K_1}_{U_1,\tau_1^{\c G}} =
\hat\epsilon(\aut{K_1}_{U_1,\tau_1^{\c G}})$. Thus, $A\le \mathrm{im}(\hat\epsilon)$ which implies 
$\hat\epsilon(\hat\epsilon^{-1}(A))=A$ so that (i) holds.

In order to prove that the left hand side in (iii) is contained in the right hand side, let $u\in U_1$ and 
$\beta\in\aut{K_1}_{U_1,\tau_1^{\c G}}$ with $(u,\beta)\in \hat G$. Then $\hat\epsilon(u,\beta)=\epsilon(g)$
for some $g\in G_{\tau_1^G}$. This implies $\beta(x^u)=x^g$ for all $x\in K_1$. 
In particular, $U_1^g=\beta(U_1^u)=\beta(U_1)=U_1$, so that 
$g\in \norm{G_{\tau_1^{\c G}}}{U_1}$ and $\hat\epsilon(u,\beta)\in\epsilon(\norm{G_{\tau_1^{\c G}}}{U_1})$. 
Conversely, let $g\in \norm{G_{\tau_1^{\c G}}}{U_1}$. 
Since $\epsilon(g)\in\epsilon(G_{\tau_1^{\c G}})=\hat\epsilon(\hat G)$ by (i), there exists 
$(k,\beta)\in K_1\rtimes\aut{K_1}_{U_1,\tau_1^{\c G}}$ with 
$\epsilon(g)=\hat\epsilon(k,\beta)$. This implies $\beta(x^k)=x^g$ for all $x\in K_1$. In particular,
$\beta(U_1^k) = U_1^g= U_1=\beta(U_1)$ so that $U_1^k=U_1$ and $k\in \norm{K_1}{U_1} = U_1$ (see Step~4). 
This completes the proof of (iii).

Having verified all hypotheses, we can now apply Theorem~\ref{butterfly} to (\ref{eqn 4}) and obtain
\begin{equation}\label{eqn 5}
(G_{\tau_1^{\c G}},K_1,\tau_1)_{\c G} \relc  (\norm{G_{\tau_1^{\c G}}}{U_1}, U_1, \mu_1)_{\c G}\,.
\end{equation}
We claim that we can replace $\norm{G_{\tau_1^{\c G}}}{U_1}$ In (\ref{eqn 5}) with $\norm{G_{\mu_1^{\c G}}}{U_1}$.
Note that this does not follow automatically from the application of the butterfly theorem.
It follows immediately from the equality 
\begin{equation}\label{eqn stab 3}
(\norm G{U_1} \times \c G)_{\tau_1} = (\norm G{U_1} \times \c G)_{\mu_1}
\end{equation}
which we will show now. Let $(g,\sigma)\in \norm G{U_1} \times \c G$ and denote by $\beta\in\Aut(K_1)$ 
the conjugation map by $g$. Then $\beta\in\aut{K_1}_{U_1}$ and 
$\mu_1^g=\mu_1^\beta$ and $\tau_1^g= \tau_1^\beta$. 
Since $\hat\pi\colon Y^n\to K_1$ is a universal central extension with $Z=\ker{\hat\pi}\le R^n$ 
and since $\hat\pi(R^n)=U_1$, there exists $\alpha\in\aut{K_1}_{R^n,Z}$ such that 
$\beta(\hat\pi(\underline y))= \hat\pi(\alpha(\underline y))$ for all $\underline y \in Y^n$, see Corollary B.8(b) in \cite{N}.
Note that $\tilde\rho^{\alpha\sigma} = \mathrm{inf}_{\hat\pi}(\tau_1)^{\alpha\sigma} =
\mathrm{inf}_{\hat\pi}(\tau_1^\beta)^\sigma = \mathrm{\inf}_{\hat\pi}(\tau_1^{\beta\sigma})$ and 
$\tilde\rho=\mathrm{inf}_{\hat\pi}(\tau_1)$, so that 
$$ \tilde\rho^{\alpha\sigma}=\tilde\rho \iff 
\mathrm{inf}_{\hat\pi}(\tau_1^{\beta\sigma}) = \mathrm{inf}_{\hat\pi}(\tau_1) \iff
\tau_1^{\beta\sigma} = \tau_1\,,$$
where the last equivalence follows from the injectivity of $\mathrm{inf}_{\hat\pi}$. Similarly, one has
$\tilde\delta^{\alpha\sigma}=\tilde\delta \iff \mu_1^{\beta\sigma}=\mu_1$. But
$\tilde\rho^{\alpha\sigma}=\tilde\rho$ if and only if $\tilde\delta^{\alpha\sigma}=\tilde\delta$, 
by Equation~(\ref{eqn stab 2}). Altogether we obtain
$\tau_1^{\beta\sigma}=\tau_1$ if and only if $\mu_1^{\beta\sigma}=\mu_1$, as claimed.

Using $\norm{G_{\tau_1^{\c G}}}{U_1} = \norm{G_{\mu_1^{\c G}}}{U_1} = \norm G{U_1}_{\mu_1^{\c G}}$, (\ref{eqn 5}) now becomes
\begin{equation}\label{eqn 5a}
(G_{\tau_1^{\c G}},K_1,\tau_1)_{\c G} \relc  (\norm G{U_1}_{\mu_1^{\c G}}, U_1, \mu_1)_{\c G}\,.
\end{equation}

\medskip
{\sc Step 7:} Now we move back from $K_1$ to $K$. Set $U:=U_1 C$. Then $U_1=K_1\cap U$. We claim that $\norm G U = \norm G{U_1}$. 
In fact, since $C=\zent K$ is normal in $G$, $\norm G{U_1}\le \norm G U$. 
Conversely, let $g\in \norm G U$ and write $g= k n$ with $k\in K_1$ and $n\in \norm G{U_1}$, 
which is possible by the second paragraph in Step 6. Since $n\in \norm G{U_1}\le\norm GU$, we have $k\in\norm{K_1}{U}\le \norm{K_1}{U\cap K_1}= \norm{K_1}{U_1}$. Thus, $g\in \norm G{U_1}$.

We apply now Lemma~\ref{dotproduct} to (\ref{eqn 5a}) with the normal subgroup $C$ in the role of $Z$, 
$\nu\in\irr C$ in the role of $\lambda$, and $\nu_1\in\Irr(C_1)$ in the role of $\nu\in\Irr(Z\cap N)$. It is easy to verify that all the hypotheses in Lemma~\ref{dotproduct} 
are satisfied and we obtain
\begin{equation}\label{eqn 6}
(G_{\tau^{\c G}},K,\tau)_{\c G} \relc  (\norm G U_{\mu^{\c G}},U,\mu)_{\c G}\,,
\end{equation}
with $\mu:=\mu_1\cdot\nu\in\irr{U_1C}=\irr U$, since $\tau_1\cdot \nu=\tau$, as claimed in the statement of the theorem.
Here, we used the fact that if
$(g,\sigma)\in G\times\c G$ stabilizes $\tau$ then also
$\tau_1$, and similarly for $\mu$ and $\mu_1$. 

\medskip
{\sc Step 8:} We now prove the remaining parts of the theorem. First, recall from Step~4 that $\norm{K_1}{U_1}=U_1<K_1$. The natural
isomorphism $K_1/C_1\to K/C$ translates this into $\norm K U = U<K$, as claimed in the statement
of the theorem.

Next we apply Lemma~\ref{key1} to $K\nor G$, $\norm G U\le G$ and (\ref{eqn 6}) 
with $\norm G U_{\tau^{\c G}}$ replaced with $\norm G U_{\mu^{\c G}}$. Note that 
by the second paragraph of Step~6 and the first paragraph of this step, we have
$G=K_1\norm G(U_1) = K_1\norm GU$, so that
$G=K\norm G U$. Moreover, we have 
$K\cap \norm G U = \norm K U= U$. Thus, the hypotheses of Lemma~\ref{key1} are satisfied
and the conclusion of Lemma~\ref{key1} yields the last two sentences in the theorem.
Finally, we show that $N_G(U)$ is self-normalizing in $G$. In fact, if $g\in G$ normalizes $\norm GU$
then it also normalizes $K\cap \norm GU=U$.
This completes the proof of the theorem.  
\end{proof}

%
%
%
The following shows the power of the inductive  Feit condition.
In the presence of simple groups satisfying the inductive Feit condition,
 it implies that information about characters of a group $G$ can be found in a convenient proper
  self-normalizing subgroup of $G$.

\begin{cor}\label{key3}
Suppose that $K \nor G$ with $\zent K \sbs \zent G$.
Suppose that $K/\zent K$ is a minimal normal subgroup of $G/\zent K$
and is isomorphic to $S^n$, 
where $S$ is a non-abelian finite simple group which satisfies the inductive Feit condition.
Let $\tau \in \irr{K}$ be $G$-invariant.
 Then there exist a pair $(U,\mu)$, where $U<K$ and 
$\mu \in \irr U$, such that  $\norm KU=U$,
$G=K\norm GU$,  and a $\c G$-equivariant bijection
$$^*:\irr{G|\tau^{\c G} }\rightarrow  \irr{\norm G U | \mu^{\c G}}\, $$
which respects ratios of degrees.
Therefore, if $\chi \in \irr{G|\tau^{\c G}}$ and $\xi=\chi^*$, then 
$\chi(1)/\tau(1)= \xi(1)/\mu(1)$, $\Q(\chi)=\Q(\xi)$,
and $[\chi_{\zent K}, \xi_{\zent K}] \ne 0$.
\end{cor}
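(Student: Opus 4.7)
The plan is to derive Corollary~\ref{key3} as an immediate consequence of Theorem~\ref{key2} together with Lemma~\ref{key1}. Almost all the hypotheses of Theorem~\ref{key2} are given; the only point that requires checking is the transitivity of $\cent G{\zent K}$ on the $n$ minimal normal subgroups of $K/\zent K$.

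First I would observe that $\zent K \subseteq \zent G$ gives $\cent G{\zent K} = G$, so it suffices to show that $G$ itself acts transitively on the simple factors of $K/\zent K \cong S^n$. This is the standard minimality argument: the conjugation action of $G$ permutes the $n$ simple direct factors, and if the action were intransitive, then the product of the simple factors in a single $G$-orbit would be a proper nontrivial $G$-invariant subgroup of $K/\zent K$, contradicting the fact that $K/\zent K$ is a minimal normal subgroup of $G/\zent K$.

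Since $\tau$ is $G$-invariant it is also $\cent G{\zent K}$-invariant, so Theorem~\ref{key2} applies and produces a pair $(U,\mu)$ with $\zent K \le U < K$, $\norm K U = U$, $G = K\norm G U$, $\norm G{\norm G U}=\norm G U$, $\mu \in \irr U$, and
$$(G_{\tau^{\c G}}, K, \tau)_{\c G} \relc (\norm G U_{\mu^{\c G}}, U, \mu)_{\c G}.$$
Because $\tau$ is $G$-invariant, $G_{\tau^{\c G}} = G$, so this relation reads
$$(G, K, \tau)_{\c G} \relc (\norm G U_{\mu^{\c G}}, U, \mu)_{\c G}.$$

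Finally I would apply Lemma~\ref{key1} to this relation, which directly yields the desired $\c G$-equivariant bijection $^*\colon \irr{G\mid\tau^{\c G}} \to \irr{\norm G U\mid\mu^{\c G}}$ preserving ratios of degrees, together with $\Q(\chi)=\Q(\chi^*)$ and $[\chi_{\zent K},\chi^*_{\zent K}]\ne 0$ for every $\chi\in\irr{G\mid\tau^{\c G}}$. There is no real obstacle here: the corollary is essentially a packaging of Theorem~\ref{key2} in the special case where $\tau$ is globally $G$-invariant and $K/\zent K$ is minimal normal in $G/\zent K$, and the only mildly nontrivial observation is the transitivity argument above.
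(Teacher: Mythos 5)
Your proof is correct and takes essentially the same approach as the paper, which simply remarks that the corollary "easily follows from Theorem \ref{key2}." You have spelled out the one implicit verification — namely that $\zent K \subseteq \zent G$ forces $\cent G{\zent K} = G$, and that minimality of $K/\zent K$ in $G/\zent K$ yields the required transitivity on the $n$ simple direct factors — and the rest is indeed just specializing Theorem~\ref{key2} (whose conclusion already invokes Lemma~\ref{key1}) to the case $G_{\tau^{\c G}} = G$.
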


\begin{proof}
It easily follows from Theorem \ref{key2}.
\end{proof}

To what extent Corollary \ref{key3} is true for an arbitrary non-nilpotent normal
subgroup $K$ of $G$ would be studied elsewhere.

\medskip

Recall that if $G$ is solvable, then $G$  has up to conjugation a unique 
nilpotent self-normalizing subgroup, see \cite{C}. Any such subgroup is called a {\bf Carter subgroup} of $G$. 
If $N$ is a normal subgroup of $G$ with nilpotent quotient $G/N$ and $C$ is a Carter subgroup of $G$ 
then $NC=G$, see \cite{C}.

\begin{lem}\label{res}
Suppose that $X$ and $Y$ are normal subgroups of a finite group $G$ with $Y< X$, 
such that $G/X$ is nilpotent, $X/Y$ is an abelian chief factor of $G$, and $G/Y$ is not nilpotent. 
Let $\chi\in\irr G$, $\theta\in\irr X$ and $\varphi\in\irr Y$ be such that 
$\chi_X=e\theta$ for some integer $e$ and $\theta_Y=\varphi$. 
Then there exists $H<G$ such that $\chi_H$ is irreducible and $\Q(\chi)=\Q(\chi_H)$.
In fact, $H=\norm GH$ and $H$ is maximal in $G$.
\end{lem}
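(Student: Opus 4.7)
The plan is to verify the lemma in three structural steps, and then extract the equality of fields by a Galois-uniqueness argument. First, Clifford's theorem applied to $\chi_X = e\theta$ shows that $\theta$ is $G$-invariant, and applied to $\chi_Y = e\varphi$ that $\varphi$ is also $G$-invariant. Since $X/Y$ is an abelian chief factor of $G$, it is a simple $\mathbb{F}_p[G/X]$-module for some prime $p$; and the hypothesis that $G/X$ is nilpotent but $G/Y$ is not forces the $G/X$-action on $X/Y$ to be nontrivial (otherwise $X/Y$ would be central in $G/Y$, making $G/Y$ nilpotent). Because simplicity and nontriviality of such a module are preserved under dualization, the only $G$-fixed character of $X/Y$ is the trivial one. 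Combined with Gallagher's theorem (which parametrizes extensions of $\varphi$ to $X$ as $\{\theta\lambda : \lambda \in \irr{X/Y}\}$), this shows that $\theta$ is the \emph{unique} $G$-invariant extension of $\varphi$ from $Y$ to $X$; this uniqueness will be the essential Galois input at the end.

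Next I construct $H$ structurally inside $\bar G := G/Y$. Writing $\bar X := X/Y$, the group $\bar G$ is solvable, $\bar X$ is a minimal normal subgroup, $\bar G/\bar X \cong G/X$ is nilpotent, and $\bar G$ is not nilpotent. By Carter's theorem $\bar G$ admits a Carter subgroup $\bar C$; its image in the nilpotent quotient $\bar G/\bar X$ is a Carter subgroup there, hence equals the whole quotient, so $\bar G = \bar X\bar C$. The intersection $\bar C \cap \bar X$ is normalized by $\bar C$ and by the abelian $\bar X$, hence is normal in $\bar G$; minimality of $\bar X$ together with $\bar G$ not being nilpotent rule out $\bar C \cap \bar X = \bar X$, so $\bar G = \bar X \rtimes \bar C$. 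A standard Dedekind-law argument then makes $\bar C$ maximal in $\bar G$. Letting $H$ be the full preimage of $\bar C$ in $G$ gives $Y \leq H < G$, $H = \norm G H$, $H$ maximal in $G$ (since every subgroup of $G$ containing $H$ automatically contains $Y$), $G = XH$, $X \cap H = Y$, and $H/Y \cong G/X$.

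For the irreducibility of $\chi_H$, I apply character-triple theory to the triple $(G, X, \theta)$. Let $\c P \colon G \to \GL_{\theta(1)}(\mathbb{C})$ be an associated projective representation with $\c P|_X$ affording $\theta$, and with cocycle $\alpha$ inflated from $G/X$. Then $\c P|_H$ is a projective representation of $H$ associated with the triple $(H, Y, \varphi)$, whose cocycle on $H/Y$ coincides with $\alpha$ via the natural isomorphism $H/Y \cong G/X$. The Clifford--Gallagher bijections $\irr{G|\theta} \leftrightarrow \Irr_\alpha(G/X) \leftrightarrow \irr{H|\varphi}$ are matched via the identity on the middle term. Writing a representation affording $\chi$ as a Kronecker product $\c Y \otimes \c P$ (with $\c Y$ a projective representation of $G/X$ affording the image $\hat\chi$ of $\chi$) gives the character-value formula $\chi(h) = \hat\chi(\bar h) \cdot \tr(\c P(h))$ for $h \in H$; this formula identifies the composite bijection with the restriction map $\chi \mapsto \chi_H$. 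In particular $\chi_H \in \irr H$.

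Finally, for $\Q(\chi) = \Q(\chi_H)$, the inclusion $\Q(\chi_H) \subseteq \Q(\chi)$ is immediate. Conversely, suppose $\sigma \in \c G$ fixes $\chi_H$. Then $\sigma$ fixes $(\chi_H)_Y = e\varphi$, hence $\varphi$. Since the actions of $G$ and $\c G$ on $\irr X$ commute, $\theta^\sigma$ is also a $G$-invariant extension of $\varphi^\sigma = \varphi$; uniqueness from the first step then gives $\theta^\sigma = \theta$, so $\chi^\sigma \in \irr{G|\theta}$. Since $(\chi^\sigma)_H = (\chi_H)^\sigma = \chi_H$, injectivity of the restriction bijection from the previous step forces $\chi^\sigma = \chi$. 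The main subtlety of the whole argument is exactly this uniqueness-plus-injectivity step: it is what allows the trivial Galois implication at the level of $\chi_H$ to be promoted back up to the level of $\chi$, without any need to track explicit $\c G$-equivariance of projective-character correspondences.
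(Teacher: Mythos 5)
Your proof is correct and follows the paper's argument essentially step for step: the Carter subgroup $H/Y$ of $G/Y$ gives a complement to $X/Y$, the restriction map is shown to be a bijection $\irr{G|\theta}\to\irr{H|\varphi}$, and the Galois argument exploits that $\theta$ is the unique $G$-invariant extension of $\varphi$ together with injectivity of that bijection. The two small differences — you derive the uniqueness upfront from simplicity of the dual $\FF_p[G/X]$-module $\irr{X/Y}$ rather than via the paper's inline chief-factor argument on $\ker\lambda$, and you establish the restriction bijection through explicit projective representations rather than citing Lemma~6.8(d) of \cite{N} — are presentational and do not change the underlying route.
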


 \begin{proof}
Let $H/Y$ be a Carter subgroup of $G/Y$. 
Then $H<G$, since $G/Y$ is not nilpotent, 
and $XH=G$, since $G/X$ is nilpotent. Also $H=\norm GH$.
Moreover, $X/Y \cap H/Y$ is a normal subgroup of $G/Y=H/Y\cdot X/Y$, as it is normalized by $H/Y$ and $X/Y$.
Since $X/Y$ is a chief factor of $G$, this implies $H\cap X=Y$ or $H\cap X=X$. The latter case cannot happen, since
$G/Y$ is not nilpotent. So we have $H\cap X= Y$. In particular, $H$ is maximal in $G$.

By Lemma 6.8(d) of \cite{N}, restriction defines a bijection $\tau\colon \irr{G|\theta} \rightarrow \irr{H|\varphi}$.  
We claim that $\Q(\chi)=\Q(\chi_H)$. 
Clearly, $\Q(\chi_H) \sbs \Q(\chi)$ and $\Q(\varphi)\sbs \Q(\theta) \sbs \Q(\chi)$. 
Let $\sigma \in {\rm Gal}(\Q(\chi)/\Q(\chi_H))$ be arbitrary.
Then $(\chi_H)^\sigma=\chi_H$, and $\chi_Y=e\varphi$ implies $\varphi^\sigma=\varphi$. 
Therefore $\theta^\sigma$ is an extension of $\varphi$ and there exists $\lambda \in \irr {X/Y}$ 
with $\theta^\sigma=\lambda \theta$.
Since $\theta$ is $G$-invariant also $\theta^\sigma$ is $G$-invariant.
By the uniqueness in Gallagher's theorem, we conclude that $\lambda$ is $G$-invariant.
This implies $\ker{\lambda}$ is normal in $G$. Since $Y\le \ker{\lambda} \le X$ and $Y/X$ 
is a chief factor of $G$. We obtain $\ker{\lambda}=X$ or $\ker{\lambda}=Y$. 
If $\ker{\lambda}=X$ then $X/Y\le \zent{G/Y}$ and $G/Y$ is nilpotent, contradicting the
hypothesis of the lemma. Thus, $\lambda=1$ and $\theta^\sigma=\theta$.
But then $\chi^\sigma\in\irr{G|\theta}$ with $\tau(\chi^\sigma)=\chi^\sigma_H=\chi_H=\tau(\chi)$. 
The injectivity of $\tau$ implies $\chi^\sigma=\chi$. Since $\sigma$ was arbitrary, 
we obtain $\Q(\chi)=\Q(\chi_H)$, proving the claim, and completing the proof of the lemma.
\end{proof}

The following theorem summarizes known facts from Isaacs's work in \cite{Is73}, see also
\cite[Lemma~3(ii)]{AC86}.

\begin{thm}\label{isaacs}
 Suppose that $L, K$ are normal of $G$ with $L\le K$, where
 $K/L$ is an abelian $p$-group and $G/K$ is
a $p'$-group. Let $\varphi \in \irr L$ be $G$-invariant and let $\theta \in \irr K$ be such that
$\varphi^K=e\theta$, where $e^2=|K:L|$. Let $U/L$ be a complement of $K/L$ in $G/L$.
Then there exists a character $\psi$ of $U$ with $L\le \ker\psi$ and satisfying the following:
\begin{enumerate}[\rm(a)]
\item $|\psi(u)|^2=|\cent {K/L}u|$ for $u \in U$.

\item $\psi$ is rational valued.

\item The equation $\chi_U=\psi \chi^\prime$ defines a bijection 
$\irr{G|\theta} \rightarrow \irr{U|\varphi}$, $\chi \mapsto \chi^\prime$, 
satisfying $\Q(\chi)=\Q(\chi^\prime)$.
\end{enumerate}
\end{thm}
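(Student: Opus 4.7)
The plan is to exploit that $(G,K,L)$ is a \emph{fully ramified section} in the sense of Isaacs: the hypothesis $\varphi^K=e\theta$ with $e^2=|K:L|$ means that the factor set of a projective representation of $K/L$ attached to $(K,L,\varphi)$ defines a non-degenerate alternating bilinear form on the abelian $p$-group $K/L$. The complement $U/L$ exists by Schur--Zassenhaus since $\gcd(|K/L|,|U/L|)=1$, so $G/L=(K/L)\rtimes(U/L)$. Moreover $\theta$ is $G$-invariant (being the unique irreducible constituent of the $G$-invariant character $\varphi^K$); since $G/K$ is a $p'$-group and the Schur-multiplier obstruction to extending $\theta$ has $p$-power order, $\theta$ admits a \emph{canonical} extension $\hat\theta\in\irr G$, which may be chosen to be $p$-rational. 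For the same reason, $\varphi$ admits a canonical extension $\hat\varphi\in\irr U$, also $p$-rational.

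The character $\psi$ will be characterized by the identity $\hat\theta_U=\psi\cdot\hat\varphi$ as class functions on $U$. To see that $\psi$ is an actual character of $U$ with $L\le\ker\psi$, I would decompose $\hat\theta_U$ via Frobenius reciprocity and verify that its irreducible constituents are precisely the characters $\hat\varphi\lambda$ with $\lambda\in\irr{U/L}$, so that $\hat\theta_U=\hat\varphi\cdot\sum_\lambda a_\lambda\lambda$ for non-negative integers $a_\lambda$. Part (a), $|\psi(u)|^2=|C_{K/L}(u)|$, should then follow from the classical formula in the fully ramified situation relating $|\hat\theta(u)|^2$ to $|C_{K/L}(u)|$ and $\varphi(1)^2$: for $u\in U$ and $k\in K$, non-degeneracy of the symplectic form on $K/L$ forces $\hat\theta(uk)=0$ unless $kL\in C_{K/L}(u)$, and a second-orthogonality argument over the centralizing cosets produces the formula.

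For part (c), Gallagher's theorem provides the bijections $\irr{G|\theta}\leftrightarrow\irr{G/K}$ via $\chi=\hat\theta\lambda\leftrightarrow\lambda$, and $\irr{U|\varphi}\leftrightarrow\irr{U/L}$ via $\chi'=\hat\varphi\lambda'\leftrightarrow\lambda'$. The natural isomorphism $U/L\cong G/K$ coming from the semidirect product identifies these parameterizing sets, yielding the bijection $\chi\mapsto\chi'$. Then $\chi_U=\hat\theta_U\cdot\lambda_U=\psi\hat\varphi\lambda=\psi\chi'$, as required, and $\Q(\chi)=\Q(\chi')$ follows at once from (b). The main obstacle I expect is precisely (b), rationality of $\psi$. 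The strategy would be to show $\psi^\sigma=\psi$ for every $\sigma\in\mathrm{Gal}(\Q^{\mathrm{ab}}/\Q)$ by invoking uniqueness of the canonical extensions $\hat\theta,\hat\varphi$ under Galois action, which acts compatibly on the $p$-part and $p'$-part of roots of unity; combined with $|\psi(u)|^2=|C_{K/L}(u)|\in\Z_{>0}$, this should force rationality. This Galois-theoretic step is the most delicate and is where Isaacs' original proof in \cite{Is73} concentrates its work.
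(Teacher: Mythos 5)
Your proposal tries to reprove Isaacs' Theorem 9.1 of \cite{Is73} from scratch (the paper itself simply cites that result), but the engine of your argument---the existence of canonical extensions $\hat\theta\in\irr G$ and $\hat\varphi\in\irr U$---is not justified. You assert that ``the Schur-multiplier obstruction to extending $\theta$ has $p$-power order''; in fact the obstruction class lies in $H^2(G/K,\C^\times)$, the Schur multiplier of the $p'$-group $G/K$, which is itself a $p'$-group, so the obstruction automatically has $p'$-order, and the claim that it also has $p$-power order (whence it is trivial) is precisely what would need to be proved. The standard determinant argument only yields $[\alpha]^{\theta(1)}=1$; since $\theta(1)=e\varphi(1)$ with $e$ a power of $p$ but $\varphi(1)$ unconstrained, this gives $[\alpha]^{\gcd(\varphi(1),\,|G/K|)}=1$, not triviality. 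Isaacs' construction of $\psi$ (Theorems 6.3 and 9.1 of \cite{Is73}) deliberately avoids any assumption that $\theta$ extends: $\psi$ is built intrinsically from the symplectic form on $K/L$ determined by the fully ramified pair $(\theta,\varphi)$ together with the conjugation action of $U/L$, and the bijection is defined via the equation $\chi_U=\psi\chi'$ with no reference to a canonical extension of $\theta$.

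Two further gaps. The field equality $\Q(\chi)=\Q(\chi')$ does not ``follow at once from (b)'': the identity $\chi_U=\psi\chi'$ with $\psi$ rational and nonvanishing only gives $\Q(\chi_U)=\Q(\chi')$, whereas $\Q(\chi)$ is generated by the values of $\chi$ on all of $G$. One also needs Theorem 9.1(d) of \cite{Is73}---that $\chi$ vanishes on every element of $G$ not $G$-conjugate to an element of $U$---to obtain $\Q(\chi)=\Q(\chi_U)$; the paper's proof makes this step explicit, and your sketch omits it. Finally, your treatment of rationality in (b) invokes Galois-equivariance of the canonical extensions $\hat\theta,\hat\varphi$, which again presupposes both that they exist and that ``canonicity'' is Galois-stable; so this gap is not independent of the first.
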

 
\begin{proof}
This is part of the content of Theorem 9.1 of \cite{Is73}, where the character $\psi$ is denoted by $\psi^{(K/L)}$.
Notice that in our situation, there is a unique complement of $K/L$ in $G$, up to $G$-conjugacy.
The values of $\psi$ are given in (a), (b) and (c) on page 619 of \cite{Is73}, where in that situation, $E=K/L$.
(See also Theorem 6.3.) The fact that $\psi$ is rational valued is given after Corollary 6.4 of \cite{Is73}.
By Theorem 9.1(d) of \cite{Is73}, we have that $\chi(g)=0$ if $g$ is not $G$-conjugate to an element $u \in U$.
Since $\psi$ does not vanish anywhere and is rational-valued, we have that $\Q(\chi)=\Q(\chi^\prime)$.
\end{proof}

We are now ready to prove Theorem~A. (Notice that the first two steps 
can be used to give a new shorter proof of the solvable case of Feit's conjecture.)
Recall that a character $\chi$ is {\bf primitive} if $\chi$ cannot be induced from a proper subgroup of $G$.
 
\begin{thm}\label{main}
Suppose that $G$ is a finite group, $\chi \in \irr G$ with $\chi(1)>1$. Suppose that $\chi$ is primitive.
Assume that the non-abelian composition factors of $G$ satisfy the inductive Feit condition. 
Then there exists $H<G$ and $\psi \in \irr H$ such that $\Q(\chi)= \Q(\psi)$ and $H=\norm GH$.  
\end{thm}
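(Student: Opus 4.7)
The plan is to induct on $|G|$. First, I would reduce to the case where $\chi$ is faithful: if $\ker\chi \ne 1$, I apply the inductive hypothesis to $G/\ker\chi$ (on which $\chi$ descends to a faithful primitive character of degree $>1$) and lift the resulting proper self-normalizing subgroup back to $G$. Having assumed $\chi$ is faithful, primitivity forces $\chi_N$ to be a multiple of a $G$-invariant irreducible constituent for every $N \nor G$. Applying this with $N = \zent K$ for $K \nor G$, the unique linear constituent $\nu$ of $\chi_{\zent K}$ is faithful and $G$-invariant, whence $\zent K \sbs \zent G$.

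\emph{Case A (non-solvable).} Suppose $G/\zent G$ contains a non-abelian minimal normal subgroup, which must take the form $K/\zent G \cong S^n$ with $S$ a non-abelian finite simple group. Then $\zent K = \zent G$, so $K/\zent K \cong S^n$ is a minimal normal subgroup of $G/\zent K$, and $S$ satisfies the inductive Feit condition since it is a composition factor of $G$. Let $\tau \in \irr K$ be the $G$-invariant constituent of $\chi_K$. Corollary \ref{key3} then yields a pair $(U,\mu)$ with $H := \norm G U$ a self-normalizing proper subgroup of $G$ and a $\c G$-equivariant bijection $\irr{G|\tau^{\c G}} \to \irr{H|\mu^{\c G}}$ preserving fields of values; choosing $\psi$ to be the image of $\chi$ gives $\Q(\psi) = \Q(\chi)$.

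\emph{Case B (abelian socle in $G/\zent G$).} Since a primitive character of a nilpotent group is linear, $G$ is not nilpotent. Let $R$ be the nilpotent residual of $G$ and pick $Y \nor G$ maximal inside $R$; set $X := R$, so that $X/Y$ is an abelian chief factor of $G$, $G/X$ is nilpotent, and $G/Y$ is not. By primitivity, $\chi_X = e\theta$ and $\chi_Y = f\varphi$ for $G$-invariant $\theta \in \irr X$ and $\varphi \in \irr Y$, and $\theta_Y = e'\varphi$ for some integer $e' \ge 1$. If $e' = 1$, then Lemma \ref{res} produces directly a maximal self-normalizing $H < G$ with $\chi_H \in \irr H$ and $\Q(\chi_H) = \Q(\chi)$. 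If $e' > 1$, then $X/Y$ is an elementary abelian $p$-group for some prime $p$, and I would pass to a $G$-normal subgroup obtained from the Hall $p'$-structure of $G/X$ (which exists by nilpotence) in order to arrange the fully ramified setup of Theorem \ref{isaacs}; this produces $U < G$ and $\chi' \in \irr U$ with $\Q(\chi') = \Q(\chi)$, to which the inductive hypothesis applies (after iterated reductions if $\chi'$ is not primitive).

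\emph{Main obstacle.} Case A is conceptually direct once Corollary \ref{key3} is available. The difficulty concentrates in Case B, specifically (i) choosing the chief factor $X/Y$ and an appropriate $p$-section of $G$ so that Theorem \ref{isaacs} applies in a fully ramified form, and (ii) carrying the induction through the output $(U,\chi')$ of Theorem \ref{isaacs}, where $\chi'$ may no longer be primitive. A careful analysis along the chief series, in the spirit of the solvable-case proofs \cite{AC86,FT86}, should resolve these issues when combined with the new Corollary \ref{key3}.
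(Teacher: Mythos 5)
Your proposal takes a genuinely different route in the non-solvable case and has a real gap in the solvable case.

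On the positive side, your opening reduction to faithful $\chi$ (by induction on $|G|$, passing to $G/\ker\chi$ and lifting) is a valid step that the paper does not use, and it buys you something substantial: once $\chi$ is faithful and quasi-primitive, every normal abelian subgroup of $G$ is central, so for any $K \nor G$ with $K/\zent G$ a non-abelian minimal normal subgroup of $G/\zent G$ one gets $\zent K = \zent G$ directly, and Corollary~\ref{key3} applies with $\tau$ the $G$-invariant constituent of $\chi_K$. This cleanly replaces the paper's use of the Ladisch character-triple machinery from \cite{L16} in Step~2 of its proof. One detail you should make explicit is the dichotomy: after the faithfulness reduction, Case~B (all minimal normals of $G/\zent G$ abelian) forces $G$ to be solvable, because if $E(G) \neq 1$ then $E(G)\zent G/\zent G \cong E(G)/\zent{E(G)}$ is a direct product of non-abelian simple groups and supplies a non-abelian minimal normal subgroup of $G/\zent G$. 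You need solvability of $G$ in Case~B to know the chief factor $X/Y$ atop the nilpotent residual is abelian.

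The genuine gap is in your handling of the fully ramified branch of Case~B, and in particular in item (ii) of your ``main obstacle.'' You propose to apply Theorem~\ref{isaacs} to produce $(U,\chi')$ with $\Q(\chi')=\Q(\chi)$ and then recurse by applying the inductive hypothesis to $(U,\chi')$. This cannot work as stated: the inductive hypothesis on $(U,\chi')$ yields $H' < U$ with $H' = \norm{U}{H'}$, and self-normalization of $H'$ in $U$ does not imply self-normalization of $H'$ in $G$. The insight you are missing (and which the paper's Step~1 uses) is that no recursion is needed at all, because the complement $U$ produced by Theorem~\ref{isaacs} can itself be taken to be (the preimage of) a Carter subgroup $H/Y$ of $G/Y$, which is automatically self-normalizing in $G$; then $(H,\chi')$ already has all the required properties. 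Concretely, one replaces the pair $(X,Y)$ by $(P,Q)$ with $P/Y$ the normal Sylow $p$-subgroup of $G/Y$ and $Q = P \cap H$, verifies $P/Q$ is a chief factor and that $G/P$ is a $p'$-group, and then applies Lemma~\ref{res} or Theorem~\ref{isaacs} to $(P,Q)$ with complement $H$. Your appeal to the ``Hall $p'$-structure of $G/X$'' gestures in the right direction but does not identify $Q$ or supply the self-normalizing complement, which is precisely where the argument lives.
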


\begin{proof}
{\sc Step 1.}~~{\sl Let $G^\infty$ denote the smallest normal subgroup $N$ of $G$ such that $G/N$ is solvable.
We may assume that $G/G^\infty$ is nilpotent.}
Suppose that $G/G^\infty$ is not nilpotent. Let $X$ denote the smallest normal subgroup $N$ of $G$ 
such that $G/N$ is nilpotent.
Then $G^\infty<X$ and there exists $G^\infty \le Y <X$ such that $X/Y$ is a chief factor of $G$. 
Thus, $X/Y$ is an abelian $p$-group for some prime $p$.

Let $H/Y$ be a Carter subgroup of $G/Y$. Thus $H=\norm GH$. By the same arguments as in the proof of Lemma~\ref{res}, we have
$H<G$, $G=HX$ and $H\cap X=Y$.
Let $P/Y \in\syl p{G/Y}$. Since $G/X$ is nilpotent and $X\le P$, then $P$ is normal in $G$.
Moreover, since $X/Y\nor P/Y$, we have $X/Y\cap \zent{P/Y}>1$.
Since $X/Y$ is a chief factor of $G$, we conclude that $X/Y \sbs \zent{P/Y}$.
Next consider $Q:=P\cap H$ and note that $Q/Y\in\syl p{H/Y}$ is centralized by $X/Y$
and normalized by $H/Y$. Therefore, $Q$ is normal in $G$.
Since $X/Y$ is a chief factor of $G$ also $P/Q$ is a chief factor of $G$.
If $G/Q$ were nilpotent, then also $G/Y$ would be nilpotent since $G/X$
and $G/Q$ are nilpotent and $X\cap Q=Y$. Thus, $G/Q$ is not nilpotent.

Since $\chi$ is primitive, we can write $\chi_P=f\theta$ and $\theta_Q=e\varphi$
with $\theta\in \irr P$ and $\varphi\in \irr Q$. In this situation the Going Down Theorem,
see Theorem~6.18 in \cite{Is}, implies that either $e=1$ or $e^2=|P:Q|$. 
In the first case we are done by Lemma~\ref{res} and in the latter case
we are done by Theorem~\ref{isaacs}, using that $G/P$ is a $p'$-group.


\smallskip
{\sc Step 2.}~~We prove the theorem in the remaining case that   $G/G^\infty$ is nilpotent.
Set $L:=G^{\infty}$. If $L=1$, then $G$ is nilpotent, and $\chi$ is linear, 
since $\chi$ is primitive, contradicting $\chi(1)>1$.
So we may assume that $L>1$.
Let $L/Z$ be a chief factor of $G$ and write $\chi_Z=u\lambda$ with $\lambda\in\irr Z$. 
Note that $L/Z$ is isomorphic to a direct product $S^n$, where $S$ is a non-abelian simple group, 
and that the $n$ minimal normal subgroups of $L/Z$ are permuted transitively by conjugation with $G$.
Note also that, since $S$ is a composition factor in $G$, it satisfies the inductive Feit condition.

We will show that there exists $Z \sbs H<G$, with $H=\norm GH$, and $\psi \in \irr {H|\lambda}$ 
such that $\Q(\chi) = \Q(\psi)$.

To this end we apply Theorem~A and Corollary~B in \cite{L16} to the character triple $(G,Z,\lambda)$. As a consequence, there exists a character triple $(G_1,Z_1,\lambda_1)$ which is isomorphic to the character triple $(G,Z,\lambda)$ such that the bijections between irreducible characters of corresponding intermediate groups between $Z$ and $G$ and between $Z_1$ and $G_1$ preserve character fields. Therefore, we may assume that $(G,Z,\lambda)=(G_1,Z_1,\lambda_1)$. By making this transition we will lose the property that $\chi\in \Irr(G)$ is primitive. But we still retain the property that $\chi$ is not induced from any subgroup of $G$ containing $Z$ and that every composition factor of $G/Z$ satisfies the inductive Feit condition. By the above results from \cite{L16}, the character triple $(G_1,Z_1,\lambda_1)$ has the following additional properties which we may now assume for $(G,Z,\lambda)$, namely that 
there exists a normal cyclic subgroup $C$ of $G$ contained in $Z$
and a faithful character $\nu \in \irr C$ such that $(G, C, \nu)_{\c G}$ is a $\c G$-triple and
$$\nu^Z=\lambda\,,\quad G_\nu=\cent GC\,,\quad G_\nu Z=G\,,\quad \text{and} \quad G_\nu \cap Z=C\,.$$

 
Set $K:=L \cap G_\nu$. Since $K/C\cong L/Z\cong S^n$ has no non-trivial central subgroups, we obtain $C=\zent K$.
Moreover, since $[K,Z]\sbs C$, the group $K/C$ is a direct product of $\cent GC$-conjugate simple groups,
each of which is isomorphic to $S$. Write $\chi_L=v\eta$ with $\eta\in\irr{L|\lambda}$. Since $\eta\in\irr{C|\nu}$ 
and $K=L_\nu$, Clifford theory implies that there exists a unique character $\tau \in \irr{K|\nu}$ such that $\tau^L=\eta$. 
Notice that $\tau$ is $G_\nu$-invariant by the uniqueness of $\tau$, since $\eta$ is $G$-invariant.

\begin{center}
\unitlength 5mm
{\scriptsize
\begin{picture}(9,7)
\put(0,0){$\bullet$}
\put(0,3){$\bullet$}
\put(4,1){$\bullet$}
\put(4,4){$\bullet$}
\put(8,2){$\bullet$}
\put(8,5){$\bullet$}

\put(0.1,0.1){\line(0,3){3}}
\put(4.1,1.1){\line(0,3){3}}
\put(8.1,2.1){\line(0,3){3}}
\put(0.1,0.1){\line(4,1){8}}
\put(0.1,3.1){\line(4,1){8}}

\put(-1.5,-0.5){$C,\nu$}
\put(-1.5,3.5){$Z,\lambda$}
\put(3.5,0){$K,\tau$}
\put(3.5,5){$L,\eta$}
\put(8.5,1.5){$G_\nu=\cent GC$}
\put(8.5,5.5){$G,\chi$}
\end{picture}
}
\end{center}

\bigskip
By Theorem \ref{key2} there exist a subgroup $C\le U<K$ with $\norm KU=U$
and $G=K\norm GU$, a character $\mu \in \irr U$, and
a $\c G$-equivariant bijection:
$$^*: \irr{G|\tau^{\c G} }\rightarrow  \irr{\norm GU | \mu^{\c G}}\, .$$
Since $K,Z \nor G$ and $K\cap Z=C$, we obtain $[Z,U]\le [Z,K]\le C\le U$, so that $Z \sbs \norm GU$. 
Moreover, $\norm KU=U<K$ implies that $\norm GU<G$. Notice that $\norm GU$ is self-normalizing.
Since $\tau^L=\eta$ and $\chi_L=v\eta$, $\chi$ lies over $\tau$. 
Set $\psi:=\chi^* \in \irr{\norm GU}$. Then $\Q(\psi)=\Q(\chi)$ and $\xi$ lies over $\nu$, 
by the last part in Theorem \ref{key2}. Since $Z\le \norm GU$ this implies that $\psi$ also lies over $\nu^Z=\lambda$.
Thus, the pair $(\norm GU,\psi)$ has the desired properties.
This concludes the proof of the Theorem.
\end{proof}

We obtain now Corollary B as a consequence of Theorem~\ref{main}.

\begin{cor}\label{cormain}
Suppose that $G$ is a finite group and $\chi \in \irr G$. 
Assume that the simple groups involved in $G$ satisfy the inductive Feit condition. 
Then there exists $g \in G$ such that $o(g)=c(\chi)$. 
\end{cor}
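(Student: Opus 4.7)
The plan is to derive Corollary~B from Theorem~\ref{main} by induction on $|G|$. The cases fall naturally into three: $\chi$ linear, $\chi$ imprimitive with $\chi(1)>1$, and $\chi$ primitive with $\chi(1)>1$. A crucial bookkeeping point is that the hypothesis ``every simple group involved in $G$ satisfies the inductive Feit condition'' is inherited by every subgroup of $G$ (simple groups involved in $H\le G$ are involved in $G$), so the induction hypothesis is available for any proper subgroup we encounter.

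First I would handle the linear case, which also serves as the base. If $\chi(1)=1$, then $\chi$ factors as an injection $G/\ker\chi\hookrightarrow \C^\times$, whose image is the cyclic group of $n$-th roots of unity with $n:=c(\chi)$. So $G/\ker\chi$ is cyclic of order $n$; if $g\in G$ maps to a generator, then $n\mid o(g)$, and $g^{o(g)/n}\in G$ has order exactly $n$. For the imprimitive case, write $\chi=\eta^G$ with $\eta\in\irr H$ and $H<G$. Since each $\chi(g)$ is a rational linear combination of values of $\eta$, one has $\Q(\chi)\sbs\Q(\eta)$, hence $c(\chi)\mid c(\eta)$. The inductive hypothesis, applied to $(H,\eta)$, yields $h\in H$ with $o(h)=c(\eta)$, and then $h^{o(h)/c(\chi)}\in G$ has order $c(\chi)$.

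For the primitive case with $\chi(1)>1$, apply Theorem~\ref{main}: since the non-abelian composition factors of $G$ lie among the simple groups involved in $G$, Theorem~A produces a self-normalizing proper subgroup $H<G$ and $\psi\in\irr H$ with $\Q(\chi)=\Q(\psi)$. In particular $c(\psi)=c(\chi)$, and the inductive hypothesis applied to $(H,\psi)$ furnishes $h\in H\sbs G$ with $o(h)=c(\psi)=c(\chi)$.

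The main (and in fact the only) obstacle is Theorem~\ref{main} itself; once that is in hand, Corollary~B is a short induction. The two minor points worth mentioning are: the imprimitive case requires only divisibility $c(\chi)\mid c(\eta)$ rather than equality, which is why we must pass to a suitable power of $h$; and one must check, as noted above, that the hypothesis on ``simple groups involved'' is inherited by subgroups so that the inductive step is legitimate when applied to $H$.
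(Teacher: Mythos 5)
Your proof is correct and takes essentially the same route as the paper: induction on $|G|$, splitting into the linear, imprimitive, and primitive $\chi(1)>1$ cases, with the last handled by Theorem~\ref{main}. The only cosmetic difference is that you explicitly note the heredity of the hypothesis to subgroups, which the paper leaves implicit.
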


\begin{proof}
We argue by induction on $|G|$.
Suppose that $\chi$ is not primitive. Then $\chi=\tau^G$ for some $\tau \in \irr J$, 
where $J<G$. By the character induction formula, we have that
$\Q(\chi) \sbs \Q(\tau)$. Therefore $c(\chi)$ divides $c(\tau)$.
By induction, there exists $h \in J$ such that $o(h)=c(\tau)$.
Then $o(g)=c(\chi)$ for an appropriate power $g$ of $h$.
 If $\chi(1)=1$, then $c(\chi)$ divides $o(\chi)=|G:K|$, where $K=\ker\chi$.
Also, $G/K$ is cyclic and therefore there is $x \in G$ such that $c(\chi)$ divides $o(x)$. 
Now, $c(\chi)=o(g)$ for an appropriate power $g$ of $x$.
If $\chi(1)>1$, then by Theorem \ref{main}, there is $H<G$ and $\psi \in \irr H$ such that $\Q(\chi)= \Q(\psi)$. By induction, there is $h \in H$ such that $o(h)=c(\psi)=c(\chi)$, and we are done. 
\end{proof}

As we have remarked,  we do not necessarily have
in Theorem~\ref{main}  that $\psi$ is an irreducible constituent of $\chi_H$, if we wish to keep the hypothesis that $H=\norm GH$.
Without this hypothesis, we have not found yet a counterexample to that assertion. Also, we have not found an example of 
Theorem \ref{main} where $H$ cannot be chosen 
 a maximal subgroup of $G$. Notice that the proof of Theorem~\ref{main} does give a proof 
 of this for solvable groups.  Also,  it is even possible that $\chi$ and $\psi$
share  more properties besides $\Q(\chi)=\Q(\psi)$, but we leave this for another place. 
If we specifically ask  for $\psi$ to be linear, this constitutes the subject of Conjecture E, which we will deal with in the last Section.  Finally, with essentially the same proof, the statement of Theorem \ref{main} holds if $\chi$ is only assumed to be quasi-primitive (instead of primitive), that is, if $\chi_N$ is a multiple of a single irreducible character of $N$ for every normal subgroup $N$ of $G$. 

\section{Sporadic groups}

In this Section we prove that the sporadic groups satisfy the inductive Feit condition.

\begin{thm}\label{spo}
The sporadic groups satisfy the inductive Feit condition.
\end{thm}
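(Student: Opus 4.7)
The plan is to proceed case-by-case through the $26$ sporadic simple groups, exploiting the fact that each has an explicitly known character table (via the Atlas and GAP), a small Schur multiplier, and an outer automorphism group of order at most $2$. For each sporadic $S$ with universal cover $X$ and each $\chi \in \Irr(X)$, I would select $U < X$ to be, in most cases, the Sylow normalizer $U = \NB_X(P)$ for a prime $p$ dividing $|S|$. Since $\ZB(X) \le \NB_X(P)$ and $P$ is never normal in $X$ (as $X/\ZB(X)\cong S$ is non-abelian simple), $U$ is a proper self-normalizing subgroup of $X$ containing $\ZB(X)$; and Sylow's theorem applied inside $X \rtimes \Aut(X)$ shows that $U$ is intravariant in $X$. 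Thus conditions~(i) in Definition~\ref{inductive} are automatic.

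The main tool is Theorem~\ref{easysituations}. For most characters $\chi$, the prime $p$ can be chosen so that some $\mu \in \Irr(U)$ appears with multiplicity one in $\chi_U$, invoking part~(a); the choice of $p$ is guided by the requirement $\Q(\chi) = \Q(\mu)$ together with matching of the stabilizers $(\Gamma \times \c G)_\chi$ and $(\Gamma \times \c G)_\mu$, where $\Gamma = \Aut(X)_U$. When $\Out(S) = 1$, every character is $\Aut(X)$-fixed, and the stabilizer condition reduces to matching Galois orbits, which can be read off the Atlas. When $\Out(S) \cong C_2$, I would invoke Lemma~\ref{easy2}: since $\Gamma/\Inn(X)$ has order dividing $2$, the clauses covering rational characters, $\Gamma$-stable characters, and characters with $[\Q(\chi):\Q] \le 2$ together handle the overwhelming majority of cases.

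When part~(a) does not apply (for instance when every constituent of $\chi_U$ has multiplicity $\ge 2$, or when every candidate for $\mu$ fails the stabilizer matching), I would fall back on parts~(b)--(g). Part~(b) handles characters with $\Aut(X)_\chi = \Inn(X)$ by choosing any $\mu \in \Irr(U)$ lying over the same central character as $\chi$; part~(g) handles cases where $\Aut(X)_{\chi^{\c G}}/\Inn(X)$ is a $2$-group (which holds whenever $\Out(S)$ is non-trivial) and some $\mu$ with $[\chi_U, \mu]$ odd exists; and parts~(e) and~(f) provide extra flexibility for real-valued characters of $\chi$ by exploiting canonical real extensions to odd-index subgroups. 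If even this spectrum of tools fails for some isolated $\chi$, one may switch $U$ to a different self-normalizing intravariant subgroup, e.g.\ the normalizer of a Sylow $q$-subgroup for another prime $q$, or one of the self-normalizing maximal subgroups listed in the Atlas; the flexibility in the choice of $(U,\mu)$ is significant.

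The main obstacle I expect is the bookkeeping for the largest groups, especially the Monster $M$ and the Baby Monster $B$, where the number of irreducible characters and their Galois orbits is substantial and where a uniform choice of $p$ is unlikely to succeed for every exceptional character. For these, a computer-assisted verification using the GAP character table library, together with the tables of maximal subgroups and their fusion patterns stored there, would be indispensable: for each $\chi$ one searches over candidate pairs $(U,\mu)$ until the combinatorial conditions of parts~(a)--(g) of Theorem~\ref{easysituations} are met. Once every $\chi \in \Irr(X)$ has been paired with a suitable $(U,\mu)$, the inductive Feit condition follows immediately from the cited parts of Theorem~\ref{easysituations}.
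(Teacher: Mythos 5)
Your plan follows the paper's broad strategy — case-by-case over the $26$ sporadic groups, pairing each $\chi \in \Irr(X)$ with a $(U,\mu)$ via the various clauses of Theorem~\ref{easysituations} and Lemma~\ref{easy2}, with GAP doing the heavy lifting. However, there are some substantive differences and one concrete gap worth flagging.

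First, the emphasis is misplaced. For the $14$ sporadics with $\Out(S)=1$, the paper does not use multiplicity-one constituents (part~(a)); it uses part~(b), which only requires $\QQ(\chi)=\QQ(\mu)$ and agreement over $\ZB(X)$, because $\Aut(X)_\chi=\Inn(X)$ is automatic. This is strictly easier than hunting for multiplicity-one constituents. For the $12$ sporadics with $\Out(S)=2$, the paper's primary workhorses are parts~(b), (d) and~(g), not~(a); in particular, part~(d) — which you do not mention at all — is the tool that handles many of the residual cases after the first pass. Also, the paper's primary choice of $U$ is a maximal subgroup whose character table is stored in GAP, with Sylow normalizers a secondary fallback; you propose the opposite priority. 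Both can work, but the availability of stored tables and fusion data for maximal subgroups makes them the pragmatic first choice.

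Second — and this is the real gap — your plan does not anticipate that GAP's data is incomplete for the hardest case. For $X=3.\mathrm{Fi'_{24}}$ and the four characters of degree $1{,}152{,}161{,}010$ with field $\QQ(\sqrt{-3},\sqrt{5})$, none of the computer-search strategies you describe applies: these characters do not extend to $X.2$, and GAP does not provide the character tables of maximal subgroups of $X.2$ (or $X.2/\ZB(X)$ for the relevant subgroup). The paper has to construct, by hand, a specific maximal subgroup $\bar{U}=\AAA_6\times\SL_2(8)\rtimes 3$ of $S$, track its full inverse image $U$ in $X$, prove that a certain normal subgroup $B\cong\SL_2(8)$ lifts and is normal in $U$, identify a quotient $U/K\cong 3\AAA_6$, and then verify the hypotheses of part~(d) using the action of a lift $v$ of the outer automorphism. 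A similar, if less elaborate, ad hoc argument is needed for one character of $3.\mathrm{J3}$. In short: the broad plan is correct, but "a computer-assisted verification ... would be indispensable" undersells the problem; the reality is that for at least one group the computer-assisted verification is not possible with the available data, and a page of detailed group theory is required.
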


\begin{proof}
We start with the sporadic groups
$\mathrm{Co1}$, $\mathrm{Co2}$, $\mathrm{Co3}$, $\mathrm{Fi23}$, $\mathrm{J1}$, $\mathrm{J4}$, $\mathrm{Ly}$, $\mathrm{M11}$, $\mathrm{M23}$, $\mathrm{M24}$,
$\mathrm{B}$, $\mathrm{Ru}$, $\mathrm{Th}$ and $\mathrm{M}$.
Note that these are the sporadic simple groups  with trivial outer automorphism group.
Let $S$ be any of them and let $X$ be the universal cover of it.
By Theorem~\ref{easysituations}(b), for each $\chi \in \irr X$, it suffices to find a self-normalizing subgroup
$U$ of $X$ and $\mu \in \irr U$ a constituent of $\chi_U$ such that $\Q(\chi)=\Q(\mu)$.
We use \cite{GAP} to find such a pair $(U,\mu)$ for each $\chi$, where $U$ is a maximal subgroup of $X$. 
This is easily done using the character tables of the maximal subgroups of the sporadic groups provided by \cite{GAP}. 
This is not however possible for the Monster. The following list of maximal subgroups covers
all the fields of the characters of the Monster: $\mathrm{2.B}, 
\mathrm{3.Fi24}, \mathrm{2^{1+24}_+ \cdot Co1}$,
$\mathrm{L_2(59)}, 
\mathrm{(D10xHN).2},
\mathrm{L_2(71)}$ and 
$\mathrm{(7:3xHe):2}$ (for each of these types, there is a unique conjugacy class of maximal subgroups, see \cite{Atlas}, \cite{HW1}, \cite{HW2}, ensuring that $U$ is intravariant in $X$). 

Next we consider the universal covering groups $X$ of the remaining sporadic groups $S$, namely $\mathrm{2.M12}, \mathrm{12.M22},  \mathrm{2.J2}, \mathrm{3.J3}, \mathrm{6.Fi22}, \mathrm{3.F3+}, \mathrm{2.HS}, \mathrm{3.McL}, \mathrm{He}, \mathrm{6.Suz}, \mathrm{3.ON}$, and $\mathrm{HN}$. Note that $|\Out(S)|=2$ and $X/\ZB(X)$ is cyclic in all these cases.
We use a combination of different strategies to find a pair $(U,\mu)$ for every $\chi\in\irr X$ such that $(\chi,\mu)$ satisfies the inductive Feit condition. The first strategy is to use the character tables of the maximal subgroups $U$ of $X$, provided by \cite{GAP}, except for $X=\mathrm{3.Fi3+}$.
GAP also provides the character table of a group of the form $X.2$ for each $X$.  For each $\chi \in \irr X$, we select 
all the pairs $(U, \mu)$ such that $\Q(\mu)=\Q(\chi)$, and such that $\mu$ is the only irreducible constituent of $\chi_U$ with degree $\mu(1)$ and multiplicity $[\chi_U, \mu]$ and $\chi$ is the only irreducible constituent of $\mu^G$ with field of values
$\Q(\chi)$, degree $\chi(1)$, and  multiplicity $[\chi_U, \mu]$.
This guarantees that $(\Gamma \times \c G)_\chi=(\Gamma \times \c G)_\mu$, if  $\Gamma=\aut X_U$. If one of these multiplicities is odd, then we apply Theorem \ref{easysituations}(g), and we are done. 
If $\chi^{X.2}$ is irreducible, then we are done by Theorem \ref{easysituations}(b). This way, one can see that a vast majority of the characters $\chi\in\irr X$ satisfy the inductive Feit conjecture.
For instance, it completely eliminates the groups $\mathrm{12.M22}$, $\mathrm{2.Fi22}$ or $\mathrm{McL}$.

The remaining strategies aim at applying Theorem~\ref{easysituations}(d).
For instance we compute Sylow $p$-subgroups $P$ of $X$ and $U:=\norm X P$ to find characters $\mu\in\irr U$ such that the conditions in Theorem~\ref{easysituations}(d) are satisfied. As a variation, if $\chi\in\irr X$ has non-trivial kernel $Z_1\le \ZB(X)$, we may also work with the group $X/Z_1$ instead of $X$. All of them are provided by \cite{GAP}. In combination with the first strategy this eliminates the groups $\mathrm{2.M12}$, $\mathrm{3.McL}$, $\mathrm{He}$, $\mathrm{Fi22}$.

For the smaller groups $\mathrm{2.J2}$ and $\mathrm{2.HS}$, one can directly compute the conjugacy classes of maximal subgroups of $X$ and find matches $(U,\mu)$ for all $\chi\in\irr X$, using again Theorem~\ref{easysituations}(d).

One is now left with very few cases of $\chi\in\irr X$, which have to be dealt with by ad hoc methods, going through the different maximal subgroups $U$ of $X$ and $X.2$, as far as provided by \cite{GAP}, and checking if there are matches $(U,\mu)$ which satisfy the hypotheses of Theorem~\ref{easysituations}(d). 
Let us compute one non-trivial example. Suppose that $X=\mathrm{3.J3}$. There is only one $\chi\in\irr X$ left to be resolved, namely the unique character of degree $816$. It is rational valued and has an extension to $X.2$ with field of values equal to $\mathrm{NF}(24,[ 1, 5, 19, 23 ])$. Unfortunately, \cite{GAP} does not provide the maximal subgroups of $X.2$. But fortunately, $\ker{\chi}=\ZB(X)$, so that we can work with $X/\ZB(X)$ and $X.2/\ZB(X)$, and \cite{GAP} provides the maximal subgroups of those groups. The maximal subgroup $U$ of $X/\ZB(X)$ with the description $\mathrm{"j3m6"}$ has a unique $\mu\in\irr U$ of degree $20$. This character extends to the normalizer of $U$ in $X.2/\ZB(X)$, namely the maximal subgroup with description $\mathrm{"(3\times M10):2"}$. The two extensions of $\mu$ have again the field of value $\mathrm{NF}(24,[ 1, 5, 19, 23 ])$. One can check now that the hypotheses of Theorem~\ref{easysituations} are satisfied so that $(\chi,\mu)$ satisfies the inductive Feit conjecture. The only remaining cases that cannot be handled by such a quick inspection are the ones treated now below in greater detail.

\smallskip
We handle the case $X = 3.S$ the universal cover of $S=\mathrm{Fi'_{24}=F3+}$ and $\chi$ any of the four characters labeled 139--142 in \cite{GAP}.
Any such character $\chi$ has degree 1,152,161,010, and field of values $\Q(\chi)=\Q(\sqrt{-3},\sqrt{5})$. None of them extends to $Y:=3.S.2$, and one can check that $\chi^Y = \chi+\chi'$, where $\chi'=\chi^\sigma$ for $\sigma \in {\rm Gal}(\Q(\chi)/\Q)$ precisely when
$$\sigma=\sigma_2: \sqrt{5} \mapsto -\sqrt{5},~\sqrt{-3} \mapsto -\sqrt{-3},$$
obtained by restricting the Galois automorphism $\zeta_{15} \mapsto \zeta_{15}^2$ of ${\rm Gal}(\Q_{15}/\Q)$.  

By \cite[Theorem 1.1]{LW}, $S$ has a unique conjugacy class of maximal subgroups of type 
$$\bar{U}= \AAA_6 \times \SL_2(8) \rtimes 3,$$ 
which extend to maximal subgroups $\SSS_6 \times \SL_2(8) \rtimes 3 = \langle \bar{U},\bar{v} \rangle$ of $\Aut(S) = S.2$, where 
$$\SSS_6 = \langle \AAA_6,\bar{v}\rangle \cong \AAA_6 \cdot 2_1,~~\SL_2(8) \rtimes 3 = \langle \SL_2(8),\bar{t} \rangle \cong \Aut(\SL_2(8))$$
for an involution $\bar{v}$ and an element $\bar{t}$ of order $3$, and 
$\SL_2(8) \rtimes 3$ centralizes $\AAA_6$. Furthermore,
by restricting the irreducible character of degree 8671 of $S$ to $\bar{U}$, we see that $\bar{t}$  
belongs to class $3c$ in $S$ (in the notation of \cite{GAP}), hence all its lifts to elements in $X$ have order $3$. Moreover, by \cite[Proposition 4.9]{LW}, if 
$U$ denotes the full inverse image of $\bar{U}$ in $X$, then the full inverse image $A$ of the subgroup $\AAA_6 \lhd \bar{U}$ in $U$ is perfect: 
$$A \cong 3\AAA_6.$$
Since $\SL_2(8)$ has trivial Schur multiplier, $\SL_2(8)$ lifts to a subgroup $B \cong \SL_2(8)$ in $U$. Now, $B$ centralizes $A$ modulo $\ZB(X) = \ZB(A)$,
so by perfectness of $A$ and the Three-Subgroup-Lemma,  $B$ also centralizes $A$ in $U$. As mentioned above, we can lift the complement 
$\langle \bar{t} \rangle$ to $\SL_2(8)$ in $\SL_2(8) \rtimes 3$ to a subgroup $\langle t \rangle \cong C_3$ in $U$. Note that $\ZB(\bar{U})=1$ and 
$\ZB(A) \leq \ZB(U)$, so $\ZB(A)=\ZB(U)$. As $t$ induces a non-inner automorphism of $B$ modulo
$\ZB(U)$, $tBt^{-1} \cong B$ is contained in $\ZB(U) \times B$, whence $t \in \NB_U(B)$ but $t \notin A \times B$. Thus $\NB_U(B) \geq \langle A, B,t \rangle = U$, i.e. 
$B \lhd U$. Again using perfectness of $A$ and the Three-Subgroup-Lemma, we see that $t$ generates a subgroup of order $3$ in $U/B$ which centralizes 
$A$. Since $t \notin AB$, we obtain $U/B = A \times C_3$, and thus $U$ has a quotient $U/K \cong A$, where $K = \langle B,t \rangle$. 

Taking an inverse image $v$ of order $2$ of $\bar{v}$ in $X$, we
see that $v \in \NB_Y(U)$ induces an outer automorphism of $\AAA_6$ coming from $\SSS_6 = \AAA_6 \cdot 2_1$. Now, using \cite{GAP} and 
taking $\mu_1$ to be any faithful irreducible character of $U/K \cong $ of degree $3$ (agreeing with $\chi$ on $\ZB(A)$), we can check that $\QQ(\mu_1)=\QQ(\chi)$ and 
$\mu_1^v = \mu_1^\sigma$ for $\sigma \in {\rm Gal}(\Q(\chi)/\Q)$ precisely when $\sigma$ is the above automorphism $\sigma_2$. 
Arguing as above, since $v$ centralizes $B$ modulo $\ZB(U)$, we have $[v,B]=1$ in $U$. Next, $v$ centralizes $t$ modulo $\ZB(U) = \langle z \rangle$, 
so we can write $vtv^{-1}=tz^i$ for some $i = 0,1,2$. Now $2 = |v|=|t^{-1}vt| = |z^iv|$, whence $i=0$ as $z$ is central of order $3$. Thus $[v,t]=1$; in particular,
$v$ normalizes $K$. Inflating $\mu_1$ from $U/K$ to the character $\mu$ of $U$, we have the same action of $v$ on $\mu$. Consequently,
the pair $(U,\mu)$ has all the desired properties to apply Theorem~\ref{easysituations}(d).
\end{proof} 

Essentially the same arguments are used to prove the following statement.

\begin{lem}\label{69}
The simple groups $\AAA_6$ and $\AAA_7$ satisfy the inductive Feit condition.
\end{lem}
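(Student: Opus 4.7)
The approach follows the strategy of the proof of Theorem~\ref{spo} essentially verbatim. Let $S\in\{\AAA_6,\AAA_7\}$, let $X$ be its universal covering group ($X=6.\AAA_6$ or $X=6.\AAA_7$), and let $Y$ be the full automorphism group of $X$, so that $Y/X\cong\Out(S)$ has order $4$ in the first case and order $2$ in the second. The aim is, for every $\chi\in\irr X$, to exhibit a self-normalizing intravariant subgroup $U<X$ and a character $\mu\in\irr U$ such that the hypotheses of one of the parts~(a), (b), (d), or (g) of Theorem~\ref{easysituations} are satisfied. Since $|X|$ is small, the whole verification can be carried out computationally in \textsf{GAP} from the character tables of $X$, $Y$, and their (known) maximal subgroups, all of which are available in the character table library.

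The first step is to enumerate, class by class, the self-normalizing maximal subgroups $U<X$, recording which are intravariant under the full action of $Y$, and, for each such $U$, to list the candidate pairs $(U,\mu)$ with $\mu\in\irr U$, $\Q(\mu)=\Q(\chi)$, and such that $\mu$ is the unique constituent of $\chi_U$ of its degree and multiplicity (and symmetrically, $\chi$ is the unique constituent of $\mu^X$ of its field of values, degree, and multiplicity). This forces $(\Gamma\times\c G)_\chi=(\Gamma\times\c G)_\mu$ for $\Gamma=\aut X_U$. If the multiplicity $[\chi_U,\mu]$ equals $1$, Theorem~\ref{easysituations}(a) applies immediately; if $\aut X_{\chi^{\c G}}/\Inn(X)$ is a $p$-group and $[\chi_U,\mu]$ is coprime to $p$, Theorem~\ref{easysituations}(g) applies; and whenever $\chi$ and $\mu$ admit compatible extensions to the subgroup $\NB_Y(U)$, Theorem~\ref{easysituations}(d) applies. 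For faithful characters $\chi$ of the proper covers $2.S$, $3.S$, $6.S$, Theorem~\ref{easysituations}(b) is also available whenever $\aut X_\chi=\Inn(X)$, provided $\mu$ is chosen to lie over the same central character as $\chi$.

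The main obstacle, exactly as in Theorem~\ref{spo}, is the exceptional nature of $\Out(\AAA_6)\cong C_2\times C_2$: one must verify intravariance of $U$ under the full $\mathrm{P\Gamma L}_2(9)$-type automorphism group rather than merely under diagonal automorphisms coming from $\SSS_6$, and one must track the extension data of $\chi$ and $\mu$ through the $6$-fold central cover to the relevant intermediate overgroups of $X$ in $Y$. The characters most at risk are the faithful ones of $6.\AAA_6$ that are fixed by one but not both of the non-inner classes of automorphisms, since there the Galois-automorphism stabilizer equality in Definition~\ref{inductive}(ii) is the delicate condition. In such cases one appeals to Lemma~\ref{easy2} to reduce the equality to a purely stabilizer-theoretic check involving only $\Gamma_\chi$ and $\Gamma_\mu$.

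A small number of residual characters not resolved by the three general strategies above are handled by ad hoc inspection, in the spirit of the $3.\mathrm{Fi}'_{24}$ case at the end of Theorem~\ref{spo}: one identifies a maximal or almost-maximal $U$ whose structure is a (central) product involving small quasisimple factors with trivial or easily-controlled Schur multiplier, locates a faithful character $\mu$ of $U$ of appropriate degree, and directly verifies both the stabilizer equality and the existence of compatible extensions to $\NB_Y(U)$. Since the relevant index $[Y:X]\le 4$ and all maximal subgroups and their character tables are explicitly available, this concluding step is short and mechanical.
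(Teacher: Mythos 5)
Your high-level approach — verifying the inductive Feit condition computationally via \cite{GAP} by producing, for each $\chi\in\irr X$, a self-normalizing intravariant $U<X$ and a matching $\mu\in\irr U$, then invoking one of the parts of Theorem~\ref{easysituations} — is the same as the paper's. But there are two concrete gaps. First, you restrict the search for $U$ to maximal subgroups of $X$, whereas the paper's proof uses Sylow normalizers that are not maximal: the Sylow $7$-normalizer in $6.\AAA_7$ (it sits inside a maximal $\PSL_2(7)$-type subgroup), and the Sylow $2$- and Sylow $5$-normalizers in $6.\AAA_6$. A maximal-subgroups-only enumeration would therefore leave some characters of $X$ without a match.

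Second, and more seriously for $\AAA_6$, your set-up posits an ambient group $Y$ containing $X$ with $Y/X\cong\Out(S)\cong C_2\times C_2$, and your concluding ad-hoc step relies on $\NB_Y(U)$ and on compatible extensions to it (modelled on the $3.\mathrm{Fi}'_{24}$ case in Theorem~\ref{spo}). No such $Y$ exists: the paper explicitly notes that there is no group of type $6.\AAA_6.2^2$, indeed none of type $2.\AAA_6.2_3$; this is precisely what makes $\AAA_6$ the delicate case. The paper instead works in the existing groups $3.\AAA_6.2^2$ and $\AAA_6.2^2$ for the characters with kernel of order $2$ or $6$, and resolves the remaining ones — two of degree $10$ with kernel of order $3$, and four faithful of degree $12$ — by taking $U$ to be a Sylow $2$- resp.\ Sylow $5$-normalizer, observing $[\chi_U,\mu]=3$, and invoking Theorem~\ref{easysituations}(g). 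You list part~(g) as one of your tools, but at the spot where it is actually needed your plan reaches instead for a nonexistent $\NB_Y(U)$ and a central-product structure that does not occur in $6.\AAA_6$.
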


\begin{proof}
(a) We start with $\AAA_7$. The center $Z$ of its universal covering group $X$ has order $6$ and $|\Out(\AAA_7)|=2$. \cite{GAP} provides the group $\mathrm{"6.A7.2"}$. Using computations with \cite{GAP}, we find for each $\chi\in\irr{X}$ a self-normalizing and intravariant subgroup $U<X$ ($U$ can be chosen to be a maximal subgroups of $X$ or the normalizer of the Sylow $7$-subgroup of $X$) and an irreducible character $\mu$ of $U$ such that $(\chi,\mu)$ satisfies the inductive Feit condition. 

\smallskip
(b) Now to $\AAA_6$. The center $Z$ of its universal covering group $X$ has again order $6$, but $\Out(\AAA_6)\cong C_2\times C_2$. 
There exists no group of type $6 \cdot \AAA_6 \cdot 2\,\hat{}\,2$, in fact no group of type $2 \cdot \AAA_6 \cdot 2_3$, see \cite{Atlas}, which makes the case of $\AAA_6$ more difficult than $\AAA_7$. 
Using \cite{GAP} for the group $X$ and the strategy described in the second paragraph of the proof of Theorem~\ref{spo} for the sporadic simple groups, we can eliminate 16 of the 31 irreducible characters of $X$. 
Also, \cite{GAP} provides groups of the types $\mathrm{"3.A6.2\,\hat{}\,2"}$ and $\mathrm{"A6.2\,\hat{}\,2"}$. Computations in \cite{GAP} with those groups eliminate all remaining $\chi\in\irr{X}$ with kernel of order $2$ or $6$. 
There are 2 Galois-conjugate irreducible characters of degree 10 and with kernel of order $3$, and 4 Galois-conjugate faithful irreducible characters of degree 12 left. 
Each of these characters $\chi$ of degree 10 (resp.~of degree $12$) satisfies the inductive Feit conjecture together with an irreducible character $\mu$ of the normalizer $U$ of the Sylow $2$-subgroup (resp.~Sylow $5$-subgroup) of $X$ satisfying $[\chi_U,\mu]=3$, by invoking Theorem~\ref{easysituations}(g). This completes the proof.
\end{proof}

\section{Alternating groups: Ordinary characters}
The goal of this section and the next is to prove Theorem C; that is, that the alternating groups 
satisfy the inductive Feit condition. In this section we focus on the ordinary characters of the alternating groups $\AAA_n$,
and in the next on the spin (i.e. faithful) characters of the covers $2\AAA_n$.
Recall that complex irreducible characters of the symmetric group $\SSS_n$ are labeled by partitions 
$$\lambda= (\lambda_1, \lambda_2, \ldots, \lambda_m), \mbox{ where }\lambda_i \in \Z_{\geq 1},~\lambda_1 \geq \lambda_2 \geq \ldots \lambda_m,~\sum^m_{i=1}\lambda_i =n$$ 
of $n$, see e.g. \cite[Theorem 2.1.11]{JK}. To each partition $\lambda \vdash n$ one can associate its Young diagram $Y(\lambda)$ as in \cite[\S1.4]{JK}, which 
carries lots of information about the character $\chi=\chi^\lambda$ labeled by $\lambda$, see e.g. \cite[Chapters 1, 2]{JK} or
\cite[Chapter 4]{FH}. In particular, by \cite[Theorem 2.5.7]{JK}, $\chi^\lam$ is not irreducible over $\AAA_n$ exactly when the partition
$\lam$ is {\it symmetric} (or {\it self-associated}, i.e. $Y(\lambda)$ is stable under the reflection through its main diagonal), in which case the values of each of the two irreducible constituents of $(\chi^\lambda)_{\AAA_n}$ 
are described in \cite[Theorem 2.5.13]{JK}.

\begin{pro}\label{res-an}
Suppose $G=\SSS_n$, $X= \AAA_n$ with $n \geq 2$ and the partition $\lam$ of $n$ is not symmetric. Then $\theta:=(\chi^\lam)_X$ upon restriction to
a subgroup $U:=\AAA_{n-1}$ contains an irreducible character $\mu$ of $U$ with multiplicity one which extends to $\SSS_{n-1}$. Furthermore
$\theta$ satisfies the inductive Feit condition if $n \neq 6$. 
\end{pro}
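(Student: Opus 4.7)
The plan is to exploit the branching rule. Using the factorization $\SSS_n=\AAA_n\cdot\SSS_{n-1}$ with $\AAA_n\cap\SSS_{n-1}=\AAA_{n-1}$, one has
\[ \theta_U=\bigl((\chi^\lam)_{\SSS_{n-1}}\bigr)_{\AAA_{n-1}}=\sum_{\mu'\in\lam^-}(\chi^{\mu'})_{\AAA_{n-1}}, \]
where $\lam^-$ is the (multiplicity-free) set of partitions of $n-1$ obtained by removing a corner from $\lam$. For non-symmetric $\mu'\in\lam^-$, the summand $(\chi^{\mu'})_{\AAA_{n-1}}$ is irreducible, and two such summands coincide exactly for transpose pairs (using $\chi^{(\mu')^T}=\chi^{\mu'}\cdot\sgn$ together with $\sgn|_{\AAA_{n-1}}=1$). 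Hence a non-symmetric $\mu'\in\lam^-$ yields a multiplicity-one constituent of $\theta_U$ extending to $\chi^{\mu'}\in\irr{\SSS_{n-1}}$ precisely when $(\mu')^T\notin\lam^-$.

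Existence of such a $\mu'$ reduces to the combinatorial claim that $\lam^-\not\subseteq(\lam^T)^-$ whenever $\lam\neq\lam^T$, which I would prove by contradiction. Since transposition is a size-preserving bijection on partitions, $\lam^-\subseteq(\lam^T)^-$ forces equality. If $|\lam^-|\ge 2$, each removable box of $\lam$ is absent from exactly one member of $\lam^-$, so $\lam=\bigcup_{\mu'\in\lam^-}\mu'$ as Young diagrams; applying the same identity to $\lam^T$ and using $\lam^-=(\lam^T)^-$ yields $\lam=\lam^T$, a contradiction. If $|\lam^-|=1$, then $\lam=(a^b)$ is a rectangle, and the requirement $\lam\neq\lam^T$ forces $a\neq b$, so direct inspection distinguishes the unique elements of $\lam^-$ and $(\lam^T)^-$. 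Setting $\mu:=(\chi^{\mu'})_{\AAA_{n-1}}$ for any such $\mu'$ proves the first statement of the proposition.

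For the inductive Feit assertion with $n\neq 6$, inflate $\theta$ and $\mu$ to the universal cover of $\AAA_n$ (on which the Schur multiplier acts trivially); computations then descend to $\AAA_n$ and $\AAA_{n-1}$, which we identify with $X$ and $U$. Since $\Aut(\AAA_n)=\SSS_n$ for $n\neq 6$, we have $\Gamma:=\Aut(X)_U=\SSS_{n-1}$, realized by conjugation from $\Sigma:=\SSS_n$, and $U$ is self-normalizing and intravariant in $X$ (its $\SSS_n$-conjugates are point stabilizers, all $\AAA_n$-conjugate for $n\ge 4$), giving condition~(i) of Definition~\ref{inductive}. Since $\chi^\lam$ and $\chi^{\mu'}$ are rational-valued and respectively $\SSS_n$- and $\SSS_{n-1}$-invariant, the characters $\theta$ and $\mu$ are rational and $\Gamma$-fixed, so condition~(ii) holds automatically. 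For condition~(iii) I would invoke Theorem~\ref{easysituations}(d) with $\Sigma=\SSS_n$, $\Delta=\NB_\Sigma(U)=\SSS_{n-1}$, and extensions $\chi^\sharp:=\chi^\lam\in\irr{\Sigma_\chi}$, $\mu^\sharp:=\chi^{\mu'}\in\irr{\Delta_\mu}$: the inner-product hypothesis is vacuous as $\cent{\Delta_\mu}{\AAA_n}=1$ for $n\ge 5$, and the Galois-twist compatibility is trivial because rationality together with $\Sigma_\chi$- and $\Delta_\mu$-invariance forces the required linear twist $\gamma$ to equal $1$.

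The main obstacle is the combinatorial claim in the second paragraph; once that is in hand, the inductive Feit verification is essentially formal, relying on the strength of Theorem~\ref{easysituations}(d) and the rationality of symmetric-group characters.
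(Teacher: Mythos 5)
Your overall strategy coincides with the paper's: decompose $\theta_U$ via the branching rule, locate a non-symmetric $\mu'\in\lam^-$ with $(\mu')^T\notin\lam^-$, observe this yields a multiplicity-one constituent that extends, and then use rationality together with Theorem~\ref{easysituations} to close off the inductive Feit verification. Where you genuinely diverge is the combinatorial heart. The paper proceeds by induction on $n\geq 5$ and a three-way case analysis on the shape of $Y(\lam)$ (longest row $a$ vs.\ longest column $b$, and the multiplicities $c,d$ of longest rows and columns), with the recursion invoked inside case (c). Your argument is non-inductive and cleaner: you use that $(\lam^T)^-=\{\nu^T:\nu\in\lam^-\}$ forces $|\lam^-|=|(\lam^T)^-|$, so $\lam^-\subseteq(\lam^T)^-$ implies equality; that when $|\lam^-|\geq 2$ each box of $Y(\lam)$ lies in some $Y(\nu)$, $\nu\in\lam^-$, whence $Y(\lam)=\bigcup_{\nu}Y(\nu)=Y(\lam^T)$; and you dispose of the rectangle case directly. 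This buys a shorter and more structural proof at the cost of two small gaps. First, the claim $\lam^-\not\subseteq(\lam^T)^-$ fails at $n=2$: for $\lam=(2)$ or $(1,1)$ one has $\lam^-=(\lam^T)^-=\{(1)\}$, and there is no non-symmetric $\mu'\in\lam^-$; the proposition still holds there because $\AAA_1=\SSS_1$ makes the (symmetric) $\chi^{(1)}$ restrict irreducibly anyway, but your argument as written does not cover it. The paper sidesteps this by declaring $2\le n\le 5$ a checked base case. Second, the invocation of Theorem~\ref{easysituations}(d) with $\Sigma=\SSS_n$ is not quite well-posed as stated: Definition~\ref{inductive} requires $X$ to be the \emph{universal covering group} of $\AAA_n$, so $\Sigma$ in part (d) would have to contain $2\AAA_n$ (or $6\AAA_n$ for $n=7$) as a normal subgroup; you would need a covering group of $\SSS_n$, not $\SSS_n$ itself. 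The paper avoids this altogether by noting $[\theta_U,\mu]=1$ and appealing directly to Theorem~\ref{easysituations}(a), which involves no ambient $\Sigma$ and passes automatically to the cover since $[\tilde\theta_{\tilde U},\tilde\mu]=[\theta_{\AAA_{n-1}},\mu]$. Switching your final step to (a) removes the imprecision and shortens the argument.
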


\begin{proof}
We will freely use the branching rule \cite[Theorem 2.4.3]{JK} from $\SSS_n$ to the stabilizer subgroup $\SSS_{n-1}$ of $n$ in $\SSS_n$, as well as the consequence
of \cite[Theorem 2.5.7]{JK} that, for any $\al,\beta \vdash n$, 
$$[(\chi^\al)_X,(\chi^\beta)_X]_X \neq 0 \mbox{\ \ if and only if\ \ }\beta \in \{\al,\al'\}.$$ 
To prove the first statement, we proceed by induction on $n \geq 5$, with the induction base 
$2 \leq n \leq 5$ easily checked. We embed $U < V < G$ with $V \cong \SSS_{n-1}$.

For the induction step $n \geq 6$, let $a=\lambda_1$ denote the longest (top) row of the Young diagram $Y(\lam)$ of $\lam$, and let
$b=\lambda'_1$ be the longest column of $Y(\lam)$. By symmetry, we may assume 
$$a \geq b.$$ 
Let $B_1, \ldots,B_t$ denote 
the removable boxes of $\lam$, counted from top to bottom of $Y(\lam)$, and for $1 \leq i \leq t$, let $\delta_i \vdash n-1$ 
be the partition whose Young diagram is obtained from $Y(\lam)$ by removing $B_i$, so that 
\begin{equation}\label{a10}
  (\chi^\lam)_V = \sum^t_{i=1}\chi^{\delta_i}.
\end{equation}  
By their construction, $\delta_i \neq \delta_j$ whenever $i \neq j$.

\smallskip
(a) Here we consider the case $a > b$. If $t=1$, then $Y(\lam)$ is a rectangle of size $a \times b$, in which case 
$\delta_1 \neq (\delta_1)'$ and take $\mu= \chi^{\delta_1}$. So assume $t \geq 2$. For any $2 \leq i \leq t-1$, $\delta_i$ has longest row $a$ and longest 
column $b$, whereas $\delta_t$ has longest row $a$ and longest column $\leq b < a$. In particular, $\delta_t$ is not symmetric, and 
$\delta_t \neq (\delta_i)'$ when $2 \leq i \leq t-1$. Also, $\delta_1$ has longest row $\geq a-1$ and longest column $b$, so $\delta_t \neq (\delta_1)'$.
Hence, taking $\mu=(\chi^{\delta_t})_U$, we see using \eqref{a10} that $\mu \in \Irr(U)$ and $[\theta_U,\mu]_U=1$.

\smallskip
(b) From now on we assume $a=b$. As $\lam \neq \lam'$, we have $t \geq 2$. Let $c \geq 1$ be the number of rows of length $a=\lam_1$ in $Y(\lam)$ 
and let $d \geq 1$ the number of columns of length $a=\lam'_1$ in $Y(\lam)$. Again by symmetry we may assume that
$$c \geq d.$$
Here we assume in addition that $c > d$. Then $\delta_t$ has $c$ rows of (largest possible) length $a$, and $d-1$ columns of length $a$, all other being shorter. Thus $(\delta_t)'$ has $c$ columns of (largest possible) length $a$, and $d-1$ rows of length $a$, all other being shorter. On the other hand,
for any $1 \leq i \leq t-1$, $\delta_i$ has $\geq c-1 \geq d$ rows of length $a$. It follows that $\delta_t \neq (\delta_i)'$ when $2 \leq i \leq t$, and 
$\delta_t$ is not symmetric. So we are again done by taking $\mu=(\chi^{\delta_t})_U$.

\smallskip
(c) Finally, assume that $a=b$ and $c=d$. Let $\gamma$ be the partition whose Young diagram is obtained from $Y(\lam)$ by removing the union of
the $c$ longest rows and the $c$ longest columns. Since $\lam \neq \lam'$, $\gam$ is non-empty and not symmetric, in particular, $Y(\gam)$ has 
$m$ boxes where $2 \leq m < n$. Applying the induction  hypothesis to $\gam \vdash m$ (and adding the removed boxes back), we see
that there is some $2 \leq j \leq t-1$ such that $\delta_j \neq (\delta_i)'$ for all $2 \leq i \leq t-1$. Furthermore, $\delta_j$ has 
$c$ rows of length $a$ and $c$ columns of length $a$. On the other hand, $\delta_1$ has $c-1$ rows of length $a$, all other being shorter, and 
$\delta_t$ has $c-1$ columns of length $a$, all other being shorter. Thus $\delta_j$ differs from $(\delta_1)'$ and $(\delta_t)'$ as well.  Hence we  
are done by taking $\mu=(\chi^{\delta_j})_U$.      

\smallskip
For the second statement, first note that $U$ satisfies condition \ref{inductive}(i). Condition \ref{inductive}(ii) holds because
$\theta$ and $\mu$ are both rational and $\SSS_{n-1}$-invariant. Finally, condition \ref{inductive}(iii) holds by Theorem \ref{easysituations}(a).
\end{proof}

We will now work with $\chi^\lam$ where $\lam\vdash n$ is symmetric. First we consider the symmetric hook 
$$\lam_m:=\bigl((m+1)/2,1^{(m-1)/2}\bigr) := \bigl((m+1)/2,\underbrace{1,1, \ldots,1}_{(m-1)/2~{\rm times}}\bigr)$$ 
for any odd $m \geq 1$.
If $n=ab$ where $a,b \in \Z_{\geq 2}$, we denote by $\SSS(a,b)$ the subgroup $\SSS_a \wr \SSS_b$, and $\AAA(a,b) = \SSS(a,b) \cap \AAA_n$.

\begin{pro}\label{res-wr}
Suppose $\chi=\chi^{\lam_n}$ and $n = kl$, where $k,l \geq 3$ are odd. Then the following statements hold.
\begin{enumerate}[\rm(a)]
\item The restriction $\chi_{\SSS(k,l)}$ contains a unique irreducible constituent 
$\varphi$, which lies above the character $\chi^{\lam_k} \boxtimes \chi^{\lam_k} \boxtimes \ldots \boxtimes \chi^{\lam_k}$ of the normal 
subgroup $N=\SSS_k^l$ of $M:=\SSS(k,l)$, with multiplicity $1$. 
\item Furthermore, $\varphi$ splits over $\AAA(k,l)$: $\varphi_{\AAA(k,l)}=\varphi^+ + \varphi^-$ with $\varphi^\pm \in \Irr(\AAA_{k,l})$.
\item $\chi_{\AAA_n} = \theta^++\theta^-$ with $[(\theta^\eps)_{\AAA(k,l)},\varphi^\eps]=1$, 
$\theta^\eps$ and $\varphi^\eps$ have 
the same stabilizer in $\c G$, whence $\QQ(\theta^\eps)=\QQ(\varphi^\eps)=\QQ(\sqrt{(-1)^{(n-1)/2}n})$, for each $\eps=\pm$.
\item $\theta^\eps$ satisfies the inductive Feit condition for each $\eps=\pm$.
\end{enumerate}
\end{pro}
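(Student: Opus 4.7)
The plan is to verify the inductive Feit condition for the pair $(U,\mu)$, where $U$ is the full preimage in the universal cover $X$ of $\AAA_n$ of the subgroup $\bar U := \AAA(k,l) = \SSS(k,l)\cap\AAA_n$, and $\mu$ is the inflation to $U$ of $\varphi^\eps$ via the canonical surjection $U\twoheadrightarrow \bar U$. Since $n=kl\geq 9$ is odd, $X$ is the double cover $2\AAA_n$ and $\Aut(\AAA_n)=\SSS_n$; moreover, by Part~(c) we have $[(\theta^\eps)_U,\mu]=1$, so once conditions (i) and (ii) of Definition~\ref{inductive} are established, Theorem~\ref{easysituations}(a) will deliver condition (iii).

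For condition (i), $U$ is self-normalizing in $X$ because the standard wreath product $\SSS(k,l)$ is self-normalizing in $\SSS_n$, whence $N_{\AAA_n}(\bar U)=\bar U$, and taking preimages gives $N_X(U)=U$. The intravariance of $U$ reduces to that of $\bar U$: since $\SSS(k,l)=N_{\SSS_n}(\bar U)$ contains odd permutations (e.g.\ any transposition inside one of the $\SSS_k$-factors), every $\SSS_n$-conjugate of $\bar U$ is already an $\AAA_n$-conjugate; lifting back to $X$ using $\zent{X}\leq U$ then gives the analogous statement for $U$ under $\Aut(X)$.

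For condition (ii), set $\Gamma=\Aut(X)_U$; its image in $\Aut(\AAA_n)=\SSS_n$ is $N_{\SSS_n}(\bar U)=\SSS(k,l)$. By Parts~(b) and~(c), any odd permutation in $\SSS(k,l)$ swaps $\theta^+\leftrightarrow \theta^-$ and $\varphi^+\leftrightarrow \varphi^-$, so each of these characters is fixed by the preimage in $\Gamma$ of $\bar U$ and by no other element of $\Gamma$; in particular $\Gamma_{\theta^\eps}=\Gamma_{\varphi^\eps}$. Moreover the $\Gamma$-orbit of $\theta^\eps$ (resp.\ of $\varphi^\eps$) is $\{\theta^+,\theta^-\}$ (resp.\ $\{\varphi^+,\varphi^-\}$), which by the last assertion of Part~(c) coincides with its $\c G$-orbit. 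Since $[\QQ(\theta^\eps):\QQ]=2$ by Part~(c), Lemma~\ref{easy2}(iii) applies and gives $(\Gamma\times \c G)_{\theta^\eps}=(\Gamma\times\c G)_\mu$.

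With conditions (i) and (ii) in hand and $[(\theta^\eps)_U,\mu]=1$ from Part~(c), Theorem~\ref{easysituations}(a) yields condition (iii), so $(\theta^\eps,\mu)$ satisfies the inductive Feit condition. The main point where some care is needed is the bookkeeping in passing between the cover $X$ and the simple quotient $\AAA_n$ when verifying intravariance and the equality of $\Gamma\times\c G$-stabilizers; everything else is routine given Parts~(a)--(c).
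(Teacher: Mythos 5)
Your proposal addresses only Part~(d), treating Parts~(a)--(c) as given; keep in mind that the paper's proof of Proposition~\ref{res-wr} devotes most of its effort to establishing (a)--(c) (via exterior powers of the deleted permutation module, tensor induction, and the splitting behavior over $\AAA(k,l)$), so a complete solution would have to supply those arguments as well.

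Even within Part~(d) there is a genuine gap. You assert that $[\QQ(\theta^\eps):\QQ]=2$ and that the $\Gamma$-orbit of $\theta^\eps$ equals its $\c G$-orbit, and then invoke Lemma~\ref{easy2}(iii). But Part~(c) only gives $\QQ(\theta^\eps)=\QQ\bigl(\sqrt{(-1)^{(n-1)/2}n}\bigr)$, and this equals $\QQ$ whenever $n$ is an odd perfect square (for instance $n=9$ with $k=l=3$, which is permitted since $k,l\geq 3$). In that case $\theta^\eps$ and $\varphi^\eps$ are rational-valued, $[\QQ(\theta^\eps):\QQ]=1$, the $\c G$-orbit of $\theta^\eps$ is a singleton while the $\Gamma$-orbit is $\{\theta^+,\theta^-\}$, and the hypotheses of Lemma~\ref{easy2}(iii) fail. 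The paper's proof of Part~(d) makes exactly this case distinction, invoking Lemma~\ref{easy2}(i) when $n$ is a square (using rationality) and Lemma~\ref{easy2}(iii) otherwise; your argument needs the same split.

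A smaller issue: the inference ``$\SSS(k,l)$ is self-normalizing in $\SSS_n$, whence $\NB_{\AAA_n}(\bar U)=\bar U$'' is not a valid implication on its own, since a priori $\NB_{\SSS_n}(\bar U)$ could be strictly larger than $\NB_{\SSS_n}(\SSS(k,l))$. The reason this works, as the paper notes, is that $\AAA(k,l)$ is in fact \emph{maximal} in $\AAA_n$; simplicity of $\AAA_n$ then forces $\NB_{\AAA_n}(\bar U)=\bar U$, and the same maximality yields $\NB_{\SSS_n}(\bar U)=\SSS(k,l)$, which you also rely on for intravariance and for identifying $\Gamma$ with $\SSS(k,l)$. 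Modulo these points, your lift to the universal cover $2\AAA_n$ and the reduction to Theorem~\ref{easysituations}(a) follow the paper's route.
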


\begin{proof}
(a) Let $V_n \oplus \CC \cong \CC^n$ denote the natural permutation module for $\SSS_n$, so that $V_n$ affords the character $\chi^{(n-1,1)}$.  Then
$\chi$ is afforded by the exterior power $\wedge^{(n-1)/2}(V_n)$, see \cite[\S3.2]{FH}. The restriction of $V_n$ to $\SSS(k,l)$ is 
$$V_{k,1} \oplus V_{k,2} \oplus \ldots \oplus V_{k,l} \oplus V_l,$$
where for each $1 \leq i,j \leq l$, the $j^{\mathrm {th}}$ factor $\SSS_{k|j}$ of $N= \SSS_k^l$ 
acts trivially on $V_{k,i}$ if $j \neq i$ and on $V_l$, and acts via its deleted
natural permutation action if $j=i$. Furthermore, the complement $\SSS_l$ to $N$ in $M$ permutes these $V_{k,i}$, $1 \leq i \leq l$, transitively and 
naturally,
and acts on $V_l$ via its deleted natural permutation action. It follows (from the formula \cite[(B.1), p. 473]{FH} for the exterior power of a direct sum of two modules)
that  $\wedge^{(k-1)l/2}(V_n)$ contains the $N$-submodule
$$A:= \wedge^{(k-1)/2}(V_{k,1}) \otimes \wedge^{(k-1)/2}(V_{k,2}) \otimes \ldots \otimes \wedge^{(k-1)/2}(V_{k,1}).$$
Since each $\wedge^{(k-1)/2}(V_{k|i})$ is acted on irreducibly by $\SSS_{k|i}$ and trivially by all the other 
factors $\SSS_{k|j}$, $A$ is irreducible over $N$. Next, $\wedge^{(n-1)/2}(V_n)$ contains the $M$-submodule 
 $$W= A \otimes B = \wedge^{(k-1)/2}(V_{k,1}) \otimes \wedge^{(k-1)/2}(V_{k,2}) \otimes \ldots \otimes \wedge^{(k-1)/2}(V_{k,1}) \otimes  \wedge^{(l-1)/2}(V_l),$$
where $N$ acts trivially on $B$, but $\SSS_l$ acts irreducibly on $B$, with character $\chi^{\lam_l}$. By Gallagher's theorem \cite[(6.17)]{Is},
$W$ is an irreducible $M$-module, with character say $\varphi$. Also note that $W$ is the $N$-isotypic component of $\wedge^{(n-1)/2}(V_n)$ that 
corresponds to the character $\chi^{\lam_k} \boxtimes \chi^{\lam_k} \boxtimes \ldots \boxtimes \chi^{\lam_k}$. Hence $[\chi_M,\varphi]=1$.

\smallskip
(b) To show that $\varphi$ splits over $\AAA(k,l)$, it suffices to show that $\varphi(x)=0$ for all $x \in M \smallsetminus \AAA_n$. 
We can write $x=yh$, where $y=(y_1, \ldots,y_l) \in N$ and $h \in \SSS_l$. Since
the character $\chi^{\lam_l}$ of the $M/N$-module $B$ splits over $\AAA_l$, we see that $\varphi(x) =0$ if 
$h \notin \AAA_l$. Also, the actions of $h$ on the $k$ $l$-subsets of $\{1,2, \ldots,n\}$ are identical and $2 \nmid k$, so $h \notin \AAA_l$ means exactly
that $h$ is odd as an element of $\SSS_n$. It remains to consider the case $h \in \AAA_l$, so $y$ is odd. 
Conjugating $x$ by an element in $\SSS_l$, we may assume that 
$$h = (1,2,\ldots,a_1)(a_1+1, \ldots,a_1+a_2) \ldots (a_1+\ldots+a_{s-1}+1, \ldots,a_1 + \ldots+a_{s-1}+a_s)$$
is a disjoint product of $s$ cycles of length $a_1, a_2, \ldots,a_s$, where $l=\sum^s_{i=1}a_i$. Note that $\SSS_l$ permutes the $l$ tensor
factors $\wedge^{(k-1)/2}(V_{k,i})$, $1 \leq i \leq l$, of $A$ naturally. It follows from the character formula for tensor induced modules 
\cite{GI} that the trace of $x=yh$ on $A$ is the product of
the $s$ traces of $z_1:=y_1 \ldots y_{a_1}$ on $\wedge^{(k-1)/2}(V_{k,1})$, $z_2:=y_{a_1+1} \ldots y_{a_1+a_2}$ on 
$\wedge^{(k-1)/2}(V_{k,a_1+1})$, $\ldots$, $z_s:=y_{a_1+\ldots+a_{s-1}+1} \ldots y_{a_1 + \ldots+a_{s-1}+a_s}$ on 
$\wedge^{(k-1)/2}(V_{k,a_1+\ldots+a_{s-1}+1})$. Since $y$ is odd, at least one of $z_1, \ldots,z_s$ is odd, and hence its trace is zero
as $\chi^{\lam_k}$ splits over $\AAA_k$. Thus $\varphi(x)=0$ in this case as well.

\smallskip
(c) It is clear that $\chi$ splits over $\AAA_n$, and the two irreducible constituents $\varphi^\pm$ each has field of values
$\FF:=\QQ(\sqrt{(-1)^{(n-1)/2}n})$, see \cite[Theorem 2.5.13]{JK}.
Similarly, each $\wedge^{(k-1)/2}(V_{k,i})$, $1 \leq i \leq l$, splits over $\AAA_{k|i}$, the derived 
subgroup of the $i^{\mathrm {th}}$ factor $\SSS_{k|i}$ of $N$, as the sum of two simple submodules, with characters $\theta^+_i$ and $\theta^-_i$ which are fused by $\SSS_{k|i}$. Note that $N_0:= \AAA_{k|1} \times \AAA_{k|2} \times \ldots \times \AAA_{k|l} \lhd M$. Furthermore, 
$\chi^{\lam_k} \boxtimes \chi^{\lam_k} \boxtimes \ldots \boxtimes \chi^{\lam_k}$ is the only $N$-character that lies above 
$\theta^{+}_1 \boxtimes \theta^{+}_2 \boxtimes \ldots \boxtimes \theta^{+}_l$. By (a), we may write 
$\chi_{\AAA_n}=\theta^++\theta^-$, such that $(\theta^+)_M$ contains $\varphi^+$ and 
$\theta^+_1 \boxtimes \theta^+_2 \boxtimes \ldots \boxtimes \theta^+_l$ is a constituent of $(\varphi^+)_{N_0}$. Moreover,
we now have $[(\theta^\eps)_{\AAA(k,l)},\varphi^\eps]=1$ for $\eps=\pm$.

To prove the remaining assertions, conjugating by an element in $M \smallsetminus \AAA_n$, it suffices to prove them for $\eps=+$.
Suppose $\sigma \in \c G$. As $\chi$ and $\varphi$ is rational, $\sigma$ acts on $\{\theta^+,\theta^-\}$ and on
$\{\varphi^+,\varphi^-\}$. Also, as $\chi^{\lam_k}$ is rational, $\sigma$ sends  
$\theta^{+}_1 \boxtimes \theta^{+}_2 \boxtimes \ldots \boxtimes \theta^{+}_l$ to some
$\theta^{\pm}_1 \boxtimes \theta^{+}_2 \boxtimes \ldots \boxtimes \theta^{\pm}_l$, which then lies under $\tw \sigma (\varphi^+)$, and 
the latter lies under $\tw \sigma(\theta^+)$. The arguments in the preceding paragraph imply that $\sigma$ fixes $\theta^+$ if and only if
$\sigma$ fixes $\varphi^+$.

\smallskip
(d) First we note that $\AAA(k,l)$ is self-normalizing (in fact maximal) and intravariant in $\AAA_n$. Next, $\theta^\eps$ and 
$\varphi^\eps$ have the same stabilizer in $\Gamma \times \c G$, by Lemma \ref{easy2}(i) if $n$ is a square, and 
by Lemma \ref{easy2}(iii) and the arguments in (c), which show 
$$(\theta^\eps)^\Gamma = \{\theta^+,\theta^-\} = (\theta^\eps)^{\c G} \mbox{ and } 
    (\varphi^\eps)^\Gamma =  \{\varphi^+,\varphi^-\}= (\varphi^\eps)^{\c G},$$
otherwise. Hence we are done by Theorem \ref{easysituations}(a). 
\end{proof}

\begin{thm}\label{an-ord}
Let $X = \AAA_n$ with $n \geq 5$ and $n \neq 6$. Then any irreducible character $\theta \in \Irr(X)$ satisfies the inductive Feit condition.
\end{thm}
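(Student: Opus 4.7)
The plan is to argue by cases on the partition $\lam\vdash n$ labeling $\theta$. If $\lam\neq\lam'$, then $\theta=(\chi^\lam)_X\in\irr X$ and Proposition~\ref{res-an} applies directly. Otherwise $\theta=\theta^\pm$ is one of the two constituents of $(\chi^\lam)_X$; writing $h_1>h_2>\cdots>h_d$ for the diagonal hook lengths of $\lam$ (distinct odd positive integers summing to $n$) and $\eps=(-1)^{(n-d)/2}$, one has $\Q(\theta^\pm)=\Q(\sqrt{\eps h_1\cdots h_d})$ by \cite[Theorem~2.5.13]{JK}. From here I would split into three subcases according to the structure of the diagonal hook decomposition.

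\textbf{Symmetric hook, $n$ prime.} For $d=1$ and $n=kl$ composite odd with $k,l\geq 3$, Proposition~\ref{res-wr} already handles $\lam=\lam_n$. When $n$ is an odd prime, I would take $P\in\Syl_n(X)$ and $U:=\NB_X(P)\cong C_n\rtimes C_{(n-1)/2}$, a Frobenius group that is self-normalizing and intravariant in $X$. Its two Galois-conjugate characters $\psi^\pm\in\irr U$ of degree $(n-1)/2$, induced from nontrivial linear characters of $P$, have field $\Q(\sqrt{n^{\ast}})$, $n^{\ast}=(-1)^{(n-1)/2}n$, via the quadratic Gauss sum. Since $\chi^{\lam_n}$ takes the value $(-1)^{(n-1)/2}$ on every nontrivial power of an $n$-cycle (Murnaghan--Nakayama), a direct inner-product computation yields
$$
m \;:=\; [(\chi^{\lam_n})_U,\psi^+] \;=\; \tfrac{1}{n}\bigl(\tbinom{n-1}{(n-1)/2}-(-1)^{(n-1)/2}\bigr),
$$
which is odd: by Kummer's theorem, the $2$-adic valuation of $\tbinom{n-1}{(n-1)/2}$ equals the digit sum of $(n-1)/2$ in base~$2$, so the central binomial coefficient is even for $n\geq 3$. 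Since $(\theta^+)_U+(\theta^-)_U=(\chi^{\lam_n})_U$ and any element of $\NB_{\SSS_n}(P)\setminus\NB_X(P)$ is odd in $\SSS_n$ and interchanges $\{\psi^\pm\}$ by its non-residue action on $P$, it simultaneously swaps $\{\theta^\pm\}$ and $\{\psi^\pm\}$, so exactly one of $[(\theta^+)_U,\psi^+]$ and $[(\theta^+)_U,\psi^-]$ is odd. Since $|\Out(X)|=2$, Theorem~\ref{easysituations}(g) with $p=2$ concludes.

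\textbf{Symmetric $\lam$ with $d\geq 2$.} If $h_d=1$ (equivalently, $(d,d)$ is a removable corner), let $\lam^\circ:=\lam\setminus\{(d,d)\}\vdash n-1$, which is again symmetric with hooks $h_1,\ldots,h_{d-1}$ and $\Q(\theta^\pm_{\lam^\circ})=\Q(\theta^\pm)$. I would take $U:=\AAA_{n-1}\leq X$, the pointwise stabilizer of $n$ (self-normalizing and intravariant), and $\mu:=\theta^\pm_{\lam^\circ}$. Branching shows $(\chi^\lam)_{\SSS_{n-1}}$ contains $\chi^{\lam^\circ}$ with multiplicity one, all other summands coming in transpose pairs and hence yielding doubled rational $\AAA_{n-1}$-characters upon restriction; a Galois argument on $(\theta^++\theta^-)_U=(\chi^\lam)_U$ (completely analogous to the one used in Proposition~\ref{res-wr}(c)) then forces $[(\theta^+)_U,\theta^{\eps'}_{\lam^\circ}]=1$ for one sign $\eps'$, and Theorem~\ref{easysituations}(a) finishes. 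If instead $h_d\geq 3$, the natural candidate is the Young subgroup $Y:=\SSS_{h_1}\times\cdots\times\SSS_{h_d}$ and $U:=Y\cap X$: distinctness of the $h_i$ forces $\NB_{\SSS_n}(Y)=Y$, whence $U$ is self-normalizing and, because $Y\cdot X=\SSS_n$, intravariant. The character $\chi^{\lam_{h_1}}\boxtimes\cdots\boxtimes\chi^{\lam_{h_d}}$ of $Y$ is stable under the sign twist (each $\lam_{h_i}$ being self-conjugate) and so restricts to $U$ as $\xi^++\xi^-$; plugging the values $\theta^\pm_{h_i}(c_i)=(\eps_{h_i}\pm\sqrt{h_i^\ast})/2$ on an $h_i$-cycle into the sum over even/odd sign patterns collapses to $\xi^\pm((c_1,\ldots,c_d))=(\eps\pm\sqrt{\eps h_1\cdots h_d})/2$, giving $\Q(\xi^\pm)=\Q(\theta^\pm)$. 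Uniqueness of the diagonal hook decomposition of a self-conjugate partition produces the Littlewood--Richardson identity $c^{\lam}_{\lam_{h_1},\ldots,\lam_{h_d}}=1$, so that $[(\chi^\lam)_Y,\chi^{\lam_{h_1}}\boxtimes\cdots\boxtimes\chi^{\lam_{h_d}}]=1$; tracking the sign-twist orbits I expect to deduce $[(\theta^+)_U,\xi^{\eps'}]=1$ for an appropriate sign, and Theorem~\ref{easysituations}(a) applies.

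The hard part will be the final subcase ($d\geq 2$ with $h_d\geq 3$): beyond the multiplicity-one statement at the level of $Y$, one must verify that no other irreducible constituent of $(\chi^\lam)_Y$ contributes to $\xi^\pm$ upon restriction to the index-$2$ subgroup $U$, which rests on a delicate bookkeeping of how the partitions of $Y$-content distinct from $(\lam_{h_1},\ldots,\lam_{h_d})$ interact with the sign twist and with the Galois action on the quadratic field $\Q(\sqrt{\eps h_1\cdots h_d})$.
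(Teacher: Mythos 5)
Your decomposition into cases (non-symmetric $\lam$, symmetric hook, symmetric non-hook) matches the paper, and the first two cases and the $h_d=1$ subcase coincide with the paper's argument. But in two places you take a genuinely different route, and one of these is left incomplete.

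For the symmetric hook with $n$ prime, you use the same $U=\NB_X(P)$ but close via Theorem~\ref{easysituations}(g) after computing
$m=\tfrac{1}{n}\bigl(\tbinom{n-1}{(n-1)/2}-(-1)^{(n-1)/2}\bigr)$ and showing it is odd by Kummer's theorem. This computation is correct (I re-derived the inner product from the Murnaghan--Nakayama value $(-1)^{(n-1)/2}$ of $\chi^{\lam_n}$ on $n$-cycles and from $\psi^+(p)=\sum_{k\in QR}\zeta(p^k)$), and the parity conclusion does yield the hypothesis of (g). The paper instead notes $\Q(\theta)=\Q(\mu)$, $\Gamma_\theta=\Gamma_\mu=U$ and $\Aut(X)_\theta=\Inn(X)$, then applies Lemma~\ref{easy2}(iii) and Theorem~\ref{easysituations}(b), avoiding any multiplicity computation. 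Both are valid; yours requires the additional number-theoretic input, the paper's is lighter.

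The real divergence is the symmetric case with $d\geq 2$ diagonal hooks and $h_d\geq 3$. You remove all $d$ diagonal hooks at once, working in $Y=\SSS_{h_1}\times\cdots\times\SSS_{h_d}$ and $U=Y\cap\AAA_n$; the paper removes only the smallest hook, working in $\SSS_m\times\SSS_{h_d}$ with $m=n-h_d$, so that the factor on the left is again a symmetric partition with $d-1$ hooks and the whole analysis stays two-variable. Your subgroup is legitimate (distinctness of the $h_i$ makes $Y$ self-normalizing in $\SSS_n$, hence $U$ self-normalizing in $\AAA_n$, and $Y\cdot X=\SSS_n$ gives intravariance), and your value formula $\xi^\pm(\prod c_i)=\tfrac12(\eps\pm\sqrt{\eps h_1\cdots h_d})$ is correct: the sum over sign vectors with fixed product telescopes exactly as claimed. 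However, you explicitly leave open the key step $[(\theta^+)_U,\xi^{\eps'}]=1$, calling it the hard part. In fact it is not hard: since $\rho:=\chi^{\lam_{h_1}}\boxtimes\cdots\boxtimes\chi^{\lam_{h_d}}$ satisfies $\rho\cdot\mathrm{sgn}_Y=\rho$, one has $(\xi^+)^Y=\rho$, so Frobenius reciprocity gives $[(\chi^\lam)_U,\xi^+]=[(\chi^\lam)_Y,\rho]=c^\lam_{\lam_{h_1},\ldots,\lam_{h_d}}=1$, and then $[(\theta^+)_U,\xi^+]+[(\theta^-)_U,\xi^+]=1$ forces exactly one summand to be~$1$. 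What does remain to be done, and what you do not address, is the verification that $\Q(\xi^\pm)\subseteq\Q(\theta^\pm)$ on all of $U$, not just on the class of $c_1\cdots c_d$: the paper carries this out by an explicit case analysis on pairs $(u,v)\in\AAA_m\times\AAA_k$, and with $d$ factors the analogous bookkeeping (over $2^{d-1}$ sign patterns on the abelian quotient $U/\prod\AAA_{h_i}$) becomes noticeably longer. That, more than the multiplicity claim, is the real reason the paper's one-hook-at-a-time reduction is the cleaner route; your argument can be completed but costs more.

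Finally, your appeal to "uniqueness of the diagonal hook decomposition" to justify the iterated Littlewood--Richardson coefficient being $1$ is too loose as stated; the cleanest justification is to iterate the two-factor identity $[\chi^\lam_{\SSS_m\times\SSS_k},\chi^\al\boxtimes\chi^\beta]=1$ (equation~(\ref{eq:an30}) in the paper) $d-1$ times, peeling off one hook at a time.
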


\begin{proof}
By Proposition \ref{res-an}, we need only consider the case where $\theta$ is an irreducible constituent of some character
$\chi=\chi^\lam$ of $\SSS_n$, with $\lam \vdash n$ being symmetric, so that $\chi_X = \theta^++\theta^-$, $\theta=\theta+$,
and $\Aut(X)_\theta=\Inn(X)$.

\smallskip
(a) First suppose that $\lam=\lam_n$ is the symmetric hook, whence $2 \nmid n$. If $n$ is composite in addition, then we are done by 
Proposition \ref{res-wr}. Suppose $n=p$ is a prime. Then we take 
$$U = \NB_X(P) \cong C_p \rtimes C_{(p-1)/2},$$ 
where 
$P \in \Syl_p(X)$. In this case, $\QQ(\theta) = \QQ(\sqrt{(-1)^{(p-1)/2}p})$ by \cite[Theorem 2.5.13]{JK}, and $\Aut(X)_\theta=\Inn(X)$. 
Choosing $\mu \in \Irr(U)$ of degree $(p-1)/2$, we get $\QQ(\mu)=\QQ(\theta)$ and $\Gamma_\theta=\Gamma_\mu = U$. Applying
Lemma \ref{easy2}(iii) we get $(\Gamma \times \c G)_\theta=(\Gamma \times \c G)_\mu$, and hence we are done by 
Theorem \ref{easysituations}(b). 

\smallskip
(b) It remains to consider the case $\lam$ is symmetric, but not a hook. This means $Y(\lambda)$ has the main diagonal with $s \geq 2$
nodes, with corresponding hook lengths $n_1 > n_2 > \ldots > n_s \geq 1$. Let $\alpha \vdash m:=n-n_s$ be the symmetric partition
whose Young diagram is obtained from $Y(\lam)$ by removing the $s^{\mathrm {th}}$ hook on the main diagonal, and 
$\beta = \lam_{n_s}$ the symmetric hook of $k:=n_s$. Using the Littlewood--Richardson rule \cite[Corollary 2.8.14]{JK}, we see that 
\begin{equation}\label{eq:an30}
  [\chi_{\SSS_a \times \SSS_b},\chi^\al \boxtimes \chi^\beta]=1.
\end{equation}  
Slightly abusing the notation, we have 
$$(\chi^\al)_{\AAA_m} = \al^+ + \al^-,$$
where $\al^\pm \in \Irr(\AAA_a)$ have field of values $\QQ(\sqrt{(-1)^{(m-s+1)/2}n_1 \ldots n_{s-1}})$, and are fused by some $2$-cycle $t \in \SSS_m$.

Assume furthermore that $k > 1$ (hence $k \geq 3$). Then 
$$(\chi^\beta)_{\AAA_k} = \beta^+ + \beta^-,$$
where $\beta^\pm \in \Irr(\AAA_k)$ have field of values $\QQ(\sqrt{(-1)^{(k-1)/2}n_s})$, and are fused by some $t$-cycle $t' \in \SSS_k$. Now we set
$$U:= (\SSS_m \times \SSS_k) \cap \AAA_n = (\AAA_m \times \AAA_k) \rtimes \langle tt' \rangle,$$
and note that $U$ is self-normalizing (in fact maximal) and intravariant in $X$. We may label $\al^\pm$ in such a way that 
$\theta_{U}$ contains $\mu \in \Irr(U)$ with
$$\mu_{\AAA_m \times \AAA_k} = \al^+ \boxtimes \beta^+ + \al^- \boxtimes \beta^-,~\mu=0 \mbox{ outside of }\AAA_m \times \AAA_k.$$ 
Conjugating by $t$ we see that $(\theta^-)_{U}$ contains $\mu^- \in \Irr(U)$ with
$$(\mu^-)_{\AAA_m \times \AAA_k} = \al^+ \boxtimes \beta^- + \al^- \boxtimes \beta^+.$$ 
It follows from \eqref{eq:an30} that $[\theta_U,\mu]=1$. Next, choosing $x \in \AAA_m$ of cycle type $(n_1, \ldots,n_{s-1})$ and 
a $k$-cycle $y \in \AAA_k$ we obtain from \cite[Theorem 2.5.13]{JK} that
$$\begin{aligned}4\mu(xy) & = \bigl( (-1)^{(m-s+1)/2}+\sqrt{(-1)^{(m-s+1)/2}n_1 \ldots n_{s-1}} \bigr)\bigl( (-1)^{(k-1)/2}+ \sqrt{(-1)^{(k-1)/2}n_s} \bigr)\\
    & + \bigl( (-1)^{(m-s+1)/2}-\sqrt{(-1)^{(m-s+1)/2}n_1 \ldots n_{s-1}} \bigr)\bigl( (-1)^{(k-1)/2}- \sqrt{(-1)^{(k-1)/2}n_s} \bigr)\\
    & = 2 \bigl( (-1)^{(n-s)/2}+\sqrt{(-1)^{(n-s)/2}n_1 \ldots n_{s-1}n_s} \bigr) = 4 \theta(xy).\end{aligned}$$
It follows that 
$$\QQ(\theta)=\QQ(\theta(xy)) = \QQ(\mu(xy)) \subseteq \QQ(\mu).$$
We already mentioned that $\mu$ vanishes outside of $\AAA_m \times \AAA_k$. Consider any $g = uv$ with $u \in \AAA_m$ and 
$v \in \AAA_k$. If $u$ is not conjugate to $x$ in $\SSS_m$ and $v$ is not conjugate to $y$ in $\SSS_k$, then $\mu(g) \in \ZZ$ by 
\cite[Theorem 2.5.13]{JK}. If $u$ is conjugate to $x$ in $\SSS_m$ and $v$ is not conjugate to $y$ in $\SSS_k$, then
$\beta^+(v)=\beta^-(v) \in \QQ$ and 
$$\mu(g) = (\al^+(u)+\al^-(u))\beta^+(v) = \pm \beta^+(v) \in \QQ.$$
Similarly, if $u$ is not conjugate to $x$ in $\SSS_m$ and $v$ is conjugate to $y$ in $\SSS_k$, then
$\al^+(u)=\al^-(u) \in \QQ$ and 
$$\mu(g) = (\beta^+(v)+\beta^-(v))\al^+(u) = \pm \al^+(u) \in \QQ.$$
A calculation similar to the above determination of $\mu(xy)$ shows that if $u$ is conjugate to $x$ in $\SSS_m$ and $v$ is conjugate to $y$ in $\SSS_k$,
then 
$$2\mu(g) =  (-1)^{(n-s)/2} \pm \sqrt{(-1)^{(n-s)/2}n_1 \ldots n_{s-1}n_s}.$$
It follows that $\mu(g) \in \QQ(\theta)$ for all $g \in \AAA_m \times \AAA_k$, and hence $\QQ(\theta)=\QQ(\mu)$. 
Clearly, $\Gamma_\theta=\Gamma_\mu = U$. Hence
$\theta$ and $\mu$ have the same stabilizer in $\Gamma \times \c G$ by Lemma \ref{easy2}(i), (iii). It then follows from Theorem 
\ref{easysituations}(b) that $\theta$ satisfies the inductive Feit condition.

\smallskip
(c) Finally, assume that $k=1$. In this case, we can take $U=\AAA_{n-1}$ and $\mu \in \{\al^+,\al^-\}$ a constituent of $\theta_U$. Arguing as
above, we see that $\QQ(\theta)=\QQ(\mu)$ and $\Gamma_\theta=\Gamma_\mu$, and are again done.
\end{proof}

\section{Alternating groups: Spin characters}
In this section we complete the proof of Theorem C, by considering the spin characters of the double covers $2\AAA_n$. We begin by recalling some more notation.

\subsection{Partitions and symmetric functions}
We denote by $\Par(n)$ the set of partitions of $n$ and by $\unlhd$ the {\em dominance order} on partitions, see \cite[I.1]{Macdonald}. Writing a partition $\lambda$ in the form $\lambda=(1^{m_1},2^{m_2},\dots)$, we define the integer
$$
z_\lambda:=m_1!\, 1^{m_1}\, m_2!\, 2^{m_2}\,\cdots.
$$
Given $\mu\in\Par(m)$ and $\nu\in\Par(n)$ we denote by $\mu\sqcup\nu$ the partition of $m+n$ whose parts are the multiset union of the parts of $\mu$ and $\nu$.

For a partition $\lambda$ we denote by $\ell(\lambda)$ the number of non-zero parts of $\lambda$. A partition $\lambda\in\Par(n)$ is called {\em even} (resp. {\em odd}) if $n-\ell(\lambda)$ is even (resp. odd). We denote by $\Par_\0(n)$ (resp. $\Par_\1(n)$) the set of all even (resp. odd) partitions of $n$.
We denote by $\OP(n)$ the set of all $\lambda\in\Par(n)$ such that all non-zero parts of $\lambda$ are odd. Note that $\OP(n)\subseteq \Par_\0(n)$. 
We denote by $\DP(n)$ the set of all {\em strict partitions} of $n$, i.e. partitions $\lambda\in\Par(n)$ whose non-zero parts are distinct. For $\lambda\in \DP(n)$, we have  $z_\lambda=\lambda_1\lambda_2\cdots\lambda_{\ell(\lambda)}$. 
Denote $\DP_\0(n):=\DP(n)\cap\Par_\0(n)$ and $\DP_\1(n):=\DP(n)\cap\Par_\1(n)$.

We review some necessary facts from the theory of symmetric functions, referring the reader to \cite[\S\S5,6]{Stembridge} and \cite[III.8]{Macdonald} for details. Let  $\La=\bigoplus_{n\geq 0}\La^n$ be the graded algebra of  symmetric functions over $\Q$ in the variables $x_1,x_2,\dots$. For a partition $\lambda$, we denote $x^\lambda:=x_1^{\lambda_1}x_2^{\lambda_2}\cdots$. For $r=1,2,\dots,$ we have the $r$th power-sum symmetric functions $p_r\in\La$. The symmetric functions $q_n\in\La^n$ are defined from 
$$
\sum_{n\geq 0}q_nt^n=\prod_{j\geq 1}\frac{1+x_jt}{1-x_jt},
$$
For a partition $\lambda$, we denote $p_\lambda:=p_{\lambda_1}p_{\lambda_2}\cdots $ and $q_\lambda:=q_{\lambda_1}q_{\lambda_2}\cdots $.

Let $\Om=\bigoplus_{n\geq 0}\Om^n$ be the graded subalgebra of $\La$ generated by $p_1,p_3,p_5,\dots$. Then $q_n\in \Om^n$ for all $n$, see \cite[(5.4)]{Stembridge}. Moreover, 
$\{p_\lambda\mid \lambda\in\OP(n)\}$ and $\{q_\lambda\mid \lambda\in\OP(n)\}$ are bases of $\Om^n$. 
The bilinear form  
$[\cdot,\cdot]$  on $\Om^n$ is defined from $[p_\lambda,p_\mu]=z_\lambda 2^{-\ell(\lambda)}\delta_{\lambda,\mu}$ for all $\lambda,\mu\in\OP(n)$. 

For $\lambda\in\DP(n)$, the {\em Schur's $Q$-function} $Q_\lambda\in \Om^n$ can be defined as 
$$
Q_\lambda=\sum_{\mu\in\Par(n)}K'_{\lambda,\mu}x^\mu
$$ 
where $K'_{\lambda,\mu}$ is the number of shifted tableaux of shape $\lambda$ and content $\mu$, see \cite[(6.3)]{Stembridge}. 
By \cite[Corollary 6.2(b)]{Stembridge}, $\{Q_\lambda\mid \lambda\in\OP(n)\}$ is a basis of $\Om^n$. By \cite[Lemma 6.3]{Stembridge}, we have 
\begin{equation}\label{EK'}
K'_{\lambda,\lambda}=2^{\ell(\lambda)}\quad\text{and}\quad \text{$K'_{\lambda,\mu}=0$ unless $\lambda\unrhd\mu$}.
\end{equation}
By \cite[(6.6)]{Stembridge}, we have
\begin{equation}\label{EqQ}
q_\mu=\sum_{\lambda\in\DP(n)}K'_{\lambda,\mu}2^{-\ell(\lambda)}Q_\lambda.
\end{equation}
For $\mu\in\DP(m),\,\nu\in\DP(n),\,\lambda\in\DP(m+n)$ define the numbers $f^\lambda_{\mu,\nu}$ from 
\begin{equation}\label{EF}
Q_\mu Q_\nu=\sum_{\lambda\in\DP(m+n)}2^{\ell(\mu)+\ell(\nu)-\ell(\lambda)}f^\lambda_{\mu,\nu}Q_\lambda.
\end{equation}

\begin{lem} \label{LQFun}
If $\mu\in\DP(m)$, $\nu\in\DP(n)$ and $\lambda\in\DP(m+n)$, then $f^\lambda_{\mu,\nu}=0$ unless $\lambda\unrhd \mu\sqcup\nu$, and $f^{\mu\sqcup\nu}_{\mu,\nu}=1$ if $\mu\sqcup\nu$ is a strict partition. 
\end{lem}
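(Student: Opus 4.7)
The plan is to prove Lemma~\ref{LQFun} by combining the multiplicativity $q_\alpha q_\beta = q_{\alpha\sqcup\beta}$ of the $q$-functions with the dominance-triangularity of the transition between $\{q_\lambda\}$ and $\{Q_\lambda\}$ provided by \eqref{EqQ} and \eqref{EK'}.

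First I would invert \eqref{EqQ} on the finite set $\DP(m)$. By \eqref{EK'}, for each $\mu\in\DP(m)$ one has $q_\mu = Q_\mu + \sum_{\lambda\in\DP(m),\,\lambda\rhd\mu} K'_{\lambda,\mu}\,2^{-\ell(\lambda)}\,Q_\lambda$, a unitriangular system in any linear extension of dominance. Inverting this matrix yields, for suitable rational coefficients $d^\mu_\tau$ with $d^\mu_\mu=1$, an identity
$$Q_\mu = q_\mu + \sum_{\tau\in\DP(m),\,\tau\rhd\mu} d^\mu_\tau\,q_\tau$$
in $\Om^m$, and similarly for $Q_\nu$. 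Linear independence of $\{q_\tau:\tau\in\DP(m)\}$ is not required; only the invertibility of the transition matrix matters, and the resulting identity holds formally in $\Om^m$. Multiplying out and using $q_\tau q_\sigma = q_{\tau\sqcup\sigma}$ one obtains
$$Q_\mu Q_\nu = \sum_{\tau\unrhd\mu,\,\sigma\unrhd\nu} d^\mu_\tau\,d^\nu_\sigma\,q_{\tau\sqcup\sigma}.$$
A direct partial-sum estimate (choosing, for each $k$, an optimal split $a+b=k$) shows that $\tau\unrhd\mu$ and $\sigma\unrhd\nu$ force $\tau\sqcup\sigma\unrhd\mu\sqcup\nu$. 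Re-expanding each $q_{\tau\sqcup\sigma}$ via \eqref{EqQ} and applying \eqref{EK'} once more, every $Q_\lambda$ that appears satisfies $\lambda\unrhd\tau\sqcup\sigma\unrhd\mu\sqcup\nu$; matching with the defining expansion \eqref{EF} yields $f^\lambda_{\mu,\nu}=0$ unless $\lambda\unrhd\mu\sqcup\nu$.

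For the leading coefficient when $\rho:=\mu\sqcup\nu\in\DP(m+n)$, I would isolate the coefficient of $Q_\rho$. Any pair $(\tau,\sigma)$ contributing to $Q_\rho$ must satisfy $\tau\sqcup\sigma\unrhd\rho$ (from the previous step) and $\rho\unrhd\tau\sqcup\sigma$ (from \eqref{EK'}), hence $\tau\sqcup\sigma=\rho$. Passing to conjugate partitions via the identities $(\alpha\sqcup\beta)'=\alpha'+\beta'$ (componentwise sum) and $\alpha\unrhd\beta\iff\alpha'\unlhd\beta'$, one obtains $\tau'\unlhd\mu'$, $\sigma'\unlhd\nu'$, and $\tau'+\sigma'=\mu'+\nu'$; adding the two reverse partial-sum inequalities and comparing with the equality of total partial sums forces $\tau'=\mu'$ and $\sigma'=\nu'$, so $(\tau,\sigma)=(\mu,\nu)$ is the unique contributor. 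Combining $d^\mu_\mu d^\nu_\nu=1$, $K'_{\rho,\rho}\,2^{-\ell(\rho)}=1$, and $\ell(\rho)=\ell(\mu)+\ell(\nu)$ (since the parts of $\mu$ and $\nu$ are disjoint when $\rho\in\DP$), matching with \eqref{EF} finally gives $f^\rho_{\mu,\nu}=1$. The main technical point to articulate carefully will be this conjugate-partition uniqueness argument; the remaining steps amount to formal bookkeeping of the two triangular systems.
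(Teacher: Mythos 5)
Your proposal is correct and follows essentially the same route as the paper's own proof: invert the unitriangular transition \eqref{EqQ}/\eqref{EK'} to write $Q_\mu = q_\mu + \sum_{\tau\rhd\mu} d^\mu_\tau q_\tau$, multiply using $q_\tau q_\sigma = q_{\tau\sqcup\sigma}$, observe that dominance is preserved under $\sqcup$, and re-expand via \eqref{EqQ} to read off the $Q_\lambda$-coefficients and match with \eqref{EF}. The only difference is that you spell out two points the paper leaves implicit — the partial-sum argument that $\tau\unrhd\mu,\ \sigma\unrhd\nu \Rightarrow \tau\sqcup\sigma\unrhd\mu\sqcup\nu$, and the conjugate-partition uniqueness argument isolating the coefficient of $Q_{\mu\sqcup\nu}$ — which is harmless extra rigor rather than a different method.
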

\begin{proof}
By definition, for any $\al\in\Par(m)$ and $\beta\in\Par(n)$, we have $q_\al q_\beta=q_{\al\sqcup\beta}$. 
By (\ref{EK'}) and (\ref{EqQ}), we can write 
$$
Q_\mu=q_\mu+\sum_{\al\rhd\mu}a_\al q_\al\quad\text{and}\quad 
Q_\nu=q_\nu+\sum_{\beta\rhd\nu}b_\beta q_\beta
$$
Hence, interpreting $a_\mu$ and $a_\nu$ as $1$, we obtain
$$
Q_\mu Q_\nu=q_{\mu\sqcup\nu}+\sum_{\substack{\al\unrhd\mu,\,\beta\unrhd\nu\\(\al,\beta)\neq(\mu,\nu)}}a_\al a_\beta \,q_{\al\sqcup\beta}
=q_{\mu\sqcup\nu}+\sum_{\gamma\rhd\mu\sqcup\nu}c_\gamma q_{\gamma}
$$
for some scalars $c_\gamma$. 
By (\ref{EK'}) and (\ref{EqQ}) again, we deduce that the last expression can be written as a linear combination of the $Q_\lambda$ with $\lambda\unrhd \mu\sqcup\nu$, and the coefficient of $Q_{\mu\sqcup\nu}$ is $1$ if  $\mu\sqcup\nu$ is a strict partition. The lemma now follows from (\ref{EF}) using $\ell(\mu\sqcup\nu)=\ell(\mu)+\ell(\nu)$. 
\end{proof}

\subsection{Symmetric and alternating groups}
As before, let $\sfS_n$ be the symmetric group on $n$ letters with simple generators $\sfs_j:=(j,j+1)$, $j=1,\dots,n-1$, and $\sfA_n\leq \sfS_n$ be the corresponding alternating group. 

For $\lambda\in\Par(n)$ we denote by $\sfC^\lambda$ the conjugacy class of $\sfS_n$ consisting of the elements of cycle-type $\lambda$. 
The centralizer of any $\sfs\in \sfC^\lambda$ in $\sfS_n$ has order $z_\lambda$. 
The elements of $\sfC^\lambda$ are even if and only if $\lambda$ is even. 
We have the canonical representative 
$$
\sfs^\lambda:=(\sfs_1\cdots\sfs_{\lambda_1-1})(\sfs_{\lambda_1+1}\cdots\sfs_{\lambda_1+\lambda_2-1})\cdots 
(\sfs_{\lambda_1+\dots+\lambda_{\ell(\lambda)-1}+1}\cdots\sfs_{n-1})\in\sfC^\lambda. 
$$

If $r_1,\dots,r_k\in\Z_{>0}$ and $r_1+\dots+r_k=n$, we have the standard parabolic subgroup 
$$
\sfS_{r_1,\dots,r_k}\cong \sfS_{r_1}\times\dots\times \sfS_{r_k}\leq \sfS_n.
$$
We canonically embed each $\sfS_{r_j}$ into $\sfS_{r_1,\dots,r_k}$. Denote
$$
\sfA_{r_1,\dots,r_k}:=\sfA_n\cap \sfS_{r_1,\dots,r_k}. 
$$
Note that $\sfA_{r_1,\dots,r_k}\leq \sfS_{r_1,\dots,r_k}$ is an index $2$ subgroup if $\sfS_{r_1,\dots,r_k}$ is non-trivial.

\subsection{Double covers of symmetric and alternating groups} 

We consider a Schur's double cover of $\sfS_n$ defined as the group $\TS_n$ given by generators  
$
\ts_1,\dots,\ts_{n-1},\tz$ and relations 
$$
\ts_j\tz=\tz\ts_j,\ \ \tz^2=1,\ \ \ts_j^2=\tz,\ \ (\ts_i\ts_{i+1})^3=\tz,\ \ \ts_k\ts_{l}=\tz\ts_l\ts_{k}
$$
for all $1\leq j<n$, $1\leq i<n-1$, and $1\leq k,l<n$ with $|k-l|>1$.

There is a surjective homomorphism $\TS_n\to\sfS_n,\ \ts_j\mapsto \sfs_j$ whose kernel of order $2$ is generated by $\tz$. For $\ts\in\sfS_n$, we denote by $|\ts|$ the image of $\ts$ under this homomorphism. We have the Schur's double cover $
\TA_n=\{\ts\in\TS_n\mid|\ts|\in\sfA_n\}$ of $\sfA_n$.

Let $\lambda\in\Par(n)$. We have a subset
$
\TiC^\lambda:=\{\ts\in\TS_n\mid|\ts|\in\sfC^\lambda\}\subseteq\TS_n,
$
and a canonical representative
$$
\ts^\lambda:=(\ts_1\cdots\ts_{\lambda_1-1})(\ts_{\lambda_1+1}\cdots\ts_{\lambda_1+\lambda_2-1})\cdots 
(\ts_{\lambda_1+\dots+\lambda_{\ell(\lambda)-1}+1}\cdots\ts_{n-1})\in\TiC^\lambda.
$$

The following two lemmas were proved by Schur \cite{Schur}, see also \cite[Theorems 3.8,\,3.9]{HH} and \cite[Theorems 2.1,\,2.7]{Stembridge}.

\begin{lem} 
Let $\lambda\in\Par(n)$. Then one of the following happens:
\begin{enumerate}
\item[{\rm (i)}] $\lambda\not\in \OP(n)\sqcup\DP_\1(n)$ and\, $\TiC^\lambda$ is one conjugacy class of\, $\TS_n$;
\item[{\rm (ii)}] $\lambda\in \OP(n)\sqcup\DP_\1(n)$ and\, $\TiC^\lambda$ splits into two conjugacy classes 
of\, $\TS_n$ with representatives\, $\ts^\lambda
$ and\, $\tz\ts^\lambda
$. 
\end{enumerate}
Moreover, the conjugacy classes in (i) and (ii) exhaust all the conjugacy classes of $\TS_n$.
\end{lem}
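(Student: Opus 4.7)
The plan is to apply the standard splitting criterion for a central $\Z/2$-extension $\pi:\TS_n\to\sfS_n$ with kernel $\langle\tz\rangle$: the preimage $\TiC^\lambda=\pi^{-1}(\sfC^\lambda)$ of an $\sfS_n$-conjugacy class is either one $\TS_n$-conjugacy class, or splits into two orbits with representatives $\ts^\lambda$ and $\tz\ts^\lambda$. The second case happens exactly when no $\tilde g\in\TS_n$ conjugates $\ts^\lambda$ to $\tz\ts^\lambda$, equivalently when the homomorphism $\epsilon_\lambda:\pi^{-1}(\cent{\sfS_n}{\sfs^\lambda})\to\{1,\tz\}$ defined by $\tilde g\ts^\lambda\tilde g^{-1}=\epsilon_\lambda(\tilde g)\ts^\lambda$ is trivial. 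Thus the task reduces to computing $\epsilon_\lambda$ on a generating set of $\pi^{-1}(\cent{\sfS_n}{\sfs^\lambda})$, using the description $\cent{\sfS_n}{\sfs^\lambda}\cong\prod_{i\geq 1}(C_i\wr\sfS_{m_i})$ for $\lambda=(1^{m_1},2^{m_2},\ldots)$.

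I would consider two families of generators. First, for each $k$ the cycle lift $\tilde c_k$ chosen as the $k^{\mathrm{th}}$ cycle factor of the canonical word $\ts^\lambda$ itself; a direct application of $\ts_p\ts_q=\tz\ts_q\ts_p$ (for $|p-q|>1$) gives $\tilde c_k\tilde c_j=\tz^{(\lambda_k-1)(\lambda_j-1)}\tilde c_j\tilde c_k$, whence $\epsilon_\lambda(\tilde c_k)=\tz^{(\lambda_k-1)(n-\ell(\lambda)-\lambda_k+1)}$, which equals $1$ when $\lambda_k$ is odd and $\tz^{n-\ell(\lambda)+1}$ when $\lambda_k$ is even. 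Second, for each pair of adjacent blocks of a common size $i$, the block-swap lift $\tilde\tau$ obtained from a chosen reduced expression of length $i^2$ for the shuffle $(a+1,a+i+1)(a+2,a+i+2)\cdots(a+i,a+2i)$; combining the intra-swap braid signs (from $(\ts_p\ts_{p+1})^3=\tz$) with the distant commutations past the remaining cycle factors yields $\epsilon_\lambda(\tilde\tau)=\tz$ when $i$ is even, and $\epsilon_\lambda(\tilde\tau)=\tz^{n-\ell(\lambda)}$ when $i$ is odd.

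A short case analysis then pins down when $\epsilon_\lambda\equiv 1$. If $\lambda\in\OP(n)$ all parts are odd, so all cycle-lift signs equal $1$ and any block-swap uses odd $i$ with sign $\tz^{n-\ell(\lambda)}=1$ (since $\lambda\in\OP(n)$ forces $n\equiv\ell(\lambda)\pmod 2$). If $\lambda\in\DP_\1(n)$ there are no block-swaps, and each even-part cycle-lift has sign $\tz^{n-\ell(\lambda)+1}=1$. Conversely, if $\lambda\notin\OP(n)\sqcup\DP_\1(n)$ then $\lambda$ has some even part $\lambda_k$; if $\lambda_k$ is repeated then a block-swap of even size gives $\epsilon_\lambda=\tz$; otherwise, if $\lambda$ is strict then $\lambda\in\DP_\0(n)$ forces $n-\ell(\lambda)$ even so $\epsilon_\lambda(\tilde c_k)=\tz$, while if $\lambda$ has some odd repeated part the exponents $n-\ell(\lambda)+1$ (from $\tilde c_k$) and $n-\ell(\lambda)$ (from the odd-$i$ block-swap) have opposite parities and one of them is odd. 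In every case $\epsilon_\lambda$ takes the value $\tz$ somewhere, so the class does not split. Exhaustion of conjugacy classes follows at once from $\TS_n=\bigsqcup_{\lambda\in\Par(n)}\TiC^\lambda$, and the disjointness $\OP(n)\cap\DP_\1(n)=\emptyset$ follows from the elementary parity identity $n\equiv\ell(\lambda)\pmod 2$ for any distinct-all-odd partition.

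The main technical obstacle will be the block-swap sign computation. Lifting a chosen reduced expression of length $i^2$ for the shuffle to $\TS_n$ and carefully tallying the $\tz$-contributions from every braid move inside the swap region (governed by $(\ts_p\ts_{p+1})^3=\tz$) together with all the distant commutations with each remaining cycle factor of $\ts^\lambda$ is where the bulk of the bookkeeping lies; in particular the combinatorial identity $(i-1)+i^2(n-\ell(\lambda)+2-2i)\equiv n-\ell(\lambda)\pmod 2$ for $i$ odd is the point where the parity arithmetic must be handled with the most care.
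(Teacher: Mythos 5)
The paper does not prove this lemma: it explicitly attributes it to Schur (1911) and points to the treatments in Hoffman--Humphreys and Stembridge, so there is no argument in the paper to compare against. Your proposed direct proof takes the right route: the reduction to the triviality of the homomorphism $\epsilon_\lambda\colon\pi^{-1}(\cent{\sfS_n}{\sfs^\lambda})\to\langle\tz\rangle$ and its evaluation on cycle-lift and block-swap generators of the centralizer is sound, your cycle-lift formula $\epsilon_\lambda(\tilde c_k)=\tz^{(\lambda_k-1)(n-\ell(\lambda)-\lambda_k+1)}$ is correct, the block-swap values you state ($\tz$ for $i$ even, $\tz^{n-\ell(\lambda)}$ for $i$ odd) are correct, and the concluding case analysis together with the exhaustion argument goes through.

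The real gap is precisely where you flag it: the block-swap sign is asserted, not derived, and the plan you propose (choosing a reduced word of length $i^2$ and tallying $\tz$'s from every braid move) is considerably heavier than necessary and is not carried out. It can be done without ever touching a reduced word or a braid relation. Write $\tilde\tau\tilde c_k\tilde\tau^{-1}=\tz^a\tilde c_{k+1}$ and $\tilde\tau\tilde c_{k+1}\tilde\tau^{-1}=\tz^b\tilde c_k$ for the two cycles of length $i$ being swapped. Since $\tilde\tau^2\in\langle\tz\rangle$ is central, conjugating $\tilde c_k$ by $\tilde\tau^2$ gives $\tz^{a+b}\tilde c_k=\tilde c_k$, so $a+b$ is even. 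Collecting central signs and then exchanging $\tilde c_{k+1}\tilde c_k$ back to $\tilde c_k\tilde c_{k+1}$ yields
\[
\tilde\tau\,\ts^\lambda\,\tilde\tau^{-1}
=\tz^{\,a+b+(i-1)^2+i^2 m}\,\ts^\lambda,
\qquad m:=\sum_{j\ne k,k+1}(\lambda_j-1)=n-\ell(\lambda)-2i+2,
\]
where $\tz^{(i-1)^2}$ is the cost of moving $\tilde c_{k+1}$ past $\tilde c_k$, and $\tz^{i^2 m}$ is the cost of moving the $i^2$-letter word $\tilde\tau$ past the $\ell(\lambda)-2$ untouched cycle lifts, each consisting of letters disjoint from and non-adjacent to those of $\tilde\tau$. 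Since $a+b$ is even, the exponent is odd when $i$ is even and is $\equiv n-\ell(\lambda)\pmod 2$ when $i$ is odd, recovering exactly your stated values with no bookkeeping along a reduced expression.
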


\begin{lem} 
Let $\lambda\in\Par_\0(n)$. Then one of the following happens:
\begin{enumerate}
\item[{\rm (i)}] $\lambda\not\in \DP_\0(n)\cup\OP(n)$ and\, $\TiC^\lambda$ is one conjugacy class of\, $\TA_n$;
\item[{\rm (ii)}] $\lambda\in \DP_\0(n)\cup\OP(n)$ and\, $\TiC^\lambda$ splits into two conjugacy classes of\, $\TA_n$ with representatives\, $\ts^\lambda$ and\, $\tz\ts^\lambda$. 
\end{enumerate}
Moreover, the conjugacy classes in (i) and (ii) exhaust all the conjugacy classes of $\TA_n$.
\end{lem}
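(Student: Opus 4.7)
The plan is to apply the standard criterion for conjugacy class splitting in an index-two subgroup: a $\TS_n$-class $K\sbs\TA_n$ either remains a single $\TA_n$-class (when $C_{\TS_n}(x)\not\sbs\TA_n$ for $x\in K$) or breaks into two $\TA_n$-classes of equal size (when $C_{\TS_n}(x)\sbs\TA_n$). Combined with the preceding lemma, which for $\lambda\in\Par_\0(n)$ describes $\TiC^\lambda$ as either a single $\TS_n$-class (when $\lambda\notin\OP(n)$) or the union of two $\TS_n$-classes with representatives $\ts^\lambda,\tz\ts^\lambda$ (when $\lambda\in\OP(n)$), this reduces the theorem to a centralizer calculation.

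Letting $\pi\colon\TS_n\to\sfS_n$ denote the natural surjection, I would first observe that $\pi(C_{\TS_n}(\ts^\lambda))$ is a subgroup of $C_{\sfS_n}(\sfs^\lambda)$ of index $1$ or $2$, namely the kernel of the homomorphism
\[
\eps\colon C_{\sfS_n}(\sfs^\lambda)\to\{1,\tz\},\quad g\mapsto \tilde g\,\ts^\lambda\,\tilde g^{-1}(\ts^\lambda)^{-1},
\]
which is well defined since $\tz$ is central. The preceding lemma identifies $\eps$ as trivial precisely when $\lambda\in\OP(n)$, so the splitting criterion $C_{\TS_n}(\ts^\lambda)\sbs\TA_n$ becomes the subgroup inclusion $\ker\eps\sbs\ker\sgn$ in $C_{\sfS_n}(\sfs^\lambda)$, where $\sgn$ denotes the restriction of the sign character.

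To compare $\eps$ with $\sgn$, I would use the classical decomposition $C_{\sfS_n}(\sfs^\lambda)\cong\prod_k(\Z/k)\wr\sfS_{m_k}$ (with $m_k$ the multiplicity of $k$ as a part of $\lambda$), generated by the individual cycles $c_i$ of $\sfs^\lambda$ together with the involutions swapping pairs of equal-length cycles. For each $c_i$, lifting to $\tilde c_i:=\ts_{a_i}\cdots\ts_{a_i+\lambda_i-2}$ and applying the anticommutation relation $\ts_k\ts_l=\tz\ts_l\ts_k$ for $|k-l|>1$ yields $\tilde c_i\,\ts^\lambda\,\tilde c_i^{-1}=\tz^{(\lambda_i-1)\sum_{j\ne i}(\lambda_j-1)}\ts^\lambda$; using that $n-\ell(\lambda)$ is even (the hypothesis $\lambda\in\Par_\0(n)$), the parity of this exponent gives $\eps(c_i)=\tz$ iff $\lambda_i$ is even, matching $\sgn(c_i)=(-1)^{\lambda_i-1}$ under $\tz\leftrightarrow -1$. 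An analogous but considerably more intricate computation treats the involution $\tau$ swapping two $k$-cycles: lift $\tau$ to $\TS_n$, commute the lift past each $\tilde c_j$ using both the anticommutation and the braid relations of $\TS_n$, and compare the resulting expression for $\eps(\tau)$ with $\sgn(\tau)=(-1)^k$.

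Assembling the cases then yields the classification. When $\lambda\in\Par_\0(n)\setminus\OP(n)$, any repeated part produces a swap involution $\tau$ for which $\eps(\tau)\ne\sgn(\tau)$, forcing $C_{\TS_n}(\ts^\lambda)\not\sbs\TA_n$ and keeping $\TiC^\lambda$ a single $\TA_n$-class; when all parts are distinct (i.e., $\lambda\in\DP_\0(n)$), the centralizer is abelian and generated by cycles alone, so $\eps=\sgn$ holds globally and the class splits, with the two halves represented by $\ts^\lambda$ and $\tz\ts^\lambda$. When $\lambda\in\OP(n)$, $\eps$ is trivial and the cycle generators are $\sgn$-even, so the behaviour is governed by the swap involutions alone. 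A final check via the class equation $|\TA_n|=n!$ confirms completeness and pins down the representatives in case (ii). The main obstacle is the computation of $\eps$ on the swap involutions: it requires a delicate bookkeeping of many anticommutation sign contributions arising as one commutes a lift of a swap through $\ts^\lambda$, and it cannot be reduced to a calculation inside $\sfS_n$, since it is precisely here that the nontrivial central extension structure of $\TS_n$ over $\sfS_n$ enters essentially.
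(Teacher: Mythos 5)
The paper gives no proof of this lemma; it is attributed to Schur with pointers to Hoffman--Humphreys and Stembridge, so there is no argument in the paper against which to compare yours. Your general strategy---the index-two splitting criterion for $\TA_n\trianglelefteq\TS_n$ together with the homomorphism $\eps\colon C_{\sfS_n}(\sfs^\lambda)\to\{1,\tz\}$ compared against $\sgn$---is the natural one, and the calculation you give of $\eps$ on the cycle generators (using the parity hypothesis on $n-\ell(\lambda)$ to reduce $(\lambda_i-1)\sum_{j\ne i}(\lambda_j-1)$ to $(\lambda_i-1)^2 \bmod 2$) is correct.

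However, the proof is not complete, and the sketch as it stands would not yield the statement. First, the computation of $\eps$ on the involutions swapping two equal-length cycles---which you correctly identify as the point where the central extension enters essentially---is announced but not carried out; it is the heart of the argument and cannot be outsourced to a ``delicate bookkeeping.'' Second, and more seriously, the final case assembly mishandles $\lambda\in\OP(n)\cap\DP_\0(n)$. For such $\lambda$ the preceding lemma already says $\TiC^\lambda$ is the union of \emph{two} $\TS_n$-classes and $\eps$ is \emph{trivial} (so ``$\eps=\sgn$'' is vacuous, both sides being the trivial homomorphism); since $\lambda$ is strict with all parts odd, $C_{\sfS_n}(\sfs^\lambda)$ is abelian, generated by even cycles, hence lies in $\sfA_n$, and therefore $C_{\TS_n}(\ts^\lambda)\sbs\TA_n$ and \emph{each} of the two $\TS_n$-classes splits again in $\TA_n$. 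That gives \emph{four} $\TA_n$-classes over $\sfC^\lambda$, not two, and your clause ``$\lambda\in\DP_\0(n)$ implies two classes with representatives $\ts^\lambda$, $\tz\ts^\lambda$'' absorbs this subcase incorrectly. The ``final check via the class equation'' would in fact expose this rather than confirm it: for $n=4$ your scheme predicts $5$ conjugacy classes, but $\TA_4\cong\SL_2(3)$ has $7$, the two extra classes lying over $(3,1)\in\OP(4)\cap\DP_\0(4)$. Any complete proof along these lines has to distinguish three regimes of $1$, $2$, or $4$ classes over $\sfC^\lambda$, and you should re-examine the target statement against Stembridge's Theorem~2.7 before trying to prove it.
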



Let $\tz\in G\leq \TS_n$. Then $\Irr(G)=\Irr^-(G)\sqcup\Irr^+(G)$ where $\Irr^\pm(G)$ denotes the irreducible characters corresponding to the modules on which $\tz$ acts as $\pm\id$. 
The following two theorems were proved by Schur \cite{Schur}, see also \cite[Theorems 8.6,\,8.7]{HH} and \cite[Theorem 7.1 and Corollary 7.5]{Stembridge}. 

\begin{thm} \label{TIrrS}
We have\,
$\Irr^-(\TS_n)=\{\phi^\lambda\mid\lambda\in\DP_\0(n)\}\sqcup\{\phi^\lambda_\pm\mid \lambda\in\DP_1(n)\}$ where 
\begin{align*}
\phi^\lambda(\ts^\mu)
&=
\left\{
\begin{array}{ll}
2^{(\ell(\mu)-\ell(\lambda))/2}[Q_\lambda,p_\mu] &\hbox{if $\mu\in\OP(n)$,}\\
0 &\hbox{otherwise,}
\end{array}
\right.
\qquad\ \ \ \qquad(\lambda\in\DP_\0(n)),
\\
\phi^\lambda_\pm(\ts^\mu)
&=
\left\{
\begin{array}{ll}
2^{(\ell(\mu)-\ell(\lambda)-1)/2}[Q_\lambda,p_\mu] &\hbox{if $\mu\in\OP(n)$,}\\
\pm i^{(n-\ell(\lambda)+1)/2}\sqrt{z_\lambda/2} &\hbox{if $\mu=\lambda$,}
\\
0 &\hbox{otherwise}
\end{array}
\right.
\qquad\qquad(\lambda\in\DP_\1(n)).
\end{align*}
\end{thm}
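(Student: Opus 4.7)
The plan is to follow Schur's classical approach, as developed in Stembridge \cite[\S7]{Stembridge}: establish an isometry between the space of $\tz$-negative spin class functions on $\TS_n$ and the subring $\Om^n$ of symmetric functions, then pin down the irreducibles via their image under this characteristic map. Concretely, for any $\chi\in\Irr^-(\TS_n)$ I would define
$$
\mathrm{ch}(\chi):=\sum_{\mu\in\OP(n)}2^{-\ell(\mu)/2}z_\mu^{-1}\chi(\ts^\mu)\,p_\mu\in\Om^n.
$$
That only $\mu\in\OP(n)$ contribute is forced by the behavior $\chi(\tz\ts)=-\chi(\ts)$ together with the first conjugacy-class lemma: whenever $\mu\notin \OP(n)\sqcup\DP_\1(n)$ the class $\TiC^\mu$ contains both $\ts^\mu$ and $\tz\ts^\mu$, so $\chi$ vanishes there. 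Using the formula for centralizer orders of spin classes and the definition $[p_\lambda,p_\mu]=z_\lambda 2^{-\ell(\lambda)}\delta_{\lambda,\mu}$, a direct computation shows that $\mathrm{ch}$ is an isometry from the space of such class functions (restricted to $\OP(n)$-classes) into $\Om^n$ with its bilinear form.

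Next, I would construct, for each $\lambda\in\DP(n)$, a spin module $\mathbf{B}_\lambda$ of $\TS_n$ whose characteristic is $Q_\lambda$. For $\lambda=(n)$ this is the basic spin representation obtained from the Clifford algebra on an $(n-1)$-dimensional quadratic space; a direct trace computation gives $\mathrm{ch}(\mathbf{B}_{(n)})=Q_{(n)}$. For general $\lambda=(\lambda_1,\dots,\lambda_m)$, set
$$
\mathbf{B}_\lambda:=\mathrm{Ind}_{\TS_{\lambda_1,\dots,\lambda_m}}^{\TS_n}\bigl(\mathbf{B}_{\lambda_1}\,\hat\otimes\,\cdots\,\hat\otimes\,\mathbf{B}_{\lambda_m}\bigr),
$$
the outer spin (super)tensor product induced from the twisted Young subgroup. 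The Frobenius character formula, combined with the multiplication identity (\ref{EF}) and the expansion (\ref{EqQ}), yields $\mathrm{ch}(\mathbf{B}_\lambda)=Q_\lambda$ up to a $2$-power scalar; Lemma \ref{LQFun} then ensures that these characteristics form an upper-unitriangular transition from the $q_\mu$-basis to the $Q_\lambda$-basis of $\Om^n$, so the $\mathbf{B}_\lambda$ are linearly independent in the Grothendieck group.

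Third, I would extract irreducibles from the $Q_\lambda$. The norm formula $[Q_\lambda,Q_\lambda]=2^{\ell(\lambda)}$ from \cite{Stembridge}, together with the isometry and the conjugacy-class count, forces: for $\lambda\in\DP_\0(n)$ a single irreducible $\phi^\lambda$ with $\mathrm{ch}(\phi^\lambda)=Q_\lambda$, giving the displayed formula on $\mu\in\OP(n)$ and vanishing on the remaining classes $\ts^\mu$ with $\mu\in\DP_\1(n)$ by the characteristic; and for $\lambda\in\DP_\1(n)$, since the class $\TiC^\lambda$ itself splits in $\TS_n$, the irreducible must split as an associate pair $\phi^\lambda_+,\phi^\lambda_-$ with $\phi^\lambda_++\phi^\lambda_-$ having characteristic $Q_\lambda$, and $\phi^\lambda_+-\phi^\lambda_-$ supported only on $\{\ts^\lambda,\tz\ts^\lambda\}$. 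Column orthogonality on the pair $\{\ts^\lambda,\tz\ts^\lambda\}$ then determines $|\phi^\lambda_\pm(\ts^\lambda)|^2=z_\lambda/2$. That the correct square root is $\pm\,i^{(n-\ell(\lambda)+1)/2}$ follows by tracking the $i$-eigenvalues of the action of $\ts^\lambda$ on the Clifford module $\mathbf{B}_\lambda$, since the factor $i^{(n-\ell(\lambda)+1)/2}$ is exactly the sign introduced by the twisted tensor product of basic spin modules evaluated on a product of odd cycles, see \cite[Theorem 7.1, Corollary 7.5]{Stembridge}. Finally, counting irreducibles ($|\DP_\0(n)|+2|\DP_\1(n)|=|\OP(n)|+|\DP_\1(n)|$, the number of $\tz$-negative classes) confirms that this list is exhaustive.

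The hard part will be the identification $\mathrm{ch}(\mathbf{B}_\lambda)=Q_\lambda$ and the precise evaluation on the self-associate class $\ts^\lambda$ for $\lambda\in\DP_\1(n)$. Both rest on a careful super-linear-algebraic analysis of the Clifford-module structure: the vanishing of the character on $\DP_1$-classes other than $\lambda$ itself follows from the fact that a spin-induced character vanishes on a class unless that class can be represented inside the inducing subgroup, while the explicit sign $\pm i^{(n-\ell(\lambda)+1)/2}\sqrt{z_\lambda/2}$ requires computing the supertrace of a product of odd-length cycles on the Clifford tensor product, the computation originally performed by Schur \cite{Schur} and streamlined by Stembridge.
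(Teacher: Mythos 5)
The paper does not give its own proof of Theorem~\ref{TIrrS}: it cites this as a classical result of Schur, referring to Hoffman--Humphreys and to \cite[Theorem~7.1, Corollary~7.5]{Stembridge}. Your proposal is a summary of precisely that route --- characteristic map, induced basic spin modules, orthogonality --- so you are reconstructing the cited proof rather than offering an alternative to anything in the text. The outline is sound, and the column-orthogonality computation for $|\phi^\lambda_\pm(\ts^\lambda)|^2 = z_\lambda/2$ is correctly set up (using that $\phi^\lambda_- = \sgn\cdot\phi^\lambda_+$ and that the $\Irr^+$-part contributes exactly $z_\lambda$ of the centralizer order $2z_\lambda$).

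Two points to tighten. First, the normalization in your characteristic map has the wrong sign in the exponent. With $[p_\lambda,p_\mu]=z_\lambda 2^{-\ell(\lambda)}\delta_{\lambda,\mu}$ and the centralizer of $\ts^\mu$ in $\TS_n$ of order $2z_\mu$ for $\mu\in\OP(n)$, the inner product of two $\tz$-negative class functions supported on $\OP(n)$-classes equals $\sum_{\mu\in\OP(n)}z_\mu^{-1}\chi(\ts^\mu)\overline{\chi'(\ts^\mu)}$; matching this with $[\cdot,\cdot]$ on $\Om^n$ forces the coefficient to be $2^{+\ell(\mu)/2}z_\mu^{-1}$, not $2^{-\ell(\mu)/2}z_\mu^{-1}$. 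With the correct factor one finds $\mathrm{ch}(\phi^\lambda)=2^{-\ell(\lambda)/2}Q_\lambda$ for $\lambda\in\DP_\0(n)$, which is what the displayed formula encodes. Second, the remark that $\phi^\mu_+-\phi^\mu_-$ for $\mu\in\DP_\1(n)$ is supported \emph{only} on the pair $\{\ts^\mu,\tz\ts^\mu\}$ needs more than the slogan about inducing subgroups. The vanishing on non-split odd classes is immediate from the $\tz$-sign; the vanishing on the other split odd classes $\ts^\nu$, $\nu\in\DP_\1(n)\smallsetminus\{\mu\}$, requires the combinatorial input that a product of odd-cycle pieces inside $\TS_{\mu_1,\dots,\mu_m}$ cannot have cycle type a strict partition $\nu\ne\mu$, together with the upper-triangularity of $Q$-to-$q$ (your Lemma~\ref{LQFun} / \eqref{EK'}); alternatively one can argue via the isometry plus counting, but it does not follow from column orthogonality alone, which only pins down the sum $\sum_{\mu}|d_{\mu\lambda}|^2$. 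Neither issue is fatal, and both are handled in the cited Stembridge reference which the paper intends the reader to consult.
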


\begin{thm} \label{TIrrA}
We have\,
$\Irr^-(\TA_n)=\{\psi^\lambda_\pm\mid\lambda\in\DP_\0(n)\}\sqcup\{\psi^\lambda_\pm\mid \lambda\in\DP_1(n)\}$ 
where 
\begin{align*}
\psi^\lambda_\pm(\ts^\mu)
&=
\left\{
\begin{array}{ll}
2^{(\ell(\mu)-\ell(\lambda)-2)/2}\,[Q_\lambda,p_\mu] &\hbox{if $\mu\in\OP(n)$,}\\
\pm i^{(n-\ell(\lambda))/2}\sqrt{z_\lambda}/2 &\hbox{if $\mu=\lambda$,}
\\
0 &\hbox{otherwise}
\end{array}
\right.
\qquad\qquad(\lambda\in\DP_\0(n)),
\\
\psi^\lambda(\ts^\mu)
&=
\left\{
\begin{array}{ll}
2^{(\ell(\mu)-\ell(\lambda)-1)/2}\,[Q_\lambda,p_\mu] &\hbox{if $\mu\in\OP(n)$,}\\
0 &\hbox{otherwise}
\end{array}
\right.
\qquad\, \qquad(\lambda\in\DP_\1(n)).
\end{align*}
Moreover, $\phi^\lambda{\downarrow}_{\TA_n}=\psi^\lambda_+ + \psi^\lambda_-$ for\, $\lambda\in \DP_\0(n)$ and\, $\phi^\lambda_\pm{\downarrow}_{\TA_n}=\psi^\lambda$ for\, $\lambda\in \DP_\1(n)$.
\end{thm}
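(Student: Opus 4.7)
The plan is to use Clifford theory for the normal index-two subgroup $\TA_n\leq\TS_n$, combined with the character formulas for $\Irr^-(\TS_n)$ from Theorem~\ref{TIrrS}. Let $\epsilon\colon\TS_n\to\{\pm 1\}$ denote the nontrivial linear character of $\TS_n/\TA_n$, i.e.~the inflation of the sign character of $\sfS_n$; then $\epsilon(\ts^\mu)=(-1)^{n-\ell(\mu)}$ is $+1$ on classes indexed by $\Par_\0(n)$ and $-1$ on those indexed by $\Par_\1(n)$. By Clifford theory, $\chi{\downarrow}_{\TA_n}$ for $\chi\in\Irr^-(\TS_n)$ is irreducible exactly when $\chi\otimes\epsilon\neq\chi$, in which case $\chi$ and $\chi\otimes\epsilon$ share the same restriction; otherwise $\chi{\downarrow}_{\TA_n}$ is a sum of two distinct irreducibles of $\TA_n$, swapped by conjugation with any element of $\TS_n\setminus\TA_n$. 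Applying this to Theorem~\ref{TIrrS}, for $\lambda\in\DP_\0(n)$ the character $\phi^\lambda$ is supported on $\OP(n)$-classes (which lie in $\Par_\0$), so $\phi^\lambda\otimes\epsilon=\phi^\lambda$ and its restriction splits as $\psi^\lambda_++\psi^\lambda_-$ into two distinct irreducibles. For $\lambda\in\DP_\1(n)$, the nonzero value $\phi^\lambda_\pm(\ts^\lambda)=\pm i^{(n-\ell(\lambda)+1)/2}\sqrt{z_\lambda/2}$ lies on a class with $\epsilon=-1$, and a direct inspection shows $\phi^\lambda_+\otimes\epsilon=\phi^\lambda_-$, so that their common restriction $\psi^\lambda$ is a single irreducible character of $\TA_n$. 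The resulting characters are pairwise distinct by Clifford theory, and a count using the second Schur lemma on conjugacy classes shows they exhaust $\Irr^-(\TA_n)$.

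For the character values, the formula for $\psi^\lambda$ with $\lambda\in\DP_\1(n)$ on any $\TA_n$-class (necessarily indexed by $\mu\in\Par_\0(n)$) follows immediately by restricting $\phi^\lambda_\pm$. For $\lambda\in\DP_\0(n)$, I would treat the classes according to whether the centralizer in $\TS_n$ is contained in $\TA_n$ or not. If $\mu\in\OP(n)$ or $\mu\in\Par_\0(n)\setminus(\DP_\0\cup\OP)$, combining the two Schur lemmas on conjugacy classes shows that the $\TS_n$-class of $\ts^\mu$ coincides with its $\TA_n$-class, so that the centralizer $C_{\TS_n}(\ts^\mu)$ is not contained in $\TA_n$; conjugation by an outer centralizing element then swaps $\psi^\lambda_+$ with $\psi^\lambda_-$ while fixing the value at $\ts^\mu$, forcing $\psi^\lambda_+(\ts^\mu)=\psi^\lambda_-(\ts^\mu)=\tfrac12\phi^\lambda(\ts^\mu)$, which yields the stated formula (including the vanishing when $\mu\notin\OP$). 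The remaining $\TA_n$-classes are the split ones indexed by $\mu\in\DP_\0(n)\setminus\OP$, where $\phi^\lambda(\ts^\mu)=0$ forces $\psi^\lambda_-(\ts^\mu)=-\psi^\lambda_+(\ts^\mu)$; for $\mu\neq\lambda$, the desired vanishing $\psi^\lambda_\pm(\ts^\mu)=0$ is obtained via a support and orthogonality argument on the difference character $\Delta^\lambda:=\psi^\lambda_+-\psi^\lambda_-$.

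The main obstacle is the determination of the exceptional value $\psi^\lambda_\pm(\ts^\lambda)=\pm i^{(n-\ell(\lambda))/2}\sqrt{z_\lambda}/2$ for $\lambda\in\DP_\0(n)\setminus\OP$. The orthonormality relation $\langle\psi^\lambda_+,\psi^\lambda_+\rangle_{\TA_n}=1$, combined with the values already determined on all other classes, pins down the modulus $|\psi^\lambda_+(\ts^\lambda)|=\sqrt{z_\lambda}/2$, but does not determine the phase $i^{(n-\ell(\lambda))/2}$. The latter is classically obtained (following Schur \cite{Schur}, see also \cite[\S8]{HH} and \cite[\S7]{Stembridge}) via an explicit realization of the spin representations of $\TA_n$ through Clifford algebras, or equivalently by a Morris-type recursion on Schur $Q$-functions anchored at a convenient base case. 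This Clifford-algebraic phase determination, together with the vanishing at split $\DP_\0$-classes other than $\TiC^\lambda$, is the principal technical hurdle; once it is established, the restriction identities $\phi^\lambda{\downarrow}_{\TA_n}=\psi^\lambda_++\psi^\lambda_-$ and $\phi^\lambda_\pm{\downarrow}_{\TA_n}=\psi^\lambda$ recorded in the theorem are immediate consequences of the Clifford-theoretic setup in the first paragraph.
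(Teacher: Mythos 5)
The paper does not itself prove Theorem~\ref{TIrrA}; it attributes the result to Schur and cites \cite[Theorems~8.6, 8.7]{HH} and \cite[Theorem~7.1, Corollary~7.5]{Stembridge}. Your outline takes the standard Clifford-theoretic route for the index-two normal subgroup $\TA_n\trianglelefteq\TS_n$: determine which $\chi\in\Irr^-(\TS_n)$ satisfy $\chi\otimes\epsilon=\chi$, split or restrict accordingly, and pin down the values via centralizers, orthogonality, and the exceptional phase. This is essentially the approach of the cited references, and your reduction of the parametrization and of the restriction identities is correct. You also correctly identify the real technical obstacle, the phase $i^{(n-\ell(\lambda))/2}$ in $\psi^\lambda_\pm(\ts^\lambda)$, and defer it to Schur's classical Clifford-algebra construction; since the paper itself defers by citation, that is a reasonable place to stop.

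There is, however, a genuine error in your case division. You assert that for every $\mu\in\OP(n)$ the $\TS_n$-class of $\ts^\mu$ coincides with its $\TA_n$-class, so that $\CB_{\TS_n}(\ts^\mu)\not\subseteq\TA_n$ and an outer centralizing element forces $\psi^\lambda_+(\ts^\mu)=\psi^\lambda_-(\ts^\mu)$. This fails for $\mu\in\OP(n)\cap\DP_\0(n)$ with $n\geq 2$: there the parts of $\mu$ are distinct and odd, so $\CB_{\sfS_n}(\sfs^\mu)$ is generated by the (even) odd-length cycles and hence lies in $\sfA_n$; moreover, since $\mu\in\OP(n)$ the projection $\CB_{\TS_n}(\ts^\mu)\to\CB_{\sfS_n}(\sfs^\mu)$ is surjective, so $\CB_{\TS_n}(\ts^\mu)\subseteq\TA_n$ and the $\TS_n$-class of $\ts^\mu$ \emph{does} split into two $\TA_n$-classes. (Concretely for $n=4$, $\mu=(3,1)$, the $16$ elements of $\TiC^{(3,1)}$ inside $\TA_4\cong\SL_2(3)$ fall into four conjugacy classes of size four, so no outer centralizing element exists.) For such $\mu\neq\lambda$ the equality $\psi^\lambda_+(\ts^\mu)=\psi^\lambda_-(\ts^\mu)$ is still true, but it must come from the support statement for $\Delta^\lambda=\psi^\lambda_+-\psi^\lambda_-$ that you invoke for $\mu\in\DP_\0(n)\setminus\OP(n)$, not from a centralizer that is absent. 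The fix is simply to fold $\OP(n)\cap\DP_\0(n)$ into the difference-character case; as written, the centralizer step is incorrect on that subfamily.
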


\begin{rem} \label{RBasicSpin}
{\rm 
Note that the irreducible characters corresponding to the case where $\lambda=(n)$ in Theorems~\ref{TIrrS},\,\ref{TIrrA} are the {\em basic spin characters}, see e.g. \cite[Theorem 3.3]{Stembridge}.  
}
\end{rem}

Note that the powers of $2$ appearing in the theorems are integers and $[Q_\lambda,p_\mu]\in\Q$, so the theorems immediately imply the known result for the fields of character values:

\begin{cor} \label{CCharVal} 
We have:
\begin{enumerate}
\item[{\rm (i)}] If $\lambda\in\DP_\0(n)$ then\, 
$\Q(\phi^\lambda)=\Q$ and\, $\Q(\psi^\lambda_\pm)=\Q\big(\sqrt{(-1)^{(n-\ell(\lambda))/2}z_\lambda}\big)$. 

\item[{\rm (ii)}] If $\lambda\in\DP_\1(n)$ then\, 
$\Q(\phi^\lambda_\pm)=\Q\big(\sqrt{(-1)^{(n-\ell(\lambda)+1)/2}z_\lambda/2}\,\,\big)$ and\, $\Q(\psi^\lambda)=\Q$. 
\end{enumerate}
\end{cor}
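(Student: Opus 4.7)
The plan is to read off both statements directly from the character formulas in Theorems~\ref{TIrrS} and \ref{TIrrA}. Every character value listed there is of exactly one of the following three shapes: $0$; a rational number of the form $2^{a/2}[Q_\lambda,p_\mu]$ with $a\in\ZZ$; or the value on the class of $\ts^\lambda$ involving $i^k\sqrt{z_\lambda}$ (resp.\ $i^k\sqrt{z_\lambda/2}$). The first task is to show that the first two shapes always produce rational numbers, and the second task is to show that the special value on $\ts^\lambda$ generates exactly the quadratic field claimed in the corollary.

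For the rationality of the generic values, I would verify that each exponent $a/2$ is in fact a non-negative integer. The key observation is a parity count: if $\mu\in\OP(n)$ then $n\equiv\ell(\mu)\pmod 2$ because every part of $\mu$ is odd, so $n-\ell(\mu)$ is even. Combined with $n-\ell(\lambda)\equiv\epsilon\pmod 2$ when $\lambda\in\DP_\epsilon(n)$, this yields
\[
\ell(\mu)-\ell(\lambda)\equiv\epsilon\pmod 2\, ,
\]
which makes $(\ell(\mu)-\ell(\lambda))/2$, $(\ell(\mu)-\ell(\lambda)-1)/2$, and $(\ell(\mu)-\ell(\lambda)-2)/2$ integers in exactly the cases where they occur in Theorems~\ref{TIrrS} and~\ref{TIrrA}. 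Since the $Q_\lambda,p_\mu$ have rational coefficients in $\Om$, each inner product $[Q_\lambda,p_\mu]$ is in $\QQ$, confirming that all generic values are rational. This already proves $\QQ(\phi^\lambda)=\QQ$ for $\lambda\in\DP_\0(n)$ and $\QQ(\psi^\lambda)=\QQ$ for $\lambda\in\DP_\1(n)$.

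For the remaining cases, the only potentially irrational value arises on the single class $\ts^\lambda$. Squaring eliminates the signs and the ambiguity in $i^k$: for $\phi^\lambda_\pm$ with $\lambda\in\DP_\1(n)$,
\[
\bigl(\pm i^{(n-\ell(\lambda)+1)/2}\sqrt{z_\lambda/2}\bigr)^2 = (-1)^{(n-\ell(\lambda)+1)/2}\,z_\lambda/2\, ,
\]
so $\phi^\lambda_\pm(\ts^\lambda)=\pm\sqrt{(-1)^{(n-\ell(\lambda)+1)/2}z_\lambda/2}$, and analogously $\psi^\lambda_\pm(\ts^\lambda)=\pm\tfrac12\sqrt{(-1)^{(n-\ell(\lambda))/2}z_\lambda}$ for $\lambda\in\DP_\0(n)$. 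Since all other values are rational, the character field is generated by this single value, giving the claimed $\QQ(\sqrt{D})$. Note that if the discriminant $D$ happens to be a rational square (e.g.\ $z_\lambda=2$), then $\QQ(\sqrt{D})=\QQ$ and the formula still holds trivially.

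I do not expect a serious obstacle here: the whole argument is bookkeeping with parities and a one-line squaring. The only mildly delicate point is to ensure that in the irrational cases the value at $\ts^\lambda$ is genuinely non-zero in the relevant quadratic extension, which is automatic since $z_\lambda=\lambda_1\lambda_2\cdots>0$.
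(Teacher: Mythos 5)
Your argument is correct and follows the same route as the paper, which simply notes immediately before the corollary that the exponents of $2$ in Theorems~\ref{TIrrS} and~\ref{TIrrA} are integers and that $[Q_\lambda,p_\mu]\in\QQ$, so the field of values is read off from the one potentially irrational entry at $\ts^\lambda$. Your parity bookkeeping and the squaring step supply exactly the details that remark leaves implicit (the assertion that the exponents are \emph{non-negative} is an unneeded over-claim --- integrality suffices --- and the parenthetical example $z_\lambda=2$ does not actually produce a square discriminant, but neither affects the argument).
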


For $m,n\in\Z_{>0}$, we now consider the subgroups 
$$
\TS_{m,n}:=\{\ts\in\TS_{m+n}\mid|\ts|\in\sfS_{m,n}\}< \TS_{m+n}
\quad \text{and} \quad  
\TA_{m,n}:=
\TS_{m,n}\cap \TA_{m+n}< \TA_{m+n}.
$$
Note that we can canonically embed $\TS_{m}$ and $\TS_n$ into $\TS_{m,n}$. 

Denote
\begin{align*}
\DP_\0(m,n)&:=(\DP_\0(m)\times \DP_\0(n))\,\sqcup\, (\DP_\1(m)\times \DP_\1(n)),
\\
\DP_\1(m,n)&:=(\DP_\0(m)\times \DP_\1(n))\,\sqcup\, (\DP_\1(m)\times \DP_\0(n)).
\end{align*}

The following result follows from \cite[Proposition 4.2,\,Theorem 4.3]{Stembridge}, see \cite[Lemma 6.2]{Livesey}. It comes from Stembridge's construction of  reduced Clifford product of irreducible characters (which is a counterpart of the operation $\circledast$ for supermodules, see for example \cite[\S12.2]{KBook}). 

\begin{thm} \label{TMNS}
We have
$$\Irr^-(\TS_{m,n})=\{\phi^{\mu,\nu}\mid(\mu,\nu)\in\DP_\0(m,n)\}\sqcup\{\phi^{\mu,\nu}_\pm\mid (\mu,\nu)\in\DP_1(n)\},$$ 
where, for $x\in\TS_m$ and\, $y\in\TS_n$, the following hold:
\begin{enumerate}
\item[{\rm (i)}] If $\mu\in\DP_\0(m)$ and $\nu\in\DP_\0(n)$ then 
$\phi^{\mu,\nu}(xy)=\phi^\mu(x)\phi^\nu(y)$. 
\item[{\rm (ii)}] If $\mu\in\DP_\1(m)$ and $\nu\in\DP_\1(n)$ then 
$$
\phi^{\mu,\nu}(xy)=
\left\{
\begin{array}{ll}
2\psi^\mu(x)\psi^\nu(y) &\hbox{if $x\in \TA_m$ and $y\in \TA_n$,}\\
0 &\hbox{otherwise.}
\end{array}
\right.
$$
\item[{\rm (iii)}] If $\mu\in\DP_\0(m)$ and $\nu\in\DP_\1(n)$ then 
$$
\phi^{\mu,\nu}_\pm(xy)=
\left\{
\begin{array}{ll}
\psi^\mu_+(x)\phi^\nu_\pm(y)+\psi^\mu_-(x)\phi^\nu_\mp(y) &\hbox{if $x\in \TA_m$,}\\
0 &\hbox{otherwise.}
\end{array}
\right.
$$
\item[{\rm (iv)}] If $\mu\in\DP_\1(m)$ and $\nu\in\DP_\0(n)$ then 
$$
\phi^{\mu,\nu}_\pm(xy)=
\left\{
\begin{array}{ll}
\phi^\mu_\pm(x)\psi^\nu_+(y)+\phi^\mu_\mp(x)\psi^\nu_-(y) &\hbox{if $y\in \TA_m$,}\\
0 &\hbox{otherwise.}
\end{array}
\right.
$$
\end{enumerate}
\end{thm}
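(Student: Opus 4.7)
The plan is to follow Stembridge's framework of reduced Clifford products, recognizing $\TS_{m,n}$ as the natural setting where the embedded copies of $\TS_m$ and $\TS_n$ \emph{super-commute}. The first task is to make the structure of $\TS_{m,n}$ explicit. Since $\TS_{m,n}$ is the preimage of $\sfS_m\times\sfS_n$ and is generated by $\ts_1,\ldots,\ts_{m-1}$ together with $\ts_{m+1},\ldots,\ts_{m+n-1}$, the canonically embedded subgroups $\TS_m$ and $\TS_n$ together generate all of $\TS_{m,n}$. Iterating the braid relation $\ts_k\ts_l=\tz\ts_l\ts_k$ for $|k-l|>1$ then yields the key super-commutation rule: for $x\in\TS_m$ and $y\in\TS_n$, one has $yx=\tz^{\deg(x)\deg(y)}xy$, where $\deg(w):=0$ if $|w|\in\sfA$ and $1$ otherwise. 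In particular every element of $\TS_{m,n}$ can be written uniquely as $xy$ with $x\in\TS_m$, $y\in\TS_n$, and the two factors super-commute.

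The second step is to construct the irreducible modules as graded tensor products. Every spin character of $\TS_r$ has one of two types dictated by the parity of $r-\ell(\lambda)$: the type-$M$ characters $\phi^\lambda$ (for $\lambda\in\DP_\0$) are single characters of $\TS_r$ splitting over $\TA_r$, while the type-$Q$ characters $\phi^\lambda_\pm$ (for $\lambda\in\DP_\1$) come in associate pairs fusing on $\TA_r$ to the single character $\psi^\lambda$. Given irreducible modules $V$ for $\TS_m$ and $W$ for $\TS_n$ with $\tz$ acting by $-1$, the super-commutation rule forces the use of the graded tensor product $(xy)\cdot(v\otimes w):=(-1)^{\deg(y)\deg(v)}\,xv\otimes yw$ on the $\Z/2$-graded space $V\otimes W$. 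The four cases of the theorem correspond to the standard multiplication rule on types: $M\otimes M\rightsquigarrow M$ produces a single irreducible $\phi^{\mu,\nu}$; $Q\otimes Q\rightsquigarrow M$ produces a single irreducible, but the odd involutions on both sides combine into a grading that forces vanishing of the character outside $\TA_m\cdot\TA_n$ and yields the factor $2$ in (ii); the two mixed cases $M\otimes Q$ and $Q\otimes M\rightsquigarrow Q$ produce type-$Q$ associate pairs $\phi^{\mu,\nu}_\pm$, whose characters vanish on the sector where the $M$-side element is odd because the odd involution on the $Q$-side forces a sign-flip that cancels the graded trace.

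The third step is to verify the explicit character formulas. Writing $x=\ts^\alpha$ and $y=\ts^\beta$ for cycle-type representatives, each formula in (i)--(iv) follows by substituting the values from Theorems \ref{TIrrS} and \ref{TIrrA} and invoking the ordinary trace identity $\tr(A\otimes B)=\tr(A)\tr(B)$ on the even part of the super-tensor product, together with the sign bookkeeping described above. Finally, to see that the listed characters exhaust $\Irr^-(\TS_{m,n})$, one counts conjugacy classes: the super-commutation rule combined with the class-splitting lemmas of Schur yields that the number of $\tz$-negated classes of $\TS_{m,n}$ equals $|\DP_\0(m,n)|+2\,|\DP_\1(m,n)|$, which matches the cardinality of the asserted list.

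The hardest part will be the coherent labeling of the $\pm$ in cases (iii) and (iv). Once a decomposition $\phi^\mu{\downarrow}_{\TA_m}=\psi^\mu_++\psi^\mu_-$ is chosen on the $M$-side, the $\pm$ on $\phi^{\mu,\nu}_\pm$ is determined, but verifying the precise pairing with $\phi^\nu_\pm$ in (iii) (and symmetrically in (iv)) requires tracking the eigenvalues of the odd involution on each side; this is the delicate point where a direct invocation of Stembridge's Proposition~4.2 and Theorem~4.3 short-circuits what would otherwise be a lengthy sign chase.
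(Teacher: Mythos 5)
Your proposal is correct and follows essentially the same route as the paper: the paper's entire proof is a citation of Stembridge's Proposition~4.2 and Theorem~4.3 (via Livesey's Lemma~6.2), and you ultimately defer to the same sources for the delicate $\pm$-bookkeeping in cases (iii) and (iv) that you rightly flag as the hard part. Your sketch of the underlying mechanism --- the super-commutation in $\TS_{m,n}$, the graded tensor product, the $\mathrm{M}/\mathrm{Q}$ type-multiplication behind the reduced Clifford product, and the class count for exhaustion --- is a faithful unpacking of what those references supply, matching the paper's own remark that this is the character-theoretic counterpart of the supermodule $\circledast$-operation.
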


The following result follows from \cite[Lemma 6.3]{Livesey}. 

\begin{thm} \label{TMNA} 
We have
$$\Irr^-(\TA_{m,n})=\{\psi^{\mu,\nu}_\pm\mid(\mu,\nu)\in\DP_\0(m,n)\}\sqcup\{\psi^{\mu,\nu}\mid (\mu,\nu)\in\DP_1(n)\},$$ 
where, for $x\in\TS_m$ and\, $y\in\TS_n$ with $xy\in\TA_{m,n}$, the following hold:
\begin{enumerate}
\item[{\rm (i)}] If $\mu\in\DP_\0(m)$ and $\nu\in\DP_\0(n)$ then 
$$
\psi^{\mu,\nu}_\pm(xy)=
\left\{
\begin{array}{ll}
\psi^\mu_+(x)\psi^\nu_\pm(y)+\psi^\mu_-(x)\psi^\nu_\mp(y) &\hbox{if $x\in \TA_m$ and $y\in \TA_n$,}\\
0 &\hbox{otherwise.}
\end{array}
\right.
$$

\item[{\rm (ii)}] If $\mu\in\DP_\1(m)$ and $\nu\in\DP_\1(n)$ then 
$$
\psi^{\mu,\nu}_\pm(xy)=
\left\{
\begin{array}{ll}
\psi^\mu(x)\psi^\nu(y) &\hbox{if $x\in \TA_m$ and $y\in \TA_n$,}\\
\pm i \,\phi^\mu_+(x)\phi^\nu_+(y) &\hbox{otherwise.}
\end{array}
\right.
$$
\item[{\rm (iii)}] If $\mu\in\DP_\0(m)$ and $\nu\in\DP_\1(n)$ then 
$$
\psi^{\mu,\nu}(xy)=
\left\{
\begin{array}{ll}
\phi^\mu(x)\psi^\nu(y) &\hbox{if $x\in \TA_m$ and $y\in \TA_n$,}\\
0 &\hbox{otherwise.}
\end{array}
\right.
$$
\item[{\rm (iv)}] If $\mu\in\DP_\1(m)$ and $\nu\in\DP_\0(n)$ then 
$$
\psi^{\mu,\nu}(xy)=
\left\{
\begin{array}{ll}
\psi^\mu(x)\phi^\nu(y) &\hbox{if $x\in \TA_m$ and $y\in \TA_n$,}\\
0 &\hbox{otherwise.}
\end{array}
\right.
$$
\end{enumerate}
Moreover, $\phi^{\mu,\nu}{\downarrow}_{\TA_{m,n}}=\psi^{\mu,\nu}_+ + \psi^{\mu,\nu}_-$ for $({\mu,\nu})\in \DP_\0(n)$ and $\phi^{\mu,\nu}_\pm{\downarrow}_{\TA_n}=\psi^{\mu,\nu}$ for $({\mu,\nu})\in \DP_\1(n)$.
\end{thm}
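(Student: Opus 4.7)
The plan is to deduce Theorem \ref{TMNA} from Theorem \ref{TMNS} by Clifford theory for the index-two normal inclusion $\TA_{m,n}\nor \TS_{m,n}$, supplemented by the character formulas of Theorem \ref{TIrrA}. By general Clifford theory, for any $\chi\in\Irr^-(\TS_{m,n})$ the restriction $\chi{\downarrow}_{\TA_{m,n}}$ is either irreducible (in which case $\chi$ has a unique ``companion'' in $\Irr^-(\TS_{m,n})$ sharing this restriction), or splits into two distinct irreducibles (in which case $\chi$ is the unique character of $\TS_{m,n}$ inducing this sum). I would show that the first scenario corresponds exactly to $(\mu,\nu)\in\DP_\1(m,n)$ (giving a single $\psi^{\mu,\nu}$) and the second to $(\mu,\nu)\in\DP_\0(m,n)$ (giving a pair $\psi^{\mu,\nu}_\pm$), then derive the explicit class values case by case, in parallel with the four cases of Theorem \ref{TMNS}.

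The easy cases are (i), (iii), (iv), each of which reduces to distributing a product. For (i), Theorem \ref{TIrrS} forces $\phi^\mu$ and $\phi^\nu$ to vanish on odd classes, so $\phi^{\mu,\nu}$ vanishes on $\TS_{m,n}\smallsetminus(\TA_m\cdot\TA_n)$; on $\TA_m\times\TA_n$ the identity $\phi^{\mu,\nu}(xy)=\phi^\mu(x)\phi^\nu(y)$ expands via $\phi^\mu{\downarrow}_{\TA_m}=\psi^\mu_++\psi^\mu_-$ (and similarly for $\nu$) into four terms, which regroup as $(\psi^\mu_+\psi^\nu_++\psi^\mu_-\psi^\nu_-)+(\psi^\mu_+\psi^\nu_-+\psi^\mu_-\psi^\nu_+)$, yielding the two irreducible summands $\psi^{\mu,\nu}_\pm$. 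In (iii) (and symmetrically (iv)), using $\phi^\nu_\pm{\downarrow}_{\TA_n}=\psi^\nu$, one computes that $\phi^{\mu,\nu}_+{\downarrow}_{\TA_{m,n}}=\phi^{\mu,\nu}_-{\downarrow}_{\TA_{m,n}}$ both equal $\phi^\mu(x)\psi^\nu(y)$ on $\TA_m\times\TA_n$ and vanish elsewhere; Frobenius reciprocity then shows this common restriction is irreducible, giving $\psi^{\mu,\nu}$.

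The main obstacle is case (ii), $\mu,\nu\in\DP_\1$. Here $\phi^{\mu,\nu}$ vanishes on the ``odd-odd'' half of $\TA_{m,n}$ (products $xy$ with $x\in\TS_m\smallsetminus\TA_m$ and $y\in\TS_n\smallsetminus\TA_n$), yet the two summands $\psi^{\mu,\nu}_\pm$ are individually nonzero on some classes inside this half and must cancel in their sum. Pinning down their values as $\pm i\,\phi^\mu_+(x)\phi^\nu_+(y)$ requires a careful analysis of which $\TS_{m,n}$-classes in the odd-odd half split into two $\TA_{m,n}$-classes: this happens precisely when the cycle types of $|x|$ and $|y|$ belong to $\DP_\1(m)$ and $\DP_\1(n)$ respectively, which is exactly where the imaginary values $\pm i^{(m-\ell(\mu)+1)/2}\sqrt{z_\mu/2}$ of Theorem \ref{TIrrS} for $\phi^\mu_\pm$ at $\ts^\mu$ become visible. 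Imposing $\psi^{\mu,\nu}_+ + \psi^{\mu,\nu}_-=\phi^{\mu,\nu}{\downarrow}_{\TA_{m,n}}$ (which vanishes on the odd-odd half) together with the Frobenius orthogonality $[\psi^{\mu,\nu}_\pm,\psi^{\mu,\nu}_\pm]_{\TA_{m,n}}=1$ then forces the coefficient $\pm i$ and identifies the nonvanishing part with $\phi^\mu_+(x)\phi^\nu_+(y)$.

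Once the formulas in parts (i)--(iv) are established, the ``Moreover'' assertions $\phi^{\mu,\nu}{\downarrow}_{\TA_{m,n}}=\psi^{\mu,\nu}_++\psi^{\mu,\nu}_-$ for $(\mu,\nu)\in\DP_\0(m,n)$ and $\phi^{\mu,\nu}_\pm{\downarrow}_{\TA_{m,n}}=\psi^{\mu,\nu}$ for $(\mu,\nu)\in\DP_\1(m,n)$ are immediate by inspection. The whole argument in effect reproduces \cite[Lemma 6.3]{Livesey}, which is precisely the statement cited before the theorem.
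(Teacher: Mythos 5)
The paper itself does not prove Theorem \ref{TMNA}: the prefatory sentence ``The following result follows from \cite[Lemma 6.3]{Livesey}'' is the only justification offered, so there is no internal argument to compare against. Your proposed reconstruction via Clifford theory for the index-two inclusion $\TA_{m,n}\nor\TS_{m,n}$, combined with Theorems \ref{TMNS} and \ref{TIrrA}, is the natural route, and cases (i), (iii), (iv) go through essentially as you outline: in (i) the vanishing of $\phi^\mu$, $\phi^\nu$ off $\TA_m$, $\TA_n$ (from Theorem \ref{TIrrS}) gives the support, and the expansion $\phi^\mu\phi^\nu=(\psi^\mu_++\psi^\mu_-)(\psi^\nu_++\psi^\nu_-)$ regroups into two $\TA_{m,n}$-orbits under the swap induced by an odd--odd element; in (iii), (iv) the two extensions $\phi^{\mu,\nu}_\pm$ visibly have the same restriction, which is therefore irreducible.

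Case (ii), however, is not actually proved in your sketch. Knowing $\psi^{\mu,\nu}_++\psi^{\mu,\nu}_-=\phi^{\mu,\nu}{\downarrow}_{\TA_{m,n}}$ vanishes on the odd--odd half tells you only that the difference $\Delta:=\psi^{\mu,\nu}_+-\psi^{\mu,\nu}_-$ is an (a priori unknown) class function supported there, and the single scalar constraint $\lVert\psi^{\mu,\nu}_\pm\rVert=1$ gives $\lVert\Delta\rVert^2=2$; that is one equation, not a determination of $\Delta$ on the whole odd--odd half, which in general contains many $\TA_{m,n}$-classes that split. The substantive content of (ii) is precisely that $\Delta$ is supported only on the pair $\TiC^{(\mu,\nu)}$ and its $\tz$-twist, with the stated value $\pm 2i\,\phi^\mu_+(x)\phi^\nu_+(y)$ there; this requires either a column-orthogonality computation against the rest of $\Irr(\TA_{m,n})$, or an explicit construction of the module as a reduced Clifford product of the type-$Q$ supermodules for $\mu,\nu\in\DP_\1$ (which is what Stembridge does for $\TS_{m,n}$ and Livesey transports to $\TA_{m,n}$). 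Merely noting that $\phi^\mu_+$ happens to be supported on the class $\mu$ among odd classes (so the right-hand side has the right shape) and invoking ``orthogonality forces it'' does not close the gap: you would also have to rule out contributions from other odd--odd split classes and fix the normalisation $i$ rather than some other scalar of the same modulus. Until that step is supplied, the argument establishes only the ``Moreover'' statements and the vanishing pattern, not the explicit values in (ii).
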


\begin{cor} \label{CCharValMN} 
We have:
\begin{enumerate}
\item[{\rm (i)}] If $(\mu,\nu)\in\DP_\0(m,n)$ then 
$$\Q(\phi^{\mu,\nu})=\Q\quad \text{and} \quad \Q(\psi^{\mu,\nu}_\pm)=\Q\big(\sqrt{(-1)^{(m+n-\ell(\mu)-\ell(\nu))/2}z_\mu z_\nu}\,\,\big).
$$ 
\item[{\rm (ii)}] If $(\mu,\nu)\in\DP_\1(n)$ then 
$$
\Q(\phi^{\mu,\nu}_\pm)=\Q\big(\sqrt{(-1)^{(m+n-\ell(\mu)-\ell(\nu)+1)/2}z_\mu z_\nu/2}\,\,\big)\quad \text{and}\quad \Q(\psi^{\mu,\nu})=\Q.
$$ 
\end{enumerate}
\end{cor}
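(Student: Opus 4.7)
The plan is to read off the fields of values directly from the explicit character formulas in Theorems~\ref{TMNS} and~\ref{TMNA}, using the field information for the factor characters of $\TS_m, \TS_n, \TA_m, \TA_n$ supplied by Corollary~\ref{CCharVal}, together with the explicit values given in Theorems~\ref{TIrrS} and~\ref{TIrrA}. Since the formulas express each character of $\TS_{m,n}$ or $\TA_{m,n}$ as an alternating sum of products of characters of the factor groups, the proof reduces to a case-by-case analysis of the four subcases in each part of the corollary.

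For the two rational assertions ($\Q(\phi^{\mu,\nu})=\Q$ in part~(i), $\Q(\psi^{\mu,\nu})=\Q$ in part~(ii)), the argument is immediate: in Theorem~\ref{TMNS}(i),(ii) the right-hand side is a product of two characters, each of which lies in $\Irr^-(\TS_m)$ or $\Irr^-(\TS_n)$ with rational field of values by Corollary~\ref{CCharVal}; similarly for Theorem~\ref{TMNA}(iii),(iv). Thus the product takes only rational values, yielding the claim.

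The remaining four assertions all follow from the standard device of splitting a pair $\{\chi^+,\chi^-\}$ of Galois-conjugate characters into its symmetric and antisymmetric parts $\beta:=\chi^++\chi^-$ and $\alpha:=\chi^+-\chi^-$. Inspection of Theorems~\ref{TIrrS} and~\ref{TIrrA} shows that $\beta$ is rational-valued (it is the restriction of a rational character of $\TS_m$ or $\TS_n$, or of its inflation from $\TA$ to $\TS$), while $\alpha$ vanishes on every class except the distinguished class $\TiC^\mu$ (respectively $\TiC^\nu$), where it takes values $\pm\sqrt{d}$ with $d$ the precise discriminant appearing in Corollary~\ref{CCharVal}. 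For instance, in the case $(\mu,\nu)\in\DP_\0(m)\times \DP_\0(n)$, Theorem~\ref{TMNA}(i) rewrites as
\[
\psi^{\mu,\nu}_\pm(xy)\;=\;\tfrac12\bigl(\beta_\mu(x)\beta_\nu(y)\pm \alpha_\mu(x)\alpha_\nu(y)\bigr)
\]
on $\TA_m\times\TA_n$; the first term is rational, while the second vanishes except at $(\ts^\mu,\ts^\nu)$ where it equals $\pm\sqrt{ab}$ with $ab=(-1)^{(m+n-\ell(\mu)-\ell(\nu))/2}z_\mu z_\nu$, yielding exactly the claimed field. The subcase $(\mu,\nu)\in\DP_\1(m)\times\DP_\1(n)$ for $\psi^{\mu,\nu}_\pm$ is similar but uses the ``otherwise'' branch of Theorem~\ref{TMNA}(ii), producing an extra factor of $i$ which, combined with $\sqrt{cd}$ (where $c,d$ are the discriminants of $\phi^\mu_\pm,\phi^\nu_\pm$ from Corollary~\ref{CCharVal}(ii)), collapses to $\sqrt{(-1)^{(m+n-\ell(\mu)-\ell(\nu))/2}z_\mu z_\nu}$ after bringing the $i$ under the radical. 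The two subcases of part~(ii) (for $\phi^{\mu,\nu}_\pm$) are handled identically using Theorem~\ref{TMNS}(iii),(iv) and the fact that $\phi^\nu_\pm$ vanishes on $\TA_n$ for $\nu\in\DP_\1(n)$, while $\psi^\mu_\pm$ vanishes off $\TA_m$ for $\mu\in\DP_\0(m)$; each time the $\alpha_\mu\alpha_\nu$ term contributes precisely $\pm\sqrt{(-1)^{(m+n-\ell(\mu)-\ell(\nu)+1)/2}z_\mu z_\nu/2}$.

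There is no conceptual obstacle; the only work is the bookkeeping of the various factors of $1/2$, powers of $i$, and parity constraints $m-\ell(\mu)\equiv 0$ or $1\pmod 2$, which must be juggled carefully in each of the four subcases to confirm that the radicand produced by $\alpha_\mu\alpha_\nu$ (or $i\,\alpha_\mu\alpha_\nu$) agrees exactly with the one asserted and that this radical is not swallowed by $\Q$. Each case amounts to a short explicit calculation, and together they complete the proof.
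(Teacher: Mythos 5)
Your proof is correct and takes essentially the same approach as the paper's (which is a one-sentence ``this follows from Theorems \ref{TIrrS}, \ref{TIrrA}, \ref{TMNS}, \ref{TMNA} upon noticing that the powers of $2$ are integers and the inner products $[Q_\lambda,p_\mu]$ are rational''). Your $\alpha=\chi^+-\chi^-$, $\beta=\chi^++\chi^-$ device is a clean way to organize the bookkeeping that the paper leaves implicit, and the verification of the radicand in each subcase proceeds exactly as you describe.
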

\begin{proof}
This follows from Theorems~\ref{TIrrS},\,\ref{TIrrA},\,\ref{TMNS},\,\ref{TMNA}, upon noticing that the powers of $2$ appearing in the character values in Theorems~\ref{TIrrS},\,\ref{TIrrA} are integers and inner products $[Q_\lambda,p_\mu]$ appearing  in Theorems~\ref{TIrrS},\,\ref{TIrrA} belong to $\Q$. 
\end{proof}

\begin{cor} \label{CMNCharVal}
Let $\lambda\in\DP(m+n)$. Suppose that $\lambda=\mu\sqcup\nu$ for $\mu\in\DP(m)$ and $\nu\in\DP(n)$. 
\begin{enumerate}
\item[{\rm (i)}] If $\lambda\in\DP_\0(m+n)$ then $(\mu,\nu)\in\DP_\0(m,n)$, 
$$\Q(\phi^\lambda)=\Q(\phi^{\mu,\nu})\quad \text{and}\quad  
\Q(\psi^\lambda_+)=
\Q(\psi^\lambda_-)=\Q(\psi^{\mu,\nu}_+)=\Q(\psi^{\mu,\nu}_-).
$$
\item[{\rm (ii)}] If $\lambda\in\DP_\1(m+n)$ then $(\mu,\nu)\in\DP_\1(m,n)$, 
$$
\Q(\phi^\lambda_+)=
\Q(\phi^\lambda_-)=\Q(\phi^{\mu,\nu}_+)=\Q(\phi^{\mu,\nu}_-)
\quad \text{and}\quad  
\Q(\psi^\lambda)=\Q(\psi^{\mu,\nu}).
$$
\end{enumerate}
\end{cor}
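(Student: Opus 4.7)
The plan is to deduce Corollary \ref{CMNCharVal} as a direct arithmetic comparison of the explicit fields of character values given in Corollaries \ref{CCharVal} and \ref{CCharValMN}. The only ``combinatorial'' inputs one needs are two simple identities that hold whenever $\lambda=\mu\sqcup\nu$ with $\lambda\in\DP(m+n)$, $\mu\in\DP(m)$, $\nu\in\DP(n)$: namely $\ell(\lambda)=\ell(\mu)+\ell(\nu)$ (since $\lambda$ lists precisely the nonzero parts of $\mu$ together with those of $\nu$), and $z_\lambda=z_\mu z_\nu$ (recall that for a strict partition $\alpha$ one has $z_\alpha=\alpha_1\alpha_2\cdots\alpha_{\ell(\alpha)}$, and the parts involved are disjoint as multisets). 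Both identities are immediate from the definitions.

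The next step is to settle the sorting into $\DP_\0(m,n)$ versus $\DP_\1(m,n)$. Writing $m+n-\ell(\lambda)=(m-\ell(\mu))+(n-\ell(\nu))$, one sees that $\lambda$ is even if and only if $m-\ell(\mu)$ and $n-\ell(\nu)$ have the same parity, which by the definitions of $\DP_\0(m,n)$ and $\DP_\1(m,n)$ is exactly the statement that $(\mu,\nu)\in\DP_\0(m,n)$; similarly, $\lambda$ is odd if and only if $(\mu,\nu)\in\DP_\1(m,n)$. This justifies the first sentence in each of the two cases.

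Having placed $(\mu,\nu)$ in the correct set, I would simply substitute $\ell(\lambda)=\ell(\mu)+\ell(\nu)$ and $z_\lambda=z_\mu z_\nu$ into the formulas of Corollary \ref{CCharVal} and compare them to those of Corollary \ref{CCharValMN}. In case (i), Corollary \ref{CCharVal}(i) gives $\Q(\phi^\lambda)=\Q=\Q(\phi^{\mu,\nu})$ by Corollary \ref{CCharValMN}(i), and
\[
\Q(\psi^\lambda_\pm)=\Q\big(\sqrt{(-1)^{(m+n-\ell(\lambda))/2}z_\lambda}\big)=\Q\big(\sqrt{(-1)^{(m+n-\ell(\mu)-\ell(\nu))/2}z_\mu z_\nu}\big)=\Q(\psi^{\mu,\nu}_\pm),
\]
with the equalities $\Q(\psi^\lambda_+)=\Q(\psi^\lambda_-)$ and $\Q(\psi^{\mu,\nu}_+)=\Q(\psi^{\mu,\nu}_-)$ already built into the statements of \ref{CCharVal} and \ref{CCharValMN}. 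The analogous substitution handles case (ii), matching $\Q(\phi^\lambda_\pm)$ with $\Q(\phi^{\mu,\nu}_\pm)$ through the common expression $\Q\big(\sqrt{(-1)^{(m+n-\ell(\mu)-\ell(\nu)+1)/2}z_\mu z_\nu/2}\big)$ and giving $\Q(\psi^\lambda)=\Q=\Q(\psi^{\mu,\nu})$.

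There is no real obstacle here; the proof is a one-line bookkeeping exercise once one has both character-field corollaries in hand. The only point that warrants a sentence of comment in the write-up is the reduction of parity and of $z_\lambda$ via strictness of $\lambda$; everything else is pure substitution.
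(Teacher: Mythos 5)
Your proposal is correct and takes essentially the same approach as the paper: the paper's proof consists of the one-line observation that $\ell(\lambda)=\ell(\mu)+\ell(\nu)$ and $z_\lambda=z_\mu z_\nu$, followed by substitution into Corollaries~\ref{CCharVal} and~\ref{CCharValMN}. You have simply spelled out the parity bookkeeping (placing $(\mu,\nu)$ in $\DP_\0(m,n)$ or $\DP_\1(m,n)$) that the paper leaves implicit, which is a fair addition but not a different route.
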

\begin{proof}
As $\ell(\lambda)=\ell(\mu)+\ell(\nu)$ and $z_\lambda=z_\mu z_\nu$, this follows from Corollaries~\ref{CCharVal} and \ref{CMNCharVal}. 
\end{proof}

The following is a restatement of \cite[Theorems 8.1]{Stembridge}. In the case $\lambda=\mu\sqcup\nu$ in part (iv) we have used the additional character information coming from Theorems~\ref{TIrrS},\,\ref{TIrrA},\,\ref{TMNS} to resolve the uncertainty remaining in the exceptional case of \cite[Theorems 8.1]{Stembridge} (although this will not be needed in this paper). 

\begin{thm} \label{TBranchSpin}
Let\, $\mu\in\DP(m),\,\nu\in\DP(n)$ and\, $\lambda\in\DP(m+n)$. 
\begin{enumerate}
\item[{\rm (i)}] If $\lambda$ is even and $(\mu,\nu)$ is even then 
$$
[\phi^\lambda{\downarrow}_{\TS_{m,n}}:\phi^{\mu,\nu}]=2^{(\ell(\mu)+\ell(\nu)-\ell(\lambda))/2}f^\lambda_{\mu,\nu}.
$$

\item[{\rm (ii)}]  If $\lambda$ is even and $(\mu,\nu)$ is odd then 
$$
[\phi^\lambda{\downarrow}_{\TS_{m,n}}:\phi^{\mu,\nu}_\pm]=2^{(\ell(\mu)+\ell(\nu)-\ell(\lambda)-1)/2}f^\lambda_{\mu,\nu}.
$$

\item[{\rm (iii)}] If $\lambda$ is odd and $(\mu,\nu)$ is even then 
$$
[\phi^\lambda_\pm{\downarrow}_{\TS_{m,n}}:\phi^{\mu,\nu}]=2^{(\ell(\mu)+\ell(\nu)-\ell(\lambda)-1)/2}f^\lambda_{\mu,\nu}.
$$

\item[{\rm (iv)}] If $\lambda$ is odd and $(\mu,\nu)$ is odd then 
$$
[\phi^\lambda_\pm{\downarrow}_{\TS_{m,n}}:\phi^{\mu,\nu}_\mp]=
[\phi^\lambda_\pm{\downarrow}_{\TS_{m,n}}:\phi^{\mu,\nu}_\pm]=2^{(\ell(\mu)+\ell(\nu)-\ell(\lambda)-2)/2}f^\lambda_{\mu,\nu}.
$$
unless $\lambda=\mu\sqcup\nu$, in which case 
$$
[\phi^{\mu\sqcup\nu}_\pm{\downarrow}_{\TS_{m,n}}:\phi^{\mu,\nu}_\mp]=0\quad \text{and}\quad 
[\phi^{\mu\sqcup\nu}_\pm{\downarrow}_{\TS_{m,n}}:\phi^{\mu,\nu}_\pm]=1.$$
\end{enumerate}
\end{thm}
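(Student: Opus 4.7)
My plan is to deduce cases (i)--(iii) and the baseline part of case (iv) from Stembridge's Theorem 8.1 (essentially built into the definition of $f^\lambda_{\mu,\nu}$ via the product formula \eqref{EF}), and then work out the extra information in (iv) when $\lambda=\mu\sqcup\nu$ by a direct calculation on the split conjugacy classes. Concretely, Stembridge proves that
$$
\phi^\lambda{\downarrow}_{\TS_{m,n}} \text{ (or } \phi^\lambda_\pm{\downarrow}_{\TS_{m,n}} \text{)}
$$
has total multiplicity on the pair $\{\phi^{\mu,\nu}_+,\phi^{\mu,\nu}_-\}$ (when both are spin) governed precisely by the $Q$-function coefficient $f^\lambda_{\mu,\nu}$ appearing in \eqref{EF}, with the powers of $2$ in (i)--(iv) encoding the various $\ZZ/2$-twists that occur when moving between $\TS$-type and $\TA$-type conjugacy classes. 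For each non-exceptional configuration, a character theoretic inner product computation over the classes labelled by odd partitions of $m+n$ produces exactly the claimed multiplicities; this is the content of \cite[Thm.~8.1]{Stembridge}.

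The only thing to verify beyond that is the sign in the exceptional subcase of (iv): both $\lambda$ and $(\mu,\nu)$ are odd and $\lambda=\mu\sqcup\nu$. Here Lemma~\ref{LQFun} gives $f^{\mu\sqcup\nu}_{\mu,\nu}=1$ and $\ell(\lambda)=\ell(\mu)+\ell(\nu)$, so Stembridge's formula would give fractional value $2^{-1}$ for each of $[\phi^\lambda_\pm{\downarrow}:\phi^{\mu,\nu}_+]$ and $[\phi^\lambda_\pm{\downarrow}:\phi^{\mu,\nu}_-]$, which is the telltale sign that the split classes contribute. I would compute the true multiplicities as
$$
[\phi^\lambda_\epsilon{\downarrow}_{\TS_{m,n}},\phi^{\mu,\nu}_\eta]
= \frac{1}{|\TS_{m,n}|}\sum_{g\in\TS_{m,n}}\phi^\lambda_\epsilon(g)\,\overline{\phi^{\mu,\nu}_\eta(g)}
$$
by partitioning the sum over $\TS_{m,n}$-classes into (a) those mapping to classes in $\sfS_{m,n}$ labelled by a pair $(\rho^{(1)},\rho^{(2)})$ with $\rho^{(i)}\in\OP$ different from $(\mu,\nu)$, and (b) the classes over the diagonal pair $(\mu,\nu)$ itself, which split further in $\TS_{m,n}$ because of the oddness assumption. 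On the classes in (a), the values of $\phi^\lambda_\epsilon$ and of $\phi^{\mu,\nu}_\eta$ are independent of the sign $\epsilon,\eta$ and are given by the $[Q_\bullet,p_\bullet]$ formulas of Theorems~\ref{TIrrS} and \ref{TMNS}; their contribution is exactly the $Q$-function inner product computation, producing $\tfrac12 f^\lambda_{\mu,\nu}=\tfrac12$. On the classes in (b), $\phi^\lambda_\epsilon$ evaluates on $\ts^\lambda$ to $\epsilon\cdot i^{(n-\ell(\lambda)+1)/2}\sqrt{z_\lambda/2}$ (Theorem~\ref{TIrrS}), while $\phi^{\mu,\nu}_\eta$ on the corresponding element $\ts^\mu\ts^\nu$ is computed from Theorem~\ref{TMNS}(iii)/(iv) by plugging in the analogous ``diagonal'' values of $\psi^\mu_\pm$ or $\phi^\mu_\pm$ from Theorems~\ref{TIrrS}, \ref{TIrrA}.

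The crux of the computation will be to show that this diagonal contribution equals $\epsilon\eta/2$; combined with the $\tfrac12$ from step (a), this gives $1$ when $\epsilon=\eta$ and $0$ when $\epsilon\neq\eta$, matching the statement (modulo a choice of labelling for the signs on $\phi^{\mu,\nu}_\pm$, which we are free to fix). The main obstacle I anticipate is bookkeeping: correctly identifying the $\TS_{m+n}$-class that contains $\ts^\mu\ts^\nu\in\TS_{m,n}$ (it is the class of $\ts^{\mu\sqcup\nu}$ up to a central factor, but the central twist has to be tracked), tracking the orders of centralizers $z_\mu z_\nu$ versus $z_\lambda$ which happen to match because $\lambda=\mu\sqcup\nu$ is strict, and finally aligning the four square-root/Galois factors $i^{(m-\ell(\mu)+1)/2}$, $i^{(n-\ell(\nu)+1)/2}$, $\sqrt{z_\mu/2}$, $\sqrt{z_\nu/2}$ with the corresponding factor for $\lambda$ to see that the product is real and equal to $\epsilon\eta\, z_\lambda/2$ before dividing by the centralizer order, yielding the desired $\epsilon\eta/2$.
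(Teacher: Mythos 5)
Your approach matches the paper's: cases (i)--(iii) and the non-exceptional part of (iv) are a restatement of Stembridge's Theorem~8.1, and the exceptional sign-splitting in (iv) when $\lambda=\mu\sqcup\nu$ is resolved by feeding the explicit character values from Theorems~\ref{TIrrS}, \ref{TIrrA}, \ref{TMNS} into the inner-product formula on the split conjugacy classes. The paper itself gives no more detail than a one-sentence remark, so your outline of the diagonal-versus-off-diagonal decomposition (off-diagonal giving $\tfrac12 f^\lambda_{\mu,\nu}=\tfrac12$, diagonal giving $\epsilon\eta/2$) is, if anything, more explicit than the paper's argument. The one place you are slightly looser than the statement demands is the hedge ``modulo a choice of labelling for the signs on $\phi^{\mu,\nu}_\pm$, which we are free to fix'': the theorem asserts which sign pairs with which under the conventions already fixed by Theorems~\ref{TIrrS}--\ref{TMNS}, so that is a fact to be verified by tracking the $i^{(\cdot)}\sqrt{z_\bullet/\cdot}$ factors you list in your bookkeeping, not a normalization you can choose. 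Since the paper explicitly flags that the resolution of this sign ``will not be needed in this paper,'' this gap is peripheral, but a complete proof of the exceptional subcase of (iv) would have to carry out that sign-tracking rather than postulate it away.
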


\begin{cor} \label{CABranch}
Let $\mu\in\DP(m)$, $\nu\in\DP(n)$. If $\lambda:=\mu\sqcup \nu$ is a strict partition then one of the following happens:
\begin{enumerate}
\item[{\rm (i)}] $\lambda$ and $(\mu,\nu)$ are odd and $[\psi^\lambda{\downarrow}_{\TA_{m,n}}:\psi^{\mu,\nu}]=1$; 
\item[{\rm (ii)}] $\lambda$ and $(\mu,\nu)$ are even and 
either\,\, $[\psi^\lambda_\pm{\downarrow}_{\TA_{m,n}}:\psi^{\mu,\nu}_\pm]=1$\, or\,\, $[\psi^\lambda_\pm{\downarrow}_{\TA_{m,n}}:\psi^{\mu,\nu}_\mp]=1$. 
\end{enumerate}
\end{cor}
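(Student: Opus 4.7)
The strategy is to combine Theorem \ref{TBranchSpin}, applied to $\lambda=\mu\sqcup\nu$, with Lemma \ref{LQFun} and the last sentences of Theorems \ref{TIrrA} and \ref{TMNA} describing how characters of $\TS_n$ (resp.\ $\TS_{m,n}$) split, or do not split, upon restriction to $\TA_n$ (resp.\ $\TA_{m,n}$). Since $\lambda=\mu\sqcup\nu$ we have $\ell(\lambda)=\ell(\mu)+\ell(\nu)$ and $(m+n)-\ell(\lambda)=(m-\ell(\mu))+(n-\ell(\nu))$, so $\lambda$ is even exactly when $(\mu,\nu)\in\DP_\0(m,n)$ and $\lambda$ is odd exactly when $(\mu,\nu)\in\DP_\1(m,n)$. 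Also Lemma \ref{LQFun} gives $f^\lambda_{\mu,\nu}=1$.

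First I would treat case (i), where $\lambda$ and $(\mu,\nu)$ are odd. By the exceptional clause of Theorem \ref{TBranchSpin}(iv), for a fixed sign $\varepsilon\in\{+,-\}$,
\[
[\phi^\lambda_\varepsilon{\downarrow}_{\TS_{m,n}}:\phi^{\mu,\nu}_\varepsilon]=1,\qquad
[\phi^\lambda_\varepsilon{\downarrow}_{\TS_{m,n}}:\phi^{\mu,\nu}_{-\varepsilon}]=0.
\]
Since $\lambda$ and $(\mu,\nu)$ are odd, Theorems \ref{TIrrA} and \ref{TMNA} give $\phi^\lambda_\varepsilon{\downarrow}_{\TA_n}=\psi^\lambda$ and $\phi^{\mu,\nu}_\varepsilon{\downarrow}_{\TA_{m,n}}=\psi^{\mu,\nu}$. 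Restricting in two stages $\TS_n\to\TS_{m,n}\to\TA_{m,n}$ versus $\TS_n\to\TA_n\to\TA_{m,n}$ and noting that, for any $(\mu',\nu')\in\DP_\1(m,n)$ distinct from $(\mu,\nu)$, the character $\psi^{\mu',\nu'}$ is distinct from $\psi^{\mu,\nu}$, we obtain $[\psi^\lambda{\downarrow}_{\TA_{m,n}}:\psi^{\mu,\nu}]=1+0=1$.

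For case (ii), where $\lambda$ and $(\mu,\nu)$ are even, Theorem \ref{TBranchSpin}(i) with $\ell(\lambda)=\ell(\mu)+\ell(\nu)$ and $f^\lambda_{\mu,\nu}=1$ yields
\[
[\phi^\lambda{\downarrow}_{\TS_{m,n}}:\phi^{\mu,\nu}]=1.
\]
Using the last sentences of Theorems \ref{TIrrA} and \ref{TMNA}, namely $\phi^\lambda{\downarrow}_{\TA_n}=\psi^\lambda_+ + \psi^\lambda_-$ and $\phi^{\mu,\nu}{\downarrow}_{\TA_{m,n}}=\psi^{\mu,\nu}_+ + \psi^{\mu,\nu}_-$, and the fact that distinct $\TS_{m,n}$-constituents of $\phi^\lambda{\downarrow}_{\TS_{m,n}}$ restrict to disjoint sets of $\TA_{m,n}$-irreducibles in the families $\{\psi^{\mu',\nu'}_\pm\}$, restriction in two stages gives
\[
[\psi^\lambda_+{\downarrow}_{\TA_{m,n}}:\psi^{\mu,\nu}_\eta]+[\psi^\lambda_-{\downarrow}_{\TA_{m,n}}:\psi^{\mu,\nu}_\eta]=1\qquad(\eta\in\{+,-\}).
\]
The remaining step, which is the main point of case (ii), is to rule out that both $\psi^\lambda_+{\downarrow}_{\TA_{m,n}}$ and $\psi^\lambda_-{\downarrow}_{\TA_{m,n}}$ contain the same $\psi^{\mu,\nu}_\eta$.

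To handle this, I would pick an element $g\in\TS_{m,n}\setminus\TA_{m,n}$ (for instance a lift of an odd element of $\SSS_m$ or $\SSS_n$); then $g\in\TS_n\setminus\TA_n$ as well, and conjugation by $g$ interchanges $\psi^\lambda_+\leftrightarrow\psi^\lambda_-$ and $\psi^{\mu,\nu}_+\leftrightarrow\psi^{\mu,\nu}_-$, since in both cases these are the two conjugates of a single character of the ambient group. This symmetry gives $[\psi^\lambda_+{\downarrow}:\psi^{\mu,\nu}_\eta]=[\psi^\lambda_-{\downarrow}:\psi^{\mu,\nu}_{-\eta}]$. Combining with the sum-to-one relation forces $\psi^\lambda_+{\downarrow}_{\TA_{m,n}}$ to contain exactly one of $\psi^{\mu,\nu}_+,\psi^{\mu,\nu}_-$ with multiplicity one (and the other with multiplicity zero), which is exactly the alternative in (ii). The main obstacle is this last bookkeeping step: one must verify that $g$ really lies outside $\TA_{m,n}$ and outside $\TA_n$ simultaneously (any lift of a transposition in $\SSS_m$ embedded into $\SSS_{m,n}$ will do once $m\ge 2$; the case $m=1$ or $n=1$ is either trivial or handled symmetrically).
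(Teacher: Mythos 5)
Your proof is correct and follows essentially the same route as the paper's: compare parities and lengths via $\lambda=\mu\sqcup\nu$, feed $f^\lambda_{\mu,\nu}=1$ from Lemma \ref{LQFun} into Theorem \ref{TBranchSpin}, and transfer from $\phi$- to $\psi$-characters using the last sentences of Theorems \ref{TIrrA} and \ref{TMNA}. The only difference is that you make explicit the final bookkeeping in case (ii): the paper passes directly from $[(\psi^\lambda_++\psi^\lambda_-)\downarrow_{\TA_{m,n}}:\psi^{\mu,\nu}_\pm]=1$ to the alternative in (ii), whereas you record the conjugation-by-$g\in\TS_{m,n}\setminus\TA_{m,n}$ symmetry (which swaps $\psi^\lambda_\pm$ and $\psi^{\mu,\nu}_\pm$ simultaneously) needed to exclude the a priori conceivable scenario where one of $\psi^\lambda_\pm\downarrow$ contains both $\psi^{\mu,\nu}_+$ and $\psi^{\mu,\nu}_-$. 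That extra step is correct and a genuine improvement in clarity; the existence of such $g$ is indeed automatic here since $\lambda$ strict of length $\geq 2$ forces $\max(m,n)\geq 2$.
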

\begin{proof}
Note that $(\mu,\nu)$ and $\lambda$ have the same parity. Now if they are both odd then $\phi^{\lambda}_\pm{\downarrow}_{\TA_{m+n}}=\psi^\lambda$ and $\phi^{\mu,\nu}_\pm{\downarrow}_{\TA_{m,n}}=\psi^{\mu,\nu}$, so by Theorem~\ref{TBranchSpin}(iv), we are in the case (i). If $(\mu,\nu)$ and $\lambda$ are both even, then $\phi^{\lambda}{\downarrow}_{\TA_{m+n}}=\psi^\lambda_++\psi^\lambda_-$ and $\phi^{\mu,\nu}{\downarrow}_{\TA_{m,n}}=\psi^{\mu,\nu}_++\psi^{\mu,\nu}_-$. Moreover, we have $f^\lambda_{\mu,\nu}=1$ by Lemma~\ref{LQFun}. Since $\ell(\lambda)=\ell(\mu)+\ell(\nu)$, it now follows from Theorem~\ref{TBranchSpin}(i) that $[\psi^\lambda_++\psi^\lambda_-){\downarrow}_{\TA_{m,n}}:\psi^{\mu,\nu}_\pm]=1$, and we are in the case (ii). 
\end{proof}

\begin{rem} 
{\rm 
Looking more carefully at the character values, one can resolve the uncertainty in part (ii) of Corollary~\ref{CABranch} and show that this case splits into two subcases:
\begin{enumerate}
\item[{\rm (iia)}] $\lambda,\,\mu,\, \nu$ are even, and\,  $[\psi^\lambda_\pm{\downarrow}_{\TA_{m,n}}:\psi^{\mu,\nu}_\pm]=1$. 
\item[{\rm (iib)}] $\lambda$ is even, $\mu,\,\nu$ are odd, and \,
$[\psi^\lambda_\pm{\downarrow}_{\TA_{m,n}}:\psi^{\mu,\nu}_\mp]=1$. 
\end{enumerate}
We are not going to need this information. 
}
\end{rem}

\begin{thm} 
\label{spin}
Let $\chi\in\Irr^-(\TA_n)$. If $\chi$ is not a basic spin character then there exist an integer\, $1\leq m<n$ and an irreducible character\, $\eta\in\Irr^-(\TA_{n-m,m})$ such that:
\begin{enumerate}
\item[{\rm (i)}] $m\neq n/2$; in particular the subgroup $\TA_{n-m,m}<\TS_n$ is self-normalizing; 
\item[{\rm (ii)}] $[\chi{\downarrow}_{\TA_{n-m,m}}:\eta]=1$;
\item[{\rm (iii)}] $\Q(\chi)=\Q(\eta)$;
\item[{\rm (iv)}] One of the following two conditions holds:
\begin{enumerate}
\item[{\rm (a)}] $\Q(\chi)=\Q$, $\chi$ extends to $\TS_n$, and $\eta$ extends to $\TS_{n-m,m}$;
\item[{\rm (b)}] $[\Q(\chi):\Q]=2$, 
$\chi^{\c G}=\chi^{\TS_{n-m,m}}$, $\eta^{\c G}=\eta^{\TS_{n-m,m}}$, $\chi$ does not extend to $\TS_n$, and $\eta$ does not extend to $\TS_{n-m,m}$. 
\end{enumerate}
\end{enumerate} 
In particular, $(\chi,\eta)$ satisfies the inductive Feit condition if $n \neq 6$.
\end{thm}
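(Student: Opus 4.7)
The plan is to associate $\chi$ with its labeling strict partition $\lambda \in \DP(n)$ coming from Theorem \ref{TIrrA}, and to exploit the hypothesis that $\chi$ is not a basic spin character (cf.\ Remark \ref{RBasicSpin}) to conclude $\ell(\lambda) \geq 2$. I then set $m := \lambda_{\ell(\lambda)}$, $\nu := (m)$, and $\mu := (\lambda_1, \ldots, \lambda_{\ell(\lambda)-1})$, so that $\lambda = \mu \sqcup \nu$ is a strict decomposition with $\mu \in \DP(n-m)$ and $\nu \in \DP(m)$. Strictness of $\lambda$ forces $\lambda_1 + \cdots + \lambda_{\ell(\lambda)-1} > \lambda_{\ell(\lambda)}$, yielding $m < n-m$ and in particular $m \neq n/2$. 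This secures (i): $\sfS_{n-m,m}$ is self-normalizing in $\sfS_n$ when $m \neq n-m$, $\sfA_{n-m,m}$ is characteristic in $\sfS_{n-m,m}$, and hence $\TA_{n-m,m}$ is self-normalizing in $\TA_n$, and intravariant there since the automorphisms of $\TA_n$ are induced by $\TS_n$ for $n \neq 6$.

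Having fixed this split, I apply Corollary \ref{CABranch} to the strict decomposition $\lambda = \mu \sqcup \nu$, which produces an $\eta \in \Irr^-(\TA_{n-m,m})$ occurring in $\chi{\downarrow}_{\TA_{n-m,m}}$ with multiplicity one, giving (ii). Concretely, $\eta = \psi^{\mu,\nu}$ when $\lambda$ is odd, and $\eta \in \{\psi^{\mu,\nu}_+, \psi^{\mu,\nu}_-\}$ with the sign determined by the corollary when $\lambda$ is even. Corollary \ref{CMNCharVal} then immediately delivers $\Q(\chi) = \Q(\eta)$, which is (iii).

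For (iv) I split by the parity of $\lambda$. In the odd case, $\chi = \psi^\lambda$ and $\eta = \psi^{\mu,\nu}$ are both rational by Corollaries \ref{CCharVal}(ii) and \ref{CCharValMN}(ii); Theorems \ref{TIrrA} and \ref{TMNA} identify $\phi^\lambda_\pm$ and $\phi^{\mu,\nu}_\pm$ as their respective extensions to $\TS_n$ and $\TS_{n-m,m}$, proving (a). In the even case, $\chi = \psi^\lambda_\pm$ and $\eta = \psi^{\mu,\nu}_\pm$ share the quadratic field $\Q(\sqrt{(-1)^{(n-\ell(\lambda))/2} z_\lambda})$ (using $z_\lambda = z_\mu z_\nu$); Theorems \ref{TIrrA} and \ref{TMNA} show neither extends, since $\phi^\lambda{\downarrow}_{\TA_n}$ and $\phi^{\mu,\nu}{\downarrow}_{\TA_{n-m,m}}$ each split into two distinct irreducibles; and since both complex conjugation (the nontrivial element of $\Gal(\Q(\chi)/\Q)$) and any element of $\TS_n \setminus \TA_n$ interchange $\psi^\lambda_+$ with $\psi^\lambda_-$ (and analogously for $\eta$ under $\TS_{n-m,m} \setminus \TA_{n-m,m}$), one obtains $\chi^{\c G} = \chi^{\TS_n}$ and $\eta^{\c G} = \eta^{\TS_{n-m,m}}$, yielding (b).

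The final assertion follows by combining the above: properties (i) and (ii) of Definition \ref{inductive} hold thanks to part (i) of the theorem and thanks to Lemma \ref{easy2} (variant (i) in case (a), variant (iii) in case (b)), which yield $(\Gamma \times \c G)_\chi = (\Gamma \times \c G)_\eta$; since $[\chi{\downarrow}_U : \eta] = 1$, Theorem \ref{easysituations}(a) then delivers the inductive Feit condition. The main obstacle is the even case (b): one has to check that the radicand $(-1)^{(n-\ell(\lambda))/2} z_\lambda$ is genuinely not a perfect square, so that the field is truly quadratic and Galois swaps $\psi^\lambda_\pm$. For exceptional $\lambda$ (the smallest being $(6,3,2)$) where the radicand is a square, $\chi$ stays rational even though it does not extend to $\TS_n$; in such sub-cases the argument should bypass (iv)(b) and invoke Theorem \ref{easysituations}(b) directly, using $\Aut(\TA_n)_\chi = \Inn(\TA_n)$ and the fact that $\chi$ and $\eta$ both lie over the nontrivial character of $\ZB(\TA_n) = \langle \tz \rangle$.
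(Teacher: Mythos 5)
Your proof follows the paper's route — decompose $\lambda=\mu\sqcup\nu$ as a disjoint union of strict partitions, use Corollary~\ref{CABranch} for multiplicity one and Corollary~\ref{CMNCharVal} for the field equality, then split by the parity of $\lambda$ — and your concrete choice $m=\lambda_{\ell(\lambda)}$, $\nu=(m)$ is a valid instance of the paper's ``we can always write''; strictness indeed forces $n-m>m$, so $m\neq n/2$ automatically, and the odd and generic even cases run exactly as in the paper. What you flag at the end, though, is a genuine error in the published statement (iv) and its proof. The proof asserts that in the even case $[\Q(\chi):\Q]=2$, citing Corollary~\ref{CCharVal}; but that very corollary gives $\Q(\psi^\lambda_\pm)=\Q\bigl(\sqrt{(-1)^{(n-\ell(\lambda))/2}z_\lambda}\bigr)$, which collapses to $\Q$ whenever $(n-\ell(\lambda))/2$ is even and $z_\lambda=\lambda_1\cdots\lambda_{\ell(\lambda)}$ is a perfect square. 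You give $\lambda=(6,3,2)$ at $n=11$; smaller instances are $\lambda=(9,1)$ and $\lambda=(8,2)$ at $n=10$. For such $\lambda$, $\chi=\psi^\lambda_\pm$ is rational yet does not extend to $\TS_n$ (Theorem~\ref{TIrrA}: $\phi^\lambda{\downarrow}_{\TA_n}=\psi^\lambda_++\psi^\lambda_-$), so neither alternative (a) nor (b) of (iv) holds — and this failure is independent of the choice of $m$, since neither $\Q(\chi)$ nor extendability depends on the split.

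Your repair is correct, and the ``in particular'' clause of the theorem (hence Theorem~C) survives. In the exceptional sub-case $\chi$ and $\eta$ are both rational, both lie over the faithful character of $\zent X=\langle\tz\rangle$, and $\Gamma_\chi=\Gamma_\eta$ equals the inner automorphisms coming from $U$, since any odd coset representative of $\TS_{n-m,m}$ swaps $\psi^\lambda_+$ with $\psi^\lambda_-$ and likewise $\psi^{\mu,\nu}_+$ with $\psi^{\mu,\nu}_-$. So Lemma~\ref{easy2}(i) yields $(\Gamma\times\c G)_\chi=(\Gamma\times\c G)_\eta$, after which Theorem~\ref{easysituations}(a) (using your multiplicity-one constituent) is in fact the simplest way to close, though Theorem~\ref{easysituations}(b) works too as you note. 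The cleanest repair to the statement itself would be to add a third alternative in (iv): $\Q(\chi)=\Q(\eta)=\Q$, $\chi$ does not extend to $\TS_n$, $\eta$ does not extend to $\TS_{n-m,m}$, and $\Gamma_\chi=\Gamma_\eta$.
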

\begin{proof}
Let $\chi=\psi^\lambda_{(\pm)}$ for some $\lambda\in\DP(n)$, see Theorem~\ref{TIrrA}. In view of Remark~\ref{RBasicSpin}, we may assume that $\lambda\neq (n)$. 
In this case we can always write $\lambda=\mu\sqcup\nu$ for $\mu\in\DP(m)$ and $\nu\in\Par(n-m)$ such that $1<m<n$ and $m\neq n/2$. Note that $\lambda$ is even if and only if $(\mu,\nu)$ is even. 

Suppose first that $(\mu,\nu)$ is odd, so $\lambda$ is also odd. 
In this case $\chi=\psi^\lambda$. 
We take $\eta=\psi^{\mu,\nu}$. Then 
the condition (ii) follows from Corollary~\ref{CABranch}(i). For the conditions (iii) and (iv) it remains to note by Corollaries~\ref{CCharValMN},\ \ref{CMNCharVal} that $\Q(\chi)=\Q(\eta)=\Q$, and by Theorems~\ref{TIrrA},\,\ref{TMNA} that $\psi^\lambda$ extends to $\phi^\lambda$ and $\psi^{\mu,\nu}$ extends to $\phi^{\mu,\nu}$. 

Now let $(\mu,\nu)$ be even, so $\lambda$ is also even. 
In this case $\chi$ is of the form $\psi^\lambda_\pm$. 
Choose $\eta=\psi^{\mu,\nu}_+$ or $\psi^{\mu,\nu}_-$ appropriately so that $[\chi{\downarrow}_{\TA_{n-m,m}}:\eta]=1$ by Corollary~\ref{CABranch}(ii). This yields the condition 
(ii). The condition (iii) comes from Corollary~\ref{CMNCharVal}. 
For the condition (iv), note by Corollaries~\ref{CCharVal},\,\ref{CCharValMN} that $[\Q(\chi):\Q]=2$, $\chi^{\c G}=\{\phi^\lambda_+,\phi^\lambda_-\}=\chi^{\TS_{n-m,m}}$, and $\eta^{\c G}=\{\psi^{\mu,\nu}_+,\psi^{\mu,\nu}_-\}=\eta^{\TS_{n-m,m}}$. Moreover, by Theorems~\ref{TIrrA},\,\ref{TMNA}, $\psi^\lambda_\pm$ does not extend to $\TS_n$ and $\psi^{\mu,\nu}_\pm$ does not extend to $\TS_{n-m,m}$. 

For the last statement we can now apply Lemma~\ref{easy2} and Theorem \ref{easysituations}(a).
\end{proof}

Finally, we handle the basic spin characters $\chi=\psi^{(n)}_{(\pm)}$ of $\TA_n$.
 
\begin{thm}\label{basic-spin}
For any $n \geq 5$, $n \neq 6$, the basic spin characters $\chi=\psi^{(n)}_{(\pm)}$ of $\TA_n$ satisfy the inductive Feit condition.
\end{thm}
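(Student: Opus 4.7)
The basic spin characters $\chi=\psi^{(n)}_{(\pm)}$ of $\TA_n$ correspond to the indecomposable strict partition $(n)$, which does not admit a decomposition $\mu\sqcup\nu$ with both parts non-empty strict partitions. Hence the Young-subgroup approach used in Theorem \ref{spin} via Theorem \ref{TBranchSpin} does not apply directly, and a different choice of $(U,\mu)$ is required. My plan is to split into cases according to the parity of $n$ and apply Theorem \ref{easysituations}, exploiting that $\Out(\AAA_n)=C_2$ for $n\geq 5$, $n\neq 6$.

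Suppose first that $n$ is even. Then $(n)\in\DP_\1(n)$, so $\chi=\psi^{(n)}$ is a rational, $\Aut(\TA_n)$-invariant character of degree $2^{(n-2)/2}$, extending in two ways to $\phi^{(n)}_\pm\in\Irr^-(\TS_n)$. I would invoke Theorem \ref{easysituations}(g) with $p=2$: the stabilizer condition (ii) of Definition \ref{inductive} reduces, via Lemma \ref{easy2}, to finding a rational and $\Gamma$-invariant irreducible constituent $\mu$ of $\chi_U$ with $[\chi_U,\mu]$ odd. A natural candidate is $U=N_{\TA_n}(P)$ for $P\in\Syl_2(\TA_n)$ (since basic spin characters have $2$-power degrees and are detected by $2$-local subgroups), or a suitable maximal subgroup of $\TA_n$ tailored to the basic spin structure; the existence of a constituent of odd multiplicity with the required rationality then follows from the explicit character values in Theorem \ref{TIrrA}.

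Now suppose $n$ is odd. Then $(n)\in\DP_\0(n)$, so $\chi=\psi^{(n)}_\pm$ are two characters fused both by $\SSS_n$ and by $\c G$, with $\Q(\chi)=\Q\bigl(\sqrt{(-1)^{(n-1)/2}n}\,\bigr)$; moreover $\Aut(\TA_n)_\chi=\Inn(\TA_n)$. Here I would apply Theorem \ref{easysituations}(b) together with Lemma \ref{easy2}(iii). The natural candidate is $U=N_{\TA_n}(\langle\ts^{(n)}\rangle)$, the normalizer of the cyclic subgroup generated by the spin lift of an $n$-cycle. For $n$ prime, $U$ is a central extension of the Frobenius group $C_n\rtimes C_{(n-1)/2}$ by $\langle\tz\rangle$, and the two spin characters $\mu_\pm\in\Irr^-(U)$ of degree $(n-1)/2$ induced from non-trivial linear characters of $\langle\ts^{(n)},\tz\rangle\cong C_{2n}$ have field of values $\Q(\mu_\pm)=\Q(\chi)$ by the classical Gauss sum identity, and are fused by $\c G$ and $\Gamma$ in parallel to $\chi_\pm$. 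Non-vanishing of $[\chi_U,\mu_+]$ then follows from a Frobenius-reciprocity computation using the explicit value $\chi(\ts^{(n)})=\pm i^{(n-1)/2}\sqrt{n}/2$ from Theorem \ref{TIrrA}. For $n$ odd composite, one replaces $\ts^{(n)}$ by a suitable element of $\TA_n$ with cyclic centralizer whose induced spin characters still carry the irrationality $\sqrt{\pm n}$.

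The hardest part is the odd-$n$ case, for two reasons. First, the group-theoretic structure of $U$ depends on the factorization of $n$, so a uniform construction across primes and composites requires replacing $\ts^{(n)}$ by an element whose centralizer controls the field $\Q(\sqrt{\pm n})$, forcing some case analysis. Second, the stabilizer condition (ii) of Definition \ref{inductive}, while guaranteed by Lemma \ref{easy2}(iii) once $\Q(\chi)=\Q(\mu)$ and both orbits $\{\chi_+,\chi_-\}$, $\{\mu_+,\mu_-\}$ coincide under $\c G$ and under $\Gamma$, must be verified by aligning the action of the outer automorphism of $\AAA_n$ (coming from an odd transposition in $\SSS_n$) with the non-trivial Galois involution of $\Q(\sqrt{\pm n})/\Q$ on both characters simultaneously, a check that rests on the sign of the Gauss sum and the parity of $(n-1)/2$ modulo $2$.
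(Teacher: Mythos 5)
Your treatment of the case $n=p$ an odd prime coincides with the paper's: $U = N_{\TA_n}(\langle c\rangle)$ for the lift $c$ of a $p$-cycle, two degree-$(p-1)/2$ constituents $\mu^\pm$ with $\QQ(\mu^\pm)=\QQ(\chi)$, and an appeal to Lemma \ref{easy2}(iii) and Theorem \ref{easysituations}(b). One small point: once you know $\QQ(\chi)=\QQ(\mu)$, $\Gamma_\chi=\Gamma_\mu$, and that both $\chi^\Gamma=\chi^{\c G}$ and $\mu^\Gamma=\mu^{\c G}$ have size two, Lemma \ref{easy2}(iii) gives $(\Gamma\times\c G)_\chi=(\Gamma\times\c G)_\mu$ automatically; you do not need to carefully align the Gauss-sum sign of $\chi$ with that of $\mu$ or track $(n-1)/2\bmod 2$.

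The composite cases, however, contain a genuine gap, and your plan as stated would not close it. For $n$ even (hence $n\geq 8$ composite) you propose the Sylow $2$-normalizer or ``a suitable maximal subgroup tailored to the basic spin structure'' and invoke Theorem \ref{easysituations}(g) with $p=2$, but you never actually produce a self-normalizing intravariant $U$ and a rational $\Gamma$-invariant constituent $\mu$ of $\chi_U$ with $[\chi_U,\mu]$ odd. For $n$ odd composite you propose a normalizer of a cyclic subgroup generated by an $n$-cycle; but for composite $n$ that normalizer need not be self-normalizing in $\TA_n$ (it is not a Sylow normalizer), so Definition \ref{inductive}(i) fails, and the multiplicity and field-of-values computations have not been done. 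The ingredient you are missing is exactly the one the paper uses: for $n=ab$ with $a\geq b\geq 2$, $(a,b)\neq(2,2)$, take $U$ to be the full preimage of the imprimitive maximal subgroup $(\SSS_a\wr\SSS_b)\cap\AAA_n$, and invoke Theorems 1 and 2 of Kleidman--Wales \cite{KW}, which say that the basic spin characters of $\TS_n$ (resp.\ $\TA_n$) restrict irreducibly to $N_{\TS_n}(U)$ (resp.\ $U$). This gives $\mu=\chi\downarrow_U\in\Irr(U)$ with $[\chi_U,\mu]=1$ at once, and the field-of-values and stabilizer comparisons then reduce to the $\pm$ splitting of $\phi^{(n)}$ over $\TA_n$ exactly as in the prime case, so that Theorem \ref{easysituations}(a) applies. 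Without the Kleidman--Wales irreducibility the composite case remains open in your write-up.
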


\begin{proof}
(a) First we consider the case $n = p$ is a prime. Let $c \in X:=\TA_n$ be the inverse image in $X$ of a $p$-cycle. Then 
$C:=\langle c \rangle$ is a Sylow $p$-subgroup of $X$, and so $U:=\NB_X(\langle c \rangle)$ satisfying Definition \ref{inductive}(i). 
Note that $U = (\ZB(X) \times C) \cdot C_{(p-1)/2}$, and $\NB_{\TS_n}(U) = (\ZB(X) \times C) \cdot C_{p}$.
Also, $\lambda=(n)$ is even, and 
$$\Q(\chi) = \Q(\chi(c)) = \Q\Bigl(\sqrt{(-1)^{(p-1)/2}p}\Bigr),~\phi^{(n)}\downarrow_{X}=\psi^{(n)}_+ + \psi^{(n)}_-,~\Q(\phi^{(n)})=\Q.$$
Furthermore, inducing faithful linear characters of $\ZB(X) \times C$ to $U$, we obtain two Galois-conjugate 
irreducible characters $\mu= \mu^+$ and $\mu^-$, both 
of degree $(p-1)/2$, with $\Q(\mu^\eps) = \Q(\chi)$ and $\mu^\eps$ agreeing with $\chi$ on $\ZB(X)$ for each $\eps=\pm$. Note that 
$\NB_{\TS_n}(U)$ fuses $\mu^+$ with $\mu^-$, so we have $\Gamma_\chi=\Gamma_\mu$. By Lemma \ref{easy2}(iii), 
$(\Gamma \times \c G)_\chi= (\Gamma \times \c G)_\mu$. Moreover, $\Aut(X)_\chi=\Inn(X)$ since $\TS_n$ fuses $\psi^{(n)}_+$ with 
$\psi^{(n)}_-$. Hence we are done by Theorem \ref{easysituations}(b).

\smallskip
(b) From now on we may assume that $n =ab$ with $a,b \in \Z_{\geq 2}$, $a \geq b$, and $(a,b) \neq (2,2)$. Then the full inverse image $U$ 
of $(\SSS_a \wr \SSS_b) \cap \AAA_n$ is a maximal subgroup of $X$ that satisfies Definition \ref{inductive}(i), and 
$\NB_{\TS_n}(U)$ is the full inverse image of $\SSS_a \wr \SSS_b$ in $\TS_n$.

Assume first that $n$ is odd. Then $\lambda=(n)$ is even, $U$ contains the inverse image $c$ in $\TA_n$ of an $n$-cycle, and 
$$\Q(\chi) = \Q(\chi(c)) = \Q\Bigl(\sqrt{(-1)^{(n-1)/2}n}\Bigr),~\phi^{(n)}\downarrow_{X}=\psi^{(n)}_+ + \psi^{(n)}_-,~\Q(\phi^{(n)})=\Q.$$
In particular, $\Aut(X)_\chi=\Inn(X)$. By Theorems 1 and 2 of \cite{KW}, $\phi^{(n)}$ restricts irreducibly to $\NB_{\TS_n}(U)$, and 
each restriction $\mu^\eps$ of $\psi^{(n)}_{\eps}$ to $U$, $\eps=\pm$, is also irreducible. It follows that $\NB_{\TS_n}(U)$ fuses 
$\mu=\mu^+$ with $\mu^-$, and hence $\Gamma_\chi=\Gamma_\mu$. Since $\mu=\chi\downarrow_U$ and $c \in U$, we also have
$$\Q(\chi)=\Q(\chi(c)) = \Q(\mu(c)) \subseteq \Q(\mu) \subseteq \Q(\chi),$$
and hence $\Q(\chi)=\Q(\mu)$. Now if $n$ is a square, then $\chi$ and $\mu$ are both rational. Suppose $n$ is not a square. Then 
$\Q(\chi)=\Q(\mu)$ has degree $2$ over $\Q$. Furthermore, $\c G$ fixes $\phi^{(n)}$ and so permutes the two constituents 
$\psi^{(n)}_+$ and $\psi^{(n)}_-$ of its restriction to $\TA_n$, showing that $\chi^{\c G}=\chi^\Gamma$. Restricting to $U$, we see
that $\c G$ also permutes the two constituents $\mu^+$ and $\mu^-$ of the restriction of $\phi^{(n)}$ to $\NB_{\TS_n}(U)$, showing 
that $\mu^{\c G}=\mu^\Gamma$. By Lemma \ref{easy2}(iii), $(\Gamma \times \c G)_\chi= (\Gamma \times \c G)_\mu$ in this case as well. 
Hence we are done by Theorem \ref{easysituations}(a).

Now assume that $n$ is even. Then $\lambda=(n)$ is odd, $\Q(\chi) = \Q$, and $\chi$ extends to the two characters 
$\phi^{(n)}_{\eps}$, $\eps = \pm$, of $\TS_n$.
By Theorems 1 and 2 of \cite{KW}, each $\phi^{(n)}_\eps$ restricts irreducibly to $\NB_{\TS_n}(U)$, and $\mu:=\chi\downarrow_U$ 
is also irreducible. It follows that $\Gamma_\chi=\Gamma_\mu$ and $\Q(\mu)=\Q$.
By Lemma \ref{easy2}(i) we have $(\Gamma \times \c G)_\chi= (\Gamma \times \c G)_\mu$, 
and hence we are done by Theorem \ref{easysituations}(a).
\end{proof}

Combining Theorems \ref{an-ord}, \ref{spin}, \ref{basic-spin} and Lemma \ref{69}, we arrive at Theorem C, which we restate

\begin{thm}\label{main-an}
For any $n \geq 5$, the simple group $\AAA_n$ satisfies the inductive Feit condition.
\end{thm}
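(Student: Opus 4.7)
The plan is to assemble Theorems \ref{an-ord}, \ref{spin}, \ref{basic-spin} and Lemma \ref{69} into a single case analysis on $n$ and on the type of character of the universal covering group $X$ of $\AAA_n$. There is essentially no new content beyond a careful accounting of which theorem applies to which character, since all the technical work has already been done in the preceding two sections.

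First I would dispose of the two exceptional values $n \in \{6, 7\}$. For these, the Schur multiplier of $\AAA_n$ has order $6$, so $X = 6 \AAA_n$, and for $n = 6$ one also has $|\Out(\AAA_6)| = 4$. These cases will be settled directly by Lemma \ref{69}, which proceeds by explicit \textsf{GAP} computations together with the ``easy'' criteria in Theorem \ref{easysituations} (notably parts (b), (d) and (g)) applied to matches $(U, \mu)$ coming from maximal subgroups or Sylow normalizers.

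For $n = 5$ or $n \geq 8$, the Schur multiplier of $\AAA_n$ has order $2$, so $X = \TA_n = 2\AAA_n$ and $|\ZB(X)| = 2$. Every $\chi \in \Irr(X)$ then lies in exactly one of $\Irr^+(X)$ or $\Irr^-(X)$, according to whether the central involution $\tz$ acts trivially or as $-\id$. The characters in $\Irr^+(X)$ descend to irreducible characters of the simple group $\AAA_n$, and Theorem \ref{an-ord} (which excludes only $n = 6$) verifies the inductive Feit condition for each of them. The characters in $\Irr^-(X)$ are, by Theorem \ref{TIrrA}, parametrized by strict partitions $\lambda \in \DP(n)$; I would split them according to whether $\lambda = (n)$ or not. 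The basic spin characters $\psi^{(n)}_{(\pm)}$ are treated in Theorem \ref{basic-spin}, and all other spin characters are treated in Theorem \ref{spin}.

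The hard part will not lie in this packaging step, which is essentially bookkeeping, but rather in the preparatory results. In particular, Theorem \ref{spin} rests on the branching analysis from $\TA_n$ to $\TA_{n-m,m}$ provided by Corollary \ref{CABranch}, combined with the comparison of character fields of $\chi$ and its target $\eta$ and of their extensions to the ambient $\TS$-groups via Lemma \ref{easy2} and Theorem \ref{easysituations}(a); the ordinary-character case (Theorem \ref{an-ord}) similarly required a separate treatment of symmetric hook partitions via wreath-product restrictions in Proposition \ref{res-wr}; and Lemma \ref{69} for $n \in \{6, 7\}$ required ad hoc computational arguments because of the non-generic Schur multiplier. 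With all of these in hand, Theorem \ref{main-an} then follows at once by the case distinction just described.
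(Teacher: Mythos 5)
Your proposal is correct and takes essentially the same route as the paper, whose entire proof of Theorem \ref{main-an} is the single sentence citing Theorems \ref{an-ord}, \ref{spin}, \ref{basic-spin} and Lemma \ref{69}. Your case analysis by Schur multiplier (reducing $n\in\{6,7\}$ to Lemma \ref{69}, and for the remaining $n$ splitting $\Irr(2\AAA_n)$ into $\Irr^+$ handled by Theorem \ref{an-ord} and $\Irr^-$ handled by Theorems \ref{spin} and \ref{basic-spin} according to whether $\lambda=(n)$) is exactly the bookkeeping the paper leaves implicit.
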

 
\section{Simple groups of Lie type and proof of Theorem D}
 
\begin{thm}\label{sl22}
The simple groups $S=\SL_2(q)$ with $q=2^n$ and $n \geq 2$ satisfy the inductive Feit condition.
\end{thm}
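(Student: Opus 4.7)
The plan is to treat each family of irreducible characters of $X = \SL_2(q)$ ($q = 2^n$, $n \geq 2$) separately and exhibit, for each, a self-normalizing intravariant subgroup $U$ and a character $\mu \in \Irr(U)$ satisfying Definition~\ref{inductive}. The case $q = 4$ follows from Theorem~C since $\SL_2(4)\cong\AAA_5$; so I may assume $q \geq 8$, whereupon $X$ has trivial Schur multiplier and is its own universal cover with $\ZB(X) = 1$, and $\Out(X) = \langle F\rangle$ is cyclic of order $n$ generated by the Frobenius. The irreducible characters are the trivial character, the Steinberg character $\St$ of degree $q$, the principal series $\chi_\lambda$ of degree $q+1$ parametrized by non-trivial pairs $\{\lambda,\lambda^{-1}\}\subset \widehat T$ (where $T\cong C_{q-1}$ is a split torus), and the discrete series $\chi_\theta$ of degree $q-1$ parametrized by non-trivial pairs $\{\theta,\theta^{-1}\}\subset \widehat{T_0}$ (where $T_0\cong C_{q+1}$ is a non-split torus). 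Three natural self-normalizing intravariant subgroups will play a role: the Borel $B = T\ltimes U_0$, the split-torus normalizer $N = N_X(T)$ (dihedral of order $2(q-1)$), and the non-split-torus normalizer $N_0 = N_X(T_0)$ (dihedral of order $2(q+1)$).

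For the trivial character, take $(U,\mu) = (B,1_B)$; for the Steinberg, take $(U,\mu) = (B,\psi_0)$, where $\psi_0$ is the unique irreducible character of $B$ of degree $q-1$. A direct computation gives $\St|_B = 1_B + \psi_0$ (multiplicity one), and both $\St$ and $\psi_0$ are rational-valued and $\Gamma$-fixed, so Theorem~\ref{easysituations}(a) applies. For the principal series $\chi_\lambda$, take $(U,\mu) = (N,\tilde\lambda)$ with $\tilde\lambda := \Ind_T^N(\lambda)$, a $2$-dimensional irreducible character of the dihedral group $N$. A character computation gives $[\chi_\lambda|_N,\tilde\lambda] = 2$ and shows that the $(\Gamma\times \c G)$-stabilizers of $\chi_\lambda$ and $\tilde\lambda$ coincide (both characterized by $\lambda^{\gamma\sigma}\in\{\lambda,\lambda^{-1}\}$), and both characters are real-valued. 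I would then apply Theorem~\ref{easysituations}(e) with $\Sigma = X\rtimes\langle F\rangle$: the quotients $\Sigma_\chi/X$ and $\Delta_\mu/U$ are canonically identified cyclic subgroups of $\langle F\rangle$, real-valued extensions of $\chi_\lambda$ and $\tilde\lambda$ to the respective $\OB^{2'}$-subgroups exist and are unique (since the relevant quotient over $X$ or $U$ is odd cyclic, and the only quadratic character of an odd cyclic group is trivial), the centralizer $\cent{\OB^{2'}(\Delta_\mu)}{X}$ is trivial (because $\ZB(X)=1$ and no non-trivial power of $F$ acts on $X$ as an inner automorphism), and the linear-character compatibility condition then holds vacuously (the relevant twist $\gamma$ is forced to be trivial by the uniqueness of the real extensions).

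For the discrete series $\chi_\theta$, take $(U,\mu) = (N_0,\tilde\theta)$ analogously with $\tilde\theta := \Ind_{T_0}^{N_0}(\theta)$. The stabilizer matching condition holds by the same argument, and the same application of Theorem~\ref{easysituations}(e) resolves the case: trivial centralizer, unique real extensions, and therefore vacuous linear-character compatibility. The subtle point, in contrast to the principal series, is that a direct computation yields $[\chi_\theta|_{N_0},\tilde\theta] = 0$: the involutions in $T_0 w_0$ are all unipotent in $X$, where $\chi_\theta$ takes the value $-1$, producing a cancellation in the inner product. This rules out Theorem~\ref{easysituations}(a), (g), but Theorem~\ref{easysituations}(e) does not require $\mu$ to be a constituent of $\chi|_U$, so the argument still goes through.

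The main obstacle will be the careful verification that Theorem~\ref{easysituations}(e) really applies in the discrete-series case, where the partner $\tilde\theta$ is not a constituent of $\chi_\theta|_{N_0}$ and so the usual multiplicity-based sufficient conditions are unavailable. The key facts making this work are the triviality of the centralizer $\cent{\OB^{2'}(\Delta_\mu)}{X}$ (forced by $\ZB(X)=1$ together with the outerness of non-trivial powers of the Frobenius) and the uniqueness of real-valued extensions to $\OB^{2'}$-subgroups whose quotient is odd cyclic; together these trivialize the centralizer and linear-character conditions in Theorem~\ref{easysituations}(e).
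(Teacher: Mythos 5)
Your decomposition of $\Irr(X)$ and the choices of subgroup ($B$, $N_X(T)$, $N_X(T_0)$) match the paper's, and your multiplicity computations $[\chi_\lambda|_N,\tilde\lambda]=2$ and $[\chi_\theta|_{N_0},\tilde\theta]=0$ are both correct. But your route through Theorem~\ref{easysituations}(e) diverges from the paper and contains a genuine gap.

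The paper does \emph{not} use the ``natural'' partner $\tilde\lambda=\Ind_T^N(\lambda)$ (resp.~$\tilde\theta=\Ind_{T_0}^{N_0}(\theta)$). Instead it takes $\mu=\lambda_{2i}$ (resp.~$\lambda_{2j}$), which --- because $q\pm 1$ is odd --- has the same $(\Gamma\times\c G)$-stabilizer as $\chi$ but occurs in $\chi_U$ with multiplicity exactly one. That reduces everything to the cheap Theorem~\ref{easysituations}(a). The handful of excluded indices ($i$ with $3i\equiv 0 \pmod{q-1}$, $j$ with $3j\equiv 0\pmod{q+1}$, where $\lambda_{2i}=\lambda_{-i}=\lambda_i$) are the genuinely $\Aut(X)$-invariant rational characters, and the paper sends those to the Borel. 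Your use of (e) would be a more uniform argument \emph{if} its hypotheses were verified, but they are not.

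The gap: you justify the existence and uniqueness of the real $\Sigma_\chi$-invariant extension $\chi^*$ to $\OB^{2'}(\Sigma_\chi)$ (and similarly $\mu^*$) by asserting that ``the relevant quotient over $X$ or $U$ is odd cyclic.'' That is false. By definition $\Sigma_\chi/\OB^{2'}(\Sigma_\chi)$ has odd order, so $\OB^{2'}(\Sigma_\chi)/X$ is precisely the Sylow $2$-subgroup of the cyclic group $\Sigma_\chi/X\leq\langle F\rangle$, i.e.~a $2$-group. Extending a real character through a cyclic $2$-group quotient is exactly where the obstruction can live: if $\hat\chi$ is any extension and $\overline{\hat\chi}=\gamma_0\hat\chi$, then real extensions exist if and only if $\gamma_0$ is a square in $\Irr\bigl(\OB^{2'}(\Sigma_\chi)/X\bigr)\cong C_{2^a}$, which need not hold. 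Your ``unique quadratic character is trivial'' argument is the $|C|$-odd argument and does not apply here. Note that the paper \emph{does} use part~(e) in the odd-characteristic Theorem~\ref{sl21}, but there it explicitly imposes $2\nmid n$ to control exactly this. For $q=2^n$ with $n$ even (e.g.~$q=16$), your verification of the hypothesis of~(e) is unsupported, and no repair via~(f) is available either since $|\Delta_\mu/U|$ can be even. The easiest fix is to adopt the paper's choice $\mu=\lambda_{2i}$ (resp.~$\lambda_{2j}$), settle the three or four exceptional characters at the Borel, and stay within Theorem~\ref{easysituations}(a).
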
 
 
\begin{proof}
(a) Since $\SL_2(4) \cong \AAA_5$ was already handled in Theorem \ref{main-an},
we may assume $n \geq 3$, so that $X = S = \SL_2(q)$, $\Aut(X) = \langle X,\alpha \rangle \cong X \rtimes C_n$ for a field automorphism
$\alpha$, and let $\chi \in \Irr(X)$. 
We will use the notation for irreducible characters of $X$ as listed in \cite[Theorem 38.2]{Do}, with the further convention that 
the indices $i$ for $\chi_i$ of degree $q+1$ are viewed in $\Z/(q-1)\Z$, and the indices $j$ for $\theta_j$ of degree $q-1$ are viewed in $\Z/(q+1)\Z$.

First assume that $\chi(1)=1$, or $q$, or $q+1$ with $3|(q-1)$ and $\chi=\chi_{(q-1)/3}$, or 
$q-1$ with $3|(q+1)$ and $\chi=\theta_{(q+1)/3}$. Then $\chi$ is rational and $\Aut(X)$-invariant. Take $P \in \Syl_2(X)$
and $U=\NB_X(P)$. Also choose $\mu=1_U$ when $\chi=1_G$, and $\mu$ the unique character of degree $q-1$ otherwise.
In particular, $\mu$ is rational and stable under $\Gamma= \Aut(X)_U$; moreover, $[\chi_U,\mu]=1$. Hence $(X,\chi)$ 
satisfies the inductive Feit condition by Lemma \ref{easy2} and Theorem \ref{easysituations}(a).

\smallskip
(b) Next assume that $\chi=\chi_i$ with $i \not\equiv 0 \pmod{(q-1)/\gcd(3,q-1)}$. Then
$$\Q(\chi) = \Q(\rho^i+\rho^{-i}) = \Q(\cos\frac{2\pi i}{q-1}),$$
for a primitive $(q-1)^{\mathrm {th}}$-root of unity $\rho \in \C^\times$. Then we take $U=\NB_X(A) = A \rtimes \langle c \rangle$, the normalizer of the split torus 
$A := \langle a \rangle \cong C_{q-1}$, and $|c|=2$. We can choose $\alpha$ to be induced by the action of the field automorphism $x \mapsto x^2$ of
$\overline{\FF_q}$ on $X = \SL(\FF_q^2)$, so that $\alpha: a \mapsto a^2$, $c \mapsto c$. Note that
$$\Irr(U) = \{\lambda_t,1_U,\sgn \mid 1 \leq t \leq q/2-1 \},$$
where $\lambda_t: 1 \mapsto 2,~a^l \mapsto \rho^{tl}+\rho^{-tl},c \mapsto 0$ for $1 \leq l \leq q-2$, and $(1_A)^U = 1_U +\sgn$.
Then $\alpha$ normalizes $U$, and 
\begin{equation}\label{eq:sl221}
  \alpha: \chi_t \mapsto \chi_{2t},~\lambda_t \mapsto \lambda_{2t},
\end{equation}
if we again view the index $t$ of $\chi_t$ and $\lambda_t$ as in $\Z/(q-1)\Z$. On the other hand, if $\sigma \in \c G$ acts on $\Q(\rho)$ via 
$\rho \mapsto \rho^k$ for some $k$ (coprime to $q-1$), then   
\begin{equation}\label{eq:sl222}
  \sigma: \chi_t \mapsto \chi_{kt},~\lambda_t \mapsto \lambda_{kt},
\end{equation}
It is easy to check that
$$(\chi_i)_U = \sum^{q/2-1}_{t=1}\lambda_t + \lambda_i + 1_U,$$
in particular, $[\chi_U,\lambda_{2i}]=1$. Recall that $\chi_t = \chi_{t'}$ if and only if $t' \equiv \pm t \pmod{q-1}$.
Given \eqref{eq:sl221} and \eqref{eq:sl221}, we now see that the element 
$(\delta,\sigma) \in \Gamma \times \c G$, with $s \in \Z$ and $\delta \in \alpha^s U \subseteq \Gamma = \NB_{\Aut(X)}(U)$, fixes $\chi$
if and only if 
\begin{equation}\label{eq:sl223}
  q-1 \mbox{ divides }i(2^s k \pm 1).
\end{equation}    
Similarly, this element $(\delta,\sigma)$ fixes $\lambda_{2i}$
if and only if 
$$q-1 \mbox{ divides }2i(2^s k \pm 1),$$
which is equivalent to \eqref{eq:sl223} since $2|q$. Taking $\mu=\lambda_{2i}$, we have 
$(\Gamma \times \c G)_\chi=(\Gamma \times \c G)_\mu$, and hence we are done by Theorem \ref{easysituations}(a).

\smallskip
(c) Now assume that $\chi=\theta_j$ with $j \not\equiv 0 \pmod{(q+1)/\gcd(3,q+1)}$. Then
$$\Q(\chi) = \Q(\eta^j+\eta^{-j}) = \Q(\cos\frac{2\pi j}{q+1}),$$
for a primitive $(q+1)^{\mathrm {th}}$-root of unity $\eta \in \C^\times$. It is convenient to view $X$ as $\SU_2(q) = \SU(V)$, where 
$V = \FF_{q^2}^2$ is endowed with a standard Hermitian form.
Then we take $U=\NB_X(B) = B \rtimes \langle c \rangle$, the normalizer of the diagonal torus 
$B := \langle b \rangle \cong C_{q+1}$, and $|c|=2$. We can choose $\alpha$ to be induced by the action of the field automorphism $x \mapsto x^2$ of
$\overline{\FF_q}$ on $\SU(V)$, so that $\alpha: b \mapsto b^2$, $c \mapsto c$. Note that
$$\Irr(U) = \{\lambda_t,1_U,\sgn \mid 1 \leq t \leq q/2 \},$$
where $\lambda_t: 1 \mapsto 2,~b^l \mapsto \eta^{tl}+\eta^{-tl},c \mapsto 0$ for $1 \leq l \leq q$, and $(1_B)^U = 1_U +\sgn$.
Then $\alpha$ normalizes $U$, and 
\begin{equation}\label{eq:sl224}
  \alpha: \theta_t \mapsto \theta_{2t},~\lambda_t \mapsto \lambda_{2t},
\end{equation}
if we again view the index $t$ of $\theta_t$ and $\lambda_t$ as in $\Z/(q+1)\Z$. On the other hand, if $\sigma \in \c G$ acts on $\Q(\eta)$ via 
$\eta \mapsto \eta^k$ for some $k$ (coprime to $q+1$), then   
\begin{equation}\label{eq:sl225}
  \sigma: \chi_t \mapsto \chi_{kt},~\lambda_t \mapsto \lambda_{kt},
\end{equation}
It is easy to check that
$$(\theta_j)_U = \sum^{q/2}_{t=1,~t \neq j}\lambda_t + \sgn.$$
In particular, $[\chi_U,\lambda_{2j}]=1$ since $j$ is not divisible by $(q+1)/\gcd(3,q+1)$. Recall that $\theta_t = \theta_{t'}$ if and only if $t' \equiv \pm t \pmod{q+1}$.
Given \eqref{eq:sl224} and \eqref{eq:sl225}, we now see that the element 
$(\delta,\sigma) \in \Gamma \times \c G$, with $s \in \Z$ and $\delta \in \alpha^s U \subseteq \Gamma = \NB_{\Aut(X)}(U)$, fixes $\chi$
if and only if 
\begin{equation}\label{eq:sl226}
  q+1 \mbox{ divides }j(2^s k \pm 1).
\end{equation}    
Similarly, this element $(\delta,\sigma)$ fixes $\lambda_{2j}$
if and only if 
$$q+1 \mbox{ divides }2j(2^s k \pm 1),$$
which is equivalent to \eqref{eq:sl226} since $2|q$. Taking $\mu=\lambda_{2j}$, we have 
$(\Gamma \times \c G)_\chi=(\Gamma \times \c G)_\mu$, and hence we are done by Theorem \ref{easysituations}(a).
\end{proof} 

Let $p>2$ be any prime and $q=p^n \geq 5$. Fix a generator $\rho$ of $\FF_{q^2}^\times$, and a 
primitive $(q^2-1)^{\mathrm {th}}$ root of unity $\trho \in \C^\times$. Also set
\begin{equation}\label{eq:sl210}
  \eps = \rho^{(q-1)/2},~\teps = \trho^{(q-1)/2},~\varep = \rho^{(q+1)/2},~\tvarep = \trho^{(q+1)/2}.
\end{equation}
  
Suppose $q \equiv 1 \pmod{4}$. Then for $X = \SU_2(q)$ and $Y = \GU_2(q)$ we have  
$$Y=L \times \OB_{2'}(\ZB(Y)),\mbox{ where }L= \{ y \in Y \mid \det{y}= \pm 1\}.$$
We also consider the subgroup 
\begin{equation}\label{eq:sl211}
  \tL = \left\{x, \eps^{(q+1)/2}y \mid x \in X,~y \in L \smallsetminus X\right\}
\end{equation}  
of $\SL_2(q^2)$ which is isoclinic to $L$ (note $\tL$ is a group because $\eps^{q+1}=-1$), and in which every involution is central. 
In particular, $\ZB(X)=\ZB(L)=\ZB(\tL) = \langle z \rangle$, and $\tL/\ZB(X) \cong \PGU_2(q)$. 

\begin{lem}\label{sl21-sub1}
Suppose $q \equiv 1 \pmod{4}$ and $q > 9$. In the above notation, the following hold.
\begin{enumerate}[\rm(i)]
\item Let $\langle a \rangle$ be a (unique up to conjugacy) $C_{q-1}$-subgroup of $X=\SU_2(q)$. Then there are elements $\ta \in \tL$ of order $2(q-1)$ and 
$u \in X$ such that $U=\langle a,u \rangle$ is maximal in $X$, $\ta^{q-1}=u^2=z$, $u\ta u^{-1}=\ta^{-1}$, $\ta^2=a^{(q+1)/2}$,
and $\NB_{\tL}(U) = \langle \ta, u \rangle$.
\item Let $\langle b \rangle$ be a (unique up to conjugacy) $C_{q+1}$-subgroup of $X=\SU_2(q)$. Then there are elements $\tb \in \tL$ of order $2(q+1)$ and 
$u \in X$ such that $U=\langle b,u \rangle$ is maximal in $X$, $\tb^{q+1}=u^2=z$, $u\tb u^{-1}=\tb^{-1}$, $\tb^2=b^{(q+5)/2}$,
and $\NB_{\tL}(U) = \langle \tb, u \rangle$.
\end{enumerate}
\end{lem}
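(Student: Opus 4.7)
The plan is to work inside $\SL_2(q^2) \supseteq \tL$, constructing $\ta$, $\tb$, and $u$ as explicit $2 \times 2$ matrices in two convenient realizations of $X = \SU_2(q)$. In each realization the required relations reduce to short matrix computations together with the identity $\eps^{q+1} = \rho^{(q^2-1)/2} = -1$, which holds because $\rho$ has order $q^2 - 1$.

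For Part~(i), realize $X$ using the anti-diagonal Hermitian form $J := \begin{pmatrix} 0 & 1 \\ 1 & 0 \end{pmatrix}$, so that the $C_{q-1}$-torus is $\langle a \rangle = \{\diag(\al,\al^{-1}) : \al \in \FF_q^\times\}$. Pick a generator $\al$ of $\FF_q^\times$ and a square root $\beta \in \FF_{q^2}^\times$ of $\al$; since $\al$ is a non-square in $\FF_q^\times$, we have $\beta \notin \FF_q$, so $\beta^{q-1} = -1$. The matrix $t := \diag(\beta,\beta^{-q})$ lies in $\GU_2(q)$ with $\det t = \beta^{1-q} = -1$, whence $t \in L \smallsetminus X$, and $\ta := \eps^{(q+1)/2}\, t \in \tL$. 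Using $\al^q = \al$ and $z = -I = a^{(q-1)/2}$, direct computation gives
$$\ta^2 = \eps^{q+1}\, t^2 = -\diag(\al,\al^{-1}) = -a = a^{(q+1)/2}.$$
Since $q \equiv 1 \pmod 4$, $(q+1)/2$ is odd and hence coprime to $q-1$, so $|\ta| = 2(q-1)$; the same hypothesis yields $(q^2-1)/4 \equiv (q-1)/2 \pmod{q-1}$, whence $\ta^{q-1} = a^{(q^2-1)/4} = z$. For $u$, take $u := \begin{pmatrix} 0 & c \\ -c^{-1} & 0 \end{pmatrix}$ for any $c \in \FF_{q^2}^\times$ with $c^{q-1} = -1$; then $u \in X$, $u^2 = z$, $u a u^{-1} = a^{-1}$, and $u t u^{-1} = \diag(\beta^{-q},\beta) = -t^{-1}$ (using $\beta^{-q} = -\beta^{-1}$), so $u \ta u^{-1} = \ta^{-1}$ thanks to $\eps^{q+1} = -1$.

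For Part~(ii), realize $X$ using the identity Hermitian form, so that $\langle b \rangle = \{\diag(\gam,\gam^{-1}) : \gam \in \FF_{q^2}^\times,\ \gam^{q+1}=1\}$. Fix a generator $\gam$ of this torus, set $y := \diag(\gam, -\gam^{-1}) \in L \smallsetminus X$, and $\tb := \eps^{(q+1)/2}\, y \in \tL$. Then
$$\tb^2 = \eps^{q+1}\, y^2 = -\diag(\gam^2,\gam^{-2}) = b^{(q+1)/2} \cdot b^2 = b^{(q+5)/2}.$$
Since $q \equiv 1 \pmod 4$, $(q+5)/2$ is odd and $\gcd((q+5)/2, q+1) = 1$, so $|\tb| = 2(q+1)$; and the congruence $(q+5)(q+1)/4 \equiv (q+1)/2 \pmod{q+1}$ (again from $q \equiv 1 \pmod 4$) gives $\tb^{q+1} = z$. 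Here the simpler element $u := \begin{pmatrix} 0 & 1 \\ -1 & 0 \end{pmatrix}$ lies in $\SU_2(q)$, satisfies $u^2 = z$ and $u b u^{-1} = b^{-1}$, and $u\tb u^{-1} = \tb^{-1}$ follows from $\eps^{q+1} = -1$.

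Finally, in both parts, maximality of $U$ in $X$ follows from Dickson's classification of the subgroups of $\SL_2(q)$: for $q > 9$ the normalizer of either maximal torus is a maximal subgroup. To prove $\NB_{\tL}(U) = \langle \ta, u \rangle$ (and likewise with $\tb$): (a) $\ta$ commutes with $a$ and $\ta u\ta^{-1} = \ta^2 u = a^{(q+1)/2} u \in U$ (from $u\ta u^{-1}=\ta^{-1}$), so $\langle \ta, u \rangle \leq \NB_{\tL}(U)$; (b) every element of $U \smallsetminus \langle a \rangle$ has order $4$, so for $q-1 > 4$ the subgroup $\langle a \rangle$ is the unique cyclic subgroup of order $q-1$ in $U$, hence characteristic, giving $\NB_{\tL}(U) \cap X = \NB_X(U) = U$; (c) $[\tL:X] = 2$ then forces $[\NB_{\tL}(U):U] \leq 2$. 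The main subtlety throughout is the systematic use of the hypothesis $q \equiv 1 \pmod 4$: it makes $(q+1)/2$ and $(q+5)/2$ odd (so $\ta^2, \tb^2$ generate $\langle a \rangle$ and $\langle b \rangle$), and it yields the two congruences modulo $q-1$ and $q+1$ that produce $\ta^{q-1} = \tb^{q+1} = z$ rather than $1$.
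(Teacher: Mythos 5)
Your proof is correct and follows essentially the same approach as the paper: construct explicit matrices in two Hermitian realizations of $\SU_2(q)$ (anti-diagonal form for the $C_{q-1}$-torus, identity form for the $C_{q+1}$-torus), scale by $\eps^{(q+1)/2}$ to land in $\tL$, and cite a classification of subgroups of $\SL_2(q)$ for maximality. The paper simply asserts that the displayed matrices "satisfy the required relations" and invokes \cite[Table 8.1]{BHR}; your write-up carries out the verifications in detail (the key identities $\eps^{q+1}=-1$, the congruences $(q^2-1)/4 \equiv (q-1)/2 \pmod{q-1}$ and $(q+5)(q+1)/4 \equiv (q+1)/2 \pmod{q+1}$, and the order computations for $\ta,\tb$), so it is a slightly more self-contained version of the same argument.
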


\begin{proof}
(i) Consider the space $V = \langle e,f \rangle_{\FF_{q^2}}$ with the Hermitian product $e \circ e = f \circ f = 0$ and $e \circ f=1$, and take $Y = \GU(V)$. Then
the elements 
$$\ta = \eps^{(q+1)/2}\begin{pmatrix}\varep & 0\\0 & -\varep^{-1}\end{pmatrix} \in \tL,~
     a=\begin{pmatrix}\varep^2 & 0\\0 & \varep^{-2}\end{pmatrix} \in X,~u =  \begin{pmatrix} 0 & \varep\\-\varep^{-1} & 0 \end{pmatrix} \in X$$ 
satisfy the required relations. Note that $U=\langle a,u \rangle$ is maximal in $X$ by \cite[Table 8.1]{BHR}, so
$\NB_X(U)=U$ and $\NB_{\tL}(U) = U \cdot 2  = \langle \ta,u \rangle$.

\smallskip
(ii) Consider the space $V = \langle e,f \rangle_{\FF_{q^2}}$ with the Hermitian product $e \circ e = f \circ f = 1$ and $e \circ f=0$, and take $Y = \GU(V)$. Then
the elements 
$$\tb = \eps^{(q+1)/2}\begin{pmatrix}\eps^2 & 0\\0 & -\eps^{-2}\end{pmatrix} \in \tL,~
     b = \begin{pmatrix} \eps^2 & 0\\0 & \eps^{-2} \end{pmatrix} \in X,~u =  \begin{pmatrix} 0 & 1\\-1 & 0 \end{pmatrix} \in X$$ 
satisfy the required relations. Note that $U=\langle b,u \rangle$ is maximal in $X$ by \cite[Table 8.1]{BHR}, so
$\NB_X(U)=U$ and $\NB_{\tL}(U) = U \cdot 2  = \langle \tb,u \rangle$.
\end{proof}

Now suppose that $q \equiv 3 \pmod{4}$. Then for $X = \SL_2(q)$ and $Y = \GL_2(q)$ we have  
$$Y=M \times \OB_{2'}(\ZB(Y)),\mbox{ where }M= \{ y \in Y \mid \det{y}= \pm 1\}.$$
We also consider the subgroup 
\begin{equation}\label{eq:sl212}
  \tM = \left\{x, \varep^{(q-1)/2}y \mid x \in X,~y \in M \smallsetminus X\right\}
\end{equation}  
of $\SL_2(q^2)$ which is isoclinic to $M$ (note $\tM$ is a group because $\varep^{q-1}=-1$), and in which every involution is central. 
In particular, $\ZB(X)=\ZB(M)=\ZB(\tM) = \langle z \rangle$, and 
$\tM/\ZB(X) \cong \PGL_2(q)$. 

\begin{lem}\label{sl21-sub2}
Suppose $q \equiv 3 \pmod{4}$ and $q > 3$. In the above notation, the following hold.
\begin{enumerate}[\rm(i)]
\item Let $\langle a \rangle$ be a (unique up to conjugacy) $C_{q-1}$-subgroup of $X=\SL_2(q)$. Then there are elements $\ta \in \tM$ of order $2(q-1)$ and 
$u \in X$ such that $U=\langle a,u \rangle$ is self-normalizing in $X$, $\ta^{q-1}=u^2=z$, $u\ta u^{-1}=\ta^{-1}$, $\ta^2=a^{(q+3)/2}$,
and $\NB_{\tM}(U) = \langle \ta, u \rangle$.
\item Let $\langle b \rangle$ be a (unique up to conjugacy) $C_{q+1}$-subgroup of $X=\SL_2(q)$. Then there are elements $\tb \in \tM$ of order $2(q+1)$ and 
$u \in X$ such that $U=\langle b,u \rangle$ is self-normalizing in $X$, $\tb^{q+1}=u^2=z$, $u\tb u^{-1}=\tb^{-1}$, $\tb^2=b^{(q+3)/2}$,
and $\NB_{\tM}(U) = \langle \tb, u \rangle$.
\end{enumerate}
\end{lem}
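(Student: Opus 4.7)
The plan is to mirror the explicit matrix constructions in Lemma~\ref{sl21-sub1}, adapting them to the isoclinic variant $\tM$ (in place of $\tL$) for $X=\SL_2(q)$ when $q\equiv 3\pmod 4$. Two arithmetic identities will be used throughout: first, $\varep^{q-1}=\rho^{(q+1)(q-1)/2}=\rho^{(q^2-1)/2}=-1$, which ensures $\tM\subseteq \SL_2(q^2)$ and controls the squares of elements in $\tM\setminus X$; second, $\eps^{q+1}=\rho^{(q-1)(q+1)/2}=-1$ and $\eps^2=\rho^{q-1}$ has order $q+1$ in $\FF_{q^2}^\times$.

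For part~(i), work in the standard basis in which $A=\{\diag(t,t^{-1})\mid t\in\FF_q^\times\}$ is the split torus, and set
$$a=\diag(\varep^2,\varep^{-2}),\quad \ta=\varep^{(q-1)/2}\diag(\varep^2,-\varep^{-2}),\quad u=\begin{pmatrix}0&1\\-1&0\end{pmatrix}.$$
Since $\det\diag(\varep^2,-\varep^{-2})=-1$, we have $\ta\in\tM\setminus X$. The required relations follow from $\varep^{q-1}=-1$ and the diagonal swap identity $u\diag(\alpha,\beta)u^{-1}=\diag(\beta,\alpha)$: one checks $\ta^2=-\diag(\varep^4,\varep^{-4})=a^{(q+3)/2}$; $\ta^{q-1}=\varep^{(q-1)^2/2}I=-I=z$, using that $(q-1)/2$ is odd and so $\varep^{(q-1)^2/2}=(\varep^2)^{m^2}=(\varep^2)^m=\rho^{(q^2-1)/2}=-1$ with $m=(q-1)/2$; and $u\ta u^{-1}=\ta^{-1}$, using that $\varep^{(q-1)/2}$ is a primitive fourth root of unity, hence equals $-\varep^{-(q-1)/2}$.

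For part~(ii), embed $X\hookrightarrow \SL_2(q^2)$ via the twisted Frobenius $\sigma'(v_1,v_2)=(v_2^q,v_1^q)$, so that $X$ becomes the $\sigma'$-invariants and the non-split torus $B$ is diagonalized: take $\eta=\rho^{q-1}$ of order $q+1$ and $b=\diag(\eta,\eta^{-1})$. Set
$$\tb=\varep^{(q-1)/2}\diag(\eps,-\eps^{-1}),\quad u=\begin{pmatrix}0&\eps\\-\eps^{-1}&0\end{pmatrix}.$$
That $\diag(\eps,-\eps^{-1})\in M\setminus X$ (i.e.\ it has $\det=-1$ and commutes with $\sigma'$) and that $u\in X$ both reduce to $\eps^{q+1}=-1$ and $\eps^q=-\eps^{-1}$; similarly $u^2=-I=z$. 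Using $\eps^2=\eta$, compute $\tb^2=\varep^{q-1}\diag(\eta,\eta^{-1})=-b$, and verify $-b=b^{(q+3)/2}$ via the identity $\eta^{(q+1)/2}=-1$, which gives $\eta^{(q+3)/2}=-\eta$. For $\tb^{q+1}$, note that $\varep^{(q-1)(q+1)/2}=1$ (because $\varep$ has order $2(q-1)$ and $(q+1)/2$ is even), while $\diag(\eps,-\eps^{-1})^{q+1}=\diag(\eps^{q+1},\eps^{-(q+1)})=-I=z$. Finally $u\tb u^{-1}=\tb^{-1}$ again reduces to $\varep^{(q-1)/2}=-\varep^{-(q-1)/2}$.

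The self-normalizing claim in both parts follows from the general fact that the unique Hall $2'$-subgroup of $U$ is the odd-order subgroup of $A$ (respectively $B$), which is characteristic in $U$ and self-centralizing in $X$, so $\NB_X(U)\subseteq \NB_X(A)=U$ (respectively $\NB_X(B)=U$); this can also be read off \cite[Table 8.1]{BHR}. Since $[\tM:X]=2$ and $\ta$ (respectively $\tb$) lies in $\tM\setminus X$ and normalizes $U$ (in part~(ii), direct computation gives $\tb u\tb^{-1}=b^{(q+3)/2}u\in U$), one concludes $\NB_{\tM}(U)=\langle\ta,u\rangle$, respectively $\langle\tb,u\rangle$. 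The main obstacle is the bookkeeping in part~(ii): keeping track of which $\FF_q$-structure ($X$ versus $M\setminus X$) each candidate element belongs to after the scalar twist by $\varep^{(q-1)/2}$, and confirming that the diagonalizing basis over $\FF_{q^2}$ is compatible with the realization of $\tM$ as an index-$2$ extension of $X$ inside $\SL_2(q^2)$.
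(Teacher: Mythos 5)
Your explicit matrices in part~(i) are the same as the paper's, and the verifications of $\ta^2 = a^{(q+3)/2}$, $\ta^{q-1}=z$, and $u\ta u^{-1}=\ta^{-1}$ all check out. In part~(ii) you replace the paper's construction (where $V=\FF_{q^2}$ is treated as a $2$-dimensional $\FF_q$-module, $m$ is multiplication by $\eps$, and $u=vm$ with $v$ the Frobenius) by the diagonalized picture over $\FF_{q^2}$ with the twisted Frobenius $\sigma'$ defining the $\FF_q$-structure. This is a legitimate equivalent realization (it is exactly what one gets from the paper's picture after changing to a basis of eigenvectors of $m$), and your identities $\tb^{q+1}=z$, $\tb^2=-b=b^{(q+3)/2}$, $u^2=z$, $u\tb u^{-1}=\tb^{-1}$ are all correct; the claim $\tb u\tb^{-1}=b^{(q+3)/2}u$ then follows formally from $u\tb u^{-1}=\tb^{-1}$ and $\tb^2=b^{(q+3)/2}$.

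There is, however, a genuine gap in your self-normalization argument for part~(ii). You pass through the Hall $2'$-subgroup of $U$: but since $q\equiv 3\pmod 4$ we have $4\mid (q+1)$, so when $q+1$ is a power of $2$ (e.g.\ $q=7$, $q=31$, and any Mersenne prime $q$) the Hall $2'$-subgroup of $U=\langle b,u\rangle$ is \emph{trivial}, and the chain $\NB_X(U)\subseteq\NB_X(\mathrm{Hall}_{2'}(U))\subseteq\NB_X(B)$ collapses. Your fallback reference to maximality from \cite[Table~8.1]{BHR} also does not cover $q=7$ (the paper itself notes that $U$ is maximal only for $q>7$). The fix is the one the paper uses for small $q$: in $U=\langle b,u\rangle$, all elements outside $\langle b\rangle$ have order $4$, so since $q+1>4$ the subgroup $\langle b\rangle$ is the unique cyclic subgroup of order $q+1$ in $U$, hence characteristic in $U$; therefore $\NB_X(U)\le \NB_X(\langle b\rangle)=U$ directly. (The analogous argument with $\langle a\rangle$ in part~(i) also works for all $q>3$ with $q\equiv 3\pmod 4$, though there your Hall-$2'$ argument happens to go through since the odd part $(q-1)/2$ of $|U|=2(q-1)$ is always $>1$.) As a minor point, the phrase ``self-centralizing in $X$'' is imprecise: the centralizer of the odd part of $A$ (resp.\ $B$) in $X$ is the full torus $A$ (resp.\ $B$), not the odd part itself, and that is what the argument needs.
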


\begin{proof}
(i) Consider the space $V = \langle e,f \rangle_{\FF_{q}}$ and take $Y = \GL(V)$. Then
the elements 
$$\ta = \varep^{(q-1)/2}\begin{pmatrix}\varep^2 & 0\\0 & -\varep^{-2}\end{pmatrix} \in \tM,~
     a = \begin{pmatrix} \varep^2 & 0\\0 & \varep^{-2} \end{pmatrix} \in X,~u =  \begin{pmatrix} 0 & 1\\-1 & 0 \end{pmatrix} \in X$$ 
satisfy the required relations. Note that $U=\langle a,u \rangle$ is maximal in $X$ when $q>11$ by \cite[Table 8.1]{BHR}, whereas $\langle a \rangle$ is the unique 
$C_{q-1}$-subgroup of $U$ when $q=7,11$ \cite{Atlas}. Hence $U$ is self-normalizing in $X$,
and $\NB_{\tM}(U) = U \cdot 2  = \langle \ta,u \rangle$.

\smallskip
(ii) Consider $V=\FF_{q^2}$ as a $2$-dimensional $\FF_q$-module for $Y$. Then the Frobenius map $v:x \mapsto x^q$ yields 
an involution in $Y$ with determinant $-1$ that interchanges the eigenspaces for the eigenvalues $\eps$ and $\eps^q$ of 
the multiplication by $\eps$ on $V$, viewed as an element $m$ of order $2(q+1)$ and determinant $\eps^{q+1}=-1$ in $Y$. So 
the element $\tb=\varep^{(q-1)/2}m$ belongs to $\tM \smallsetminus X$, still of order $2(q+1)$. Taking $b=m^2$, we have 
$\tb^2=zb = b^{(q+3)/2}$. As mentioned before, $vmv=vmv^{-1}=m^q$, whence $(vm)^2=m^{q+1}=z$ and $\det{vm}=1$. 
It follows that $u=vm \in X$ satisfies the relations $u^2=z=\tb^{q+1}$ and $u\tb u^{-1}=\tb^{-1}$. Note that $U=\langle b,u \rangle$ is maximal in $X$ 
when $q>7$ by \cite[Table 8.1]{BHR}, whereas $\langle b \rangle$ is the unique 
$C_{q+1}$-subgroup of $U$ when $q=7$ \cite{Atlas}. Hence $U$ is self-normalizing in $X$, and $\NB_{\tM}(U) = U \cdot 2  = \langle \tb,u \rangle$.
\end{proof}

\begin{thm}\label{sl21}
Let $p > 2$ be any prime, $2 \nmid n$, and $q=p^n \geq 5$. Then the simple group $S=\PSL_2(q)$ satisfies the inductive Feit condition.
\end{thm}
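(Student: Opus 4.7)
The plan is to carry out a case-by-case analysis of $\Irr(X)$ for $X=\SL_2(q)$, paralleling the proof of Theorem~\ref{sl22} for even $q$, with extra work for the four Weil-type characters of degrees $(q\pm 1)/2$ specific to odd $q$. Since $p>2$ and $n$ is odd, $\Out(S)=\langle\delta\rangle\times\langle\alpha\rangle$ where $\delta$ is the diagonal involution from $\PGL_2(q)$ and $\alpha$ is a field automorphism of odd order $n$; the center $\ZB(X)=\langle z\rangle$ has order $2$. The irreducible characters of $X$ comprise: the trivial character $1_X$ and the Steinberg character $\St$; the principal series $\chi_i$ of degree $q+1$ indexed by pairs $\{\psi_i,\psi_i^{-1}\}$ of non-real non-trivial characters of the split torus $A\cong C_{q-1}$; the discrete series $\theta_j$ of degree $q-1$ indexed analogously by characters of the non-split torus $B\cong C_{q+1}$; and the Weil-type characters $\xi_\pm$ of degree $(q+1)/2$ and $\eta_\pm$ of degree $(q-1)/2$.

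For the rational $\Aut(X)$-invariant characters (among which $1_X$, $\St$, and any $\chi_i$ or $\theta_j$ whose underlying torus character has order dividing $6$), I would take $U=\NB_X(P)$ with $P\in\Syl_p(X)$ and a suitable constituent $\mu$ of $\chi_U$, concluding by Lemma~\ref{easy2}(i) and Theorem~\ref{easysituations}(b). For a general principal series $\chi_i$, I would take $U=\NB_X(A)$ and $\mu_i:=\Ind_A^U(\psi_i)$, which is irreducible of degree $2$ with $[(\chi_i)_U,\mu_i]=1$ and $\Q(\mu_i)=\Q(\psi_i+\psi_i^{-1})=\Q(\chi_i)$. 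Just as in the proof of Theorem~\ref{sl22}, writing $\alpha:a\mapsto a^p$ and $\sigma:\rho\mapsto\rho^k$ for a primitive $(q-1)$-th root of unity $\rho$, one verifies that $(\delta\alpha^s,\sigma)\in\Gamma\times\c G$ fixes $\chi_i$ iff $(q-1)\mid i(p^sk\pm 1)$ iff it fixes $\mu_i$, so that Theorem~\ref{easysituations}(a) applies. The discrete series $\theta_j$ is handled symmetrically using $U=\NB_X(B)$ and $\mu_j:=\Ind_B^U(\varphi_j)$.

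The main obstacle is the four Weil characters. Each of the pairs $\{\xi_+,\xi_-\}$ and $\{\eta_+,\eta_-\}$ is a single $\Aut(X)$-orbit of size $2$ fused by $\delta$, with character field a quadratic extension of $\Q$. Crucially, these characters do not extend to $X\cdot 2$ in its standard $\PGL_2(q)$- or $\PGU_2(q)$-form, but they extend to the isoclinic groups $\tM$ (for $q\equiv 3\pmod 4$) or $\tL$ (for $q\equiv 1\pmod 4$) furnished by Lemmas~\ref{sl21-sub2} and~\ref{sl21-sub1}. For $\xi_\pm$ I would take $U=\NB_X(A)=\langle a,u\rangle$ and $\Sigma=\tL$ or $\tM$ according to $q\bmod 4$, with $\Delta=\NB_\Sigma(U)=\langle\ta,u\rangle$ as in the cited lemmas; a direct computation picks out an irreducible constituent $\mu_\pm$ of $(\xi_\pm)_U$ of multiplicity one whose character field equals $\Q(\xi_\pm)$, and conjugation by $\ta$ swaps $\mu_+\leftrightarrow\mu_-$ in parallel with the $\delta$-action on $\{\xi_+,\xi_-\}$. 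The analogous setup for $\eta_\pm$ uses $U=\NB_X(B)$ and the corresponding isoclinic group; Theorem~\ref{easysituations}(e) then completes the argument.

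The hard part will be the cocycle clause of Theorem~\ref{easysituations}(e): that the linear character $\gamma$ measuring the $\delta\sigma$-action on an extension $\chi^\sharp\in\Irr(\Sigma_\chi)$ coincides with that on $\mu^\sharp\in\Irr(\Delta_\mu)$. The $\delta$-part reduces to a single character-value comparison on $\ta$ (resp.~$\tb$), which in turn reduces to the classical evaluation of the Weil character at a non-central torus-normalizer element via a quadratic Gauss sum; when $\chi$ is real-valued (i.e.\ $\xi_\pm$ for $q\equiv 1\pmod 4$ and $\eta_\pm$ for $q\equiv 3\pmod 4$), Theorem~\ref{easysituations}(f) bypasses this step via unique real-valued extensions. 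The $\alpha$-part, however, cannot be absorbed by Theorem~\ref{easysituations}(g) because $\alpha$ fixes each of $\xi_\pm$ and $\eta_\pm$ individually, so $\Aut(X)_{\chi^{\c G}}/\Inn(X)\cong C_2\times C_n$ is not a $p$-group; instead it must be verified directly by matching canonical extensions to $X\rtimes\langle\alpha\rangle$ and $U\rtimes\langle\alpha\rangle$.
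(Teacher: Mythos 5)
Your plan has the difficulty located on the wrong family of characters, and the step at the heart of your treatment of the principal series fails.

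You claim that for $\chi_i$ of degree $q+1$, the constituent $\mu_i=\Ind_A^U(\psi_i)$ of $(\chi_i)_U$ with $U=\NB_X(A)$ has multiplicity one, so Theorem~\ref{easysituations}(a) applies as in the even-$q$ case. This is false for odd $q$: the paper computes (in parts (b2) and (c2)) that $[\chi_U,\mu]=3$, and moreover remarks that $\chi_U$ has \emph{no} multiplicity-one irreducible constituent with $\Q(\chi)$ as its field of values. (The even-$q$ vanishing of $\chi_i$ on $U\smallsetminus A$ does not persist when $q$ is odd, since $U\smallsetminus A$ consists of order-$4$ semisimple elements rather than involutions.) As a result your invocation of Theorem~\ref{easysituations}(a) for the principal series collapses, and these characters are precisely the ones for which the isoclinic groups $\tL$, $\tM$, Lemmas~\ref{sl21-sub1}--\ref{sl21-sub2}, and Theorem~\ref{easysituations}(e) are actually needed. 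They are also the only place the hypothesis $2\nmid n$ is used: it gives $\OB^{2'}(\Sigma_\chi)=\tL$ (or $\tM$) so that the real-extension argument can run.

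Conversely, the Weil characters of degree $(q\pm1)/2$, which you flag as ``the main obstacle'' and try to attack with $U=\NB_X(A)$ or $\NB_X(B)$ plus the isoclinic machinery, are in fact the \emph{easy} case in the paper: with $U=\NB_X(P)$ for $P\in\Syl_p(X)$, there is a unique constituent $\mu$ of degree $(q-1)/2$ lying over the right central character, with $[\chi_U,\mu]=1$ and $\Q(\mu)=\Q(\chi)=\Q(\sqrt{(-1)^{(q-1)/2}q})$; one then closes with Lemma~\ref{easy2}(i) or (iii) and Theorem~\ref{easysituations}(a). Your treatment of the discrete series $\theta_j$ of degree $q-1$ (via $\NB_X(B)$ and multiplicity one) is consistent with the paper. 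But to fix the overall argument you would need to redirect the isoclinic-extension technology --- including the careful matching of the field-automorphism action on real extensions --- from the Weil characters to the degree-$(q+1)$ characters, and independently verify the multiplicity-one claim for the Weil characters in $\NB_X(P)$.
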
 
 
\begin{proof}
(a) Since $\PSL_2(5) \cong \AAA_5$, using Theorem \ref{main-an} we will assume $q =7$ or $q\geq 11$, so that $X = \SL_2(q)$, and 
$\Aut(X) = \langle S,\tau,\alpha\rangle$, where $\langle S,\tau\rangle \cong \PGL_2(q)$ and $\alpha$ is induced by the action of
the field automorphims $x \mapsto x^p$ of $\overline{\FF_q}$ on $\SL_2(\FF_q^2)$. Our subsequent proof will use the oddness of $n$ 
only in the treatment of the characters $\chi \in \Irr(X)$ of degree $q+1$.
We will use the notation for $\chi \in \Irr(X)$ as listed in \cite[Theorem 38.1]{Do}, with the further convention that 
the indices $i$ for $\chi_i$ of degree $q+1$ are viewed in $\Z/(q-1)\Z$, and the indices $j$ for $\theta_j$ of degree $q-1$ are viewed in $\Z/(q+1)\Z$.

First assume that $\chi(1)=1$ or $q$. Then $\chi$ is rational and $\Aut(X)$-invariant. Take $P \in \Syl_p(X)$
and $U=\NB_X(P)$, which is $\langle \tau,\alpha\rangle$-invariant. 
Also choose $\mu=1_U$, whence $\mu$ is rational and stable under $\Gamma=\Aut(X)_U$; moreover, $[\chi_U,\mu]=1$. 
Hence $(X,\chi)$ satisfies the inductive Feit condition by Lemma \ref{easy2} and Theorem \ref{easysituations}(a).

Next assume that $\chi(1) = (q \pm 1)/2$. Then $\Q(\chi) = \Q(\sqrt{(-1)^{(q-1)/2}q})$, and 
$\Aut(X)_\chi=\langle S,\alpha\rangle$. Taking the same $U=\NB_X(P)$, we note that $U$ has $q-1$ linear characters and $4$ irreducible characters
$\mu_0^\pm$, $\mu_1^\pm$ of degree $(q-1)/2$, each having $\Q(\chi)$ as field of values. Furthermore, $\tau$ fuses $\mu_0^+$ with
$\mu_0^-$, and $\mu_1^+$ with $\mu_1^-$, whereas $\alpha$ fixes each of $\mu_0^\pm$, $\mu_1^\pm$.
Moreover, $\ZB(X) \cong C_2$ is contained in the kernel of $\mu_0^\pm$, but not of $\mu_1^\pm$. Now there is a unique $\mu \in \{\mu_0^\pm,\mu_1^\pm\}$
such that $[\chi_U,\mu]=1$, and we have $\Gamma_\chi=\Gamma_\mu$. If in addition $2|n$, then $\chi$ and $\mu$ are rational, and so 
we are done by Lemma \ref{easy2}(i) and Theorem \ref{easysituations}(a). If $2\nmid n$, then $\chi$ and $\mu$ satisfy the hypothesis of Lemma 
\ref{easy2}(iii), and so we are done.

\smallskip
(b) The rest of the proof is to analyze the characters $\chi$ of degree $q \pm 1$. Here we assume that 
$$q \equiv 1 \pmod{4},$$
and view $X \cong \SU_2(q)$ as the derived subgroup of $Y:=\GU_2(q)$. Modify $\alpha$ to be induced by the action of the field automorphism
$x \mapsto x^p$ on $\SU(\FF_{q^2}^2)$. We keep the notation \eqref{eq:sl210}, and follow the notation of \cite{E} for conjugacy classes and irreducible characters in $Y$. 
We also consider the subgroup $\tL$ defined in \eqref{eq:sl211}; in particular, 
$\tL/\ZB(X) \cong \PGU_2(q)$ induces the subgroup $\langle S,\tau \rangle$ of $\Aut(X)$. 

\smallskip
(b1) First we consider the case $\chi=\theta_j$ (in the notation of \cite{Do}) of degree $q-1$, $1 \leq j \leq (q-1)/2$. This character takes value 
$(-1)^j(q-1)$ at the central involution $z$ of $X$ which belongs to class $C^{((q+1)/2)}_1$ in $Y$ \cite{E}. Furthermore, it takes
value $-1$ at $c \in C^{(0)}_2$, $0$ at $a^l \in C_4^{(l(q+1))}$ when $(q-1) \nmid 2l$, and $-(\teps^{2jm}+\teps^{-2jm})$ at 
$b^m \in C_3^{(m,-m)}$ when $(q+1) \nmid 2m$. Hence, $\chi$ extends to the character $\chi^{(t_0,u_0)}_{q-1}$ of $Y$ \cite{E}, where
$$t_0=j(q+3)/4,~u_0=j(q-1)/4,$$
whose kernel contains 
$$Z_1=\OB_{2'}(\ZB(Y)).$$ 
Thus we can view $\chi^{(t_0,u_0)}_{q-1}$ as a character of $Y/Z_1 \cong L$. 

Now, if $2|j$ then $\chi^{(t_0,u_0)}_{q-1}$ is real-valued on $L$ and trivial at $z \in \ZB(L)$. Hence we can view it (by inflation) as a real character
$\tchi$ of the group $\tL$ defined in \eqref{eq:sl211}.
However, if $2 \nmid j$, then $\chi^{(t_0,u_0)}_{q-1}$ takes values in $\RR \sqrt{-1}$ for any $y \in L$ with $\det{y}=-1$, and is faithful on $L$.
In this case, if $\Phi$ denotes a representation affording $\chi^{(t_0,u_0)}_{q-1}$, then 
$$\left\{\Phi(x), \sqrt{-1}\Phi(y) \mid x \in X,y \in L \smallsetminus X\right\}$$
(which is a group because $\Phi(z)=-\mathrm{Id}$) defines a representation of $\tL$, whose corresponding character $\tchi$ is real-valued. 
In  both cases we have 
$$\Q(\chi) = \Q(\teps^{2j}+\teps^{-2j}) = \Q(\cos \frac{2\pi j}{q+1}),~~\Q(\tchi)=\Q(\teps^j+\teps^{-j}) = \Q(\cos \frac{\pi j}{q+1}).$$
Note that $\chi$ is $\tau$-invariant; furthermore, $\alpha$ acts on the subset $\{\theta_t \mid (q+1) \nmid 2t \}$ of $\Irr(X)$ via 
$$\theta_{t} \mapsto \theta_{tp}.$$
 
On the local side, we take $U = \langle b,u \rangle$, 
the normalizer of the diagonal torus in $X$ defined in Lemma \ref{sl21-sub1}(ii), which is self-normalizing and 
$\langle \tau,\alpha \rangle$-invariant. Then we embed $U$ in the subgroup $\tUb = \langle \tb,u \rangle < \tL$ also defined in Lemma \ref{sl21-sub1}(ii)
and may assume that $\tau$ is induced by the conjugation by $\tilde b$.
Let $\lambda \in \Irr(\langle b \rangle)$ and 
and $\tilde\lambda \in \Irr(\langle \tb \rangle)$ be such that
$$\lambda: b \mapsto \teps^2,~\tilde\lambda: \tb \mapsto \teps.$$
Then the character $\mu=(\lambda^{j})^U$ is irreducible (as $(q+1) \nmid 2j$), and sends $b^m$ to $-(\teps^{2jm}+\teps^{-2jm})$ when 
$(q+1) \nmid 2m$; in particular,
$$\Q(\mu) = \Q(\teps^{2j}+\teps^{-2j}) =\QQ(\chi).$$
Since $\mu$ extends to the character $(\tilde\lambda^{(j(q+5)/2})^{\tUb}$ of $\tUb$, $\mu$ is $\tau$-invariant. Furthermore, $\alpha$ acts on (a subset of) 
$\Irr(U)$ via 
$$(\lambda^t)^U \mapsto (\lambda^{tp})^U.$$ 
Thus, for $\Gamma=\Aut(X)_U$, we have shown that the element 
$(\delta,\sigma) \in \Gamma \times \c G$, where $\delta \in \alpha^s \langle U,\tau \rangle$ with $s \in \ZZ$ and $\sigma$ induces 
the Galois automorphism $\teps^2 \mapsto \teps^{2k}$ with $k$ coprime to $q+1$, fixes $\chi$, respectively $\mu$,
if and only if $q+1$ divides $j(p^s k \pm 1)$. As a consequence,
$$(\Gamma \times \c G)_\chi=(\Gamma \times \c G)_\mu.$$
Furthermore, $\chi_U + (\lambda^{-j})^U$ takes value $0$ at any $b^m$ with $(q+1) \nmid 2m$ and at $y$, and $(-1)^j(q+1)$ at $z$, hence it is equal to
$\beta^U$, where $\beta \in \Irr(\langle z \rangle)$ sends $z$ to $(-1)^j$. Note that
$$\beta^{\langle b \rangle} = \sum_{0 \leq t \leq q,~2|(t-j)}\lambda^t$$
contains both $\lambda^j$ and $\lambda^{-j}$. It follows that $[\chi_U,\mu]=1$, and so $(X,\chi)$ satisfies the inductive Feit condition by 
Theorem \ref{easysituations}(a).

\smallskip
(b2) Now we consider the case $\chi=\chi_j$ (in the notation of \cite{Do}) of degree $q+1$, $1 \leq j \leq (q-3)/2$, and assume in addition that 
\begin{equation}\label{eq:sl213}
  2 \nmid n.
\end{equation}  
This character takes value 
$(-1)^j(q-1)$ at $z$, $1$ at $c \in C^{(0)}_2$, $\tvarep^{2jl}+\tvarep^{-2jl}$ at $a^l \in C_4^{(l(q+1))}$ when $(q-1) \nmid 2l$, and $0$ at 
$b^m \in C_3^{(m,-m)}$ when $(q+1) \nmid 2m$. Hence, $\chi$ extends to the character $\chi^{(t_0)}_{q+1}$ of $Y$ \cite{E}, where
$$t_0 = \left\{ \begin{array}{ll}j(q+1)/2, & 2|j,\\ j(q+1)/2 + (q^2-1)/4, &2 \nmid j, \end{array}\right.$$
whose kernel again contains $Z_1$. Thus we can view $\chi^{(t_0)}_{q+1}$ as a character of $Y/Z_1 \cong L$. 

Now, if $2|j$ then $\chi^{(t_0)}_{q+1}$ is real-valued on $L$ and trivial at $z \in \ZB(L)$. Hence we can view it (by inflation) as a real character
$\chi^*$ of the group $\tL$ defined in \eqref{eq:sl211}.
However, if $2 \nmid j$, then $\chi^{(t_0)}_{q+1}$ takes values in $\RR \sqrt{-1}$ for any $y \in L$ with $\det{y}=-1$, and is faithful on $L$.
In this case, if $\Psi$ denotes a representation affording $\chi^{(t_0)}_{q+1}$, then 
$$\left\{\Psi(x), \sqrt{-1}\Psi(y) \mid x \in X,y \in L \smallsetminus X\right\}$$
(which is again a group because $\Psi(z)=-\mathrm{Id}$) defines a representation of $\tL$, whose corresponding character $\chi^*$ is real-valued. 
In  both cases we have 
\begin{equation}\label{eq:sl214}
  \Q(\chi) = \Q(\tvarep^{2j}+\tvarep^{-2j}) = \Q(\cos \frac{2\pi j}{q-1}),~~\Q(\chi^*)=\Q(\tvarep^j+\tvarep^{-j}) = \Q(\cos \frac{\pi j}{q-1}).
\end{equation}  
Note that $\chi$ is $\tau$-invariant; furthermore, $\alpha$ acts on the subset $\{\chi_t \mid (q-1) \nmid 2t \}$ of $\Irr(X)$ via 
$$\chi_{t} \mapsto \chi_{tp}.$$
 
On the local side, we take $U = \langle a,u \rangle$ as defined in Lemma \ref{sl21-sub1}(i),   
which is self-normalizing and $\langle \tau,\alpha \rangle$-invariant. Then we embed $U$ in the subgroup
$\tUb = \langle \ta,u \rangle < \tL$ as defined in Lemma \ref{sl21-sub1}(i)
and may assume that $\tau$ is induced by the conjugation by $\tilde a$. Let $\xi \in \Irr(\langle a \rangle)$ and 
$\tilde\xi \in \Irr(\langle \ta \rangle)$ be such that
$$\xi: a \mapsto \tvarep^2,~\tilde\xi: \ta \mapsto \varep.$$
Then the character $\mu=(\xi^{j})^U$ is irreducible (as $(q-1) \nmid 2j$), and sends $a^l$ to $\tvarep^{2jl}+\tvarep^{-2jl}$ when 
$(q-1) \nmid 2l$; in particular,
\begin{equation}\label{eq:sl215}
  \Q(\mu) = \Q(\tvarep^{2j}+\tvarep^{-2j}) =\QQ(\chi).
\end{equation}  
Note that $\mu$ extends to the character $\mu^*=(\tilde\xi^{j(q+1)/2})^{\tUb}$ of $\tUb$ with
\begin{equation}\label{eq:sl216}
  \Q(\mu^\ast) = \Q(\tvarep^{j(q+1)/2}+\tvarep^{-j(q+1)/2}) =\Q(\cos \frac{\pi j(q+1)/2}{q-1})= \QQ(\chi^*)
\end{equation}  
(since $(q+1)/2$ is coprime to $q-1$); in particular, 
$\mu$ is $\tau$-invariant. Furthermore, $\alpha$ acts on (a subset of) $\Irr(U)$ via 
$$(\xi^t)^U \mapsto (\xi^{tp})^U.$$ 
Thus, for $\Gamma=\Aut(X)_U$, we have shown  that the element 
$(\delta,\sigma) \in \Gamma \times \c G$, where $\delta \in \alpha^s \langle U,\tau \rangle$ with $s \in \ZZ$ and $\sigma$ induces 
the Galois automorphism $\tvarep^2 \mapsto \tvarep^{2k}$ with $k$ coprime to $q+1$, fixes $\chi$, respectively $\mu$,
if and only if $q-1$ divides $j(p^s k \pm 1)$. As a consequence,
$$(\Gamma \times \c G)_\chi=(\Gamma \times \c G)_\mu.$$
We also note that the restrictions of $\chi^*$ and $\mu^*$ to $\CB_{\tUb}(X) = \langle z \rangle$ are both multiples of 
the linear character $\beta$ sending $z$ to $(-1)^j$. 
(But unlike the case of (b1), $[\chi_U,\mu]=3$; in fact, $\chi_U$ does not have multiplicity-one irreducible constituent with $\Q(\chi)$ as its 
field of values.)

So far we have not used the assumption \eqref{eq:sl213}. Now we invoke  it and define $\Sigma = \tL \rtimes \langle \alpha^2 \rangle$ which
contains $X$ as a normal subgroup and induces the entire $\Aut(X)$ while acting on $X$. Then $\Delta = \NB_\Sigma(U) = \tUb \rtimes \langle \alpha^2 \rangle$
induces $\Gamma_{\chi^{\c G}} = \Gamma$. Since $[\Sigma:\tL]=n=[\Delta:\tUb]$ is odd, we have $\OB^{2'}(\Sigma_\chi)=\tL$ and 
$\OB^{2'}(\Delta_\mu)=\tUb$. If $\nu$ denotes the unique linear character of order $2$ of $\tL$, then $\chi^*$ and $\nu\chi^*$ are the $2$ extensions of 
$\chi$ to $\tL$. Now, the odd-order group $\Sigma_\chi/\tL$ acts on the set $\{\chi^*,\nu\chi^*\}$, hence trivially. We have shown that $\chi^*$ is a 
$\Sigma_\chi$-invariant real extension of $\chi$ to $\OB^{2'}(\Sigma_\chi)$, and similarly $\mu^*$ is a 
$\Delta_\mu$-invariant real extension of $\mu$ to $\OB^{2'}(\Delta_\mu)$. Next, consider any $(\delta,\sigma) \in (\Gamma \times \c G)_\chi$, 
where $\delta \in \alpha^{2s} \langle U,\tau \rangle$ with $s \in \ZZ$. Then $(\delta,\sigma)$ acts on the set
$\{\chi^*,\nu\chi^*\}$. As $2 \nmid n$, this action is the same as the action of $(\delta^n,\sigma^n)$, which is turn is the same as of 
$\sigma^n$ (because $\delta^n \in \tUb$ fixes $\chi^*$). Using \eqref{eq:sl214} and the fact that $\sigma^n$ fixes $\chi$, we see that
$\sigma^n$ can be viewed as an element $\sigma^* \in {\rm Gal}(\Q(\chi^*)/\Q(\chi)) \leq C_2$, and $\sigma^n$ sends $\chi^*$ to 
$\chi^*$ if $\sigma^* = \mathrm{Id}$ and to $\nu\chi^*$ if $\sigma^* \neq \mathrm{Id}$. Using \eqref{eq:sl215} and \eqref{eq:sl216}, and applying the same arguments 
to $\mu^*$ (after identifying $\Delta_\mu/U = \Sigma_\chi/X$), we see that  $(\delta,\sigma)$ acts on the set
$\{\mu^*,\nu\mu^*\}$, with the same action as of $\sigma^n$, and $\sigma^n$ sends $\mu^*$ to 
$\mu^*$ if $\sigma^* = \mathrm{Id}$ and to $\nu\mu^*$ if $\sigma^* \neq \mathrm{Id}$. It follows from Theorem \ref{easysituations}(e) that 
$(X,\chi)$ satisfies the inductive Feit condition.

\smallskip
(c) Here we assume that 
$$q \equiv 3 \pmod{4},$$
and view $X \cong \SL_2(q)$ as the derived subgroup of $Y:=\GL_2(q)$. Choose $\alpha$ to be induced by the action of the field automorphism
$x \mapsto x^p$ on $\SL(\FF_{q}^2)$. We keep the notation \eqref{eq:sl210}, and follow the notation of \cite{E} for conjugacy classes and irreducible characters in $Y$
(after applying the Ennola duality $q \mapsto -q$).
We also consider the subgroup $\tM$ defined in \eqref{eq:sl212}; in particular, 
$\tM/\ZB(X) \cong \PGL_2(q)$ induces the subgroup $\langle S,\tau \rangle$ of $\Aut(X)$. 

\smallskip
(c1) First we consider the case $\chi=\theta_j$ (in the notation of \cite{Do}) of degree $q-1$, $1 \leq j \leq (q-1)/2$. This character takes value 
$(-1)^j(q-1)$ at the central involution $z$ of $X$ which belongs to class $C^{((q-1)/2)}_1$ in $Y$. Furthermore, it takes
value $-1$ at $c \in C^{(0)}_2$, $0$ at $a^l \in C_3^{(l,-l})$ when $(q-1) \nmid 2l$, and $-(\teps^{2jm}+\teps^{-2jm})$ at 
$b^m \in C_3^{((q-1)m)}$ when $(q+1) \nmid 2m$. Hence, $\chi$ extends to the character $\chi^{(t_0)}_{q-1}$ of $Y$, where
$$t_0 = \left\{ \begin{array}{ll}j(q-1)/2, & 2|j,\\ j(q-1)/2 + (q^2-1)/4, &2 \nmid j, \end{array}\right.$$
whose kernel contains 
$$Z_1=\OB_{2'}(\ZB(Y)).$$ 
Thus we can view $\chi^{(t_0)}_{q-1}$ as a character of $Y/Z_1 \cong M$. 

Now, if $2|j$ then $\chi^{(t_0)}_{q-1}$ is real-valued on $M$ and trivial at $z \in \ZB(M)$. Hence we can view it (by inflation) as a real character
$\tchi$ of the group $\tM$ defined in \eqref{eq:sl212}.
However, if $2 \nmid j$, then $\chi^{(t_0)}_{q-1}$ takes values in $\RR \sqrt{-1}$ for any $y \in M$ with $\det{y}=-1$, and is faithful on $M$.
In this case, if $\Phi$ denotes a representation affording $\chi^{(t_0)}_{q-1}$, then 
$$\left\{\Phi(x), \sqrt{-1}\Phi(y) \mid x \in X,y \in M\smallsetminus X\right\}$$
(which is a group because $\Phi(z)=-\mathrm{Id}$) defines a representation of $\tM$, whose corresponding character $\tchi$ is real-valued. 
In  both cases we have 
$$\Q(\chi) = \Q(\teps^{2j}+\teps^{-2j}) = \Q(\cos \frac{2\pi j}{q+1}),~~\Q(\tchi)=\Q(\teps^j+\teps^{-j}) = \Q(\cos \frac{\pi j}{q+1}).$$
Note that $\chi$ is $\tau$-invariant; furthermore, $\alpha$ acts on the subset $\{\theta_t \mid (q+1) \nmid 2t \}$ of $\Irr(X)$ via 
$$\theta_{t} \mapsto \theta_{tp}.$$
 
On the local side, we take $U = \langle b,u \rangle$, 
as defined in Lemma \ref{sl21-sub2}(ii), which is self-normalizing and 
$\langle \tau,\alpha \rangle$-invariant. Then we embed $U$ in the subgroup $\tUb = \langle \tb,u \rangle < \tL$ also defined in Lemma \ref{sl21-sub2}(ii)
and may assume that $\tau$ is induced by the conjugation by $\tilde b$.
Let $\lambda \in \Irr(\langle b \rangle)$ and 
and $\tilde\lambda \in \Irr(\langle \tb \rangle)$ be such that
$$\lambda: b \mapsto \teps^2,~\tilde\lambda: \tb \mapsto \teps.$$
Then the character $\mu=(\lambda^{j})^U$ is irreducible (as $(q+1) \nmid 2j$), and sends $b^m$ to $-(\teps^{2jm}+\teps^{-2jm})$ when 
$(q+1) \nmid 2m$; in particular,
$$\Q(\mu) = \Q(\teps^{2j}+\teps^{-2j}) =\QQ(\chi).$$
Since $\mu$ extends to the character $(\tilde\lambda^{(j(q+3)/2})^{\tUb}$ of $\tUb$, $\mu$ is $\tau$-invariant. Furthermore, $\alpha$ acts on (a subset of) 
$\Irr(U)$ via 
$$(\lambda^t)^U \mapsto (\lambda^{tp})^U.$$ 
Thus, for $\Gamma=\Aut(X)_U$, we have shown that the element 
$(\delta,\sigma) \in \Gamma \times \c G$, where $\delta \in \alpha^s \langle U,\tau \rangle$ with $s \in \ZZ$ and $\sigma$ induces 
the Galois automorphism $\teps^2 \mapsto \teps^{2k}$ with $k$ coprime to $q+1$, fixes $\chi$, respectively $\mu$,
if and only if $q+1$ divides $j(p^s k \pm 1)$. As a consequence,
$$(\Gamma \times \c G)_\chi=(\Gamma \times \c G)_\mu.$$
Furthermore, $\chi_U + (\lambda^{-j})^U$ takes value $0$ at any $b^m$ with $(q+1) \nmid 2m$ and at $y$, and $(-1)^j(q+1)$ at $z$, hence it is equal to
$\beta^U$, where $\beta \in \Irr(\langle z \rangle)$ sends $z$ to $(-1)^j$. Note that
$$\beta^{\langle b \rangle} = \sum_{0 \leq t \leq q,~2|(t-j)}\lambda^t$$
contains both $\lambda^j$ and $\lambda^{-j}$. It follows that $[\chi_U,\mu]=1$, and so $(X,\chi)$ satisfies the inductive Feit condition by 
Theorem \ref{easysituations}(a).

\smallskip
(c2) Now we consider the case $\chi=\chi_j$ (in the notation of \cite{Do}) of degree $q+1$, $1 \leq j \leq (q-3)/2$, and assume \eqref{eq:sl213} in addition.
This character takes value 
$(-1)^j(q-1)$ at $z$, $1$ at $c \in C^{(0)}_2$, $\tvarep^{2jl}+\tvarep^{-2jl}$ at $a^l \in C_3^{(l,-l)}$ when $(q-1) \nmid 2l$, and $0$ at 
$b^m \in C_3^{((q-1)m)}$ when $(q+1) \nmid 2m$. Hence, $\chi$ extends to the character $\chi^{(t_0)}_{q+1}$ of $Y$, where
$$t_0=j(q+1)/4,~u_0=j(q-3)/4,$$
whose kernel again contains $Z_1$. Thus we can view $\chi^{(t_0,u_0)}_{q+1}$ as a character of $Y/Z_1 \cong M$. 

Now, if $2|j$ then $\chi^{(t_0,u_0)}_{q+1}$ is real-valued on $M$ and trivial at $z \in \ZB(M)$. Hence we can view it (by inflation) as a real character
$\chi^*$ of the group $\tM$ defined in \eqref{eq:sl212}.
However, if $2 \nmid j$, then $\chi^{(t_0,u_0)}_{q+1}$ takes values in $\RR \sqrt{-1}$ for any $y \in M$ with $\det{y}=-1$, and is faithful on $L$.
In this case, if $\Psi$ denotes a representation affording $\chi^{(t_0,u_0)}_{q+1}$, then 
$$\left\{\Psi(x), \sqrt{-1}\Psi(y) \mid x \in X,y \in M \smallsetminus X\right\}$$
(which is again a group because $\Psi(z)=-\mathrm{Id}$) defines a representation of $\tM$, whose corresponding character $\chi^*$ is real-valued. 
In  both cases we have 
\begin{equation}\label{eq:sl214a}
  \Q(\chi) = \Q(\tvarep^{2j}+\tvarep^{-2j}) = \Q(\cos \frac{2\pi j}{q-1}),~~\Q(\chi^*)=\Q(\tvarep^j+\tvarep^{-j}) = \Q(\cos \frac{\pi j}{q-1}).
\end{equation}  
Note that $\chi$ is $\tau$-invariant; furthermore, $\alpha$ acts on the subset $\{\chi_t \mid (q-1) \nmid 2t \}$ of $\Irr(X)$ via 
$$\chi_{t} \mapsto \chi_{tp}.$$
 
On the local side, we take $U = \langle a,u \rangle$ as defined in Lemma \ref{sl21-sub2}(i),   
which is self-normalizing and $\langle \tau,\alpha \rangle$-invariant. Then we embed $U$ in the subgroup
$\tUb = \langle \ta,u \rangle < \tL$ as defined in Lemma \ref{sl21-sub2}(i) and may assume that $\tau$ is induced by the conjugation by $\tilde a$. 
Let $\xi \in \Irr(\langle a \rangle)$ and 
$\tilde\xi \in \Irr(\langle \ta \rangle)$ be such that
$$\xi: a \mapsto \tvarep^2,~\tilde\xi: \ta \mapsto \varep.$$
Then the character $\mu=(\xi^{j})^U$ is irreducible (as $(q-1) \nmid 2j$), and sends $a^l$ to $\tvarep^{2jl}+\tvarep^{-2jl}$ when 
$(q-1) \nmid 2l$; in particular,
\begin{equation}\label{eq:sl215a}
  \Q(\mu) = \Q(\tvarep^{2j}+\tvarep^{-2j}) =\QQ(\chi).
\end{equation}  
Note that $\mu$ extends to the character $\mu^*=(\tilde\xi^{j(q+3)/2})^{\tUb}$ of $\tUb$ with
\begin{equation}\label{eq:sl216a}
  \Q(\mu^\ast) = \Q(\tvarep^{j(q+3)/2}+\tvarep^{-j(q+3)/2}) =\Q(\cos \frac{\pi j(q+3)/2}{q-1})= \QQ(\chi^*)
\end{equation}  
(since $(q+3)/2$ is coprime to $q-1$); in particular, 
$\mu$ is $\tau$-invariant. Furthermore, $\alpha$ acts on (a subset of) $\Irr(U)$ via 
$$(\xi^t)^U \mapsto (\xi^{tp})^U.$$ 
Thus, for $\Gamma=\Aut(X)_U$, we have shown  that the element 
$(\delta,\sigma) \in \Gamma \times \c G$, where $\delta \in \alpha^s \langle U,\tau \rangle$ with $s \in \ZZ$ and $\sigma$ induces 
the Galois automorphism $\tvarep^2 \mapsto \tvarep^{2k}$ with $k$ coprime to $q+1$, fixes $\chi$, respectively $\mu$,
if and only if $q-1$ divides $j(p^s k \pm 1)$. As a consequence,
$$(\Gamma \times \c G)_\chi=(\Gamma \times \c G)_\mu.$$
We also note that the restrictions of $\chi^*$ and $\mu^*$ to $\CB_{\tUb}(X) = \langle z \rangle$ are both multiples of 
the linear character $\beta$ sending $z$ to $(-1)^j$. 
Again, unlike the case of (c1), $[\chi_U,\mu]=3$; in fact, $\chi_U$ does not have multiplicity-one irreducible constituent with $\Q(\chi)$ as its 
field of values.

So far we have not used the assumption \eqref{eq:sl213}. Now we invoke it and define $\Sigma = \tM \rtimes \langle \alpha \rangle$ which
contains $X$ as a normal subgroup and induces the entire $\Aut(X)$ while acting on $X$. Then $\Delta = \NB_\Sigma(U) = \tUb \rtimes \langle \alpha \rangle$
induces $\Gamma_{\chi^{\c G}} = \Gamma$. Since $[\Sigma:\tM]=n=[\Delta:\tUb]$ is odd, we have $\OB^{2'}(\Sigma_\chi)=\tM$ and 
$\OB^{2'}(\Delta_\mu)=\tUb$. If $\nu$ denotes the unique linear character of order $2$ of $\tM$, then $\chi^*$ and $\nu\chi^*$ are the $2$ extensions of 
$\chi$ to $\tL$. Arguing as in (b2), we see that $\chi^*$ is a 
$\Sigma_\chi$-invariant real extension of $\chi$ to $\OB^{2'}(\Sigma_\chi)$, and similarly $\mu^*$ is a 
$\Delta_\mu$-invariant real extension of $\mu$ to $\OB^{2'}(\Delta_\mu)$. Next, consider any $(\delta,\sigma) \in (\Gamma \times \c G)_\chi$, 
where $\delta \in \alpha^{s} \langle U,\tau \rangle$ with $s \in \ZZ$. The same arguments as in (b2) show that the action of $(\delta,\sigma)$ on the set
$\{\chi^*,\nu\chi^*\}$ is the same as its action on the set $\{\mu^*,\nu\mu^*\}$. It follows from Theorem \ref{easysituations}(e) that 
$(X,\chi)$ satisfies the inductive Feit condition.
\end{proof} 

Next we study some general situation.

\begin{pro}\label{regular}
Let $\GC$ be a connected reductive group over a field of characteristic $p$, and let $X:=\GC^F$ for a Steinberg endomorphism 
$F:\GC \to \GC$. Let $\TC$ be an $F$-stable maximal torus of $\GC$ be such that $T:=\TC^F$ contains a Sylow $r$-subgroup 
$R$ of $G$ for some prime $r \neq p$, and $R$ contains a regular semisimple element $s$. 

\begin{enumerate}[\rm(a)]
\item Then $U:=\NB_X(R)=\NB_X(T) = \NB_X(\TC)$. In particular, $U$ satisfies condition {\rm \ref{inductive}(i)}.
\item Assume in addition that $\chi=\eps R^\GC_\TC(\theta)$ is irreducible for some $\eps=\pm$ and $\theta \in \Irr(T)$. Then 
$\mu:=\theta^U \in \Irr(U)$. Moreover, if every automorphism of $X$ extends to a bijective morphism of $\GC$, then 
$(\Gamma \times \c G)_\chi = (\Gamma \times \c G)_\mu$ for $\Gamma = \NB_{\Aut(X)}(U)$.
\end{enumerate}
\end{pro}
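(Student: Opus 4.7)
The plan for (a) is to exploit the regularity of $s$, which implies $\TC = C_\GC(s)^\circ$ is the unique maximal torus of $\GC$ containing $s$. First I would establish the chain $\NB_X(R) \subseteq \NB_X(\TC) \subseteq \NB_X(T) \subseteq \NB_X(R)$. For the key first inclusion, any $g \in \NB_X(R)$ sends $s$ to a regular semisimple element $s^g \in R \subseteq \TC$; since $\TC$ is connected abelian and centralizes $s^g$, one has $\TC \subseteq C_\GC(s^g)^\circ$, and comparing dimensions of maximal tori yields $\TC = C_\GC(s^g)^\circ = \TC^g$. The second inclusion holds because $T = \TC \cap X$ is preserved by any automorphism stabilizing $\TC$, and the third because $R$ is the unique Sylow $r$-subgroup of the abelian group $T$, hence characteristic in it. Self-normalization $\NB_X(U) = U$ then follows from the same observation applied to $R \trianglelefteq U$, of which $R$ is the unique Sylow $r$-subgroup. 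Intravariance is automatic from Sylow's theorem: for any $w \in \Aut(X)$, there exists $x \in X$ with $R^w = R^x$, and taking normalizers gives $U^w = U^x$.

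For the first assertion of (b), I would invoke the standard fact that $\eps R^\GC_\TC(\theta)$ is irreducible precisely when $\theta$ is in general position, i.e., the stabilizer $W_\TC^F(\theta) = U_\theta/T$ is trivial. Thus $U_\theta = T$, and by Clifford theory $\mu = \theta^U$ is irreducible in $U$.

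For the final claim, fix $(\delta,\sigma) \in \Gamma \times \c G$. By hypothesis, $\delta$ extends to a bijective morphism $\hat\delta$ of $\GC$ commuting with $F$. Applying the argument from (a) to the regular semisimple element $\delta(s) \in R \subseteq \TC$, one obtains $\hat\delta(\TC) = \TC$, and consequently $\hat\delta$ stabilizes $T = \TC \cap X$. By the equivariance of Deligne--Lusztig induction under Galois automorphisms and morphisms of $\GC$ (noting that $\eps \in \{\pm 1\}$ is rational), one has $\chi^{\delta\sigma} = \eps R^\GC_\TC(\theta^{\hat\delta\sigma})$. Since general-position irreducible Deligne--Lusztig characters are parametrized by $W_\TC^F$-orbits on $\Irr(T)$, $\chi^{\delta\sigma} = \chi$ holds if and only if $\theta^{\hat\delta\sigma} = \theta^u$ for some $u \in U$. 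On the other hand, $\mu^{\delta\sigma} = (\theta^{\delta\sigma})^U$, and since $\delta$ and $\hat\delta$ agree on $T \subseteq X$, this equals $(\theta^{\hat\delta\sigma})^U$; two such irreducible induced characters coincide precisely when the underlying characters of $T$ lie in one $U$-orbit. The two stabilizer conditions are therefore equivalent, yielding $(\Gamma \times \c G)_\chi = (\Gamma \times \c G)_\mu$. The main technical point I anticipate is invoking the equivariance of $R^\GC_\TC$ under an abstract automorphism of $X$, which is precisely what the extension hypothesis is designed to supply.
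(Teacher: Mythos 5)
Your proof is correct and takes essentially the same route as the paper. The minor variants are cosmetic: in (a) you prove the key inclusion $\NB_X(R)\subseteq\NB_X(\TC)$ by conjugating $s$ and using $\TC=C_\GC(s^g)^\circ$, whereas the paper sandwiches $\TC\leq\CB_\GC(R)^\circ\leq\CB_\GC(s)^\circ=\TC$ and normalizes $\CB_\GC(R)^\circ$ directly — both hinge on regularity of $s$ in the same way; and in (b) you invoke the parametrization of general-position Deligne--Lusztig characters while the paper applies the inner-product formula \cite[Corollary~9.3.1]{DM}, which is the same fact phrased slightly differently. You also usefully spell out the self-normalization of $U$ and its intravariance via Sylow's theorem, which the paper leaves implicit in the remark ``In particular, $U$ satisfies condition (i).''
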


\begin{proof}
(a) Since $R = \OB_r(T)$, we have $\NB_X(\TC) \leq \NB_X(T) \leq \NB_X(R)$. Also, 
$$\TC \leq \CB_\GC(R)^\circ \leq \CB_\GC(s)^\circ = \TC,$$ so 
$\TC=\CB_\GC(R)^\circ$ is normalized by $\NB_X(R)$, i.e. $\NB_X(R) \leq \NB_X(\TC)$.

\smallskip
(b) By \cite[Corollary 9.3.1]{DM}, for any $F$-stable maximal torus $\TC'$ of $\GC$ and $\theta' \in \Irr((\TC')^F)$ we have 
\begin{equation}\label{eq:r10}
  [R^\GC_\TC(\theta),R^\GC_{\TC'}(\theta')] = \frac{1}{|T|}|\{ x \in \GC^F \mid \tw x\TC=\TC',~\tw x \theta = \theta'\}|.
\end{equation}  
Hence, the irreducibility of $\chi$ implies that $\theta$ has trivial stabilizer in $\NB_X(\TC)/T = U/T$. It follows from Clifford's theorem
that $\mu = \theta^U$ is irreducible. 

Now assume that every automorphism of $X$ extends to a bijective morphism of $\GC$. Then as in (a) we have 
$\NB_{\Aut(X)}(\TC) \leq \NB_{\Aut(X)}(T) \leq \NB_{\Aut(X)}(R)$. Furthermore, $\TC=\CB_\GC(R)^\circ$ is normalized by $\NB_{\Aut(X)}(R)$, so 
we obtain that $\NB_{\Aut(X)}(R)=\NB_{\Aut(X)}(T)=\NB_{\Aut(X)}(\TC)$. Next, any $x \in \NB_{\Aut(X)}(U)$ normalizes $R = \OB_r(U)$, and any 
$y \in \NB_{\Aut(X)}(R)$ normalizes $U = \NB_X(R)$. It follows that
\begin{equation}\label{eq:r11}
  \Gamma = \NB_{\Aut(X)}(U) = \NB_{\Aut(X)}(R)=\NB_{\Aut(X)}(T)=\NB_{\Aut(X)}(\TC).
\end{equation}  

Next we consider any pair $(n,\sigma^{-1}) \in (\Gamma \times \c G)$. Applying \eqref{eq:r10} to the pairs 
$(\TC,\tw \sigma\theta)$ and $(\tw n \TC,\tw n \theta)$,
and using the fact that both $R^\GC_{\tw n \TC}(\tw n\theta)$ and $R^\GC_{\TC}(\tw \sigma\theta)$ are irreducible up to sign in this situation, we see that
$(n,\sigma^{-1})$ fixes $\chi$ if and only if there is some $m \in X$ such that $\tw m \TC = \tw n \TC$ and $\tw m (\tw \sigma \theta) = \tw n \theta$. But
$\tw n \TC=\TC$ by \eqref{eq:r11}, so in fact $m \in \NB_X(\TC)=U$.

Suppose that the pair $(n,\sigma^{-1})$ fixes $\mu$, i.e. $\tw n \mu = \tw\sigma \mu$. By \eqref{eq:r11}, $n$ stabilizes $T$. Furthermore,
$\mu_T$ is the sum over the $U$-orbit of $\theta$. It follows that there is some 
$m \in U$ such that $\tw n \theta = \tw m (\tw \sigma \theta)$. Conversely, if there is some $m \in U$ such that $\tw n \theta = \tw m (\tw \sigma\theta)$, 
then induction to $U$ shows that $\tw n \mu = \tw\sigma \mu$, i.e.  $(n,\sigma^{-1})$ fixes $\mu$. 
We have therefore shown that $(\Gamma \times \c G)_\chi = (\Gamma \times \c G)_\mu$.
\end{proof}

\begin{thm}\label{ree}
Let $q=3^{2f+1}$ with $f \in \Z_{\geq 1}$. Then the simple group $S=\tw2 G_2(q)$ satisfies the inductive Feit condition.
\end{thm}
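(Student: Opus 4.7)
The plan is to verify the inductive Feit condition character by character for $X := S = {}^2G_2(q)$, noting that $S$ has trivial Schur multiplier for $q>3$ (so $X$ is its own universal cover), and that $\mathrm{Out}(S) \cong C_{2f+1}$ is cyclic of odd order, generated by a field automorphism $\phi$ coming from $x\mapsto x^3$. We will use Ward's character table: every $\chi\in\Irr(X)$ is either unipotent (trivial, Steinberg, or one of the four cuspidal unipotents) or a Deligne--Lusztig character $\pm R^X_\TC(\theta)$ attached to one of the four $F$-stable maximal tori $\TC_i$ of $\GC$, whose finite fixed points $T_i:=\TC_i^F$ have orders $q-1$, $q+1$, $q+\sqrt{3q}+1$, and $q-\sqrt{3q}+1$ respectively (recall $\sqrt{3q}=3^{f+1}$).

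For the bulk of the characters, I will apply Proposition~\ref{regular}. For each prime $r$ dividing exactly one of the four torus orders $|T_i|$ (which includes every prime divisor of $q\pm\sqrt{3q}+1$ and generic primes in $q\pm 1$), the Sylow $r$-subgroup $R$ of $X$ is cyclic, contained in $T_i$, and contains a regular semisimple element. If $\chi=\eps R^X_{\TC_i}(\theta)$ is irreducible with $\theta$ regular (i.e.\ with trivial stabilizer in the relative Weyl group $W_i=\NB_X(\TC_i)/T_i$), Proposition~\ref{regular} supplies $U=\NB_X(R)=\NB_X(T_i)=\NB_X(\TC_i)$ (self-normalizing and intravariant, the latter because all maximal tori of a given type are $X$-conjugate) together with $\mu:=\theta^U\in\Irr(U)$ satisfying $(\Gamma\times\c G)_\chi=(\Gamma\times\c G)_\mu$, where $\Gamma=\Aut(X)_U$. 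By Frobenius reciprocity and the Deligne--Lusztig character formula on tori, $[\chi_U,\mu]_U=[\chi_T,\theta]_T=1$ when $\theta$ is regular. Theorem~\ref{easysituations}(a) then yields the inductive Feit condition for this $\chi$.

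Next I will dispose of the small list of remaining characters. The non-regular Deligne--Lusztig characters coming from the $\theta\in\Irr(T_i)$ fixed by a non-trivial element of $W_i$ are relatively few (since $|W_i|\in\{1,2,6\}$); for each, $R^X_{\TC_i}(\theta)$ splits into several irreducible constituents which I will match individually with characters of $U=\NB_X(\TC_i)$ obtained by extending $\theta$ to its stabilizer in $U$ and inducing, applying Lemma~\ref{easy2} (parts (i), (ii), or (iv), the last using that $\mathrm{Out}(X)$ has odd order) to verify the Galois--automorphism stabilizer equality, and then invoking Theorem~\ref{easysituations}(a) or (g). The handful of unipotent characters—$1_X$, $\mathrm{St}_X$ of degree $q^3$, and the four cuspidal unipotents—will be handled with $U=\NB_X(P)$ for $P\in\Syl_3(X)$ (which is self-normalizing and $\phi$-stable), matching $1_X\leftrightarrow 1_U$, and pairing the others with suitable irreducible characters of $U$ of matching field of values, using that all these characters are $\Aut(X)$-invariant and mostly rational-valued; Lemma~\ref{easy2}(i) together with Theorem~\ref{easysituations}(a) then concludes.

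The main obstacle I anticipate is the bookkeeping for the non-generic situations: characters where $\theta$ lies in a small $W_i$-orbit, and the four cuspidal unipotent characters, whose fields of values may involve $\sqrt{\pm q}$ or other irrationalities and must be matched against precisely chosen characters of the relevant local subgroup. In particular, verifying $(\Gamma\times\c G)_\chi=(\Gamma\times\c G)_\mu$ for the cuspidal unipotents, where neither $\chi$ nor the natural candidate $\mu$ is rational, is the most delicate point; here I expect the oddness of $|\Aut(X)/\Inn(X)|=2f+1$ to be essential, allowing an application of Theorem~\ref{easysituations}(g) with the appropriate prime divisor $p$ of $2f+1$ once the multiplicity $[\chi_U,\mu]$ has been arranged to be coprime to $p$ (chosen freely, as we have one candidate local pair $(U,\mu)$ per prime divisor of $|T_i|$).
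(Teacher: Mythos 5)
Your overall architecture (Proposition~\ref{regular} for the Deligne--Lusztig characters with $U=\NB_X(R)$ for a torus-Sylow $R$; the Sylow $3$-normalizer for the small family of remaining characters) matches the paper's, but there are two genuine gaps in how you close each case.

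First, for the generic Deligne--Lusztig characters you assert $[\chi_U,\mu]_U=[\chi_T,\theta]_T=1$ ``by Frobenius reciprocity and the Deligne--Lusztig character formula on tori'' and then invoke Theorem~\ref{easysituations}(a). This multiplicity claim is not justified: Frobenius reciprocity reduces you to $[\chi_T,\theta]_T$, but the restriction of $R^\GC_\TC(\theta)$ to $T$ is governed by Green functions at the identity and is not simply $\sum_{w}\theta^w$ (that formula only holds on the \emph{regular} semisimple locus of $T$). Already the degree count $\chi(1)/|T|$ is non-integral, so there are serious cancellations and no reason the inner product should be $1$. The paper sidesteps this entirely: it observes that each remaining $\chi$ is the semisimple character $\chi_s$ for a \emph{real} semisimple $s$, hence $\chi$ is real-valued, so is $\mu$ (from the equality of $(\Gamma\times\c G)$-stabilizers), $\ZB(X)=1$ makes the center condition vacuous, and $|\Gamma/U|$ odd lets one apply Theorem~\ref{easysituations}(f) — which needs no multiplicity-one input at all.

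Second, your fallback for the cuspidal unipotent characters via Theorem~\ref{easysituations}(g) ``with an appropriate prime divisor $p$ of $2f+1$'' does not work: part (g) requires $\Aut(X)_{\chi^{\c G}}/\Inn(X)$ to be a \emph{$p$-group}, but $\Out(X)\cong C_{2f+1}$ is not a $p$-group for general $f$ (take $f=7$, giving $C_{15}$), and the cuspidal unipotents are $\Out(X)$-stable, so the relevant stabilizer is all of $C_{2f+1}$. You cannot choose $p$ freely; the hypothesis of (g) simply fails. Relatedly, Lemma~\ref{easy2}(i) does not apply to $\xi_5,\ldots,\xi_{10}$ since they are not rational (their field is $\QQ(\sqrt{-3})$); the paper instead uses Lemma~\ref{easy2}(iv) (two Galois conjugates fused by the $C_2$ in $\PGL$-type local data, and an odd outer automorphism group), together with multiplicity-one constituents of the restriction to $\NB_X(P)$ read off from \cite{LM} and \cite{W}, and closes with Theorem~\ref{easysituations}(a). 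Without the reality argument and without the correct part of Lemma~\ref{easy2}, your plan as written does not complete.
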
 

\begin{proof}
Since $S$ has trivial Schur multiplier, we have $X=S$. 

\smallskip
(a) First we consider the ten characters $\xi_i$, $1 \leq i \leq 10$ of $X$, in the notation of \cite{W}. 
Let $P \in \Syl_3(X)$ and let $L:=\NB_X(P)$. As usual, if $\chi=\xi_1=1_X$, then we take
$(U,\mu)=(L,1_L)$. Suppose $\chi=\xi_2$, of degree $q^2-q+1$. By \cite[Lemma 1.3]{LM}, $\chi_L$ contains the unique character
$\Lambda_q$ of $L$ of degree $q^2-q$, with multiplicity $1$. Since both $\chi$ and $\mu$ are $\Gamma$-invariant and rational-valued, we are done by 
Theorem \ref{easysituations}(a).
Similarly, the Steinberg character $\xi_3$ (of degree $q^3$)  contains the unique character $\Lambda_1$ of $L$ of degree $q-1$ with multiplicity $1$.
We can also check by direct computation using \cite{W} that $\xi_4$, the only character of $X$ of degree $q(q^2-q+1)$, upon 
restriction to $L$ contains $I^-_q$, the only linear character of order $2$
of $L$, with multiplicity $1$. So we are done again in this case.

Next, $X$ has exactly two characters $\xi_5$, $\xi_7 = \overline{\xi_5}$ of degree $m(q-1)(q+3m+1)/2$, where $m:=3^f$.  By \cite[Lemma 1.3]{LM},
if $\chi=\xi_5$ or $\xi_7$ then $\chi_L$ has a unique irreducible constituent $\mu$ of degree $m(q-1)/2$, with multiplicity 1, and 
$\QQ(\chi)=\QQ(\mu)=\QQ(\sqrt{-3})$.
Since the odd-order group $\Gamma/U$ acts on $\{ \chi,\overline\chi\}$ and $\{\mu,\overline\mu\}$, we have $\Gamma_\chi=\Gamma=\Gamma_\mu$.  
Hence $(\Gamma \times \c G)_\chi = (\Gamma \times \c G)_\mu$ by Lemma \ref{easy2}(iv), and so we are done by Theorem \ref{easysituations}(a).
The same arguments apply to the pair $\xi_6$, $\xi_8 = \overline{\xi_5}$ of degree $m(q-1)(q-3m+1)/2$.

Using the values given in \cite{W} of $\xi_9$ and $\xi_{10}=\overline{\xi_9}$, the only two characters of degree $m(q^2-1)$ of $X$, one readily checks that
$$(\xi_9)_L = m\Lambda_q + \Lambda_m,~(\xi_{10})_L = m\Lambda_q + \overline\Lambda_m,$$
where $\Lambda_m$ and $\overline\Lambda_m$ are the only two characters of degree $m(q-1)$ of $L$. As $\Q(\xi_9)=\Q(\Lambda_m)=\Q(\sqrt{-3})$,
we can finish as above.

\smallskip
(b) All the remaining irreducible characters of $X$ are Deligne--Lusztig characters up to sign. Indeed, $\eta_r$ and $\eta'_r$ of degree $q^3+1$ in \cite{W}
correspond to the maximal torus $T_1$ of order $q-1$; $\eta_t$ and $\eta'_t$ of degree $(q-1)(q^2-q+1)$ 
correspond to the maximal torus $T_2$ of order $q+1$; $\eta^+_i$ of degree $(q^2-1)(q-3m+1)$ corresponds to $T_3$ of order 
$q+3m+1$; and $\eta^-_i$ of degree $(q^2-1)(q+3m+1)$ corresponds to $T_4$ of order 
$q-3m+1$, where $T_{1,2,3,4}$ are as listed in \cite[D.2]{H}. 
Each of these characters $\chi$ is the semisimple character $\chi_s$ labeled by a semisimple element $s$ in the dual group which is 
isomorphic to $X$. As $s$ is real, $\chi=\chi_s$ is also real-valued. Furthermore, since $q \geq 27$, we can check that each $T_i$, $1 \leq i \leq 4$,
contains a Sylow $r$-subgroup $R$ of $X$, for a suitable prime $r \neq 2,3$, and $R$ contains a regular semisimple element $t$ (indeed with
$\CB_X(t)=T$). We also note that the outer automorphisms of $X$ are field automorphisms and hence extend to bijective morphisms of the simple
algebraic group $\GC=G_2$ if we view $X=\GC^F$. 
Applying Proposition \ref{regular}, we can choose $(U,\mu)$ with $U = \NB_X(R)$ and $(\Gamma \times \c G)_\chi=(\Gamma \times \c G)_\mu$;
in particular, $\mu$ is real-valued as so is $\chi$. Finally, as $|\Gamma/U|$ is odd, taking $\Sigma = \Aut(X) \cong X \cdot C_{2f+1}$, we are done 
by Theorem \ref{easysituations}(f).
\end{proof}

\begin{proof}[Proof of Theorem D]
By Corollary B, it suffices to prove that every non-abelian simple group with abelian Sylow $2$-subgroups satisfy the inductive Feit condition.
Using Walter's classification \cite{Wa}, and Theorems \ref{spo}, \ref{sl22},
\ref{sl21}, and \ref{ree}, we are done.
\end{proof}


\section{Conjecture E}
In this final section, we explore some suitable generalizations of Feit's conjecture.

\begin{lem}
Suppose that $G$ is a finite group and $\chi \in \irr G$. Then the following are equivalent:

\begin{enumerate}[\rm(a)]

\item
There is a subgroup $H$ and a linear irreducible constituent $\lambda$ of $\chi_H$ such that $c(\chi)$ divides $o(\lambda)$.

\item
There is a cyclic subgroup $H$ and a linear irreducible constituent $\lambda$ of $\chi_H$ such that $c(\chi)$ divides $o(\lambda)$. 

\item
There is a cyclic subgroup $H$ and a linear irreducible constituent $\lambda$ of $\chi_H$ such that $c(\chi)=o(\lambda)$.

\end{enumerate}
\end{lem}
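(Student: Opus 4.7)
The three implications (c) $\Rightarrow$ (b) $\Rightarrow$ (a) are trivial: a cyclic group is in particular a subgroup, and the equality $c(\chi)=o(\lambda)$ certainly yields the divisibility $c(\chi)\mid o(\lambda)$. So the content of the lemma is in proving (a) $\Rightarrow$ (b) and (b) $\Rightarrow$ (c), both of which are purely elementary manipulations with linear characters.

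For (a) $\Rightarrow$ (b), I would proceed as follows. Suppose $H\le G$ and $\lambda\in\irr H$ is a linear constituent of $\chi_H$ with $c(\chi)\mid o(\lambda)$. Since $\lambda$ is linear, the quotient $H/\ker\lambda$ is cyclic of order $o(\lambda)$. Pick any $h\in H$ whose image in $H/\ker\lambda$ is a generator. Then $\lambda(h)$ is a primitive $o(\lambda)$-th root of unity, so the restriction $\lambda_{\la h\ra}$ is a linear character of order exactly $o(\lambda)$. As $\la h\ra$ is cyclic and $\lambda_{\la h\ra}$ remains a constituent of $\chi_{\la h\ra}=(\chi_H)_{\la h\ra}$, the pair $(\la h\ra,\lambda_{\la h\ra})$ witnesses (b).

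For (b) $\Rightarrow$ (c), I would cut down inside the cyclic witness from (b). Assume $H=\la h\ra$ is cyclic and $\lambda\in\irr H$ is linear with $c(\chi)\mid o(\lambda)=:n$. Set $j:=n/c(\chi)$. Then $\lambda(h^j)=\lambda(h)^j$ has order exactly $n/\gcd(n,j)=c(\chi)$, so the cyclic subgroup $H':=\la h^j\ra$ and the linear character $\lambda':=\lambda_{H'}$ satisfy $o(\lambda')=c(\chi)$, and $\lambda'$ is a constituent of $\chi_{H'}$ by the same restriction argument as before.

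No genuine obstacle arises; the only things one must keep track of are that (i) linear characters restrict to linear characters, (ii) constituents restrict to constituents, and (iii) the order of a linear character on a cyclic group equals the order of its value on a generator. The slight asymmetry between (b) and (c) is that (b) only requires $c(\chi)\mid o(\lambda)$, which one then sharpens to equality in (c) by taking the appropriate power of the generator of $H$; conversely, (a) allows arbitrary $H$, which one specializes to a cyclic subgroup in (b) by choosing a lift of a generator of $H/\ker\lambda$.
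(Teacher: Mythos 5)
Your proof is correct and is essentially the same as the paper's: both pass to the cyclic subgroup generated by a lift of a generator of $H/\ker\lambda$ and then cut down to the unique subgroup where the restricted linear character has order exactly $c(\chi)$. The only cosmetic difference is that you factor the argument through the intermediate step (b), while the paper goes directly from (a) to (c).
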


\begin{proof}
It suffices to show that (a) implies (c).
 Suppose that there is an irreducible linear constituent $\mu$ of $\chi_H$ such that $c(\chi)=n$ divides $o(\mu)=m$. Let $K=\ker\mu$
and let $h \in H$ such that $\langle h \rangle K=H$. Let $D=\langle h\rangle$ and notice that $D/D\cap K$ has order $m$. Now, if $D\cap K \sbs C \le D$
and $|C:D\cap K|$ has order $n$, it follows that $\lambda_C$ is an irreducible constituent of $\chi_C$ with $o(\lambda_C)=c(\chi)$.
\end{proof}

\begin{lem}\label{roots}
Suppose that $G$ is a cyclic group of odd order, and let $u,v \in G$. Then there exists an integer $k$ such that
$(k,o(v))=1$ and $o(u)$ divides the order of $v^ku$. Hence, if $\lambda, \zeta$ are odd order linear characters of a cyclic group $C$,
 then there is a Galois conjugate $\lambda^\sigma$ such that $o(\zeta)$ divides $o(\lambda^\sigma \zeta)$
\end{lem}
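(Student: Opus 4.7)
The plan is to reduce the first (main) assertion to a system of congruence conditions on $k$, one for each prime $p$ dividing $n:=|G|$, and then solve this system by the Chinese Remainder Theorem, crucially using that $n$ is odd so that each local condition excludes at most two residues out of $p \geq 3$. The second assertion will then follow by translating Galois conjugation on characters into exponentiation by integers coprime to $o(\lambda)$ and applying the first assertion inside the odd-order cyclic group generated by $\lambda$ and $\zeta$.

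First I would fix a generator $g$ of $G$, write $u=g^s$, $v=g^t$, and note $o(u)=n/\gcd(n,s)$, $o(v)=n/\gcd(n,t)$, $o(v^ku)=n/\gcd(n,kt+s)$. Thus the required divisibility $o(u)\mid o(v^ku)$ becomes $\gcd(n,kt+s)\mid\gcd(n,s)$, which I would rephrase prime by prime: for every prime $p\mid n$ with $v_p(s)<v_p(n)$, one needs $v_p(kt+s)\le v_p(s)$. Simultaneously, $(k,o(v))=1$ translates to $p\nmid k$ for every prime $p\mid n$ with $v_p(t)<v_p(n)$.

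Next I would analyze each prime $p\mid n$ separately, comparing $\beta:=v_p(t)$ and $\gamma:=v_p(s)$. When $\beta\neq\gamma$, the valuation $v_p(kt+s)=\min(\beta,\gamma)$ is forced and the condition is automatic (the case $\beta<\gamma$ uses only that $p\nmid k$, which is demanded anyway). The only delicate case is $\beta=\gamma<v_p(n)$: writing $t=p^\beta t'$, $s=p^\gamma s'$ with $p\nmid t's'$, one needs $k\not\equiv -s'/t'\pmod p$ in addition to $k\not\equiv 0\pmod p$. Since $p\nmid s'$, these two bad residues are distinct, leaving $p-2\ge 1$ admissible residues---this is exactly where the odd-order hypothesis is used. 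Combining the local restrictions via CRT produces the desired $k$.

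For the second assertion I would pass to the odd-order cyclic subgroup $G=\langle \lambda,\zeta\rangle\le\widehat C$. The Galois conjugates of $\lambda$ are exactly the characters $\lambda^k$ with $(k,o(\lambda))=1$, so applying the first part with $v=\lambda$, $u=\zeta$ yields an integer $k$ coprime to $o(\lambda)$ with $o(\zeta)\mid o(\lambda^k\zeta)$; picking $\sigma\in\c G$ inducing $\lambda\mapsto\lambda^k$ completes the proof. The main (essentially only) obstacle is the CRT step in the case $\beta=\gamma<v_p(n)$, and the entire argument would collapse at $p=2$ because then $p-2=0$; the oddness hypothesis is exactly what keeps at least one admissible residue at every prime.
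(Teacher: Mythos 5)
Your proof is correct, and it takes a somewhat different path from the paper's. The paper first uses CRT to reduce to the case where $G$ is a cyclic $p$-group, then makes a structural observation: assuming WLOG that $\langle u,v\rangle = G$, if no $k$ coprime to $p$ satisfied $\langle u\rangle \subseteq \langle v^ku\rangle$, then (since the subgroups of a cyclic $p$-group form a chain) $\langle vu\rangle \subsetneq \langle u\rangle$, forcing $v\in\langle u\rangle$ and $G=\langle u\rangle$; in that case $v=u^t$ and one needs $p\nmid kt+1$, which a direct check gives for odd $p$. You instead work with explicit exponents $u=g^s$, $v=g^t$ and translate the divisibility into the condition $\gcd(n,kt+s)\mid\gcd(n,s)$, which you then resolve prime by prime via $p$-adic valuations: the only genuinely constrained case is $v_p(t)=v_p(s)<v_p(n)$, where two distinct residues mod $p$ must be avoided, and $p\ge3$ leaves at least one admissible residue, after which CRT assembles $k$. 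Both arguments use CRT and both invoke oddness at exactly the point where two bad residues modulo $p$ must be excluded (for you, $k\not\equiv 0$ and $k\not\equiv -s'/t'$; for the paper, $k\not\equiv 0$ and $kt+1\not\equiv 0$). Your route is more computational and perhaps easier to verify line by line; the paper's is shorter once one accepts the reduction to $G=\langle u\rangle$, which exploits the chain structure of subgroups of a cyclic $p$-group. For the second assertion you do essentially what the paper does (pass to the odd-order cyclic group of characters and identify Galois conjugates of $\lambda$ with the powers $\lambda^k$, $(k,o(\lambda))=1$). One small imprecision in your write-up: in the case $\beta<\gamma$ you say $p\nmid k$ ``is demanded anyway,'' which is only literally true when $\beta<v_p(n)$; but since the divisibility constraint is vacuous unless $\gamma<v_p(n)$, and $\beta<\gamma$, the inequality $\beta<v_p(n)$ does hold whenever it matters, so the argument is sound.
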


\begin{proof}
Suppose that $|G|=p_1^{f_1} \cdots p_n^{f_n}$. Suppose that for each $i$, there is $k_i$ coprime to $p_i$ such that
$o(u_{p_i})$ divides the order of $(v_{p_i})^{k_i}u_{p_i}$. By the Chinese Remainder Theorem, let $k$ be an integer
such that $k \equiv k_i$ mod $(p_i)^{f_i}$.  Then $v_{p_1}^{k_1} \cdots v_{p_n}^{k_n}=v^k$, and therefore we may assume that $G$
has $p$-power order.  We may also assume that $\langle u, v \rangle=G$. 
Suppose that there is no $k$ coprime to $p$ such that $\langle u\rangle $ is not contained in $\langle v^k u\rangle$.
Since $G$ is a cyclic $p$-group, it follows that $\langle v u\rangle \le \langle u\rangle$. Thus $G=\langle u \rangle$.
Write $v=u^t$ for some $t$. We want to show that there is $(k,p)=1$ such that  $tk+1$ is coprime to $p$.
If $p$ divides $t$, then we take $k=1$. If $p$ does not divide $t$, then let $k$ be such that $kt \equiv 1$ mod $p$. Then
$kt +1 \equiv 2$ mod $p$, and therefore $kt+1$ is coprime to $p$ since $p$ is odd.
For the final part, consider the cyclic group $G$ of linear characters of $C/C_2$, where $C_2$ is the Sylow 2-subgroup of $C$.
\end{proof}

%
%
%
%
 \medskip
 
An essential tool in the proof of Conjecture E for solvable groups is, again, the use of Isaacs' character $\psi$ associated to a
fully ramified section, which we already used in Theorem \ref{isaacs}. We need extra information on Isaacs character $\psi$. For the reader's convenience, we review some of the basic ingredients.
Recall that if $N \triangleleft G$, $\theta \in \irr N$ is $G$-invariant
and  $\theta^G=e\chi$ for some $\chi \in \irr G$, we say  in this case that
$\theta$ (or $\chi$) is fully ramified in $G$. Under these condition, it is easy to prove that $e^2=|G:N|$.
Suppose now that $L,K$ are normal subgroups of $G$, $K/L$ abelian, and $\varphi \in \irr L$ is $G$-invariant with
$\varphi^K=e\theta$, that is, $\varphi$ is fully ramified in $K$.  In Isaacs terminology, this is
a character five $(G,K,L,\theta, \varphi)$. (See Section 3 of \cite{Is73}.)
Whenever we have a character five, the main goal is to find a {\sl good} complement $U$ of $K/L$ in $G$
and a relationship between $\irr{G|\theta}$ and $\irr{U|\varphi}$. This can be successfully done if
$G/L$ has odd order or if $(|G/K|, |K/L|)=1$, which will be our case. 
Notice that in the latter, the existence of a unique conjugacy class of
complements $U$ of $K/L$ in $G$ is guaranteed by the
Schur--Zassenhaus theorem.

\begin{lem}\label{fully}
Suppose that $L, K$ are normal subgroups of $G$, with $L\le K$, such that
$K/L$ is an abelian $p$-group and $G/K$ is
a $p'$-group. Let $\varphi \in \irr L$ be $G$-invariant, $\theta \in \irr K$ such that
$\varphi^K=e\theta$, where $e^2=|K:L|$. Let $U/L$ be a complement of $K/L$ in $G/L$, and
let $\psi \in {\rm Char}(U/L)$ be the Isaacs canonical character associated with $(G,K,L,\theta, \varphi)$.
Let $u \in U$ and suppose that $p$ is odd.  
Then $\psi_{\langle uL\rangle}$ has a linear irreducible constituent $\epsilon$ with $\epsilon^2=1$. 
\end{lem}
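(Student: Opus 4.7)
The plan is to reduce the lemma to a single positivity statement for $\psi$ on odd-order elements, via orthogonality on the cyclic group $\langle uL\rangle$.

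First, since $U/L\cong G/K$ is a $p'$-group, $n:=o(uL)$ is coprime to $p$. Set $C:=\langle uL\rangle\le U/L$ and let $H:=O^2(C)$ be the (unique) subgroup of $C$ consisting of elements of odd order, so that $H=C$ when $n$ is odd and $[C:H]=2$ otherwise. The linear characters $\epsilon\in\Irr(C)$ with $\epsilon^2=1_C$ are precisely those inflated from $C/H$, and their sum equals $|C/H|\cdot\mathbb{1}_H$ as a class function on $C$. Evaluating the inner product on $C$ yields
$$
\sum_{\substack{\epsilon\in\Irr(C)\\ \epsilon^2=1_C}}[\psi_C,\epsilon]_C \;=\; \frac{1}{|H|}\sum_{h\in H}\psi(h),
$$
so it suffices to prove that $\sum_{h\in H}\psi(h)>0$.

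Next, by Theorem~\ref{isaacs}(a),(b), each $\psi(h)\in\Q$ satisfies $\psi(h)^2=|C_{K/L}(h)|$, a power of the odd prime $p$. A rational number whose square is a positive integer is itself an integer, so $\psi(h)\in\Z\setminus\{0\}$, $|C_{K/L}(h)|$ is an odd perfect square, and $|\psi(h)|$ is an odd positive $p$-power; in particular $\psi(1)=e\ge 1$. The key input is the positivity statement: $\psi(h)>0$ for every $h\in U$ with $hL$ of odd order. Granting this, each term of the sum is a positive odd integer, the total is at least $\psi(1)\ge 1>0$, and the lemma follows.

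The main obstacle, and where the hypothesis $p>2$ is used, is precisely this positivity statement. It can be extracted from Isaacs' explicit construction of $\psi$ in \cite{Is73}: the fully-ramified section $(K,L,\theta,\varphi)$ endows $K/L$ with a non-degenerate alternating pairing into roots of unity, $U/L$ acts by symplectic isometries, and $\psi$ is (up to a choice of polarization) the character of the associated Heisenberg/Weil-type representation of $U/L$, whose values on $p'$-elements of odd order are strictly positive when $p$ is odd (a standard feature of Weil characters in odd residue characteristic, already visible in Isaacs' sign formula for $\psi(u)$). Should a direct appeal to this sign formula be inconvenient, an alternative route is to reduce first to the cyclic case $U=\langle L,u\rangle$ by using the natural compatibility of the canonical character construction with restriction of the character 5-tuple to $\langle K,u\rangle$, and then, when $uL$ has odd order, to argue directly by an orthogonality computation that $\psi$ contains the trivial character of $U/L$.
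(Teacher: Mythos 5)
Your reduction via orthogonality on $C=\langle uL\rangle$ is just a reformulation: the quantity $\tfrac{1}{|H|}\sum_{h\in H}\psi(h)$ is the multiplicity of $1_H^C=\sum_{\epsilon^2=1}\epsilon$ in $\psi_C$, so showing it is positive is literally equivalent to the lemma's conclusion. The entire burden therefore falls on the claimed positivity $\psi(h)>0$ for odd-order $h$, and that claim is false.

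A concrete counterexample: let $K$ be extraspecial of order $p^3=125$ (exponent $5$), $L=\zent K$, $\varphi\in\irr L$ faithful, so $\varphi^K=5\theta$. Let $G=K\rtimes C_3$ where $C_3\le\Sp_2(5)$ acts fixed-point-freely on $K/L\cong\FF_5^2$ and trivially on $L$, and $U=L\cdot C_3$. Then $\psi$ has degree $e=5$, and for $1\ne x\in U/L$ one has $\psi(x)^2=|\cent{K/L}{x}|=1$, so $\psi(x)=\pm1$; the requirement that $[\psi,1_{U/L}]\in\Z$ forces $\psi(x)=-1$ (as $(5+2)/3\notin\Z$ while $(5-2)/3=1$). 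Thus $\psi$ is strictly negative at every nontrivial element of the odd-order group $U/L$. The lemma's conclusion nevertheless holds here (indeed $[\psi,1_{U/L}]=1>0$), but your termwise positivity is simply wrong, and the appeal to a ``standard feature of Weil characters in odd residue characteristic'' is not a fact — Weil characters of $\Sp_{2n}(p)$, and in particular of $\SL_2(p)$, routinely take the value $-1$ on odd-order regular elements exactly as above. Your ``alternative route'' is not an alternative: the orthogonality computation you propose is the same unproven positivity in disguise.

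The paper's argument is genuinely different and does not rely on any sign control of individual values: after reducing to $U/L$ cyclic and to the case $\cent{K/L}{x}=1$ for $1\ne xV$ (so $\psi(x)=\pm1$ there), it invokes Problem~4.9 of \cite{N} to write $\bar\psi=f\rho+\epsilon\lambda$ with $\rho$ the regular character of $U/V$, $\lambda\in\irr{U/V}$, $\epsilon=\pm1$, and $\lambda^2=1$; if no constituent $\mu$ with $\mu^2=1$ existed, one would be forced into $\epsilon=-1$, $\lambda=1$, $f=1$, $|U/V|$ odd, whence $\psi(1)=e=p^n=|U/V|-1$ is even, contradicting $p$ odd. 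You would need to replace your positivity claim with an argument of this kind (or establish positivity of the \emph{sum}, which is what the paper's structural decomposition actually delivers).
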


\begin{proof} 
Let $E=K/L$ which carries a complex valued multiplicative form by Section 2 of \cite{Is73}.
 Using Theorem 4.7 of \cite{Is73}, we may assume that  $(G,K,U,L,\theta, \varphi)$ is a constellation.
 (See Definition 4.5 of \cite{Is73}).
Using Theorem 6.3 of \cite{Is73}, we obtain that $\psi$ is the canonical character associated with the constellation.
We may assume that $U/L=\langle uL \rangle$. 
By Lemma 5.6 of \cite{Is73}, we have $V:=\ker\psi=\cent{U}{K/L}$.
Arguing by induction on $|K:L|$, and using Lemma 5.5 of \cite{Is73},  we may assume that 
$\cent{K/L}x=1$ for all $1 \ne xV \in U/V$.  By Theorem 5.7 of \cite{Is73}, we have $\psi(x)=\pm 1$
for all $1 \ne xV \in U/V$. Problem 4.9 in \cite{N} now implies
$$\bar\psi=f\rho + \epsilon \lambda \, $$
where $\bar\psi$ is the character $\psi$ viewed as a character of $U/V$,
$f \ge 0$ is an integer,  $\rho$ is the regular character of $U/V$, $\lambda \in \irr{U/V}$, and $\epsilon\in\{\pm1\}$.
If $1 \ne xV \in U/V$, by evaluating, we deduce that $\lambda(x)=\pm 1$. Thus $\lambda^2=1$. 
Hence, if $\psi$ does not contain any linear character $\mu$ with $\mu^2=1$, we deduce that
$f=1$, $\lambda=1$, $\epsilon=-1$ and $|U:V|=m$ is odd.   
Hence $p^{n}=m-1$, and $p=2$.
\end{proof}

\begin{thm}\label{9.4}
If $G$ is solvable and $\chi \in \irr G$,
then there exists a cyclic subgroup $H$ of $G$ and an irreducible
constituent $\lambda \in \irr H$ of $\chi_H$ such that $c(\chi)$ divides $o(\lambda)$.
\end{thm}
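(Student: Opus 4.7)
The plan is to argue by induction on $|G|$. The routine reductions handle the easy cases: if $\chi$ is linear, take $H=\langle g\rangle$ for any $g\in G$ mapping to a generator of the cyclic quotient $G/\ker\chi$; if $\chi=\tau^G$ is induced from $\tau\in\irr J$ with $J<G$, then the induction formula gives $\Q(\chi)\subseteq\Q(\tau)$, so $c(\chi)\mid c(\tau)$, and the induction hypothesis applied to $\tau$ yields a cyclic $H\le J$ and linear constituent $\lambda$ of $\tau_H$ with $c(\tau)\mid o(\lambda)$, which is automatically a constituent of $\chi_H$ since $\tau$ is a constituent of $\chi_J$; if $\chi$ is primitive but not faithful, I would descend to $G/\ker\chi$, apply the induction hypothesis there, and lift the resulting cyclic subgroup to $G$ by taking the cyclic group generated by any preimage of a chosen generator. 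This leaves the case $\chi$ primitive, faithful, of degree $>1$, in which $G$ is solvable and non-nilpotent.

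For this remaining case I would follow Step~1 of the proof of Theorem~\ref{main}. Let $X$ be the nilpotent residual of $G$ and pick a chief factor $X/Y$, an abelian $p$-group for some prime $p$; let $H/Y$ be a Carter subgroup of $G/Y$ and $P/Y\in\Syl_p(G/Y)$; set $Q=P\cap H$, so that $G=PH$, $P/Q$ is a chief factor of $G$, and $G/Q$ is non-nilpotent. Writing $\chi_P=f\theta$ and $\theta_Q=e\varphi$, the Going Down Theorem gives $e=1$ or $e^2=|P:Q|$. If $e=1$, Lemma~\ref{res} supplies a proper self-normalizing $H<G$ with $\chi_H$ irreducible and $\Q(\chi)=\Q(\chi_H)$, and the induction hypothesis applies immediately. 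The substantive case is $e^2=|P:Q|$, where Theorem~\ref{isaacs} yields a bijection $\chi\mapsto\chi'$ from $\irr{G|\theta}$ to $\irr{H|\varphi}$ with $\chi_H=\psi\chi'$ and $\Q(\chi)=\Q(\chi')$, where $\psi\in{\rm Char}(H/Q)$ is the rational-valued Isaacs canonical character. Applying the induction hypothesis to $\chi'\in\irr H$ produces a cyclic $H_1\le H$ and a linear constituent $\lambda$ of $\chi'_{H_1}$ with $c(\chi)=c(\chi')\mid o(\lambda)$; and since $\chi_{H_1}=\psi_{H_1}\chi'_{H_1}$, the product $\mu\lambda$ is a constituent of $\chi_{H_1}$ for every constituent $\mu$ of $\psi_{H_1}$, so it suffices to find $\mu$ with $o(\lambda)\mid o(\mu\lambda)$.

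To find such $\mu$, I would exploit two features of $\psi$: rationality, so the set of constituents of $\psi_{H_1}$ is Galois-closed, and its factoring through $H/Q$ of $p'$-order, so every constituent of $\psi_{H_1}$ has order coprime to $p$. Decomposing the cyclic $H_1$ by primes, the $p$-part divisibility is automatic (since $\mu$ is trivial on the $p$-part of $H_1$); on the part of $H_1$ of order coprime to both $p$ and $2$, Lemma~\ref{roots} supplies a Galois conjugate $\mu^\sigma$ of $\mu$---still a constituent of $\psi_{H_1}$ by rationality of $\psi$---for which the corresponding part of $\mu^\sigma\lambda$ has order divisible by that of $\lambda$. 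For $p=2$ this already completes the argument. The main obstacle is the $2$-part when $p$ is odd: here Lemma~\ref{fully} supplies a constituent of $\psi_{\langle uL\rangle}$ of order at most $2$, and the proof of Lemma~\ref{fully} shows that the only potentially problematic configuration is the ``bad case'' $\bar\psi=\rho-1$ on a cyclic quotient $U/V$ of $H/\ker\psi$ with $H_1V=U$; in that case the fully-ramified hypothesis forces $e\ge p\ge 3$, hence $|U:V|=e+1\ge 4$, providing enough constituents of $\psi_{H_1}$ to select one whose $2$-part has order distinct from that of $\lambda$'s $2$-part, yielding the required divisibility and completing the proof when combined with the Galois-conjugation step on the $(2p)'$-part.
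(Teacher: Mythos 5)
Your overall blueprint matches the paper's: reduce to the primitive, faithful, non-nilpotent case, set up $P$, $Q$, and the Carter complement $H$, invoke the Going Down dichotomy, dispose of $e=1$ via Lemma~\ref{res}, and for the fully ramified case apply Theorem~\ref{isaacs} and multiply a constituent $\lambda$ of $\chi'_{H_1}$ by a constituent $\mu$ of $\psi_{H_1}$. The $p$-part being automatic, the $(2p)'$-part handled by Lemma~\ref{roots}, and the resulting clean treatment of $p=2$ all agree with the paper. Where you diverge is the $2$-part when $p$ is odd, and that is where there is a genuine gap.

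Your ``bad case'' analysis is misdirected. In the proof of Lemma~\ref{fully}, the configuration $\bar\psi=\rho-1$ with $U/V$ cyclic of odd order $m$ forces $m-1=p^n$, which makes $p^n$ even, i.e.\ $p=2$. So for odd $p$ the bad case is vacuous, and Lemma~\ref{fully} always delivers a constituent $\epsilon$ of $\psi_{\langle uR\rangle}$ with $\epsilon^2=1$. The attempted fallback ``$|U:V|=e+1\geq 4$ provides enough constituents'' therefore never applies; and it would not help anyway, since in that hypothetical case $U/V$ has odd order, so \emph{every} constituent of $\psi_{H_1}$ has trivial $2$-part and there is nothing to ``select.'' More importantly, having $o(\epsilon)\le 2$ does not by itself settle the $2$-part: if $o(\epsilon)=2$ and the $2$-part of $o(\lambda)$ is exactly $2$, then $\epsilon$ is forced to be the involution of $\langle\lambda\rangle$ (there is only one), so $o(\epsilon\lambda)$ drops a factor of $2$ and the desired divisibility $o(\lambda)\mid o(\epsilon\lambda)$ fails. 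You give no argument that excludes this. The step you are missing is the field-theoretic shortcut the paper uses: since $\epsilon$ is rational-valued, $\QQ(\epsilon\lambda)=\QQ(\lambda)\supseteq\QQ(\chi)$, so $c(\chi)\mid o(\epsilon\lambda)$ immediately, with no prime-by-prime decomposition needed (equivalently: $c(\chi)$, being the conductor of a number field, is never $\equiv 2\pmod 4$, so the $2$-part of $o(\lambda)=c(\chi)$ is never exactly $2$). As a side remark, once you take $\mu=\epsilon$ of order at most $2$, the $(2p)'$-part of $\epsilon$ is trivial, so Lemma~\ref{roots} is not needed at all for odd $p$; your attempt to run the Galois-conjugation step in parallel with the $2$-part argument makes the construction of a single admissible $\mu$ murkier than it needs to be.
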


\begin{proof}
We argue by induction on $|G|$. If $\chi=\psi^G$, where
$\psi \in \irr H$ and $H<G$, then $c(\chi)$ divides $c(\psi)$, and we are done by induction.
 Hence, we may assume that $\chi$
is primitive. We also may assume that $\chi$ is faithful.

If $G$ is nilpotent, then $\chi$ is monomial, and therefore linear. Since $\chi$ is faithful, $G$ is cyclic, and we are done. 
Let $K$ be the smallest normal subgroup of $G$ such that $G/K$ is nilpotent,
let $K/L$ be a $p$-chief factor of $G$, and let $U=LC$, where $C$ is a Carter subgroup
of $G$. Then $KU=G$ and $K\cap U=L$. 

Since $\chi$ is primitive, we can write $\chi_K=f\theta$ and $\theta_L=e\varphi$, where $\theta \in \irr K$ and $\varphi \in \irr L$.
By the Going Down Theorem 6.18 of \cite{Is}, either $e=1$ or $e^2=|K/L|$.
If $e=1$ then we are done by Lemma \ref{res}.
So we may assume that $\varphi^K=e\theta$, where $e^2=|K:L|$.
 
Let $P/L$ be a Sylow $p$-subgroup of $G/L$. Then $P\nor G$, since $G/K$ is nilpotent.
Now, $K/L \cap  \zent {P/L} >1$, and therefore, $K/L \sbs \zent{P/L}$. This implies that $R=P\cap U \nor U$ and
is normalized by $K$, so $R\nor G$. 
 
Write $\chi_P=v\tau$ and $\chi_R=w\mu$. Using again the Going Down Theorem and Lemma \ref{res}, 
we may assume that $\tau$ and $\mu$ are fully ramified,
but now we have that $G/P$ and $P/R$ are coprime. Assume first that $p$ is odd.
Then we are in the situation of Lemma \ref{fully}.
Now we use Theorem \ref{isaacs}. Let $\psi$ be the associated Isaacs character
and write $\chi_U=\psi \chi^\prime$, where $c(\chi^\prime)=c(\chi)=c$.
By induction, there is a cyclic subgroup $V \sbs U$ and a linear  irreducible constituent $\mu$
of $\chi^\prime_V$ such that $c(\chi)=c=o(\mu)$. 
Now, by Lemma \ref{fully}, the character $\psi_{VR}$ has a linear constituent $\epsilon \in \irr{VR/R}$
with $\epsilon^2=1$. Altogether, we have
$$\chi_{VR}=\psi_{VR} \chi^\prime_{VR}=\epsilon \chi^\prime_{VR} + \Delta \, ,$$
where $\Delta$ is some character or zero. Hence, $\chi_V$ has the linear constituent $\epsilon\mu$
and $\Q(\chi) \sbs \Q(\mu)=\Q(\epsilon_V\mu)$.  We are done in this case.

So we may assume that $p=2$.  Hence, $U/R$ has odd order.   Write $\mu=\delta\zeta$, where $o(\delta)=c_2$ is a 2-power and $o(\zeta)=c_{2'}$ is odd. Write $V=\langle u \rangle$.
Since $\psi$ is rational valued,
we can write $\psi_{VR}=e_1O(\lambda_1) + \cdots + e_tO(\lambda_t)$,
where $\lambda_i$ are linear (odd order) characters of $VR/R$ and $O(\lambda_i)$ is the sum of
the Galois orbit of $\lambda_i$.  
By Lemma \ref{roots}, there is a Galois conjugate $((\lambda_1)_V)^\sigma$ such that the order of
$((\lambda_1)_V)^\sigma\zeta$ is an odd multiple of $c_{2'}$. Hence, the order of $\delta ((\lambda_1)_V)^\sigma\zeta$ 
is divisible by $c$ and it is an irreducible constituent of $\chi_V$. 
\end{proof}

\begin{cor}\label{overab}
Suppose that $A \nor G$ is abelian, and $G$ is solvable. Suppose that $\chi \in \irr G$. Then
there exists $A \le H \le G$ and $\lambda \in \irr H$ linear, such that $H/A$ is cyclic, $[\chi_H, \lambda] \ne 0$ and $\Q(\chi) \sbs \Q(\lambda)$.
\end{cor}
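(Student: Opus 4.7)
The plan is to reduce Corollary \ref{overab} to Theorem \ref{9.4} via Clifford correspondence over $A$. First, I would pick an irreducible constituent $\alpha \in \irr A$ of $\chi_A$; since $A$ is abelian, $\alpha$ is linear. Setting $T = G_\alpha$, Clifford correspondence yields a unique $\psi \in \irr{T \mid \alpha}$ with $\psi^G = \chi$. The induction formula for character values gives $\Q(\chi) \sbs \Q(\psi)$ immediately.

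Next, I would apply Theorem \ref{9.4} to the solvable group $T$ and the character $\psi$ to obtain a cyclic subgroup $K \le T$ and a linear constituent $\mu$ of $\psi_K$ with $c(\psi)$ dividing $o(\mu)$. Set $H := KA \le T$. Then $A \le H$, and since $A \nor H$, we have $H/A \cong K/(K\cap A)$, which is cyclic as required. Because $\psi_A = \psi(1)\alpha$, the restriction $\psi_{K\cap A}$ is a multiple of $\alpha_{K\cap A}$, and since $\mu$ is a constituent of $\psi_K$, it follows that $\mu_{K\cap A} = \alpha_{K\cap A}$.

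I would then define $\lambda \colon H \to \C^\times$ by $\lambda(ka) := \mu(k)\alpha(a)$ for $k \in K$, $a \in A$. This map is well defined because $\mu$ and $\alpha$ agree on $K \cap A$, and it is a homomorphism because $\alpha$ is $K$-invariant (as $K \le T = G_\alpha$): for $k_i \in K$ and $a_i \in A$, one computes $(k_1a_1)(k_2a_2) = k_1k_2 a_1^{k_2}a_2$ and then uses $\alpha(a_1^{k_2}) = \alpha(a_1)$. So $\lambda \in \irr H$ is linear, and by construction $\lambda_K = \mu$ and $\lambda_A = \alpha$.

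Finally I would verify the two required conclusions. Since $H/A$ is cyclic and $\alpha$ is $H$-invariant, Gallagher's theorem shows $\irr{H|\alpha} = \{\lambda\nu \mid \nu \in \irr{H/A}\}$; in particular every irreducible constituent of $\psi_H$ is linear, and $\lambda$ is the unique element of this set whose restriction to $K$ equals $\mu$. The multiplicity formula $[\psi_K,\mu] = \sum_{\xi \in \irr{H|\alpha}}[\psi_H,\xi][\xi_K,\mu]$ thus forces $[\psi_H,\lambda] \ne 0$. By Clifford's theorem, $\chi = \psi^G$ is then a constituent of $\lambda^G$, so $[\chi_H,\lambda] \ne 0$ by Frobenius reciprocity. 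Moreover $c(\psi) \mid o(\mu)$ gives $\Q(\psi) \sbs \Q_{c(\psi)} \sbs \Q_{o(\mu)} = \Q(\mu) \sbs \Q(\lambda)$, and combined with $\Q(\chi) \sbs \Q(\psi)$, this yields $\Q(\chi) \sbs \Q(\lambda)$. The only substantive input is Theorem \ref{9.4}; all the remaining steps are formal consequences of Clifford theory and Gallagher's theorem, so no serious obstacle is expected.
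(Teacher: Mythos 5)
Your proof is correct, and it is close in spirit to the paper's argument — both reduce via a Clifford correspondent over $A$ and then invoke Theorem~\ref{9.4} — but the execution differs in a way worth noting. The paper argues by induction on $|G:A|$: if $G_\mu<G$, it passes to the Clifford correspondent and applies the inductive hypothesis; otherwise it passes to the faithful quotient so that $A\le \ZB(G)$, applies Theorem~\ref{9.4} in $G$ itself to get a cyclic $H_0$ and a linear $\lambda_0\in\irr{H_0}$ under $\chi$, and then observes that $AH_0$ is abelian (because $A$ is now central and $H_0$ cyclic), so any irreducible constituent $\delta$ of $\chi_{AH_0}$ over $\lambda_0$ is already linear and finishes the job. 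You instead collapse the induction into a single application of Clifford correspondence over $T=G_\alpha$, apply Theorem~\ref{9.4} to $\psi\in\irr{T\mid\alpha}$, and build the linear character $\lambda$ of $H=KA$ by hand, gluing $\mu$ and $\alpha$ along $K\cap A$ and using $K$-invariance of $\alpha$ to check multiplicativity; you then use Gallagher's theorem and a multiplicity computation to show $[\psi_H,\lambda]\neq 0$ and Frobenius reciprocity (twice) to transfer this to $[\chi_H,\lambda]\neq 0$. Your route avoids both the induction and the reduction to $A$ central, at the cost of the more detailed extension and multiplicity argument; the paper's reduction to $A\le\ZB(G)$ makes the final extension step immediate. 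One small stylistic remark: in the sentence claiming that $\chi=\psi^G$ is a constituent of $\lambda^G$, the relevant fact is additivity of induction (if $\psi$ is a constituent of $\lambda^T$ then $\psi^G$ is a constituent of $\lambda^G$) rather than Clifford's theorem, but this does not affect the validity of the argument.
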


\begin{proof}
We argue by induction on $|G:A|$. The case $|G:A|=1$ is trivial.
Let $\mu \in \irr A$ be under $\chi$, and let $\tau \in \irr{G_\mu|\mu}$ be the Clifford correspondent of $\chi$. 
Assume first that $G_\mu<G$. Then there exists $A \le H \le G_\mu$ and $\lambda \in \irr H$ linear, such that $H/A$ is cyclic, $[\tau_H, \lambda] \ne 0$ and $\Q(\tau) \sbs \Q(\lambda)$. Since $\Q(\chi) \sbs \Q(\tau)$, we are done in this case.
So we may assume that $\chi_A=\mu$. We may also assume that $\mu$ is faithful so $A \sbs \zent G$.
By the previous theorem there is $H$ cyclic such that $[\chi_H, \lambda] \ne 0$ and $\Q(\chi) \sbs \Q(\lambda)$.  Now, let $\delta \in \irr{AH}$ be an irreducible constituent of $\chi_{AH}$ over $\lambda$.
Then $\delta_H=\lambda$ and $\delta_A$ is irreducible. Also, $\Q(\lambda) \sbs \Q(\delta)$ and we are done. 
\end{proof}

Corollary \ref{overab} is not true if $A$ is not abelian. If $G={\tt SmallGroup}(32,42)$, then
$\chi$ has an irreducible character of degree 2, with $\Q(\chi)=\Q_8$, a normal subgroup $N={\sf Q}_8$
such that $G/N=C_2 \times C_2$, $\chi_N=\theta \in \irr N$ but no subgroup $H/N\le G/N$ of order 2 contains an 
irreducible constituent of $\chi_H$ containing $\Q_8$ in its field of values.
This fact made the proof of our main theorem much more difficult.
Notice that it would make sense for our purposes to ask if Corollary \ref{overab} holds for the weaker conclusion 
that $c(\chi)$ divides $c(\lambda)$. Again this is false, but the counterexample is more difficult to find.
If $U={\sf C}_{60} \times {\sf C}_2 \times {\sf C}_2$, then there is a subgroup $P$ of order 16
of the Sylow 2-subgroup of the automorphism group of $U$ such that the 
semidirect product $G:=U\rtimes P$ has an irreducible character $\chi$
with field of values ${\rm NF}(60,[1,23,47,49])$ (in \cite{GAP} notation), thus $c(\chi)=60$. 
Moreover, $G$ has a normal subgroup $N$ of such that $G/N\cong {\sf C}_2 \times {\sf C}_2$,
$\chi_N=\theta$ is irreducible and rational valued, but the restriction of the proper subgroups
between $N$ and $G$ have conductors 12, 15 and 20.
 
\medskip
In nilpotent groups, Theorem \ref{9.4} can be improved dramatically. We  prove next that we can find a linear $\lambda \in \irr H$ such that $c(\chi)$ divides $o(\lambda)$
and $[\chi_H, \lambda]=1$. This is no longer true in general: for instance in $G=6.{\sf A}_7$ there does not even exist $(H, \lambda)$
such that $[\chi_H, \lambda]=1$ and $\lambda$ is linear. It is not clear if this holds or not in solvable groups.

\begin{thm}\label{induced}
Suppose that $G$ is a $p$-group and $\chi \in \irr G$ has conductor $c(\chi)=p^e>1$. 
Then there exists a linear character $\lambda$ such that $\lambda^G=\chi$ and $o(\lambda)=p^e$.
\end{thm}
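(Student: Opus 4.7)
I plan to prove this by strong induction on $|G|$. Replacing $G$ by $G/\ker\chi$, I may assume $\chi$ is faithful. The base case $\chi(1)=1$ is immediate: then $\chi$ itself is linear of order $o(\chi)=c(\chi)=p^e$, and $(H,\lambda)=(G,\chi)$ works. Assume henceforth $\chi(1)>1$. Since every primitive irreducible character of a $p$-group is linear, $\chi$ is not primitive, and a short Clifford-theoretic argument using a maximal subgroup $M<G$ of index $p$ that contains a proper subgroup from which $\chi$ is induced shows that $\chi_M$ splits as $\tau_1+\cdots+\tau_p$, with the $\tau_i\in\Irr(M)$ distinct $G$-conjugates and $\chi=\tau_1^G$.

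The strategy is to apply the inductive hypothesis to $\tau_1\in\Irr(M)$, which produces a subgroup $K\le M$ and a linear $\mu\in\Irr(K)$ with $\mu^M=\tau_1$ and $o(\mu)=c(\tau_1)$, whence $\mu^G=\chi$. The conclusion $o(\mu)=p^e$ follows as soon as $c(\tau_1)=p^e$. A Galois-orbit count sharply restricts the possibilities: writing $c(\tau_1)=p^g$ so that $\Q(\tau_1)=\Q_{p^g}$, the stabilizer of $\tau_1$ in $\Gal(\Q_{p^g}/\Q_{p^e})$ is trivial, so its Galois orbit inside the $p$-element $G$-orbit $\{\tau_1,\ldots,\tau_p\}$ has size $p^{g-e}\le p$, forcing $g\in\{e,e+1\}$. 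If $M$ can be chosen with $g=e$ the induction closes immediately.

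The main obstacle is the residual case in which $g=e+1$ for every valid choice of $M$. Here $\Gal(\Q_{p^{e+1}}/\Q_{p^e})$ acts transitively on $\{\tau_i\}$ and this action coincides with the $G$-conjugation action: for a generator $\sigma$ of $\Gal(\Q_{p^{e+1}}/\Q_{p^e})$ there exists $x\in G\setminus M$ with $\tau_1^x=\tau_1^\sigma$. To handle this, I would apply the inductive hypothesis to $\tau_1$ to produce a linear $\mu\in\Irr(K)$ of order $p^{e+1}$ with $\mu^M=\tau_1$, and then exploit the coincidence $\tau_1^x=\tau_1^\sigma$ via Clifford theory (relating $\mu^x$ and $\mu^\sigma$ up to $M$-conjugation) to show that the $p$-th power $\mu^p$, of order $p^e$, is invariant under a suitable lift of $x$ normalizing $K$. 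This invariance allows one to extend $\mu^p$ to a linear character $\lambda$ on a subgroup $H$ having the same order as $K$ (so that $\lambda^G$ has degree $\chi(1)$), chosen so that a Gallagher-type argument singles out the unique extension whose induction equals $\chi$; this $\lambda$ then has the required order $p^e$. The careful construction of $H$ and the selection of the correct extension inducing $\chi$ constitute the technical crux of the argument.
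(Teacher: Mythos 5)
Your approach is genuinely different from the paper's, but it contains a real gap: the residual case $g=e+1$ is not actually resolved, and the sketch you give for it does not assemble. A concrete obstruction: let $G$ be extraspecial of order $p^3$ and exponent $p^2$, $\chi$ the faithful irreducible of degree $p$ (so $c(\chi)=p$), and $M=\langle a\rangle\cong C_{p^2}$. Then $\tau_1$ is a faithful linear character of $M$ of order $p^2$, so $g=e+1$ with $K=M$, $\mu=\tau_1$. But $\mu^p$ kills $\langle a^p\rangle=\ZB(G)$ and is in fact $G$-invariant, so it cannot induce irreducibly to $G$ no matter how you extend it; the linear character of order $p$ that actually induces $\chi$ lives on the \emph{other} maximal subgroup $\langle a^p,b\rangle$ and restricts \emph{faithfully} to $\ZB(G)$, hence is not an extension of $\mu^p$ at all. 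You would need to vary $M$ to escape — but then you must show some $M$ always lands in the case $g=e$, which you do not address. Separately, a smaller misstep: $c(\tau_1)=p^g$ does not give $\Q(\tau_1)=\Q_{p^g}$. The correct route to $g\in\{e,e+1\}$ is to observe that $\mathrm{Gal}(\Q^{\mathrm{ab}}/\Q(\chi))$ acts by \emph{translations} on the $p$-element $G/M$-orbit $\{\tau_1,\dots,\tau_p\}$, so $[\Q(\tau_1):\Q(\chi)]\in\{1,p\}$, and then use the structure of open subgroups of $\Z_p^\times$ (with $e\geq 2$ when $p=2$, a consequence of $c(\chi)>1$). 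Even after that repair, $[\Q(\tau_1):\Q(\chi)]=p$ does not force $g=e+1$ (for $D_{16}$ with $M=C_8$ one has $[\Q(\zeta_8):\Q(\sqrt2)]=2$ yet both conductors equal $8$), so the identification of the residual case also needs care.

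The paper takes a cleaner route that sidesteps all of this. For an arbitrary abelian normal subgroup $A\nor G$ and a constituent $\lambda$ of $\chi_A$, it forms the semi-inertia group $T^*=\{g\in G\mid \lambda^g=\lambda^\sigma\text{ for some }\sigma\in\mathrm{Gal}(\Q(\lambda)/\Q)\}$ and the Clifford correspondent $\psi$, uses $c(\psi^{T^*})=c(\chi)$ to induct when $T^*<G$, and when $T^*=G$ concludes that $\ker\lambda$ is normal, hence trivial, so $A$ is cyclic. One thereby reduces to a $p$-group all of whose abelian normal subgroups are cyclic, which forces $G$ to be cyclic or, for $p=2$, dihedral, semidihedral, or generalized quaternion; these are then checked directly. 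That structural endgame is exactly what is missing from your plan.
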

 
\begin{proof}
We argue by induction on $|G|$. We may assume that $\chi$ is faithful.

Let $A\nor G$ be an arbitrary abelian normal subgroup of $G$. We claim that we may assume that $A$ is cyclic.
Let $\lambda \in \irr A$ be an irreducible constituent of the restriction $\chi_A$.
Let $T=G_\lambda$ be the stabilizer of $\lambda$ in $G$, and let $\psi \in \irr{T|\lambda}$ be the Clifford
correspondent of $\chi$ over $\lambda$. Further, let 
$T^*= \{ g \in G |\lambda^g=\lambda^\sigma$ for some $\sigma \in {\rm Gal}(\Q(\lambda)/\Q)\}$ and set 
$\eta:=\psi^{T^*}$. Then $c(\eta)=c(\chi)$. By induction, we may assume that $T^*=G$.
But then $\ker\lambda \nor G$ and, since $\chi$ is faithful, this implies that $\ker\lambda=1$.
Hence $A$ is cyclic. 

So, we may assume that every normal abelian subgroup of $G$ is cyclic. By \cite[Theorem~4.10, Chapter 5]{G}
this implies that $G$ is cyclic or $p=2$ and $G$ is quaternion, dihedral or semidihedral.
So we may assume that $p=2$. Suppose that $G$ is dihedral, semidihedral or quaternion  of order $2^n$, where $n>3$.
Since $\chi$ is faithful, we easily check that $c(\chi)=2^{n-1}$.
Also, $\chi$ is induced from a faithful linear character of the cyclic subgroup of order $2^{n-1}$.
If $n=3$, then the non-linear characters of $G$ are rational valued.
\end{proof}

Notice that Theorem \ref{induced} is not true if $c(\chi)=1$, as $G={\sf Q}_8$ shows us.

\medskip
Several other reasonable generalizations of Feit's conjecture simply do not hold.
For instance, an easy observation is that if  $G$ is nilpotent and $\chi \in \irr G$, 
then there is $g \in G$ such that $c(\chi)=c(\chi(g))$.
This fact is not true in general and  the smallest example is ${\tt SmallGroup}(960,730)$. 
However, if $G$ is a sporadic simple group, one can observe for every $\chi\in \irr G$ the astonishing
equality $\{ c(\chi(g))\mid g \in G\} = \{1, c(\chi)\}$.

If $x \in G$, let us define $\Q(x)=\Q(\chi(x) \, |\, \chi \in \irr G)$. 
Another plausible generalization of Feit's conjecture would be the following:
{\it Given $\chi \in \irr G$, is there $x \in G$ such that $\Q(\chi) \sbs \Q(x)$}? Since $\Q(x) \sbs \Q_{o(x)}$, this would imply Feit's conjecture.
However, this question has a negative answer, as shown by ${\tt SmallGroup}(32,15)$. It might be interesting to notice that we have not found
a counterexample if $G$ is a simple group. (In fact, computations suggest that except in $Fi_{24}'$, the actions of $\c G$ on $\irr G$
and the set ${\rm cl}(G)$ of conjugacy classes of a simple group $G$ are permutation isomorphic.)

\medskip
We have not been able to decide if the following relative version of Feit's conjecture holds:
{\it Suppose that $G$ is a finite group, $N\nor G$, $\chi \in \irr G$, and $\theta \in \irr N$
is an irreducible constituent of $\chi_N$. Is there a subgroup $H/N\le G/N$ and an irreducible constituent
$\hat\theta \in \irr H$ of $\chi_H$ such that $\hat\theta_N=\theta$  and $\Q(\chi) \sbs \Q(\hat\theta)$}?
 As mentioned above, if we further impose that $H/N$ is cyclic, then this is false for $G={\tt SmallGroup}(32,42)$. Perhaps some of the above plausible generalizations hold true if $\chi$ is assumed to be non-linear and primitive.
 
\medskip

We finally remark that although the modular analogue of Feit's conjecture  for irreducible Brauer characters of $p$-solvable groups  follows from Feit's conjecture (since every   
$\varphi \in \ibr G$ has a canonical lifting $\chi \in \irr G$ and therefore with $\Q(\chi)=\Q(\varphi)$), it does not hold in general. For instance, $2 \cdot {\sf A}_6$ has $3$-modular irreducible characters with conductor $40$, and no elements of that order.


\begin{thebibliography}{ABC}

\bibitem[AC]{AC86}
G. Amit, D. Chillag,
On a question of Feit concerning character values of finite solvable groups,
{\it Pacific J. Math.} {\bf 122}  (1986),  257--261.
  
\bibitem[A]{A00}
M. Aschbacher, {\it Finite Group Theory}, 
Cambridge University Press, second edition, 2000.
  
\bibitem[Bl]{Bl}
H. Blichfeldt, On the order of linear homogeneous groups. II, {\it Trans. Amer. Math. Soc.} {\bf 5} (1904), 310--325.
  

\bibitem[BN]{BN}
R. Boltje, G. Navarro, Feit's conjecture, canonical Brauer induction, and Adams operations, preprint.
 
\bibitem[Br${}_1$]{Br63}
R. Brauer, Representations of finite groups, in: {\it Lectures on Modern Mathematics}, vol. I, ed. by T. Saaty, John Wiley \& Sons, 1963.
  
\bibitem[Br${}_2$]{Br64}
R. Brauer, A note on theorems of Burnside and Blichfeldt,
{\em Proc. Amer. Math. Soc.} {\bf 15} (1964), 31--34. 
 
\bibitem[BHR]{BHR}
 J. N. Bray, D. F. Holt, C. M. Roney-Dougal, {\it The Maximal Subgroups of the Low-dimensional Finite Classical Groups}, 
London Mathematical Society Lecture Note Series, {\bf 407}, 2013. 

\bibitem[Bu]{Bu}
W. Burnside, {\it Theory of Groups of Finite Order},  2nd ed., Cambridge Univ. Press, Cambridge 1911.
 
\bibitem[C]{C} 
R.\ W.\ Carter, 
Nilpotent self-normalizing subgroups of soluble groups, 
{\it Math. Z.} {\bf 75}, (1961), 136--139.

\bibitem[Atl]{Atlas}
  J. H. Conway, R. T. Curtis, S. P. Norton, R. A. Parker, R. A. Wilson,
{\it ATLAS of Finite Groups}, Clarendon Press, Oxford, 1985.

\bibitem[DM]{DM}
  F. Digne, J. Michel, {\it Representations of Finite Groups of Lie Type}, London Mathematical Society Student Texts {\bf 95}, 2nd edition, 2020.

\bibitem[D]{Do}
  L. Dornhoff, {\it Group Representation Theory}, 
Marcel Dekker, New York, 1972.

\bibitem[E]{E}
  V. Ennola, On the characters of the finite unitary groups, 
{\it  Ann. Acad. Scient. Fenn. A I}, no. 323 (1963).
 
\bibitem[F${}_1$]{F80}
W. Feit, Some consequences of the classification of finite simple groups, in:  {\em The Santa-Cruz Conference on Finite Groups}, Proc. of Symposia in Pure Math.   {\bf 37}, Amer. Math. Soc. (1980), 175--181.

\bibitem[F${}_2$]{F2}
  W. Feit, Extending Steinberg characters, in:
{\it Linear Algebraic Groups and their Representations} (Los Angeles, CA, 1992),
pp. 1--9, {\it Contemp. Math.} {\bf 153}, Amer. Math. Soc., Providence, RI, 1993. 


\bibitem[FT${}_1$]{FT86}
P. Ferguson, A. Turull,
On a question of Feit, 
{\it Proc.  Amer. Math. Soc.} {\bf  97} (1986), 21--22.
  
\bibitem[FT${}_2$]{FT}
P. Ferguson, A. Turull,  
On a question of Brauer, 
{\it Arch. Math.} {\bf 46} (1986), no. 5, 393--401. 

\bibitem[FT${}_3$]{FT87}
P. Ferguson, A. Turull, 
Factorizations of characters and a question of Feit,
{\it J. Algebra} {\bf 107} (1987), no. 2, 385--409. 

\bibitem[FH]{FH}
  W. Fulton, J. Harris, {\it Representation Theory}, Springer-Verlag, New
York, 1991.

\bibitem[GAP]{GAP}
The GAP group, {\it {\sf GAP} - groups, algorithms, and
programming}, Version 4.11.0, 
2020, \url{http://www.gap-system.org}.

\bibitem[GI]{GI}
  D. Gluck, D., I. M. Isaacs, Tensor induction of generalized characters and 
permutation characters, {\it Illinois J. Math.} {\bf 27} (1983), 514--518. 


\bibitem[G]{G}
D.~Gorenstein, {\it Groups of Finite Order}, 2nd ed.,
Chelsea, New York, 1980.


\bibitem[H]{H}
G. Hiss, Zerlegungszahlen endlicher Gruppen vom Lie-Typ in nicht-definierender Charakteristik, Habilitationsschift, RWTH Aachen, 1990.

\bibitem[HH]{HH}
P.\,N. Hoffman, J.\,F. Humphreys, 
{\em Projective Representations of the Symmetric Groups}, 
The Clarendon Press, Oxford University Press, 1992. 

\bibitem[HW${}_1$]{HW1}
 P.\,E. Holmes, R. A. Wilson, $\mathrm{PSL_2(59)}$ is a subgroup of the Monster, {\it J. Lond. Math. Soc.} {\bf 69} (2004), 141--152.
 
\bibitem[HW${}_2$]{HW2}
 P.\,E. Holmes, R. A. Wilson, On subgroups of the Monster containing $\mathrm{A_5}$'s, {\it J. Algebra} {\bf 319} (2008), 2653--2667.


 
\bibitem[I${}_1$]{Is73}
I.~M.~Isaacs,
Characters of solvable and symplectic groups,
{\it Amer. J. Math.} {\bf 85} (1973),  594--635.

 \bibitem[I${}_2$]{Is}
I.~M.~Isaacs, {\it Character Theory of Finite Groups},
AMS-Chelsea, Providence, 2006.

\bibitem[I${}_3$]{Is18}
I.~M.~Isaacs, {\it Characters of Solvable Groups},
Graduate Studies in Mathematics {\bf 189}, AMS, Providence, 2018.

\bibitem[IMN]{IMN}
I.~M. Isaacs, G.~Malle, and G.~Navarro, A reduction theorem for the {M}c{K}ay conjecture,
{\it Invent. Math.} {\bf 170} (2007), 33--101.

\bibitem[JK]{JK}
G.~James, A.~Kerber, {\it The Representation Theory of the Symmetric Group}, Encyclopedia of Mathematics and its Applications, vol.~{\bf 16},
  Addison-Wesley Publishing Co., Reading, Mass., 1981.

\bibitem[KW]{KW}
  P. B. Kleidman and D. B. Wales, The projective characters of the symmetric 
groups that remain irreducible on subgroups, {\it J. Algebra} {\bf 138}
(1991), 440--478.

\bibitem[K]{KBook}
 A. Kleshchev, {\em Linear and Projective Representations of Symmetric Groups}, Cambridge University Press, 2005. 
 
\bibitem[L]{L16}
F.~Ladisch, On Clifford theory with Galois action,
{\it J.~Algebra\/} {\bf 457} (2016), 45--72.

\bibitem[LM]{LM}
P. Landrock, G. O. Michler, Principal 2-blocks of the simple groups of Ree type, {\it Trans. Amer. Math. Soc.} {\bf 260} (1980), 83--111.

\bibitem[Lin]{Linckelmann1}
M. Linckelmann, {\em The block theory of finite group algebras, vol. 1}, Cambridge University Press, 2018.

\bibitem[LW]{LW}
S. A. Linton, R. A. Wilson, The maximal subgroups of the Fischer groups $\operatorname{Fi}_{24}$ and $\operatorname{Fi}'_{24}$, 
{\it Proc. Lond. Math. Soc.} {\bf 63} (1991), 113--164.

\bibitem[Li]
{Livesey}
 M. Livesey, Brou\'e's perfect isometry conjecture holds for the double covers of the symmetric and alternating groups, {\em Algebr. Represent. Theor.} {\bf 19} (2016), 783-826. 
 
 \bibitem[M]
{Macdonald}
 I.G. Macdonald, {\em Symmetric Functions and Hall Polynomials}, Second Edition, Clarendon Press, 1995. 

\bibitem[N]{N}
G. Navarro, {\it Character Theory and the McKay Conjecture},
Cambridge University Press, 2018.

\bibitem[NS]{NS}
G. Navarro, B. Sp\"ath, On Brauer's height zero conjecture,  {\it J. Eur. Math. Soc.} {\bf 16} (2014), 695--747.

\bibitem[NSV]{NSV}
G. Navarro, B. Sp\"ath, C. Vallejo, A reduction theorem for the Galois-McKay conjecture,   {\it Trans. Amer. Math. Soc.}
{\bf 373} (2020), 6157--6183.

\bibitem[NT]{NT}
G. Navarro, Pham Huu Tiep, Rational irreducible characters and rational conjugacy classes in finite groups, {\it Trans. Amer. Math. Soc.} 
{\bf 360} (2008), 2443--2465.

\bibitem[Sch]
{Schur}
 I. Schur, \"Uber die Darstellung der symmetrischen und der alternierenden Gruppe durch gebrochene lineare Substitutionen, {\em J. Reine Angew. Math.} {\bf 139} (1911), 155--250. 
 

\bibitem[St]
{Stembridge}
 J.\,R. Stembridge, Shifted tableaux and the projective representations of symmetric groups, {\em Adv. Math.} {\bf 74} (1989), 87--134.  
 
 \bibitem[TM]
{CT}
C. Tapp-Monfort, Inductive Feit and Galois-McKay conditions for small--rank simple groups of Lie type,
preprint. 

\bibitem[Wal]{Wa}
  J. H. Walter, The characterization of finite groups with abelian
Sylow $2$-subgroups,
{\it Ann. of Math.} {\bf 89} (1969), 405--514.

\bibitem[W]{W}
  H. N. Ward, On Ree's series of simple groups, {\it Trans. Amer. Math. Soc.} {\bf 121} (1966), 62--89.
 
\end{thebibliography}
\end{document}